\newcommand{\LF}[2]{\langle #1,#2\rangle}
\newcommand{\PP}[2]{\langle #1|#2\rangle}
\newcommand{\DLF}[2]{\langle\!\langle #1,#2\rangle\!\rangle}
\newcommand{\bom}{\boldsymbol{\omega}}
\newcommand{\ra}{\rightarrow}
\newcommand{\lra}{\longrightarrow}
\newcommand{\ZZ}{\mathbb{Z}}
\newcommand{\NN}{\mathbb{N}}
\newcommand{\QQ}{\mathbb{Q}}
\newcommand{\RR}{\mathbb{R}}
\newcommand{\CC}{\mathbb{C}}
\newcommand{\HH}{\mathbb{H}}
\newcommand{\XX}{\mathbb{X}}
\newcommand{\Cc}{\mathcal{C}}
\newcommand{\DD}{\mathbb{D}}
\newcommand{\FF}{\mathbb{F}}
\newcommand{\Oo}{\mathcal{O}}
\newcommand{\Hh}{\mathcal{H}}
\newcommand{\QHh}{\vec{\mathcal{H}}}
\newcommand{\QUu}{\vec{\mathcal{U}}}
\newcommand{\Tt}{\mathcal{T}}
\newcommand{\TT}{\mathbb{T}}
\newcommand{\Pone}{\mathbb{P}^1}
\newcommand{\Gg}{\mathcal{G}}
\newcommand{\Kk}{\mathcal{K}}
\newcommand{\Aa}{\mathcal{A}}
\newcommand{\Ee}{\mathcal{E}}
\newcommand{\KK}{\mathbb{K}}
\newcommand{\MM}{\mathbb{M}}
\renewcommand{\AA}{\mathbb{A}}
\renewcommand{\SS}{\mathbb{S}}
\newcommand{\vSs}{\mathscr{S}}
\newcommand{\Uu}{\mathcal{U}}
\newcommand{\Bb}{\mathcal{B}}
\newcommand{\Mm}{\mathcal{M}}
\newcommand{\ramif}{\mathrm{ra}}
\newcommand{\inert}{\mathrm{in}}
\newcommand{\bt}{\mathbf{t}}
\newcommand{\qa}[1]{\mathbf{#1}}
\newcommand{\quat}[3]{\bigl(\frac{#1,\, #2}{#3}\bigr)}
\newcommand{\spitz}[1]{\langle #1\rangle}
\newcommand{\rperp}[1]{#1^{\perp}}
\newcommand{\genus}{g(\Hh)}
\newcommand{\ovp}{\bar{p}}
\newcommand{\ms}[1]{\mathscr #1}
\newcommand{\gge}[1]{\mathbf{#1}}
\DeclareMathOperator{\Knull}{K_0}
\DeclareMathOperator{\D}{D}
\DeclareMathOperator{\Coker}{Coker}
\DeclareMathOperator{\ch}{char}
\DeclareMathOperator{\ord}{ord}
\DeclareMathOperator{\Soc}{Soc}
\DeclareMathOperator{\Rad}{rad}
\DeclareMathOperator{\Pic}{Pic}
\DeclareMathOperator{\Br}{Br}
\DeclareMathOperator{\Aut}{Aut}
\DeclareMathOperator{\Gal}{Gal}
\DeclareMathOperator{\Inn}{Inn}
\DeclareMathOperator{\oInn}{\overline{Inn}}
\DeclareMathOperator{\Fix}{Fix}
\DeclareMathOperator{\vect}{vect}
\DeclareMathOperator{\coh}{coh}
\DeclareMathOperator{\ind}{ind}
\DeclareMathOperator{\Qcoh}{Qcoh}
\DeclareMathOperator{\Mod}{Mod}
\DeclareMathOperator{\qgr}{qgr}
\renewcommand{\mod}{\operatorname{mod}}
\DeclareMathOperator{\End}{End}
\DeclareMathOperator{\END}{END}
\DeclareMathOperator{\Hom}{Hom}
\DeclareMathOperator{\Ext}{Ext}
\newcommand{\ShHom}{\ms H\!om}
\newcommand{\ShEnd}{\ms E\!nd}
\DeclareMathOperator{\add}{add}
\DeclareMathOperator{\rk}{rk}
\DeclareMathOperator{\matring}{M}
\newcommand{\Der}{\mathcal{D}}
\newcommand{\bDerived}[1]{\Der^b(#1)}
\DeclareMathOperator{\gr}{gr}
\DeclareMathOperator{\wgr}{\widehat{\gr}}
\DeclareMathOperator{\Proj}{Proj}
\newcommand{\odual}[1]{#1\spcheck{}}
\newtheorem{proposition}{Proposition}[section]
\newtheorem{theorem}[proposition]{Theorem}
\newtheorem{corollary}[proposition]{Corollary}
\newtheorem{lemma}[proposition]{Lemma}
\theoremstyle{definition}
\newtheorem{definition}[proposition]{Definition}
\newtheorem{remark}[proposition]{Remark}
\newtheorem{example}[proposition]{Example}
\newtheorem{examples}[proposition]{Examples}
\newtheorem{numb}[proposition]{\!\!}
\numberwithin{equation}{section}
\begin{document}
\title[Weighted noncommutative regular projective curves]{Weighted
  noncommutative \\ regular projective curves} \author{Dirk Kussin}
\address{{Graduate School of Mathematics \\ Nagoya
    University \\ Furo-cho \\ Chikusa-ku \\ Nagoya 464-8602 \\ Japan}}
\email{dirk@math.uni-paderborn.de}
\subjclass[2010]{Primary: 14A22, 14H05, 16G70, 18E10. Secondary:
  11R58, 14H52, 16H10, 30F50}
\keywords{noncommutative regular projective curve, noncommutative
  function field, Auslander-Reiten translation, Picard-shift, ghost
  group, maximal order over a scheme, ramification, Witt curve,
  noncommutative elliptic curve, Klein bottle, Fourier-Mukai partner,
  weighted curve, orbifold Euler characteristic, noncommutative
  orbifold, tubular curve} 
\begin{abstract}
  Let $\Hh$ be a noncommutative regular projective curve over a
  perfect field $k$. We study global and local properties of the
  Auslander-Reiten translation $\tau$ and give an explicit description
  of the complete local rings, with the involvement of $\tau$. We
  introduce the $\tau$-multiplicity $e_{\tau}(x)$, the order of $\tau$
  as a functor restricted to the tube concentrated in $x$. We obtain a
  local-global principle for the (global) skewness $s(\Hh)$, defined
  as the square root of the dimension of the function (skew-) field
  over its centre. In the case of genus zero we show how the ghost
  group, that is, the group of automorphisms of $\Hh$ which fix all
  objects, is determined by the points $x$ with $e_{\tau}(x)>1$. Based
  on work of Witt we describe the noncommutative regular (smooth)
  projective curves over the real numbers; those with $s(\Hh)=2$ we
  call Witt curves. In particular, we study noncommutative elliptic
  curves, and present an elliptic Witt curve which is a noncommutative
  Fourier-Mukai partner of the Klein bottle. If $\Hh$ is weighted, our
  main result will be formulae for the orbifold Euler characteristic,
  involving the weights and the $\tau$-multiplicities. As an
  application we will classify the noncommutative $2$-orbifolds of
  nonnegative Euler characteristic, that is, the real elliptic,
  domestic and tubular curves. Throughout, many explicit examples are
  discussed.
\end{abstract}
\maketitle
\tableofcontents

\section{Introduction}
In this article we study categories $\Hh$ which have the same formal
properties as categories $\coh(X)$ of coherent sheaves over a regular
projective curve over a field $k$. The axioms are essentially taken
from Lenzing-Reiten~\cite{lenzing:reiten:2006}. A similar (more
general) system of axioms is formulated by Stafford-van den
Bergh~\cite[Sec.~7.1]{stafford:vandenbergh:2001}. Let $k$ be a field
and $\Hh$ a category satisfying the following properties (NC~1) to
(NC~5); the conditions (NC~6) and (NC~7) will follow from the others.
\begin{itemize}
\item[(NC~1)] $\Hh$ is small, connected, abelian, and each object in
  $\Hh$ is noetherian.
\item[(NC~2)] $\Hh$ is a $k$-category with finite dimensional Hom- and
  Ext-spaces.
\item[(NC~3)] There is an autoequivalence $\tau$ on $\mathcal{H}$
  (called the \emph{Auslander-Reiten translation}) such that Serre
  duality $\Ext^1_{\mathcal{H}} (X,Y)=\D\Hom_{\mathcal{H}} (Y,\tau X)$
  holds, where $\D=\Hom_k (-,k)$.
\item[(NC~4)] $\mathcal{H}$ contains an object of infinite length.
\end{itemize}
It follows from Serre duality that $\mathcal{H}$ is a hereditary
category, that is, $\Ext^n_{\mathcal{H}}$ vanishes for all $n\geq 2$.
Let $\mathcal{H}_0$ be the Serre subcategory of $\mathcal{H}$ formed
by the objects of finite length, and let $\Hh_+$ be the full
subcategory of objects not containing a simple object. Then each
indecomposable object of $\Hh$ lies in $\Hh_+$ or in
$\Hh_0$. Moreover, $\Hh_0 =\coprod_{x\in\mathbb{X}}\Uu_x$ (for some
index set $\mathbb{X}$) where $\Uu_x$ are connected uniserial
categories, called tubes. The objects in $\Uu_x$ are called
(skyscraper sheaves) concentrated in $x$. We also write
$\Hh=\coh(\XX)$. In order to avoid so-called degenerated cases,
discussed in~\cite{lenzing:reiten:2006}, we additionally assume:
\begin{itemize}
\item[(NC~5)] $\mathbb{X}$ consists of infinitely many points.
\end{itemize}
We call $\Hh$, and also $\XX$, a \emph{weighted noncommutative regular
  projective curve} over $k$, if it satisfies the conditions~(NC~1)
to~(NC~5), and we write $\Hh=\coh(\XX)$. We recall that, because of
(NC~5), our class of noncommutative curves forms a proper subclass of
those studied in~\cite{lenzing:reiten:2006}.\medskip

Axiom (NC~5) implies, by~\cite[Cor.~2.4]{lenzing:reiten:2006}, that
for each $x\in\XX$ the number $p(x)$ of isomorphism classes of simple
objects in $\Uu_x$ is finite. Also the second part of the following
condition will \emph{follow}, from the theory of hereditary orders,
compare Theorem~\ref{thm:structure} below.
\begin{enumerate}
\item[(NC 6)] For all points $x\in\XX$ we have $p(x)<\infty$, and for
  all except finitely many we have $p(x)=1$.
  \end{enumerate}
  The numbers $p(x)$, with $p(x)>1$, are called the \emph{weights} of
  $\Hh$. Points $x$ with $p(x)>1$ are called \emph{exceptional}. Thus
  there is a finite number of exceptional points, and of so-called
  \emph{exceptional} simple sheaves $S$, that is, simple objects $S$
  with $\Ext^1(S,S)=0$. By~\cite[Prop.~4.9]{lenzing:reiten:2006} each
  object in the quotient category $\Hh/\Hh_0$ is of finite length. An
  indecomposable object $L\in\Hh$ is called a \emph{line bundle} if it
  becomes a simple object modulo $\Hh_0$. We call a line bundle
  $L\in\Hh$ \emph{special}, if for each $x\in\XX$ there is (up to
  isomorphism) \emph{precisely one} simple sheaf $S_x$ concentrated in
  $x$ with $\Ext^1(S_x,L)\neq 0$.\medskip

  If we have $p(x)=1$ for all $x$, then we call $\Hh$
  \emph{non-weighted} (or homogeneous~\cite{kussin:2009}); this can be
  also expressed as follows
\begin{list}{(NC 6')}{\setlength{\topsep}{2.2pt plus 2.2pt}}
\item $\Ext^1(S,S)\neq 0$ (equivalently: $\tau S\simeq S$)
  for each simple object $S$.
\end{list}
\begin{proposition}[Reduction to the non-weighted
  case]\label{prop:reduction}
  Let $\Hh$ be a weighted noncommutative regular projective curve with
  the exceptional points given by $x_1,\dots,x_t$, of weights
  $p_i=p(x_i)>1$. Choose for every $i=1,\dots,t$ one simple sheaf
  $S_i$ concentrated in $x_i$. Let $\vSs\subseteq\Hh$ be the system
  $\{\tau^jS_i\mid i=1,\dots,t;\ j=1,\dots,p_i-1\}$.
  \begin{enumerate}
  \item[(1)] The right perpendicular category
    $\Hh'=\rperp{\vSs}\subseteq\Hh$ is a full, exact subcategory of
    $\Hh$, and is a non-weighted noncommutative regular projective
    curve.
  \item[(2)] There is a special line bundle $L$ in $\Hh$.
  \end{enumerate}
\end{proposition}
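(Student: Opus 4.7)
The strategy is to apply perpendicular calculus in the style of Geigle--Lenzing to the finite exceptional system $\vSs$. First observe that every $S=\tau^j S_i$ in $\vSs$ is exceptional: since the tube $\Uu_{x_i}$ has rank $p_i>1$, we have $\tau S\not\simeq S$, hence $\Ext^1(S,S)=\D\Hom(S,\tau S)=0$. Simples from different tubes are mutually Hom- and Ext-orthogonal, and inside a single tube the $\Hom/\Ext^1$ pattern between the chosen simples is controlled by the cyclic $\tau$-action on the $p_i$ simples of $\Uu_{x_i}$, so (ordering the simples of each tube in reverse $\tau$-order) the system $\vSs$ can be arranged as an exceptional sequence to which the right-perpendicular construction applies.

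For part~(1), the usual perpendicular-category arguments show that $\Hh'=\rperp{\vSs}$ is closed under kernels, cokernels and extensions in $\Hh$, so the inclusion is exact and $\Hh'$ is abelian. Properties (NC~1) and (NC~2) are inherited, Serre duality (NC~3) descends to $\Hh'$ with an adjusted Auslander--Reiten translation $\tau'$, and (NC~5) is automatic since only the tubes $\Uu_{x_1},\dots,\Uu_{x_t}$ are affected. Condition (NC~4) is obtained by producing a line bundle inside $\Hh'$, for instance by perpendicularizing any line bundle of $\Hh$ through successive universal extensions/quotients with the finitely many objects of $\vSs$. The core local step is the identification of $\Hh'\cap\Uu_{x_i}$: the $p_i-1$ vanishing conditions carve out the uniserial subcategory of $\Uu_{x_i}$ whose unique simple object is a length-$p_i$ indecomposable $\widetilde S_i$ of $\Uu_{x_i}$. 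This is a rank-one tube, fixed by $\tau'$, while for $x\notin\{x_1,\dots,x_t\}$ the tube $\Uu_x$ is already homogeneous and lies inside $\Hh'$ unchanged. Hence every tube of $\Hh'$ has weight one, confirming (NC~6$'$).

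For part~(2), choose a line bundle $L\in\Hh'$. At any non-exceptional point speciality is trivial because $p(x)=1$. At $x=x_i$, the membership $L\in\rperp{\vSs}$ yields $\Ext^1(\tau^j S_i,L)=0$ for $1\le j\le p_i-1$; on the other hand, inside the non-weighted curve $\Hh'$ any line bundle has nonzero $\Ext^1$ with the unique simple at $x_i$, so $\Ext^1(\widetilde S_i,L)\neq 0$. Applying $\Hom(-,L)$ to the composition series of $\widetilde S_i$ in $\Hh$, the hereditary long exact sequence together with the vanishings above forces the contribution to come from the composition factor $S_i$, so $\Ext^1(S_i,L)\neq 0$. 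Consequently $S_i$ is the unique simple at $x_i$ with nonvanishing $\Ext^1$ into $L$, and $L$ is special.

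The principal obstacle I anticipate is the local identification of $\Hh'\cap\Uu_{x_i}$ as a rank-one tube with the stated simple object, which requires careful bookkeeping inside $\Uu_{x_i}$ using the $\tau$-cyclic action on its simples and the precise $\Ext^1$-pattern imposed by the perpendicular conditions. Once this local picture is in place, both the non-weightedness in~(1) and the speciality assertion in~(2) follow essentially formally from perpendicular calculus.
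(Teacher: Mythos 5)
Your proposal is correct and follows essentially the same route as the paper, which simply invokes the perpendicular calculus of Lenzing (Prop.~1 of \emph{Hereditary Noetherian categories with a tilting complex}) for part~(1) and remarks for part~(2) that the exact embedding $\Hh'\subseteq\Hh$ preserves rank, so a line bundle of $\Hh'$ is special in $\Hh$. You merely supply the details the paper delegates to that citation — the exceptional-sequence ordering, the identification of $\Hh'\cap\Uu_{x_i}$ as a homogeneous tube with simple $S_i[p_i]$, and the filtration argument showing $\Ext^1(\widetilde S_i,L)\simeq\Ext^1(S_i,L)\neq 0$ — all of which check out.
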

We remark that in general there are line bundles which are not
special, cf.~\cite[Ex.~8.5.1]{kussin:2009}. Also, even in the
non-weighted cases, the group $\Aut(\Hh)$ is in general not acting
transitively on the set of line bundles.
\begin{proof}
  This is similar to the proof of~\cite[Prop.~1]{lenzing:1997b}. For
  (2) we remark that the full exact embedding $\Hh'\subseteq\Hh$
  preserves the rank. So any line bundle in $\Hh'$ gives rise to a
  special line bundle in $\Hh$.
\end{proof}
It follows, cf.\ Proposition~\ref{prop:weight-insertion}, that each
\emph{weighted} noncommutative regular projective curve $\Hh$ over $k$
is obtained from a \emph{non-weighted} noncommutative regular
projective curve $\Hh'$ over $k$ by insertion of weights into a finite
number of points of $\Hh'$, in the sense of the $p$-cycle construction
from~\cite{lenzing:1998} (we refer also
to~\cite[Sec.~6.1]{kussin:2009}). We will always consider a pair
$(\Hh,L)$ with $L$ a special line bundle, which we consider as the
structure sheaf of $\Hh$. (Later we will require additional properties
on $L$, cf.~\ref{numb:structure-sheaf}; but in the first sections this
will not be needed.)\medskip

The quotient category $\widetilde{\Hh}=\Hh/\Hh_0$ is semisimple with
one simple object, given by the class $\widetilde{L}$ of $L$ (or any
line bundle), thus $\widetilde{\Hh}\simeq\mod(k(\Hh))$ for the skew
field $k(\Hh)=\End_{\widetilde{\Hh}}(\widetilde{L})$, which we call
the \emph{function field}. Moreover, we have
$\Hh/\Hh_0\simeq\Hh'/\Hh'_0$, thus $k(\Hh)\simeq k(\Hh')$.\medskip

It \emph{follows} from~\cite{artin:stafford:1995,lenagan:1994}, cf.\
Remark~\ref{rem:ample-pair-weighted}, that:
\begin{itemize}
\item[(NC~7)] The function field $k(\Hh)$ is of finite dimension over
  its centre $Z(k(\Hh))$, which is an algebraic function field in one
  variable over $k$.
\end{itemize}
For quotability we put this on record:
\begin{proposition}\label{prop:2-additional-properties}
  Each weighted noncommutative regular projective curve $\Hh$ over a
  field $k$ (defined by (NC~1)--(NC~5)) satisfies (NC~6) and (NC~7) as
  well.
\end{proposition}
We will later show (Theorem~\ref{thm:function-field-determines}) that,
as in the commutative case, $\Hh$, if non-weighted, is uniquely
determined by its function field $k(\Hh)$, moreover:
\begin{theorem}
  There is a bijection between the set of isomorphism classes of
  non-weighted noncommutative regular projective curves over $k$ and
  the set of isomorphism classes of central skew field extensions of
  algebraic function fields in one variable over $k$.
\end{theorem}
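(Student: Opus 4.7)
The plan is to show that the assignment $\Phi\colon \Hh\mapsto k(\Hh)$ is a well-defined bijection on isomorphism classes. Well-definedness follows because any equivalence $\Hh\to\Hh'$ descends to an equivalence of the semisimple quotients $\Hh/\Hh_0\simeq\Hh'/\Hh'_0$ and hence induces an isomorphism of the function fields. That the image lands in the set of central skew field extensions of algebraic function fields in one variable over $k$ is precisely the content of (NC~7).

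For injectivity, I would invoke the reconstruction Theorem~\ref{thm:function-field-determines} promised later in the paper: a non-weighted $\Hh$ is determined up to equivalence by $k(\Hh)$. The expected argument runs as follows. From $K=Z(k(\Hh))$ recover the unique smooth projective curve $X$ over $k$ with $k(X)=K$; non-weightedness matches the closed points of $\XX$ bijectively with those of $X$, because $p(x)=1$ everywhere prevents any ``splitting'' of closed points; the local structure at each $x$ is rigidified by the action of $\tau$ on the homogeneous tube; and $\Hh$ is then recognized as the category of coherent right modules over a canonical sheaf of maximal $\Oo_X$-orders inside $k(\Hh)$.

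For surjectivity, given a central skew field $F$ of finite dimension over an algebraic function field $K$ in one variable over $k$, let $X$ be the unique smooth projective curve over $k$ with $k(X)=K$, choose any maximal $\Oo_X$-order $\Aa\subseteq F$ (standard theory of orders over a Dedekind scheme), and define $\Hh:=\coh(\Aa)$, the category of coherent right $\Aa$-modules. Axioms (NC~1) and (NC~2) are immediate from the coherence of $\Aa$ over projective $X/k$ and the fact that $F$ is a skew field; Serre duality (NC~3) follows from Grothendieck--Serre duality on $X$ twisted by the Auslander dualizing bimodule $\omega_{\Aa}:=\ShHom_{\Oo_X}(\Aa,\omega_X)$, which induces the required autoequivalence $\tau$; $\Aa$ itself is an object of infinite length, giving (NC~4); and (NC~5) reduces to the fact that a smooth projective curve over a perfect field has infinitely many closed points. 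Localization at the generic point of $X$ gives $\Aa_\eta=F$ and thus $k(\Hh)\simeq F$. Non-weightedness follows from the structure theorem for maximal orders over complete discrete valuation rings: the local maximal order in the central simple $K_x$-algebra $F_x$ is itself a local ring (Morita-equivalent to the valuation ring of the underlying local skew field), so it has, up to isomorphism, a unique simple module.

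The main obstacle is the construction of $\tau$ in (NC~3) in the non-commutative setting: one must produce an autoequivalence together with a natural duality isomorphism, which requires the correct bimodule structure on $\omega_{\Aa}$ and a careful application of Grothendieck duality along the finite affine morphism $\Aa\to X$. A secondary subtlety is showing independence of the choice of maximal order $\Aa\subseteq F$ up to equivalence of $\Hh$; this reduces locally to the fact that any two maximal orders in $F_x$ are conjugate by an element of $F_x^\times$, hence their categories of modules are Morita equivalent, and the equivalences glue globally because they agree at the generic point.
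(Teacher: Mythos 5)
Your proposal follows essentially the same route as the paper: the forward map is $\Hh\mapsto k(\Hh)$ (well-defined via $\Hh/\Hh_0$, landing in the right target by (NC~7)), injectivity is exactly Theorem~\ref{thm:function-field-determines} (which the paper proves by realizing $\Hh$ as $\coh(\Aa)$ for a maximal order via the Reiten--van den Bergh structure theorem and then citing the global Morita-equivalence of maximal orders in Morita-equivalent central simple algebras over a normal curve), and surjectivity comes from the existence of maximal orders together with the verification that $\coh(\Aa)$ satisfies (NC~1)--(NC~5), with Serre duality supplied by the dualizing bimodule $\ShHom_{\Oo_X}(\Aa,\bom_X)$ of van den Bergh--van Geel and non-weightedness from the local structure theory of maximal orders. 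Two small corrections: since the theorem is over an arbitrary field $k$, the curve with function field $Z(k(\Hh))$ is only \emph{regular} rather than smooth; and your ``the equivalences glue globally because they agree at the generic point'' is not by itself a proof --- the paper makes this precise by invoking the global statement that any two maximal orders in the same (or Morita-equivalent) central simple algebras over a normal curve are Morita equivalent (Artin--de Jong, Prop.~1.9.1(ii); affine version in Reiner, Cor.~(21.7)), realized concretely by a bimodule such as $\Aa'\!\cdot\!\Aa$ inside the generic fibre.
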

(This was also recently shown
in~\cite[Thm.~6.7]{burban:drozd:gavran:2015}.) Thus the study of
noncommutative regular projective curves is equivalent to the study of
such skew field extensions. We call the natural
number $$s(\Hh)=[k(\Hh):Z(k(\Hh))]^{1/2}$$ the (global)
\emph{skewness} of $\Hh$. Moreover, there we have
$Z(k(\Hh))\simeq k(X)$ for a unique regular projective curve over $k$
(we refer to~\ref{numb:centre-curve}). We call $X$ the \emph{centre
  curve} of $\Hh$. \medskip

Since we are mainly interested in arithmetic effects, we will mostly
deal with this non-weighted case. We will then almost always omit the
term ``non-weighted''; instead we will use the term ``weighted'' for
the general case, which we will treat mainly in the last chapter. In
different terminology ``weighted'' is called or related to
``orbifold'' or ``stacky''. In order to stress this connection, we
keep the term ``weighted'' in our general notion, although weights are
a built-in feature of general noncommutative curves, cf.\
Proposition~\ref{prop:2-additional-properties}.\medskip

For the rest of this introduction let $\Hh$ be a noncommutative
regular projective curve \emph{over a perfect field} $k$, and
\emph{non-weighted} if not otherwise specified. For each point $x$ we
denote by $S_x$ the unique simple sheaf concentrated in $x$.\medskip

The present paper aims for being a quite detailed introduction to
noncommutative curves, working out a new approach, presenting numerous
new results and discussing many explicit examples. In our approach the
main focus is on the functor $\tau$, the Auslander-Reiten translation,
which is of course a global datum of the category $\Hh$. We will study
local properties of this functor. In order to do this, we describe the
structure of the tubes $\Uu_x$ (the full subcategories of skyscaper
sheaves concentrated in one point $x$) explicitly. The
Auslander-Reiten translation $\tau$ is acting on each $\Uu_x$, and it
serves as the Auslander-Reiten translation on $\Uu_x$, which is itself
a hereditary category with Serre duality. The tubes are the most
basic, non-trivial examples of connected uniserial length
categories. P.~Gabriel~\cite{gabriel:1973} introduced the species of
such a category. In the case of a homogeneous tube with one simple
object $S$ this species is just the $D$-$D$-bimodule $\Ext^1(S,S)$,
where $D=\End(S)$. As the starting point of our local study of $\tau$
we determine these bimodules explicitly, by using results of
Lenzing-Zuazua~\cite{lenzing:zuazua:2004} on Serre duality. This is
done in Section~\ref{sec:bimodule-of-tube}. In
Section~\ref{sec:complete-local-rings} we use this to determine the
complete local rings as certain twisted power series rings.
\begin{theorem}
  For each point $x\in\XX$ the full subcategory $\Uu_x$ of skyscraper
  sheaves concentrated in $x$ is equivalent to the category of finite
  length modules over the skew power series ring
  $\End(S_x)[[T,\tau^-]]$. Here the twist $\tau^-$, with
  $Tf=\tau^-(f)T$ for all $f\in\End(S_x)$, is given by the restriction
  of the inverse Auslander-Reiten translation $\tau^-\colon\Hh\ra\Hh$
  to the simple object $S_x$ concentrated in $x$.
\end{theorem}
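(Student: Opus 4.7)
The plan is to reconstruct $\Uu_x$ from its species in the sense of Gabriel, and then to identify the resulting twisted algebra with the skew power series ring $\End(S_x)[[T,\tau^-]]$.

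First I would fix $D=\End(S_x)$. In the non-weighted setting $\tau S_x\simeq S_x$, so after choosing such an isomorphism the restriction of $\tau^-$ to $S_x$ becomes a $k$-algebra automorphism of $D$. Serre duality then yields
\begin{equation*}
\Ext^1_\Hh(S_x,S_x)\simeq \D\Hom_\Hh(S_x,\tau S_x)\simeq \D(D),
\end{equation*}
and by Section~\ref{sec:bimodule-of-tube} (building on Lenzing--Zuazua) this space is isomorphic, as a $D$-$D$-bimodule, to the twisted bimodule $M={}_DD_{\tau^-}$, whose right action is given by $m\cdot f=m\,\tau^-(f)$.

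Next I would apply Gabriel's description of a connected uniserial length category by its species: once one knows the simple object $S_x$, its endomorphism skew field $D$, and the invertible self-extension bimodule $M$, the tube $\Uu_x$ is recovered from the tensor $D$-algebra on $M$. Since $M$ has rank one on both sides, this tensor algebra is the Ore extension $D[T,\tau^-]$ with relation $Tf=\tau^-(f)T$, and its completion at the augmentation ideal $(T)$ is the skew power series ring $D[[T,\tau^-]]$. Explicitly, the desired equivalence sends the unique indecomposable $S_x^{(n)}\in\Uu_x$ of length $n$ to the cyclic module $D[[T,\tau^-]]/(T^n)$, and the length filtration $0\subset S_x^{(1)}\subset\cdots\subset S_x^{(n)}$ corresponds to the $T$-adic filtration of this module; morphisms and extensions are controlled by $M$.

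The main obstacle is the bimodule identification in the first step: showing that the twist is exactly $\tau^-$ and not merely some other automorphism of $D$. This demands carefully tracking how Serre duality swaps the left and right $D$-actions through the $k$-linear duality $\D$, and how the chosen isomorphism $\tau S_x\simeq S_x$ converts the resulting actions into the $\tau^-$-twist; this is precisely what Section~\ref{sec:bimodule-of-tube} accomplishes. Once this identification is secured, the remainder is formal: the species reconstruction of a uniserial length category, together with the passage from $D[T,\tau^-]$ to its $T$-adic completion, produces exactly the equivalence with finite length (hence $T$-torsion) modules over $D[[T,\tau^-]]$.
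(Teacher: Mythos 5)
Your overall strategy is the paper's: first pin down the bimodule $\Ext^1(S_x,S_x)$ together with its $\tau$-twist via Serre duality (this is Section~\ref{sec:bimodule-of-tube}), then rebuild the tube from this species and identify the resulting algebra with $\End(S_x)[[T,\tau^-]]$. The first step you describe accurately, and you rightly flag the bookkeeping of left/right actions under the duality $\D$ as the delicate point there.

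There is, however, a genuine gap in the second step. The assertion that ``the tube $\Uu_x$ is recovered from the tensor $D$-algebra on $M$'' is not true without a further hypothesis. Gabriel's theory only gives that the \emph{associated graded} ring $\gr(R)$ of the complete local ring $R=\End(S_x[\infty])$ is the twisted polynomial ring $D[T,\tau^-]$, equivalently that $\wgr(R)$ is the complete tensor algebra of the species. What the theorem needs is $R\simeq\wgr(R)$, since it is $\mod_0(R)$, not a priori $\mod_0(\wgr(R))$, that is equivalent to $\Uu_x$. To get this one must split the projections $R/\mathfrak{m}^n\ra R/\mathfrak{m}\simeq D$ compatibly; the paper does this with the Wedderburn--Malcev theorem, which requires $D/k$ to be separable, i.e.\ it uses the standing assumption that $k$ is perfect. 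This is not a formality: Example~\ref{ex:inseparable} exhibits a point $x$ over an imperfect field with $e^{\ast\ast}(x)=2$ but $e^{\ast}(x)e_{\tau}(x)=1$, so there $\End(S_x[\infty])$ is \emph{not} isomorphic to $D_x[[T,\tau^-]]$ even though its associated graded ring is $D_x[T,\tau^-]$. Your argument must invoke separability at exactly this point and supply the splitting argument; as written, the species alone does not determine the tube.

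A smaller remark: the relation produced in Section~\ref{sec:bimodule-of-tube} is $\mu\cdot d=\tau(d)\cdot\mu$, and converting this into the relation $Ta=\tau^{-}(a)T$ for the uniformizer of the (completed) tensor algebra is carried out in the paper by an explicit computation with small representations of the species (Jordan matrices and the identity $(J_n)^{\ell}(aI_n)=\tau^{-\ell}(a)(J_n)^{\ell}$). It is worth making this explicit rather than treating it as formal, since this is precisely where $\tau$ turns into $\tau^{-}$.
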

The clou is that the twist is always given by the (inverse)
Auslander-Reiten translation. From this we obtain almost at once a
local-global principle of skewness, in
Section~\ref{sec:local-global-principle}. Namely, we get that the
restriction of $\tau$ to $\Uu_x$ is of finite order, denoted by
$e_{\tau}(x)$, which we call the $\tau$-multiplicity in $x$. Then:
\begin{theorem}[Local-global principle of skewness]\label{thm:main-theorem}
  For each point $x\in\XX$ we have
  $$s(\Hh)=e(x)\cdot e^{\ast}(x)\cdot e_{\tau}(x),$$ where
  $e(x)=[\Ext^1(S_x,L):\End(S_x)]$,
  $e^{\ast}(x)=[\End(S_x):Z(\End(S_x))]^{1/2}$. 
\end{theorem}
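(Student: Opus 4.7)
The plan is to translate the global invariant $s(\Hh)$ into local arithmetic data at $x$ by exploiting the preceding structure theorem. That theorem identifies the tube $\Uu_x$ with the category of finite length modules over $R_x:=F_x[[T,\tau^-]]$, where $F_x=\End(S_x)$. Every nonzero element of $R_x$ has the form $uT^n$ with $u\in R_x^{\times}$, so $R_x$ is a noncommutative discrete valuation ring with skew field of fractions $\hat{D}_x:=F_x((T,\tau^-))$. I would then identify $R_x$ with the $x$-completion of a maximal $\Oo_X$-order in $k(\Hh)$, so that $\hat{D}_x$ becomes the local component of $k(\Hh)$ at $x$. Writing $k(\Hh)\otimes_{k(X)}\widehat{k(X)}_x\cong\matring_{n_x}(\hat{D}_x)$ yields $s(\Hh)=n_x\cdot\ind(\hat{D}_x)$, and it remains to compute each factor.

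For $\ind(\hat{D}_x)=e^{\ast}(x)\cdot e_{\tau}(x)$: by definition $\tau^-$ acts on $F_x$ with order $e_{\tau}(x)$; since inner automorphisms of the central simple $Z(F_x)$-algebra $F_x$ act trivially on $Z(F_x)$, the induced action on $Z(F_x)$ has the same order $e_{\tau}(x)$. After a central adjustment of the uniformizer $T$ one obtains $Z(\hat{D}_x)=Z(F_x)^{\tau^-}((T^{e_{\tau}(x)}))$, and a direct dimension count gives $[\hat{D}_x:Z(\hat{D}_x)]=[F_x:Z(F_x)]\cdot[Z(F_x):Z(F_x)^{\tau^-}]\cdot e_{\tau}(x)=e^{\ast}(x)^2\cdot e_{\tau}(x)^2$. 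For $n_x=e(x)$: since $\Hh$ is non-weighted, $\tau S_x\simeq S_x$, and Serre duality gives an $F_x$-linear isomorphism $\Ext^1(S_x,L)\cong\D\Hom(L,S_x)$, so $e(x)=\dim_{F_x}\Hom(L,S_x)$. The stalk $L_x$ is a full-rank projective over the local maximal order $\Aa_x\simeq\matring_{n_x}(R_x)$, hence isomorphic to $\Aa_x$ itself; under the Morita equivalence $\Aa_x\sim R_x$ this corresponds to $R_x^{n_x}$, and thus $\Hom(L,S_x)\cong S_x^{n_x}\cong F_x^{n_x}$, giving $e(x)=n_x$. Combining the three factors yields $s(\Hh)=e(x)\cdot e^{\ast}(x)\cdot e_{\tau}(x)$, which is independent of $x$ as it must be.

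The main obstacle will be the precise identification in the first step of the categorically defined ring $R_x$ with the $x$-completion of a maximal order in $k(\Hh)$, and the determination of the stalk $L_x$ as the regular module over $\Aa_x$: both of these appeal to the classical theory of hereditary orders over the smooth centre curve $X$. The centre computation $Z(\hat{D}_x)$ in the inner-twist case also warrants some care, but reduces to a direct verification once the outer order of $\tau^-$ on $F_x$ is properly understood.
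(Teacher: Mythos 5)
Your argument is correct in substance, but it takes a genuinely different route from the paper's. The paper stays on the categorical/ring-theoretic side: it first proves $\widehat{R}_x\simeq\matring_{e(x)}\bigl(\End(S_x[\infty])\bigr)$ by localizing the universal extension $0\to L\to L(x)\to S_x^{e(x)}\to 0$ and lifting idempotents (Propositions \ref{prop:Rx-modulo-radical} and \ref{prop:main-ring-iso}), then transports the PI-degree along $k(\Hh)\leftrightarrow R_x\leftrightarrow\widehat{R}_x$ via Posner's and Braun's theorems to get $s(\Hh)=e(x)\cdot e^{\ast\ast}(x)$, and finally computes the PI-degree of $D_x((T,\tau^-))$ from its centre $K_x((uT^r))$ exactly as you do. You instead pass through the maximal-order picture: completing $\Aa$ at $x$, writing $k(\Hh)\otimes_K\widehat{K}_x\simeq\matring_{n_x}(\widehat{D}_x)$ and $s(\Hh)=n_x\cdot\ind(\widehat{D}_x)$, and identifying $n_x=e(x)$ by Serre duality together with $\tau S_x\simeq S_x$ --- a clean alternative to the paper's matrix-size computation, and essentially the content of \eqref{eq:A-completion}, \eqref{eq:e-times-s} and \eqref{eq:local-skewness-identification}, which the paper derives as \emph{consequences} of the theorem rather than using them to prove it. Your route buys conceptual transparency (the formula reads as the classical local index decomposition $e\cdot f$ for $\widehat{D}_x$); the cost is that the identification of the categorically defined ring $\End(S_x[\infty])\simeq D_x[[T,\tau^-]]$ of Theorem~\ref{thm:pruefer-end-skew-power} with the basic local ring of $\widehat{\Aa}_x$ --- which you rightly name as the main obstacle --- must be supplied, and the paper does this via Gabriel's uniqueness of the complete local ring realizing a given uniserial category (Propositions \ref{prop:main-ring-iso} and \ref{prop:completions-equal-laurent}). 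Three points to tighten: (a) ``$L_x\simeq\Aa_x$'' presupposes $L=\Aa_{\Aa}$ with $\Aa$ a maximal order in $k(\Hh)$ itself, so that $\Aa_{\Aa}$ has rank one; for an arbitrary special line bundle the independence of $e(x)$ from $L$ is a corollary of the theorem, not an input. (b) The claim that the outer order of $\tau^-$ on $\End(S_x)$ equals the order of its restriction to the centre needs Skolem--Noether for the nontrivial direction. (c) Separability of $x$ (automatic over a perfect field) enters through the structure theorem for the tube and is genuinely needed: Example~\ref{ex:inseparable} shows the formula fails at an inseparable point.
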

As before, the clou is the involvement of the global functor
$\tau$. It should be noted that the multiplicities $e(x)$ were
introduced in representation theory of finite dimensional algebras by
Ringel~\cite{ringel:1979}.\medskip

In Section~\ref{sec:orders} we import a theorem of Reiten-van den
Bergh~\cite{reiten:vandenbergh:2002} which states that $\Hh$ is
equivalent to $\coh(\Aa)$, the coherent $\Aa$-modules, for a sheaf
$\Aa$ of maximal $\Oo_X$-orders in a central skew field over the
function field $k(X)$ of the centre curve $X$. We will sketch the
proof. In this section we also show (based on work by Artin-de
Jong~\cite{artin:dejong:2004}) the already mentioned important fact
that each noncommutative regular projective curve is uniquely
determined by its function field. We then illustrate that many results
and relations, well-known in the theory of orders, follow almost
automatic by our explicit constructions before. In particular, we see
that the $\tau$-multiplicities are just the ramification indices of
$\Aa$, for which a similar formula is well-known in certain
situations~\cite{reiner:2003}. Thus, our approach via $\tau$ sheds
also new light on orders and ramifications.\medskip

In Section~\ref{sec:dualizing-sheaf} we review some facts on the
different and dualizing sheaves. Using a result of van den Bergh-van
Geel~\cite{vandenbergh:vangeel:1984} we see that the Auslander-Reiten
translation lies in the Picard-shift group, $$\tau\in\Pic(\Hh),$$
which is defined to be the subgroup of the automorphism (class) group
$\Aut(\Hh)$ generated by the tubular shifts $\sigma_x$ in the sense of
Meltzer~\cite{meltzer:1997} and Lenzing-de la
Pe\~{n}a~\cite{lenzing:delapena:1999} (in this context agreeing with
the Seidel-Thomas twists~\cite{seidel:thomas:2001}). Moreover, we show
that $\Pic(\Hh)$ is essentially determined by $\Pic(X)$, the Picard
group (of line bundles) over the centre curve $X$.
\begin{theorem}
  There is an exact sequence
  \begin{equation*}
    1\ra\Pic(X)\ra\Pic(\Hh)\ra\prod_{x\in\XX}\ZZ/e_{\tau}(x)\ZZ\ra 1 
 \end{equation*}
 of abelian groups. 
\end{theorem}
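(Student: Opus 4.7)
The plan is to define the right-hand map by restriction of Picard-shifts to the tubes, identify its kernel via the maximal-order description $\Hh=\coh(\Aa)$ from Section~\ref{sec:orders}, and exploit the theory of two-sided ideals of $\Aa$ together with the earlier local study of $\tau$.

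First I would construct a homomorphism $\rho\colon\Pic(\Hh)\ra\prod_{x\in\XX}\ZZ/e_\tau(x)\ZZ$ by restricting to tubes. Every $\sigma\in\Pic(\Hh)$ restricts to an autoequivalence $\sigma|_{\Uu_x}$ of $\Uu_x$. Checking on a generator $\sigma_y$ via its defining universal exact sequence, one sees that $\sigma_y|_{\Uu_x}$ is the identity whenever $y\neq x$, whereas $\sigma_x|_{\Uu_x}$ agrees with $\tau^{-1}|_{\Uu_x}$ under the skew-power-series description of $\Uu_x$ from the preceding theorem. Since $\tau|_{\Uu_x}$ has order exactly $e_\tau(x)$, the assignment $\sigma_x\mapsto 1\in\ZZ/e_\tau(x)\ZZ$ gives a well-defined homomorphism $\rho$; because only finitely many $e_\tau(x)$ exceed $1$ the product is effectively a finite sum, and $\rho$ is automatically surjective since $\sigma_x\mapsto(\delta_{xy})_y$.

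Next I would identify $\ker(\rho)$ with the image of the pullback $\Pic(X)\ra\Pic(\Hh)$. By the Reiten-van den Bergh theorem, $\Hh=\coh(\Aa)$ for a maximal $\Oo_X$-order $\Aa$, and $\sigma_x$ acts as tensoring with the inverse of the unique maximal two-sided ideal $\mathfrak{m}_x\subseteq\Aa$ lying over $x\in X$. Standard maximal-order theory~\cite{reiner:2003} gives $\mathfrak{m}_x^{e_x}=\pi_x\Aa$, where $\pi_x$ is a uniformiser of $\Oo_{X,x}$ and $e_x$ is the classical ramification index, and the identification in Section~\ref{sec:orders} of $e_x$ with $e_\tau(x)$ then shows that $\sigma_x^{e_\tau(x)}$ is the pullback of $\Oo_X(x)$. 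Hence the image of $\Pic(X)$ lies in $\ker(\rho)$, and conversely any element of $\ker(\rho)$ can be written, via the first step, as a product of powers $\sigma_x^{e_\tau(x)}$ times a factor pulled back from $\Pic(X)$; it therefore lies in the image of $\Pic(X)$.

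Finally, injectivity of $\Pic(X)\ra\Pic(\Hh)$ follows because, if $\Mm\in\Pic(X)$ pulls back to the trivial element, then $\Aa\otimes_{\Oo_X}\Mm\simeq\Aa$ as $\Aa$-bimodules; restricting to the centre recovers $\Mm\simeq\Oo_X$. The main obstacle will be the equality $\sigma_x|_{\Uu_x}=\tau^{-1}|_{\Uu_x}$ and the matching of $e_\tau(x)$ with the ramification index of $\Aa$ at $x$; both rest on combining the explicit complete-local-ring presentation of $\Uu_x$ with the order-theoretic viewpoint, which are the main technical achievements of the earlier sections.
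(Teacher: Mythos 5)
Your proposal is correct and follows essentially the same route as the paper: the map to $\prod_x\ZZ/e_\tau(x)\ZZ$ is restriction to $\Hh_0$ (using $\sigma_x|_{\Uu_x}\simeq\tau^-|_{\Uu_x}$ of order $e_\tau(x)$ and $\sigma_y|_{\Uu_x}\simeq\mathrm{id}$ for $y\neq x$), the kernel is matched with $\Pic(X)$ via the bimodule identity $\Oo(x)\otimes_\Oo\Aa\simeq\Aa(e_\tau(x)\cdot x)$ coming from $\mathfrak{m}_x^{e_\tau(x)}=\pi_x\Aa$, and injectivity is obtained by passing to the centralizer subbimodule $(\Oo(\delta)\otimes_\Oo\Aa)^{\Aa}=\Oo(\delta)$, which is exactly your ``restricting to the centre'' step. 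No gaps.
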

This has, for instance, the effect, if $\Hh$ is, say, elliptic and $X$
is of genus zero (we will see such an example over the real numbers
later), that then $\Pic(\Hh)$ is finitely generated abelian of rank
one.\medskip

In Section~\ref{sec:genus} we define Euler characteristic and genus of
a noncommutative regular projective curve. Our definition of the genus
is different and made in a more straight-forward fashion than the
definitions in~\cite{vandenbergh:vangeel:1984}
and~\cite{marubayashi:vanoystaeyen:2012}; the latter are based
on~\cite{witt:1934b}. Our proof of the Riemann-Roch theorem is then
almost trivial. We also present a formula by Artin-de
Jong~\cite{artin:dejong:2004} for the Euler characteristic, without
restriction on the characteristic of the base-field. We will, in
contrast to~\cite{artin:dejong:2004}, normalize the Euler
characteristic, so that it becomes a Morita invariant. This seems to
be more natural, particularly when studying noncommutative curves (or
orbifolds) over the real numbers.\medskip

We show several general results concerning the elliptic case (genus
one, Euler characteristic zero). In particular, the classification of
indecomposable objects is similar to Atiyah's classification of
indecomposable vector bundles for elliptic curves over an
algebraically closed field~\cite{atiyah:1957}. One major difference
here is that it is possible that a noncommutative elliptic curve may
have a non-trivial Fourier-Mukai partner. We will exhibit such
examples later over the real numbers.\medskip

In Section~\ref{sec:genus-zero} we treat quite detailed certain
aspects of the genus zero case. This is the case which is also
motivated by representation theory of finite dimensional algebras,
since this case is characterized by admitting tilting objects. One of
the most important techniques in representation theory is the
Auslander-Reiten theory: the concepts of almost split
sequences~\cite{auslander:reiten:1975} and the Auslander-Reiten
translation $\tau$ are the most prominent brands of this
theory. Almost split sequences are strongly linked to Serre duality,
see~\cite{reiten:vandenbergh:2002}. Our main focus in the present
paper is on the study of the ghost group $\Gg(\Hh)$, that is, the
subgroup of $\Aut(\Hh)$ given by those automorphisms fixing the
structure sheaf $L$ and all simple sheaves $S_x$ ($x\in\XX$). In
representation theory of finite dimensional algebras it is often
assumed that the base-field is algebraically closed. Then many
problems and questions are already determined combinatorially (say, by
working with dimension vectors instead of representations). This will
typically fail over general base-fields, and the ghost group can be
regarded as a measure for this failure. Good, explicit knowledge of
the ghosts, the members of the ghost group, combined with the
combinatorial methods, is therefore important for exploring categories
of finite dimensional modules. Several of the problems posed
in~\cite{kussin:2009} will be solved. Concerning the ghost group our
main result is the following.
\begin{theorem}
  Let $\Hh$ be of genus zero. Assume that there is a point $x$ such
  that the tubular shift $\sigma_x$ is efficient in the sense
  of~\cite{kussin:2009}. Then the ghost group $\Gg(\Hh)$ is finite,
  generated by Picard-shifts ${\sigma_x}^{-d(y)}\circ{\sigma_y}^{ }$,
  which are of order $e_{\tau}(y)$, where $y$ runs through the points
  $y\neq x$ with $e_{\tau}(y)>1$. 
\end{theorem}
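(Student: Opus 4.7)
The plan is to localize the ghost group inside the Picard-shift group $\Pic(\Hh)$ and then read off the generators from the exact sequence of the preceding theorem. I would proceed in three phases.

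The first and main phase, which I expect to be the principal obstacle, is to show $\Gg(\Hh)\subseteq\Pic(\Hh)$. In the genus zero case, autoequivalences of $\Hh$ can be analysed through their action on a tilting object, and the results of~\cite{kussin:2009} are set up precisely so that efficiency of the tubular shift $\sigma_x$ forces every autoequivalence fixing $L$ to lie in the Picard-shift group. Since a ghost by definition fixes $L$, this yields the desired inclusion. This reduction depends essentially on both the genus zero hypothesis and on the efficiency of $\sigma_x$; once it is in place, the rest of the argument becomes a piece of structure theory for $\Pic(\Hh)$.

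Second, I would construct candidate ghost generators. Let $d(y)$ denote the degree of the tubular shift $\sigma_y$ measured in the normalization in which $\sigma_x$ has degree one (available because $\sigma_x$ is efficient in the non-weighted genus zero setting). Set $\phi_y:=\sigma_x^{-d(y)}\circ\sigma_y$. Since we are in the non-weighted case, each tube $\Uu_z$ has a unique simple sheaf $S_z$ up to isomorphism and is preserved by every tubular shift, so both $\sigma_x$ and $\sigma_y$ fix each $S_z$ up to isomorphism; in particular $\phi_y(S_z)\cong S_z$ for every $z\in\XX$. By construction $\phi_y$ has degree zero, and in the genus zero structure this forces $\phi_y(L)\cong L$. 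Hence each $\phi_y$ is a ghost.

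Third, I would determine order and generation through the exact sequence $1\to\Pic(X)\to\Pic(\Hh)\to\prod_{z\in\XX}\ZZ/e_\tau(z)\ZZ\to 1$. By construction the class of $\sigma_y$ in the quotient is a generator of the $y$-component, and the power of $\sigma_x$ only contributes to the $x$-component and to $\Pic(X)$, so the image of $\phi_y$ in the quotient has order exactly $e_\tau(y)$. Consequently $\phi_y^{e_\tau(y)}$ lies in the image of $\Pic(X)\cong\ZZ$ (genus zero centre curve); being of degree zero it must be trivial, so the order of $\phi_y$ in $\Gg(\Hh)$ is exactly $e_\tau(y)$. For generation, given any $\phi\in\Gg(\Hh)\subseteq\Pic(\Hh)$, its image in the product is torsion with vanishing $x$-component (since $\phi$ fixes $S_x$ and $\sigma_x$ is efficient), and absorbing the remaining components by suitable powers of the $\phi_y$'s leaves an element of $\Pic(X)$ which, again by the degree argument, is trivial. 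Finiteness is automatic, since only finitely many points $y$ satisfy $e_\tau(y)>1$.
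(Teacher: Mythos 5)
Your outline reproduces the shape of the result, but the step you yourself identify as the principal obstacle --- the inclusion $\Gg(\Hh)\subseteq\Pic(\Hh)$ --- is asserted rather than proved, and the justification you offer is based on a false premise. It is not true that efficiency of $\sigma_x$ forces every autoequivalence fixing $L$ to lie in $\Pic(\Hh)$: the group $\Aut(\XX)$ of automorphisms fixing $L$ is in general strictly larger than the ghost group and contains elements (e.g.\ Galois-type automorphisms permuting points) that are not Picard-shifts; in the finite-field example of the paper one has $\Aut(\XX)\simeq C_4$ while $\Gg(\Hh)\simeq C_2$. The paper's proof of the inclusion is the actual heart of the theorem: using graded factoriality of $R=\Pi(L,\sigma_x)$ (this is what efficiency buys), every ghost is induced by a prime-fixing graded algebra automorphism $\beta$ of $R$; after twisting by a unit one arranges $\beta$ to be the identity on the centre $Z(R)$, whence the induced automorphism of $k(\Hh)$ is inner by Skolem--Noether, so $\beta$ is conjugation by a homogeneous \emph{normal} element $r$; and since every normal element factors into primes $\pi_y$, the ghost is a product of the $\gamma_{\pi_y}^{\ast}={\sigma_x}^{-d(y)}\circ\sigma_y$. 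This gives both the inclusion into $\Pic(\Hh)$ and the generation statement in one stroke; your proposal contains no substitute for it.

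Two further steps are also incorrect as written. First, the order computation: the image of $\phi_y={\sigma_x}^{-d(y)}\sigma_y$ in $\prod_z\ZZ/e_{\tau}(z)\ZZ$ has $x$-component $-d(y)\bmod e_{\tau}(x)$, which is generally nonzero (e.g.\ $d(y)=1$ in the non-simple bimodule examples), so the order of the image is a priori the least common multiple of $e_{\tau}(y)$ and $e_{\tau}(x)/\gcd(e_{\tau}(x),d(y))$; to get order exactly $e_{\tau}(y)$ one needs the arithmetic identity $e_{\tau}(y)\,d(y)=\frac{[k(y):k]}{[k(x):k]}\,e_{\tau}(x)$, which the paper derives from the degree formula and which you do not establish. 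Second, in your generation argument the claim that a ghost has vanishing $x$-component ``since it fixes $S_x$'' confuses fixing the object $S_x$ up to isomorphism with acting trivially on the tube $\Uu_x$ as a functor: every Picard-shift fixes every $S_z$ up to isomorphism, and the generators $\phi_y$ themselves act nontrivially on $\Uu_x$ in general. Without these repairs the argument does not close, even granting the inclusion of phase one.
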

Typically, $x$ itself will be a point with $e_{\tau}(x)>1$, so that
then there are at most two further points $y_1,\,y_2$ with
$e_{\tau}(y_i)>1$, and then
$\Gg(\Hh)\simeq C_{e_{\tau}(y_1)}\times C_{e_{\tau}(y_2)}$.\medskip 

In order to become able to treat many interesting examples of higher
genus also, we work out the whole picture of noncommutative regular
projective curves over the real numbers. This is based on work by
E.~Witt~\cite{witt:1934} on central skew field extensions of real
algebraic function fields in one variable. These skew fields
correspond to noncommutative real regular projective curves, which we
call (unless commutative) Witt curves. It seems that Witt's
function-theoretic study~\cite{witt:1934} was never fully exploited in
order to study noncommutative curves over the reals. By Witt's
theorem~\cite{witt:1934} these curves correspond to Klein surfaces,
with each of its ovals (=boundary components) divided into a finite
number of segments, labelled with alternating signs ``$+$'' and
``$-$'' (in this context we also call them Witt surfaces if at least
one ``$-$'' occurs). We prove a Riemann-Hurwitz formula for the genus
(in our definition) of Witt curves. We classify all genus zero and all
genus one Witt curves. The latter will be done in
Section~\ref{sec:elliptic} by classifying topologically the
noncommutative real elliptic curves.
\begin{theorem}
  The Klein bottle has as a Fourier-Mukai partner a Witt curve given
  by the annulus with two differently signed ovals.
\end{theorem}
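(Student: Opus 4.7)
The plan is to identify both sides explicitly as real elliptic Witt curves within the classification developed earlier in the paper, and then to promote a numerical matching of their indecomposable objects to a genuine derived equivalence.

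First, I would describe the two curves via Witt's dictionary between real function fields and signed Klein surfaces. The Klein bottle (closed, non-orientable, Euler characteristic zero) corresponds to a commutative real smooth projective curve $E$ of genus one with empty real locus, so $\Hh_1=\coh(E)$ is elliptic with $s(\Hh_1)=1$ and no exceptional points. The annulus with two oppositely signed ovals corresponds to a central quaternion skew field over the function field of a real elliptic curve $E'$ whose Klein surface is an annulus (hence $E'(\RR)$ consists of two disjoint circles) ramified precisely at the two ovals with opposite local signs; setting $\Hh_2=\coh(\Aa)$ for a maximal $\Oo_{E'}$-order $\Aa$ in this skew field gives $s(\Hh_2)=2$. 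Applying the orbifold Euler-characteristic formula of Section~\ref{sec:genus} together with Theorem~\ref{thm:main-theorem} and the Riemann--Hurwitz formula for Witt curves, I would check that the opposite-sign contributions at the two ramified points cancel, so $\chi(\Hh_2)=0$; both curves thus sit in the elliptic slot of the real Witt-curve classification.

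Second, I would construct the Fourier--Mukai equivalence using the paper's analogue of Atiyah's classification of indecomposables for noncommutative elliptic curves. This classification provides, up to the action of the Picard-shift group, a slope-preserving bijection between indecomposable bundles on $\Hh_1$ and indecomposable torsion sheaves on $\Hh_2$. To realise this bijection by an honest triangle equivalence, I would select a rank-two stable bundle $V\in\Hh_1$ whose endomorphism algebra is the quaternion field $\HH$; its twists under the Picard-shift group should match, point by point, with the simple sheaves of $\Hh_2$. A Serre-duality computation then shows that $T=L_1\oplus V$ is a tilting object in $\Hh_1$ whose endomorphism algebra agrees with that of a canonical tilting object on $\Hh_2$. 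The induced equivalence $\bDerived{\Hh_1}\simeq\bDerived{\Hh_2}$ is, by definition, a Fourier--Mukai partnership.

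The hard part will be producing the bundle $V$ with the prescribed quaternionic endomorphism algebra. Its Brauer obstruction, an element of $\Br(\RR)=\ZZ/2\ZZ$, must coincide with the Brauer class of the skew field defining $\Hh_2$. Both classes are the unique non-trivial element, so at the level of Brauer groups the match is automatic; but realising this match by an explicit stable bundle on the pointless real elliptic curve $E$ requires a concrete construction---most naturally via descent from a rank-two stable bundle on the complexification $E_{\CC}$, or via a universal extension along the non-split Brauer--Severi variety attached to $E$. I expect this descent step, together with the local verification that the sign pattern on the ovals of $\Hh_2$ is exactly the one forced by the Brauer class of $\End(V)$, to be the most delicate part of the argument.
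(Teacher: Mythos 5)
There are two genuine gaps here, and one of them is fatal to the proposed construction. The tilting step cannot work: for a noncommutative elliptic curve every indecomposable object $E$ satisfies $\Ext^1(E,E)\neq 0$ (Theorem~\ref{thm:elliptic}~(1)), and $g(\Hh)=1$ means by definition that $\Ext^1(L,L)\neq 0$, so no object of the form $T=L_1\oplus V$ can be tilting; tilting objects exist precisely in the genus-zero case. The derived equivalence has to be produced differently, namely from the tubular-family structure itself: by Theorem~\ref{thm:elliptic}~(2) the semistable bundles of slope $0$ on the Klein bottle form a tubular family parametrized by a noncommutative elliptic curve $\Hh$ with $\bDerived{\Hh}\simeq\bDerived{\KK}$, and no tilting object, no descent from the complexification, and no explicit rank-two bundle are needed. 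This is how the paper argues: $\Oo_{\KK}$ is stable of slope $0$ with $\End(\Oo_{\KK})=\RR$, hence corresponds to a simple sheaf of the partner $\Hh$ with endomorphism ring $\RR$; since the Klein bottle has empty boundary, all of \emph{its} simple sheaves have endomorphism ring $\CC$, so $\Hh\not\simeq\KK$; and Table~\ref{tab:elliptic-data} (in particular the invariants $\varepsilon$ and the number of slope orbits) leaves $\AA_{\RR,\HH}$ as the only remaining candidate among the real elliptic curves. The "delicate descent step'' you flag at the end is thus avoided entirely.

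Second, your description of the annulus with two differently signed ovals misreads Witt's dictionary. Ramification (that is, segmentation) points are the points where the sign changes \emph{within} an oval; two entire ovals carrying constant, opposite signs produce no segmentation points at all, so the associated maximal order is unramified, i.e.\ Azumaya, and $e_{\tau}(x)=1$ everywhere. The Euler characteristic is then $\chi'(\AA_{\RR,\HH})=\chi(\AA)=0$ with no correction terms; there is no "cancellation of opposite-sign contributions'', since each ramification point contributes $-\tfrac{1}{2}\bigl(1-\tfrac{1}{e}\bigr)[k(x):k]$ to \eqref{eq:general-euler-char-formula} irrespective of sign, and a curve genuinely ramified at two points of index $2$ over a genus-one base would have $\chi'=-\tfrac{1}{2}$ and be of general type, not elliptic. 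The correct local picture is the one in Table~\ref{tab:local-fields}: the real oval carries split points ($e(x)=2$, $D_x=\RR$), the quaternion oval carries inert points ($e(x)=1$, $D_x=\HH$), and the inner points have $D_x=\CC$.
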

The theorem, a non-weighted analogue of~\cite{kussin:2000}, describes
a situation where the conclusion of a theorem of
Bondal-Orlov~\cite{bondal:orlov:2001} does not hold. It also shows
that the recent result~\cite{lopez_martin:2014} does not extend to the
non-algebraically closed base-fields.\medskip

We also show that in all elliptic cases the Auslander-Reiten
translation $\tau$ has finite order, more precisely, given by $1$,
$2$, $3$, $4$ or $6$, depending on the specific example; of course,
over the reals only $1$ and $2$ occur.\medskip

We end the paper with Section~\ref{sec:weighted} about the weighted
cases. We are convinced that separated treatments of the non-weighted
and the weighted cases makes the whole theory more transparent; the
focus in the non-weighted cases lies on arithmetic properties (like
the multiplicities and $\tau$-multiplicities, ghost group, etc.), and
then the weighted case is of more combinatorial nature. Here our main
result are formulae for the (normalized) orbifold Euler
characteristic, of two types:
\begin{theorem}
  Let $\Hh$ be a weighted noncommutative regular projective curve. Let
  $X$ be the centre curve, $\Hh_{nw}$ the underlying non-weighted
  curve. For the normalized orbifold Euler characteristic
  $\chi'_{orb}(\Hh)$ we have
 \begin{eqnarray*}
    \label{eq:orbifold-euler-char-formula-intro}
    \chi'_{orb}(\Hh) & = & \chi'(X)-\frac{1}{2}
    \sum_{x}\Bigl(1-\frac{1}{p(x)e_{\tau}(x)}\Bigr)[k(x):k]\\
    \label{eq:orbifold-euler-char-formula-weights-intro}
      & = & \chi'(\Hh_{nw})-\frac{1}{2}
      \sum_{x}\frac{1}{e_{\tau}(x)}\Bigl(1-\frac{1}{p(x)}\Bigr)[k(x):k].
  \end{eqnarray*}
\end{theorem}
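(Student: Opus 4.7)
The two displayed equalities are equivalent once the non-weighted counterpart
\[
\chi'(\Hh_{nw}) = \chi(X) - \frac{1}{2}\sum_x\Bigl(1-\frac{1}{e_\tau(x)}\Bigr)[k(x):k]
\]
is known, by the elementary termwise identity
\[
1-\frac{1}{p(x)e_\tau(x)} = \Bigl(1-\frac{1}{e_\tau(x)}\Bigr) + \frac{1}{e_\tau(x)}\Bigl(1-\frac{1}{p(x)}\Bigr),
\]
valid with the convention $p(x)=1$ at non-exceptional points. So the proof reduces to establishing the non-weighted formula and correctly accounting for the weight contribution at each exceptional point.

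For the non-weighted formula, I would invoke the Artin-de Jong formula for the Euler characteristic of $\Hh_{nw}\simeq\coh(\Aa)$ recalled in Section~\ref{sec:genus}, where $\Aa$ is the hereditary order produced in Section~\ref{sec:orders}. The ramification indices of $\Aa$ at $x$ have there been identified with the $\tau$-multiplicities $e_\tau(x)$, so that the classical Riemann-Hurwitz-type contribution at $x$ takes the form $(1-1/e_\tau(x))[k(x):k]$; the factor $1/2$ is the Morita-normalization that converts the order-dependent $\chi$ into the category-invariant $\chi'$, introduced in Section~\ref{sec:genus}.

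For the weight contribution, I would use the $p$-cycle construction of Proposition~\ref{prop:reduction}: passing from $\Hh_{nw}$ to $\Hh$ by inserting a weight $p(x)$ at $x$ replaces the single simple $S_x$ of $\Hh_{nw}$ by a $\tau$-cycle of $p(x)$ simples in $\Hh$, while preserving the centre curve $X$ and the intrinsic $\tau$-action on the simple. An analysis of the local Grothendieck group of the enlarged tube via the twisted power series description $\Uu_x\simeq\mod(\End(S_x)[[T,\tau^-]])$ from Section~\ref{sec:complete-local-rings} shows that the extra orbifold contribution at $x$ in $\Hh$ over that in $\Hh_{nw}$ is precisely $\frac{1}{2}\cdot\frac{1}{e_\tau(x)}(1-1/p(x))[k(x):k]$, yielding the second displayed equality; the first then follows by substitution using the algebraic identity above. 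The main obstacle is the correct setup of the normalized noncommutative orbifold Euler characteristic so that the weight contribution enters with the expected scaling $1/e_\tau(x)$; once this is justified via the twisted local structure, the rest is bookkeeping.
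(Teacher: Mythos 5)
Your reduction of the two displayed equalities to each other via the identity $1-\frac{1}{pe}=\bigl(1-\frac{1}{e}\bigr)+\frac{1}{e}\bigl(1-\frac{1}{p}\bigr)$ together with the Artin--de Jong formula \eqref{eq:general-euler-char-formula} for $\chi'(\Hh_{nw})$ is correct and matches what the paper does for that half of the statement. The gap is in the other half: the assertion that ``an analysis of the local Grothendieck group of the enlarged tube \dots shows that the extra orbifold contribution at $x$ is precisely $\frac{1}{2}\cdot\frac{1}{e_\tau(x)}\bigl(1-\frac{1}{p(x)}\bigr)[k(x):k]$'' is exactly the content of the theorem, and no mechanism is given that would produce it. The difficulty is that $\chi'_{orb}(\Hh)$ is not a sum of local contributions in any a priori sense: it is defined through the average Euler form $\DLF{L}{L}=\sum_{j=0}^{\ovp-1}\LF{\tau^jL}{L}$, which depends on the global action of $\tau$ on the class $[L]$ and on $\ovp$, the least common multiple of \emph{all} weights. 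Inserting a weight at one point changes $\ovp$ and changes $\tau[L]$ at \emph{every} summand of the average, so the change in $\chi'_{orb}$ cannot be read off from the twisted power series description $\Uu_x\simeq\mod_0\bigl(\End(S_x)[[T,\tau^-]]\bigr)$ of the single tube at $x$.

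The ingredient you are missing is the expression of $\tau$ as a Picard-shift, namely formula \eqref{eq:tau-picard-general}: $\tau=\prod_x{\sigma_x}^{p(x)e_{\tau}(x)(\gamma_x+1)-1}$, where $\gamma=\sum_x\gamma_x\cdot x$ is the canonical divisor of the centre curve; this comes from the comparison of dualizing sheaves and the identification of the different as $\Delta=\sum_x(p(x)e_\tau(x)-1)\cdot x$. The paper's proof feeds this into the Grothendieck group to get $\tau\gge{a}=\gge{a}+\sum_x\bigl(\gamma_xp(x)e_\tau(x)+p(x)e_\tau(x)-1\bigr)e(x)\gge{s_x}$ with $\gge{a}=[L]$, then computes $\LF{\tau^j\gge{a}}{\tau^{\ovp}\gge{a}-\gge{a}}$ using Riemann--Roch and $\LF{\gge{a}}{\gge{s_x}}=s(\Hh)e^{\ast}(x)[k(x):k]$ (Lemma~\ref{lem:deg-S-general}), sums as in Lenzing's K-theoretic computation to obtain $\DLF{\gge{a}}{\gge{a}}$, and concludes using $\sum_x\gamma_x[k(x):k]=-2\chi(X)$. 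Without some version of this global computation (or an honest induction on weight insertion that tracks how $\DLF{L}{L}$ and $\ovp$ both change), your argument does not close.
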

(Here, the $k(x)$ are the residue class fields over the centre curve
$X$.) Over the real numbers this yields a formula for the Euler
characteristic of noncommutative (compact) two-dimensional orbifolds,
extending the formula in the classical case from Thurston's
book~\cite{thurston:2002}:
\begin{corollary}[General Riemann-Hurwitz formula]
  Let $\Hh$ be a noncommutative real $2$-orbifold with underlying
  compact Riemann, Klein or Witt surface $\Hh_{nw}$. Then
    $$\begin{array}{l}
        \chi'_{orb}(\Hh)\ =\ \chi'(\Hh_{nw})-
        \frac{1}{4}\cdot\sum_{x}\bigl(1-\frac{1}{p(x)}\bigr)-\frac{1}{2}\cdot\sum_{y}
        \bigl(1-\frac{1}{p(y)}\bigr)-\sum_{z}
        \bigl(1-\frac{1}{p(z)}\bigr),
  \end{array}$$
  where $x$ runs over the ramification points, $y$ over the other
  boundary points, and $z$ over the inner points.
\end{corollary}
As an application we classify all weighted noncommutative regular
projective curves $\Hh$ with $\chi'_{orb}(\Hh)=0$ over the real
numbers; up to parameters there are $39$ cases, $8$ elliptic and $31$
tubular ones. $17$ have $s(\Hh)=1$ and $22$ have $s(\Hh)=2$.
\begin{theorem}
  Each tubular curve has (fractional) Calabi-Yau dimension $n/n$,
  where $n$ is the maximum of the numbers $p(x)e_{\tau}(x)$. The
  weight-ramification vector, given by the numbers
  $p(x)e_{\tau}(x)>1$, each counted $[k(x):k]$-times, is a derived
  invariant of a tubular curve.
\end{theorem}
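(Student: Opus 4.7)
The plan is to reduce the first assertion to proving $\tau^n\simeq\mathrm{id}_{\Hh}$. Since the Serre functor of $\bDerived{\Hh}$ is $S=\tau\circ[1]$, one has $S^n=\tau^n\circ[n]$, so fractional Calabi--Yau dimension $n/n$ is precisely the condition $S^n\simeq[n]$, equivalent to $\tau^n\simeq\mathrm{id}$ as an autoequivalence of $\bDerived{\Hh}$ (and hence of $\Hh$, since $\tau$ is an autoequivalence of $\Hh$).

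The local computation inside each tube follows directly from the structure theorem for tubes established earlier. After insertion of weight $p(x)$ at the point $x$, the tube $\Uu_x$ has rank $p(x)$: the functor $\tau$ acts as a cyclic permutation of order $p(x)$ on the simples on the mouth, while $\tau^{p(x)}$ restricts on each of these simples to the Auslander--Reiten translation inherited from the non-weighted curve $\Hh_{nw}$, whose order is $e_{\tau}(x)$ by definition. Hence $\tau|_{\Uu_x}$ has order exactly $p(x)e_{\tau}(x)$. From the classification of the tubular cases one reads off that the maximum of the numbers $p(x)e_{\tau}(x)$ equals their least common multiple (this is immediate for the commutative rational types $(2,3,6),(2,4,4),(3,3,3),(2,2,2,2)$, and must be checked through the list of $31$ tubular cases), so every $p(x)e_{\tau}(x)$ divides $n$ and $\tau^n$ acts as the identity on all of $\Hh_0$.

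The next step is to trivialize $\tau^n$ on line bundles. The Picard-shift theorem stated earlier places $\tau\in\Pic(\Hh)$ in the exact sequence
\[
1\to\Pic(X)\to\Pic(\Hh)\to\prod_{x\in\XX}\ZZ/e_{\tau}(x)\ZZ\to 1,
\]
and since every $e_{\tau}(x)$ divides $n$ the class $\tau^n$ maps to zero, hence lies in $\Pic(X)$. Because $\Hh$ is tubular, $\chi'_{orb}(\Hh)=0$; via the orbifold-Euler formula this translates into the vanishing of the degree of the dualizing element, so $\tau^n\in\Pic^0(X)$. Combined with the already established triviality on all simples, standard Serre duality together with the Riemann--Roch argument of the dualizing-sheaf section forces such a Picard-shift to be the identity, which gives $\tau^n\simeq\mathrm{id}$ globally.

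For the derived invariance I would exploit that, in the tubular case, the Auslander--Reiten components of $\bDerived{\Hh}$ form a $\mathbb{P}^1(\QQ)$-indexed family of tubular families, each equivalent to $\Hh_0$ itself; this overall structure is a derived invariant. A $k$-linear derived equivalence $\bDerived{\Hh}\simeq\bDerived{\Hh'}$ therefore matches tubes to tubes of the same rank and the same $\tau$-period, identifying the multisets $\{p(x)e_{\tau}(x)\}$ on both sides, and the refinement by $[k(x):k]$ is read off from the $k$-dimensions of the endomorphism algebras $\End(S_x)$, which are preserved. The main obstacle will be the line-bundle step: the tube orders of $\tau$ drop out cleanly from the local theory, but trivializing $\tau^n$ on $\Pic(\Hh)$ requires extracting a precise degree-zero argument from $\chi'_{orb}(\Hh)=0$ and pushing it through the Picard-shift sequence to kill the $\Pic^0(X)$-part, which is the only genuinely global input.
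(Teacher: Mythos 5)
Your treatment of the Calabi--Yau statement is essentially sound and close to the paper's own route: the order of $\tau$ on the tube at $x$ is $p(x)e_{\tau}(x)$ (Proposition~\ref{prop:tau-multiplicity-weighted}), for the four admissible weight-ramification vectors the maximum equals the least common multiple, and $\tau^n$ is then a Picard-shift that is trivial on $\Hh_0$ and of degree zero. Two small corrections: in the weighted setting the relevant exact sequence is~\eqref{eq:Picard-sequence-weighted} with quotient $\prod_x\ZZ/p(x)e_{\tau}(x)\ZZ$, not $\prod_x\ZZ/e_{\tau}(x)\ZZ$; and the ``main obstacle'' you flag at the end is not one, because tubularity forces the centre curve $X$ to have genus zero (by~\eqref{eq:orbifold-euler-char-formula}, since $\ovp>1$ and $\chi'_{orb}=0$ imply $\chi(X)>0$), so $\Pic_0(X)=0$ and Lemma~\ref{lem:degree-zero-picard} finishes the argument with no further work.

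The derived-invariance argument, however, has a genuine gap. You assert that the slope-indexed tubular families of $\bDerived{\Hh}$ are ``each equivalent to $\Hh_0$ itself'' and that a derived equivalence therefore matches the tubes of $\Hh_0$ with those of $\Hh'_0$. This is false: a derived equivalence may carry $\Hh_0$ onto a tubular family $\bt_{\alpha}$ of finite slope in $\bDerived{\Hh'}$, which is the torsion part of a \emph{different} tubular curve $\Hh'\spitz{\alpha}$ in the same derived class, and these torsion parts are in general not equivalent to one another. Example~\ref{ex:a-b-c}~(c) is exactly such a case: the weighted real projective plane and the weighted disc $\DD_{2,2}$ are derived equivalent, yet one torsion part contains $2$ tubes on which $\tau$ has order $2$ and the other contains $4$; even the raw multiset of tube $\tau$-periods is therefore \emph{not} a derived invariant, and the ramification vector itself is not either (as the paper states). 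Your mechanism would only identify the data of $\Hh_0$ with that of $(\Hh'\spitz{\alpha})_0$, which is circular with respect to what is being proved. The paper's argument avoids this entirely: the fractional Calabi--Yau dimension $n/n$ is a derived invariant by definition, hence so is $n=\max_x p(x)e_{\tau}(x)$; and since the four possible weight-ramification vectors $(2,3,6)$, $(2,4,4)$, $(3,3,3)$, $(2,2,2,2)$ have pairwise distinct maxima, the maximum alone already determines the whole vector. You need this (or some substitute for it) to close the argument; the tube-matching approach as written does not.
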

We start with Section~\ref{sec:localization} by showing several basic
facts about noncommutative curves (partially extending results
from~\cite{reiten:vandenbergh:2002}) like the existence of homogeneous
coordinate rings (so that these curves are in particular
noncommutative projective schemes in the sense of
Artin-Zhang~\cite{artin:zhang:1994}). We explain two kinds of
localizations, one ring-theoretic (Ore-Asano), the other categorical
(Serre-Grothendieck-Gabriel), and show that both yield the same. They
result in the non-complete rings $R_x$, associated with each point
$x\in\XX$. These rings are noncommutative Dedekind domains with a
unique non-zero prime ideal, but in general not local. Their
completions are (Morita-equivalent to) local rings, which we are going
to describe as stated above.\medskip

We emphasize that many of our main results are in full generality,
without perfectness or separability assumption. We also elaborate in
detail an enlightning inseparable Example~\ref{ex:inseparable}.

\section{Basic concepts}
Let $\Hh$
be a weighted noncommutative regular projective curve over the field
$k$.
\begin{numb}[Rank function]
  Let $\Hh\ra\widetilde{\Hh}=\Hh/\Hh_0=\mod(k(\Hh))$,
  $X\mapsto\widetilde{X}$ be the quotient functor. The
  $k(\Hh)$-dimension on $\Hh/\Hh_0$ induces the \emph{rank function}
  $\rk\colon\Knull(\Hh)\ra\ZZ$ of $\Hh$. For an indecomposable object
  $E\in\Hh$ we have $\rk(E)=0$ if $E\in\Hh_0$ and $\rk(E)>0$ if
  $E\in\Hh_+$. In particular, an object $E\in\Hh$ has rank $0$ if and
  only if it is of finite length. An indecomposable object $L$ with
  $\rk(L)=1$ is called a \emph{line bundle}. The function field
  $k(\Hh)$ is isomorphic to the endomorphism ring of $\widetilde{L}$
  in $\widetilde{\Hh}$.
\end{numb}
\begin{numb}[Almost split sequences]
  We recall that a short exact sequence
\begin{equation}
  \label{eq:ass}
  \mu\colon\ 0\ra
  A\stackrel{u}\ra B\stackrel{v}\ra C\ra 0
\end{equation}
in $\Hh$ is called \emph{almost split}, \cite{auslander:reiten:1975},
if it does not split, if $A$ and $C$ are indecomposable, and if every
morphism $X\ra C$, which is not a split epimorphism, factors through
$v$. (Then also the dual factorization property holds.) Then $A$ is,
up to isomorphism, uniquely determined by $C$, and conversely. For
every indecomposable $C$ (resp.\ $A$) there is an almost split
sequence~\eqref{eq:ass} ending (starting) in $C$ (in $A$); then $\tau
C=A$ and $\tau^- A=C$, which define mutually quasiinverse
autoequivalences $\tau,\,\tau^-\colon\Hh\ra\Hh$, which appear in the
Serre duality. For categories of coherent sheaves $\tau$ is also known
as Serre functor; we will reserve this term for the derived category
of $\Hh$.

The almost split sequences are fundamental in the definition of the
\emph{Auslander-Reiten quiver} of $\Hh$: its vertices are the
isomorphism classes of indecomposable objects in $\Hh$, and the arrows
between classes of indecomposables are given by the so-called
\emph{irreducible} morphisms, which are the components of the maps
which occur in the corresponding almost split sequences.
\end{numb}
\begin{numb}[Homogeneous tubes]\label{nr:tubes}
  Let $x\in\XX$ and $\Uu=\Uu_x$ be the corresponding connected
  uniserial category in $\Hh_0$. Assuming $p(x)=1$, there is up to
  isomorphism precisely one simple object $S=S_x$ in $\Uu$. Such
  categories are also called \emph{homogeneous tubes}. For each
  $n\geq 1$ we denote by $S[n]$ the (up to isomorphism) unique
  indecomposable object in $\Uu$ of length $n$. (We additionally set
  $S[0]:=0$.) Thus we have $\Uu=\add(\{S[n]\mid n\geq 1\})$. We have
  injections $\iota_n\colon S[n]\ra S[n+1]$ and surjections
  $\pi_n\colon S[n+1]\ra S[n]$. The Auslander-Reiten translation
  satisfies $\tau S[n]\simeq S[n]$. We will usually identify them,
  $\tau S[n]=S[n]$. We then have almost split sequences
  $\mu_1\colon 0\ra S\stackrel{\iota_1}\lra S[2]\stackrel{\pi_1}\lra
  S\ra 0$, and for $n\geq 2$:
  $$\mu_n\colon 0\ra S[n]\stackrel{(\pi_{n-1},\iota_n)^t}\lra
  S[n-1]\oplus S[n+1]\stackrel{(\iota_{n-1},\pi_n)}\lra S[n]\ra 0.$$
  The $\iota_n$ and $\pi_n$ are the irreducible maps in $\Uu$.
\end{numb}
\begin{numb}[$\tau$-multiplicity]
  Let $\Uu_x$ be a homogeneous tube with simple object $S_x$. The
  Auslander-Reiten translation $\tau$ restricts to an autoequivalence
  of $\Uu_x$. Up to isomorphism it fixes all indecomposable objects
  $S_x[n]$. If we consider a skeleton of $\ind(\Uu_x)$, we can assume
  that equality $\tau S_x[n]=S_x[n]$ holds for all $n\geq 1$. The
  action on morphisms induces, in particular, an automorphism of
  $D_x=\End(S_x)$, that is, an element $\tau$ in $\Aut(D_x/k)$. We
  define $\Gal(D_x/k)=\Aut(D_x/k)/\Inn(D_x/k)$, the factor group
  modulo inner automorphisms. By the theorem of
  Skolem-Noether~\cite[12.6]{pierce:1982}, restriction to the centre
  $Z(D_x)$ yields an injective homomorphism
  $\Gal(D_x/k)\ra\Gal(Z(D_x)/k)$. We call
  \begin{equation}
    \label{eq:def-tau-multi}
    e_{\tau}(x)=\ord_{\Gal(D_x/k)}(\tau),
  \end{equation}
  the order of (the class of) $\tau$ in $\Gal(D_x/k)$, the
  $\tau$-\emph{multiplicity} of $x$.
\end{numb}
\begin{numb}[Picard-shifts]
  Let $x\in\XX$ be a point and $\Uu=\Uu_x$ be a homogeneous tube in
  $\Hh$. The indecomposable objects in $\Uu$ form the Auslander-Reiten
  component containing the simple object $S=S_x$ with support
  $\{x\}$. Then $\End(S)$ is a division algebra over $k$, and
  $\Ext^1(S,S)$ is one-dimensional as $\End(S)$-vector space. Thus $S$
  is, in the terminology of~\cite{seidel:thomas:2001}, a spherical
  object. (In~\cite{seidel:thomas:2001} only the case $\End(S)=k$ is
  considered.) For every object $E$ in $\Hh$, which has no
  indecomposable summand in $\Uu$, one has the $S$-universal extension
  $0\ra E\ra E(x)\ra E_x\ra 0$ of $E$ with
  $E_x=\Ext^1(S,E)\otimes_{\End(S)}S$. The assignment $E\mapsto E(x)$
  induces an autoequivalence $\sigma_x\colon\Hh\ra\Hh$, called the
  \emph{tubular} (or \emph{Picard-}) \emph{shift} associated with $x$,
  coming with a natural transformation
  $1_{\Hh}\stackrel{x}\ra\sigma_x$. We refer
  to~\cite{meltzer:1997,lenzing:delapena:1999,kussin:2009}. This
  coincides with the notion of a \emph{Seidel-Thomas twist},
  \cite{seidel:thomas:2001}. We will later see
  (Lemma~\ref{lem:isomorphic-point-functors}) that such a functor
  $\sigma_x$ is given as the tensor product with a certain
  bimodule. For $n\in\ZZ$ we write $E(nx)={\sigma_x}^n(E)$. The
  assignment $E\mapsto E_x$ is also
  functorial. Following~\cite[4.2]{lenzing:1998} we call $E_x$ the
  \emph{fibre} of $E$; if $f\in\Hom(E,E')$, we call
  $f_x\in\Hom(E_x,E'_x)$ the corresponding \emph{fibre map}.\medskip

  Let $x\neq y$ be two points. It is well-known that
  $\sigma_x\circ\sigma_y\simeq\sigma_y\circ\sigma_x$ holds, and that
  the restriction of $\sigma_x$ to $\Uu_y$ is isomorphic to the
  identity functor on $\Uu_y$. As a natural but non-trivial result we
  will show in Corollary~\ref{cor:tau-minus=sigma-x}, that over a
  perfect field $\sigma_x$ acts functorially on $\Uu_x$ like the
  inverse Auslander-Reiten translation $\tau^-$. We will also point
  out in Example~\ref{ex:inseparable} that this is not true in general
  over non-perfect fields. We denote by $\Pic(\Hh)$ the
  \emph{Picard-shift group}, that is, the subgroup of the automorphism
  (class) group $\Aut(\Hh)$ of $\Hh$ generated by all tubular shifts
  $\sigma_x$ ($x\in\XX$); it is an abelian group. We call a vector
  bundle $F$ a Picard-shift of a vector bundle $E$ if there is
  $\sigma\in\Pic(\Hh)$ such that $F\simeq\sigma(E)$. In particular,
  the group $\Pic(\Hh)$ acts on the set of isomorphism classes of line
  bundles on $\Hh$. If $\Hh=\coh(X)$ with $X$ commutative, then
  $\Pic(\Hh)$ is isomorphic to the \emph{Picard group} $\Pic(X)$,
  given by the set if isomorphism classes of line bundles with the
  tensor product. In general the action of $\Pic(\Hh)$ on line bundles
  is neither transitive nor faithful. We will see that in case $\Hh$
  is multiplicity free (all $e(x)=1$), the action is transitive. The
  question of faithfulness is strongly linked to the study of the
  \emph{ghost group} $\Gg(\Hh)$, the subgroup of $\Aut(\Hh)$, given by
  those $\sigma$ fixing the structure sheaf $L$ and all simple sheaves
  $S_x$. We also consider the automorphism group $\Aut(\XX)$, the
  subgroup of $\Aut(\Hh)$ given by those $\sigma$ fixing the structure
  sheaf $L$. All three, $\Pic(\Hh)$, $\Gg(\Hh)$, $\Aut(\XX)$, are
  normal subgroups of $\Aut(\Hh)$.\medskip

For the generalization of Picard-shifts with respect to
non-homogeneous tubes we refer to~\cite{lenzing:delapena:1999}
and~\cite{kussin:2009}. 
\end{numb}
\begin{numb}[Multiplicity and Comultiplicity]
  Let $L$ be a special line bundle so that $\Ext^1(S_x,L)\neq 0$ holds
  for all $x\in\XX$. The dimensions
  \begin{equation}
    \label{eq:def-e(x)}
    e(x)=[\Ext^1(S_x,L)\colon\End(S_x)]
  \end{equation}
  are called \emph{multiplicities}, \cite{ringel:1979},
  \cite{lenzing:delapena:1999}, \cite{kussin:2009}. In particular, we
  have the $S_x$-universal extension
  \begin{equation}
    \label{eq:S-universal-L}
    0\ra L\stackrel{\pi_x}\ra L(x)\ra{S_x}^{e(x)}\ra 0
  \end{equation}
  of $L$. The number
  \begin{equation}
    \label{eq:comultiplicity}
    e^{\ast}(x)=[\End(S_x):Z(\End(S_x))]^{1/2}
  \end{equation}
  we called \emph{comultiplicities} in~\cite{kussin:2009}, since (in
  case of genus zero) for almost all $x\in\XX$ the product of $e(x)$
  and $e^{\ast}(x)$ coincides with the skewness $s(\Hh)$,
  \cite[Cor.~2.3.5]{kussin:2009}. It was left open
  in~\cite{kussin:2009} whether $e(x)\cdot e^{\ast}(x)$ is always a
  divisor of $s(\Hh)$, not to speak about what the description of the
  cofactor could be. To answer this question, and without being
  restricted to the case of genus zero, was one of the main
  motivations for this article.

  We remark that the comultiplicity, like the skewness, can be
  expressed in terms of polynomial identity (PI) degree.
\end{numb}
\begin{numb}[Orbit algebras]
  In (noncommutative) algebraic geometry orbit algebras are important
  tools for constructing homogeneous coordinate rings. We refer to the
  survey~\cite{stafford:vandenbergh:2001}. If $E$ is an object in
  $\Hh$ and $\sigma\colon\Hh\ra\Hh$ an endofunctor, then we denote by
  $\Pi(E,\sigma)$ the positively $\ZZ$-graded \emph{orbit algebra}
  $\bigoplus_{n\geq 0}\Hom(E,\sigma^n E)$. The multiplication is
  defined on homogeneous elements $f\colon E\ra\sigma^mE$,
  $g\colon E\ra\sigma^n E$ by the rule
  $g\ast f=\sigma^m(g)\circ f\colon E\ra\sigma^{m+n}E$.

  The special cases we are interested in are $\Pi(L,\sigma_x)$ with
  $L$ the structure sheaf and $\sigma_x\colon\Hh\ra\Hh$ a
  Picard-shift. Then the homogeneous element $\pi_x$
  from~\eqref{eq:S-universal-L} is central,
  \cite[Lem.~1.7.1]{kussin:2009}. We denote the (homogeneous) ideal of
  $\Pi(L,\sigma_x)$ generated by $\pi_x$ by $P_x$. We will later see
  that $P_x$ is a homogeneous prime ideal. Whereas
  in~\cite{kussin:2009}, \cite{kussin:2008} we fixed one
  autoequivalence $\sigma$ (with additional good properties) and one
  coordinate algebra $\Pi(L,\sigma)$ for $\Hh$, we will in this paper
  for every point $x$ make use of its ``own'' orbit algebra
  $\Pi(L,\sigma_x)$ in order to investigate the numbers $e(x)$,
  $e^{\ast}(x)$ and $e_{\tau}(x)$.
\end{numb}
\begin{numb}[PI-degree]
  We will make use (in Section~\ref{sec:local-global-principle}) of
  some ring-theoretic tools like the polynomial identity (PI)
  degree. We will never use the original definition. Instead in our
  special situation we could take the following two properties~(i)
  and~(ii) as an equivalent definition for the PI-degree. If $R$ is a
  ring (always assumed to be associative and with identity) we denote
  by $Z(R)$ its centre.
  \begin{enumerate}
  \item[(i)] If $D$ is a skew field which is of finite dimension over
    its centre, then the PI-degree of $D$ equals the square root of
    this dimension, \cite[Thm.~1.5.23]{rowen:1980}. Moreover, the
    PI-degree of the matrix ring $\matring_n(D)$ is $n$ times the
    PI-degree of $D$, \cite[1.5.16]{rowen:1980}.
  \item[(ii)] If $R$ is a noetherian domain, so that its quotient
    division ring $Q(R)$ is of finite dimension over its centre, then
    Posner's theorem~\cite[Thm.~7]{amitsur:1967} tells us that the
    PI-degree of $R$ equals the PI-degree of $Q(R)$.
  \end{enumerate}
\end{numb}
By $\mod(R)$ we denote the category of finitely presented (right)
$R$-modules, by $\mod_0(R)$ the full subcategory of the modules of
finite length. Usually, finite length is equivalent to finite
dimension over the base-field $k$.\medskip

We conclude the section with a motivating simple but non-trivial
example, illustrating some of our results.
\begin{example}
  Let $R=\CC[X;Y,\sigma]$ be the twisted graded polynomial algebra
  over $k=\RR$.  Here the variables $X$ and $Y$ are of degree one, $X$
  central, and $Yz=\sigma(z)Y$ for all $z\in\CC$, where
  $\sigma(z)=\bar{z}$ is the complex conjugation. The quotient
  category $\Hh=\qgr(R)=\mod^{\ZZ}(R)/\mod^{\ZZ}_0(R)$ is a
  noncommutative regular projective curve. Its function field is
  $k(\Hh)=\CC(t,\sigma)$, which is the quotient division ring (of
  degree zero fractions) of $R$. The centre of $R$ is $\RR[X,Y^2]$,
  the centre of $k(\Hh)$ is $\RR(t^2)$. For the skewness we obtain
  $s(\Hh)=2$. In this example we have an explicit description for all
  the points of $\XX$. These correspond bijectively to the homogeneous
  prime elements in $R$ (up to scalars). With the exception of $Y$,
  all prime elements belong to the centre; they are listed in
  Table~\ref{tab:didactic-examp}. For a point $x$ we write $D_x$ for
  the endomorphism ring of the corresponding simple object $S_x$. Then
  $e^{\ast}(x)=[D_x:Z(D_x)]^{1/2}$. The number $e(x)$ coincides with
  the number of irreducible factors of the corresponding prime
  element. It is shown in~\cite[Cor.~5.4.4]{kussin:2009} (and will be
  again shown in this paper in a broader context) that the
  Auslander-Reiten translation $\tau$ is given as the product
  $\tau={\sigma_x}^{-1}{\sigma_y}^{-1}$ of two (inverse)
  Picard-shifts, where (from now on) $x$ and $y$ are the points
  corresponding to the primes $X$ and $Y$, respectively. It follows
  readily that $e_{\tau}(p)=1$ for all points $p\neq x,\,y$.
  Moreover, the ghost group $\Gg(\Hh)$ is shown to be of order $2$,
  generated by $\gamma={\sigma_x}{\sigma_y}^{-1}$.
  \begin{table}[h]
    \centering
    $$\begin{array}{l|cccc|c}
        \text{prime/point}\ x & D_x & e(x) & e^{\ast}(x) &
                                                           e_{\tau}(x)
        & D_x [[T,\tau^-]]\\ 
        \hline
        X,\,Y & \CC & 1 & 1 & 2 & \CC[[T,\sigma]]\\
        (Y-\sqrt{\alpha}X)(Y+\sqrt{\alpha}X),\
        \alpha>0 & \RR & 2 & 1 & 1 & \RR[[T]] \\ 
        Y^2-\alpha X^2,\  \alpha<0 & \HH& 1 & 2 & 1 & \HH[[T]] \\
        (Y^2-zX^2)(Y^2-\bar{z}X^2),\ z\in\CC\setminus\RR & \CC & 2 & 1
                                                         & 1 & \CC[[T]] 
     \end{array}$$
    \caption{$k(\Hh)=\CC(t,\sigma)$}
    \label{tab:didactic-examp}
  \end{table}
  It can be seen directly (though
  not trivially, cf.\ \cite[Cor.~5.4.3]{kussin:2009}) that
  $e_{\tau}(x)=2=e_{\tau}(y)$. Of course, having already computed
  $e(x)$ and $e^{\ast}(x)$ (and the same for $y$) it will generally
  follow from Theorem~\ref{thm:main-theorem}.\medskip

  This example is a special case of the treatment of genus zero curves
  in Section~\ref{sec:genus-zero}, and it is also a special case of
  what we call a Witt curve, treated in
  Section~\ref{sec:Witt-curves}. These are obtained by Klein surfaces
  (=certain quotients of compact Riemann surfaces) together with a
  so-called $\pm$-configuration. These were studied by Witt in his
  seminal paper~\cite{witt:1934}. We will show in
  Corollary~\ref{cor:e-tau=e-ramif} that the $\tau$-multiplicities
  coincide with the ramification indices of the function (skew)
  field. That in the present example $x$ and $y$ are the only
  ramification points (having index $2$) then also follows from Witt's
  work.\medskip

  We sketch another way for computing $e_{\tau}(x)$: we can localize
  $R$ with respect to the homogenous prime ideal $P=RX$, considering
  only fractions of degree zero, denoting this ring by $R_x$. This is
  (in this special case!) a local ring whose maximal (left and right)
  ideal $J$ is generated by $\pi=XY^{-1}$, which satisfies
  $\pi z=\bar{z}\pi$ for all $z\in\CC$. The $J$-adic completion then
  is easily seen to be
  $\widehat{R}_x=\CC[[\pi,\sigma]]\simeq\CC[[T,\sigma]]$ (as indicated
  in the table). The centre is given by $\RR[[\pi^2]]=\RR[[T^2]]$. In
  the language of valuations, $\pi$ is a uniformizer for the
  completion, $\pi^2$ a uniformizer of the centre. We see readily that
  the ramification index $e_{\ramif}(x)$ of $x$, defined by
  $\widehat{R}_x \pi^2=\widehat{J}^{e_{\ramif}(x)}$, equals $2$.
  Since the tube to $x$ is given by $\Uu_x=\mod_0(\widehat{R}_x)$ it
  is not difficult to see (cf.\ Theorem~\ref{thm:order-sigma-ramif})
  that the twist $\sigma$ induces the Picard-shift
  ${\sigma_x}_{|\Uu_x}$, restricted to the tube $\Uu_x$. Therefore
  $\tau$ acts like ${\sigma_x}^{-1}$ as functor on $\Uu_x$ with order
  $2$, that is, $e_{\tau}(x)=2$. (A similar argument holds for $y$,
  considering $\pi^{-1}=YX^{-1}$.) We will use the notion of the
  \emph{different} in Sec.~\ref{sec:dualizing-sheaf}, defined as the
  Weil divisor $\Delta=\sum_p (e_{\ramif}(p)-1)\cdot p$. In the
  present example it follows from the preceding computations that
  $\Delta=1x+1y$.
\end{example}

\section{Homogeneous coordinate rings and localizations}
\label{sec:localization}
We assume that $(\Hh,L)$ is a (non-weighted) noncommutative regular
projective curve over the field $k$. In this section we show that, via
the Serre construction, $\Hh$ is a noncommutative noetherian
projective scheme in the sense of Artin-Zhang~\cite{artin:zhang:1994},
and accordingly $\XX$ a projective spectrum. Moreover, via
localization we study rings locally at a point $x\in\XX$.
\begin{lemma}
  Each vector bundle has a line bundle filtration.
\end{lemma}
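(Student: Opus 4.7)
The plan is to induct on the rank. If $\rk(E)=0$, then $E\in\Hh_0\cap\Hh_+=0$, so the empty filtration works. If $\rk(E)=1$, then $E$ is torsion-free (being in $\Hh_+$), and rank-additivity on direct summands shows that any decomposition $E=A\oplus B$ forces one summand to have rank zero, hence to be a finite-length subobject of $E$; but $E\in\Hh_+$, so that summand is zero. Thus $E$ is indecomposable of rank one, i.e.\ already a line bundle.

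For the inductive step with $\rk(E)=r\geq 2$, I would produce a line bundle subobject $L''\subseteq E$ whose quotient $E/L''$ still lies in $\Hh_+$; induction then provides a line bundle filtration of $E/L''$, which lifts to one of $E$. To find $L''$, first fix any line bundle $L_0$ in $\Hh$ (e.g.\ the special line bundle provided by Proposition~\ref{prop:reduction}). In the localization $\widetilde{\Hh}=\Hh/\Hh_0\simeq\mod(k(\Hh))$ the object $\widetilde{E}$ has $k(\Hh)$-dimension $r\geq 1$, so there is a nonzero morphism $\widetilde{L_0}\to\widetilde{E}$. By the standard description of morphisms in a Serre quotient, and because $E\in\Hh_+$ admits no nonzero subobject in $\Hh_0$, such a morphism is represented by an honest morphism $\phi\colon L_0'\to E$ in $\Hh$ with $L_0'\subseteq L_0$ and $L_0/L_0'\in\Hh_0$. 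Then $L_0'$ is a torsion-free object of rank one, hence (by the rank-one argument above) a line bundle. Any such nonzero $\phi$ is automatically injective: its kernel is a subobject of the torsion-free object $L_0'$, so either of rank zero (hence zero) or of rank one (forcing the image to have rank zero and thus to be a finite-length subobject of $E$, contradicting $E\in\Hh_+$).

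Having produced at least one line bundle subobject of $E$, I would then \emph{saturate} it: among all rank-one subobjects of $E$ that contain $\phi(L_0')$, pick a maximal one $L''$, which exists because $E$ is noetherian. The same rank argument shows $L''$ is a line bundle. Finally, $E/L''\in\Hh_+$: otherwise it would contain a simple subobject $T$, whose preimage $M\subseteq E$ would satisfy $\rk(M)=\rk(L'')=1$ and $M\supsetneq L''$, contradicting maximality. Since $\rk(E/L'')=r-1$, the induction hypothesis concludes the argument. The only delicate point is verifying that the morphism $\widetilde{L_0}\to\widetilde{E}$ lifts to an honest morphism in $\Hh$; this is where $\Hh_+$'s defining property (no simple subobjects) is essential, as it forces the ``denominator on the target side'' in the roof representation to be trivial.
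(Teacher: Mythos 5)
Your proof is correct and is essentially the standard argument behind the citation the paper gives for this lemma (the paper itself only refers to \cite[Prop.~1.6]{lenzing:reiten:2006} without reproducing a proof): produce a rank-one subobject by lifting a nonzero map $\widetilde{L_0}\to\widetilde{E}$ from the quotient category $\Hh/\Hh_0$, saturate it using noetherianness so that the quotient again lies in $\Hh_+$, and induct on the rank. All the delicate points — that the ``target denominator'' in the roof is trivial because $E$ has no finite-length subobjects, that rank-one objects of $\Hh_+$ are automatically indecomposable and hence line bundles, and that maximality of $L''$ forces $E/L''\in\Hh_+$ — are handled correctly.
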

\begin{proof}
  We refer to~\cite[Prop.~1.6]{lenzing:reiten:2006}.
\end{proof}
\begin{lemma}\label{lem:cokernel-pi-n}
  Let $0\ra L\stackrel{\pi}\lra L(x)\ra S^e\ra 0$ be the $S$-universal
  sequence of $L$ with $S=S_x$ and $e=e(x)$. For $n\geq 1$ we have the
  exact sequence $$0\ra L\stackrel{\pi^n}\lra L(nx)\ra S[n]^e\ra 0.$$
\end{lemma}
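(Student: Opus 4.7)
The plan is to induct on $n$; the case $n = 1$ is the given $S$-universal extension. Interpret $\pi^n \in \Hom(L, L(nx))$ as the $n$-fold product in the orbit algebra $\Pi(L, \sigma_x)$, i.e.\ the composition $\sigma_x^{n-1}(\pi) \circ \cdots \circ \sigma_x(\pi) \circ \pi$, and set $D = \End(S_x)$. Applying the exact autoequivalence $\sigma_x^{n-1}$ to the base sequence, and noting that $\sigma_x$ restricts to an autoequivalence of the homogeneous tube $\Uu_x$ (so $\sigma_x^{n-1}(S^e) \cong S^e$, because the indecomposables of $\Uu_x$ are unique of each length), yields
\[
0 \to L((n-1)x) \xrightarrow{\sigma_x^{n-1}(\pi)} L(nx) \to S^e \to 0.
\]
Placed beneath the inductive hypothesis in the commutative diagram with composite left column $\pi^n = \sigma_x^{n-1}(\pi) \circ \pi^{n-1}$, the snake lemma presents $C := \Coker(\pi^n)$ as an object of $\Uu_x$ fitting in the short exact sequence $0 \to S[n-1]^e \to C \to S^e \to 0$, so $\ell(C) = ne$ and $C = \bigoplus_i S[k_i]^{m_i}$ by Krull--Schmidt.

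The critical computation is $\dim_D \Hom(S, C) = e$. Applying $\Hom(S, -)$ to $0 \to L \to L(nx) \to C \to 0$ and using that line bundles are torsion-free, this reduces to showing that the composite $\Ext^1(S, L) \to \Ext^1(S, L(nx))$ is zero. To this end, I would verify inductively that each iterate $\sigma_x^i(\pi) \colon L(ix) \to L((i+1)x)$ is again the $S$-universal extension of $L(ix)$ --- a transport of structure along the equivalence $\sigma_x^i$, with $\sigma_x^i(S) \cong S$ preserving the role of $S$. This $S$-universal property makes each connecting map $\Hom(S, S^e) \to \Ext^1(S, L(ix))$ an isomorphism, so every individual map $\Ext^1(S, L(ix)) \to \Ext^1(S, L((i+1)x))$ vanishes and their composite is zero. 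Consequently $\Hom(S, C) \cong \Ext^1(S, L) \cong D^e$, whence $\sum_i m_i = e$.

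Since $C / S[n-1]^e \cong S^e$ is semisimple we have $\Rad(C) \subseteq S[n-1]^e$, and the lengths coincide: $\ell(\Rad(C)) = ne - e = \ell(S[n-1]^e)$, so $\Rad(C) = S[n-1]^e \cong \bigoplus_i S[k_i - 1]^{m_i}$. Krull--Schmidt now forces $k_i = n$ for every $i$, giving $C \cong S[n]^e$. The principal obstacle is exactly this final identification: extensions of $S^e$ by $S[n-1]^e$ form an $M_e(D)$-parameter family, and pinning down our specific cokernel as $S[n]^e$ (equivalently, that the classifying matrix is invertible) is what the dimension count $\dim_D \Hom(S, C) = e$ achieves through the socle-radical analysis.
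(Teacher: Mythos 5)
Your proof is correct, and its skeleton (induction on $n$, factoring $\pi^n$, the snake lemma presenting the cokernel $C$ as an extension of the two previous cokernels, then showing $C$ has exactly $e$ indecomposable summands all of length $n$) is the same as the paper's. Where you genuinely diverge is in the mechanism for the summand count. The paper splits it into two inequalities: $m\geq e$ comes from $\Soc(S^e)\subseteq\Soc(E)$, and $m\leq e$ from an explicit linear-dependence argument — if $m>e$ the pullback classes $\mu\cdot u_i\in\Ext^1(S,L)$ are $\End(S)$-linearly dependent, producing a nonzero $h\colon S\to E$ with $\mu\cdot h$ split and hence a forbidden embedding $S\hookrightarrow L(nx)$. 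You instead compute $\Hom(S,C)\cong\Ker\bigl(\Ext^1(S,L)\to\Ext^1(S,L(nx))\bigr)$ from the six-term sequence and show the latter map vanishes by factoring it through the chain of $S$-universal extensions; note that the vanishing of each link also follows from a bare dimension count, since $\Ext^1(S,L((i+1)x))\to\Ext^1(S,S^e)$ is onto (heredity) between spaces of equal $\End(S)$-dimension $e$, so the universal property is not strictly needed there. The final identification also differs: the paper bounds the lengths by $\ell_i\leq n$ via annihilation by $\mathfrak{m}^n$ in the complete local ring, while you compute $\Rad(C)=S[n-1]^e$ and invoke Krull--Schmidt (where you should note, for $n\geq 2$, that no $k_i$ can equal $1$, since otherwise $\Rad(C)$ would have fewer than $e$ indecomposable summands). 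Your homological route is arguably more self-contained — it avoids both the explicit dependence relation and the reference to $\End(S[\infty])$ — at the cost of the small verification that the iterated shifts of $\pi$ remain $S$-universal.
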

\begin{proof}
  By induction on $n$. For $n=1$ the assertion is trivial. Let
  $n>1$. Write $\mu\colon 0\ra L\stackrel{\pi^n}\lra L(nx)\lra E\ra
  0$. By induction hypothesis, from the snake lemma we obtain that $E$
  appears as the middle term of a short exact sequence
  \begin{equation}
    \label{eq:cokernel-sequence}
    0\ra S^e \ra
    E\ra S[n-1]^e\ra 0. 
  \end{equation}
  Write $E=E_1\oplus\ldots\oplus E_m$ with $E_i=S[\ell_i]$
  indecomposable. By uniseriality we have $\Soc(E_i)=S$. This yields
  $S^e=\Soc (S^e)\subseteq\Soc(E)=S^m$, and thus $m\geq e$. On the
  other hand, assume that $m>e$. Let $u_i\colon S\ra
  S[\ell_i]=E_i\stackrel{j_i}\lra E$ a monomorphism. By the definition
  of $e=e(x)$, there are $f_1,\dots,f_m$ in $\End(S)$, not all of them
  zero, such that $0=\sum_{i=1}^m \mu\cdot u_i f_i
  =\mu\cdot\bigl(\sum_{i=1}^m u_i f_i\bigr)$. Denoting $\sum_{i=1}^m
  u_i f_i$ by $0\neq h\colon S\ra E$, the short exact sequence
  $\mu\cdot h$ splits, and we obtain, that $S$ embeds into $L(nx)$,
  which gives a contradiction. We conclude $m=e$. Let
  $R=\End(S[\infty])$ be the complete local ring with maximal ideal
  $\mathfrak{m}$ such that $\Uu=\mod_0(R)$. Since $S^e$ is annihilated
  by $\mathfrak{m}$ and $S[n-1]^e$ by $\mathfrak{m}^{n-1}$, we deduce
  from sequence~\eqref{eq:cokernel-sequence} that $E$ is annihilated
  by $\mathfrak{m}^n$, and thus all $\ell_i\leq n$. Since the length
  of $E$ is $n\cdot e$, we get $\ell_i =n$ for all $i$. This completes
  the proof of the lemma.
\end{proof}
\begin{lemma}\label{lem:nonzero-Hom}
  Let $E$ be an indecomposable vector bundle and $S$ be a simple
  sheaf. Then $\Hom(E,S)\neq 0$.
\end{lemma}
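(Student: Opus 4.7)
The plan is to reduce the statement to the case of a line bundle via the line bundle filtration from the previous lemma, and then to use Serre duality together with the specialness of $L$.

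First I would use induction on the rank $r=\rk(E)$. Given an indecomposable vector bundle $E$ of rank $r\geq 2$, I would extract from the line bundle filtration a short exact sequence $0\to E'\to E\to L_r\to 0$ with $L_r$ a line bundle and $E'$ a vector bundle of rank $r-1$. Since $\tau L_r$ is a vector bundle and hence has no simple subobject, Serre duality gives $\Ext^1(L_r,S)\cong\D\Hom(S,\tau L_r)=0$. The long exact sequence for $\Hom(-,S)$ would therefore produce a surjection $\Hom(E,S)\twoheadrightarrow\Hom(E',S)$, and the inductive hypothesis applied to any indecomposable summand of $E'$ (which is again a vector bundle of strictly smaller rank) would yield the non-vanishing. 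This reduces the problem to the line bundle case.

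For the base case $E=L'$ a line bundle, I would observe that in the non-weighted setting each simple sheaf has the form $S=S_x$ with $\tau S_x\simeq S_x$, so Serre duality gives
\begin{equation*}
\Hom(L',S_x)\;\cong\;\D\Ext^1(S_x,L').
\end{equation*}
The claim thus reduces to showing $\Ext^1(S_x,L')\neq 0$ for every line bundle $L'$ and every $x\in\XX$, equivalently that the $S_x$-universal extension of $L'$ is non-trivial, i.e.\ that the tubular shift $\sigma_x$ does not fix $L'$ up to isomorphism.

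The hard part will be this last statement. My plan is to use the special structure sheaf $L$ as a reference: the orbit algebra $\Pi(L,\sigma_x)=\bigoplus_{m\geq 0}\Hom(L,L(mx))$ has, by Lemma~\ref{lem:cokernel-pi-n}, graded components that grow linearly in $m$ (the contribution $\Hom(L,S_x[m]^{e(x)})$ is non-zero and of linearly growing dimension since $L$ is special), which records the fact that $\sigma_x$ genuinely moves $L$. A comparison of $L'$ with $L$ via the bimodule structure underlying the tubular shifts, together with the noetherianness of the orbit algebras, should then force $\sigma_x L'\not\simeq L'$, yielding $\Ext^1(S_x,L')\neq 0$ and completing the proof.
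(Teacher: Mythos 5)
Your reduction to the line bundle case is fine: the line bundle filtration gives $0\to E'\to E\to L_r\to 0$, and since $\tau L_r$ is a vector bundle we get $\Ext^1(L_r,S)=\D\Hom(S,\tau L_r)=0$, hence a surjection $\Hom(E,S)\twoheadrightarrow\Hom(E',S)$ and the induction on rank goes through. Likewise the translation of the base case into $\Ext^1(S_x,L')\neq 0$ via $\tau S_x\simeq S_x$ is correct. But the base case is where all the content of the lemma sits, and there your argument is not a proof but a hope. Worse, the tools you propose to use are logically downstream of this lemma in the paper: the noetherianness of $\Pi(L,\sigma_x)$ rests on the ampleness of $(L,\sigma_x)$, which is proved using Lemma~\ref{lem:morphisms-line-bundles-large-n}, whose proof is exactly an application of the present lemma; and any ``comparison of $L'$ with $L$'' of the kind you describe amounts to producing a nonzero map between twists of $L$ and $L'$, which again is Lemma~\ref{lem:morphisms-line-bundles-large-n}. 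So the plan as stated is circular. Note also that the specialness of $L$ only gives $\Ext^1(S_x,L)\neq 0$ for the one chosen structure sheaf; the whole difficulty is an \emph{arbitrary} line bundle $L'$, about which specialness says nothing.

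The paper's proof is a one-line reference to \cite[(S11)]{lenzing:delapena:1999} and \cite[Cor.~IV.1.8]{reiten:vandenbergh:2002}, and the key ingredient there is the \emph{connectedness} of $\Hh$, which your proposal never invokes. The standard argument runs roughly as follows: if $\Hom(E,\Uu_x)=0$ for some indecomposable vector bundle $E$ and some tube $\Uu_x$, then by Serre duality also $\Ext^1(E,\Uu_x)=\D\Hom(\Uu_x,\tau E)=0$ (again because $\tau E$ has no simple subobjects), so $\Uu_x$ lies in the full perpendicular of $E$; propagating this Hom- and Ext-orthogonality one splits off a nonzero direct factor of $\Hh$, contradicting (NC~1). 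Connectedness is genuinely needed -- without it the statement is false -- so any correct completion of your base case must bring it in. I would either adopt that orthogonality/connectedness argument for the line bundle case, or drop the rank induction entirely, since the cited argument handles all indecomposable vector bundles at once.
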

\begin{proof}
  Using connectedness of $\Hh$ this is shown like
  in~\cite[(S11)]{lenzing:delapena:1999}
  or~\cite[Cor.~IV.1.8]{reiten:vandenbergh:2002}.
\end{proof}
\begin{lemma}\label{lem:morphisms-line-bundles-large-n}
  Let $L$ and $L'$ be line bundles, and let $x\in\XX$ be a point. Then
  $\Hom(L(-nx),L')\neq 0$ for $n\gg 0$.
\end{lemma}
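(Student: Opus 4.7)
The plan is to reduce, via the autoequivalence $\sigma_x$, to a non-vanishing of $\Hom$ into $L'(nx)$, and then use a dimension count for $\Hom(L, S_x[n])$. Since $\sigma_x$ is an autoequivalence of $\Hh$,
$$\Hom(L(-nx), L') = \Hom({\sigma_x}^{-n}L, L') \simeq \Hom(L, {\sigma_x}^n L') = \Hom(L, L'(nx)),$$
so it suffices to prove $\Hom(L, L'(nx)) \neq 0$ for $n \gg 0$. The same inductive argument as in Lemma~\ref{lem:cokernel-pi-n}, applied with $L'$ in place of $L$, yields an exact sequence
$$0 \ra L' \ra L'(nx) \ra S[n]^{e'} \ra 0,$$
where $S = S_x$ and $e' = \dim_{\End(S)}\Ext^1(S, L')$. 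Note $e' \geq 1$: by Lemma~\ref{lem:nonzero-Hom}, $\Hom(L', S) \neq 0$, and Serre duality together with $\tau S \simeq S$ gives $\Ext^1(S, L') \simeq \D\Hom(L', \tau S) \simeq \D\Hom(L', S) \neq 0$.

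Applying $\Hom(L, -)$ to this sequence produces the exact piece
$$\Hom(L, L') \ra \Hom(L, L'(nx)) \ra \Hom(L, S[n])^{e'} \stackrel{\delta}\lra \Ext^1(L, L').$$
The crux is to show that $\dim_k \Hom(L, S[n])$ is unbounded in $n$. From uniseriality of $\Uu_x$ (see~\ref{nr:tubes}) we have the short exact sequence $0 \ra S \ra S[n+1] \stackrel{\pi_n}\lra S[n] \ra 0$, since $\Soc(S[n+1]) = S$. Applying $\Hom(L, -)$, Serre duality gives $\Ext^1(L, S) \simeq \D\Hom(S, \tau L) = 0$: indeed $\tau L$ is again a line bundle, hence lies in $\Hh_+$ and contains no simple subobject. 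We obtain inductively
$$\dim_k \Hom(L, S[n]) = n \cdot \dim_k \Hom(L, S),$$
and $\dim_k \Hom(L, S) \geq 1$ by Lemma~\ref{lem:nonzero-Hom}.

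Since $\Ext^1(L, L')$ is finite-dimensional, for $n$ sufficiently large the map $\delta$ cannot be injective; its kernel then lifts via the previous exact sequence to a nonzero element of $\Hom(L, L'(nx))$, which is the required conclusion. The only real obstacle is establishing the linear growth of $\dim_k \Hom(L, S[n])$; once the Ext-vanishing $\Ext^1(L, S) = 0$ is extracted from Serre duality, everything else is routine bookkeeping with long exact sequences.
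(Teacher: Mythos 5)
Your proof is correct and is essentially the paper's argument in mirror image: the paper applies $\Hom(-,L')$ to $0\to L(-nx)\to L\to S[n]^e\to 0$ and lets $\dim_k\Ext^1(S[n]^e,L')$ grow, whereas you first transpose via the autoequivalence $\sigma_x$ and apply $\Hom(L,-)$ to $0\to L'\to L'(nx)\to S[n]^{e'}\to 0$, letting $\dim_k\Hom(L,S[n])$ grow; both rest on the same count against the finite-dimensional $\Ext^1(L,L')$. Your explicit verifications that $e'\geq 1$ and that $\Ext^1(L,S)=0$ via Serre duality are exactly the points the paper leaves implicit.
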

\begin{proof}
  By the preceding lemma we have an exact sequence $0\ra L(-nx)\ra
  L\ra S[n]^e\ra 0$ for each $n\geq 0$. Applying $\Hom(-,L')$ gives
  $0\ra\Hom(L,L')\ra\Hom(L(-nx),L')\ra\Ext^1(S[n]^e,L')\ra\Ext^1(L,L')$. By
  Lemma~\ref{lem:nonzero-Hom} we have $d:=\dim_k \Hom(L',S)>0$, and
  thus $\dim_k\Ext^1(S[n]^e,L')=dne\gg 0$ for $n\gg 0$. From this
  follows the claim.
\end{proof}
\begin{lemma}\label{lem:Serre-construction}
  For each $x\in\XX$ the pair $(L,\sigma_x)$ is ample in the sense
  of~\cite{artin:zhang:1994}. Accordingly,
  \begin{equation}
    \label{eq:Serre-construction}
    \Hh\simeq\frac{\mod^{\ZZ}(\Pi(L,\sigma_x))}{\mod_0^{\ZZ}(\Pi(L,\sigma_x))}.
  \end{equation}
  In particular, a noncommutative regular projective curve $\Hh$ is a
  noncommutative projective scheme in the sense of
  Artin-Zhang~\cite{artin:zhang:1994}.
\end{lemma}
\begin{proof}
  (Compare the proof of~\cite[Lem.~IV.4.1]{reiten:vandenbergh:2002})
  We have the inverse system $\dots\ra L(-2x)\ra L(-x)\ra L$ of
  subobjects with zero
  intersection. By~\cite[Lem.~IV.1.3]{reiten:vandenbergh:2002}) there
  is a line bundle $L'\subseteq L$ such that $\Ext^1(U,L)=0$ for all
  subobjects (line bundles) $U\subseteq L'$. Moreover, for $n\gg 0$ we
  have $L(-nx)\subseteq L'$, and we conclude $\Ext^1(L(-nx),L)=0$.

  Let $E\in\Hh$. Let $F\subseteq E$ be the largest subobject such
  there is an epimorphism $G:=\oplus_{i=1}^t L(-\alpha_ix)\ra F$, and
  let $C=E/F$. We assume that $C\neq 0$, and will show that this
  yields a contradiction. If $C$ is of finite length, then it follows
  from Lemma~\ref{lem:cokernel-pi-n} that a finite direct sum of
  copies of $L$ maps onto $C$. Thus we can assume that $C$ is a vector
  bundle, and it suffices to assume that $C$ is a line bundle. We have
  an exact sequence $0\ra K\ra G\ra F\ra 0$ with $G$ a finite direct
  sum of $\sigma_x$-shifts of $L$. By the preceding paragraph there is
  $n_0$ such that $\Ext^1(L(-nx),G)=0$, and then $\Ext^1(L(-nx),F)=0$
  for all $n\geq n_0$.

  By Lemma~\ref{lem:morphisms-line-bundles-large-n} we have a
  non-trivial morphism $L(-mx)\ra C$ for some $m\geq n_0$. Since
  $\Ext^1(L(-mx),F)=0$, this lifts to a non-trivial morphism
  $L(-mx)\ra E$, giving a contradiction.
\end{proof}
\begin{lemma}
  The homogeneous ideal $P_x$ in $\Pi(L,\sigma_x)$ generated by
  $\pi_x$ is prime.
\end{lemma}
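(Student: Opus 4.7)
The plan is to show $\Pi/P_x$ is a graded prime ring. First, I would identify $P_x$ in each degree: since $\pi_x \in \Pi_1$ is central (\cite[Lem.~1.7.1]{kussin:2009}) and $\pi_x\colon L \to L(x)$ is a monomorphism in $\Hh$, multiplication by $\pi_x$ is injective in each degree, so $P_x \cap \Pi_n = \pi_x \ast \Pi_{n-1}$. Applying $\sigma_x^{n-1}$ to the $S_x$-universal extension $0 \to L \xrightarrow{\pi_x} L(x) \to S_x^{e(x)} \to 0$, and using that $\sigma_x$ preserves the homogeneous tube $\Uu_x$ with $\sigma_x^j(S_x) \simeq S_x$, yields
\[
0 \to L((n-1)x) \xrightarrow{\sigma_x^{n-1}(\pi_x)} L(nx) \xrightarrow{q_n} S_x^{e(x)} \to 0.
\]
Applying $\Hom(L,-)$, an element $f \in \Pi_n$ lies in $P_x$ exactly when $\bar f := q_n \circ f = 0$, so the evaluation map $f \mapsto \bar f$ embeds $(\Pi/P_x)_n$ into $\Hom(L, S_x^{e(x)})$.

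The second step is a product formula. For $f \in \Pi_m$ and $g \in \Pi_n$, I would use the centrality relation $\sigma_x^m(\pi_x) \circ f = \sigma_x(f) \circ \pi_x$, together with the fact that $\sigma_x^n(q_m)$ and $q_{m+n}$ both arise as cokernels of $\sigma_x^{m+n-1}(\pi_x)$ (agreeing up to a canonical identification $\sigma_x^n(S_x^{e(x)}) \simeq S_x^{e(x)}$), to show $\sigma_x^n(\bar f) \circ \sigma_x^{n-1}(\pi_x) = 0$. Hence $\sigma_x^n(\bar f)$ factors uniquely through $q_n$ as $\sigma_x^n(\bar f) = \tilde f^{(n)} \circ q_n$ with $\tilde f^{(n)} \in \End(S_x^{e(x)}) \simeq M_{e(x)}(\End(S_x))$, and one obtains $\overline{f \ast g} = \tilde f^{(n)} \circ \bar g$ in $\Hom(L, S_x^{e(x)})$.

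Serre duality with $\tau S_x \simeq S_x$ gives $\Ext^1(S_x, L) \simeq D\Hom(L, S_x)$, hence $\dim_k \Hom(L, S_x) = e(x)\dim_k \End(S_x)$, so $\dim_k \Hom(L, S_x^{e(x)}) = \dim_k \End(S_x^{e(x)})$. Moreover, Serre duality plus an ampleness-type vanishing yields $\Ext^1(L, L((k-1)x)) = 0$ for $k \gg 0$, upgrading the evaluation embedding to an isomorphism $(\Pi/P_x)_k \simeq \Hom(L, S_x^{e(x)})$. For such $k$ and any chosen target exponent $n$, the assignment $\bar c \mapsto \tilde c^{(n)}$ is then a $k$-linear bijection onto $\End(S_x^{e(x)})$: injectivity is clear since $q_n$ is epic and $\sigma_x^n$ is an autoequivalence, while surjectivity follows by lifting any prescribed $\tilde c \in \End(S_x^{e(x)})$ first via $\sigma_x^n$ to a morphism $L \to S_x^{e(x)}$, which then lifts to some $c \in \Pi_k$.

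To conclude primality, let $f, g \notin P_x$ be homogeneous of degrees $m, n$. Iterating the product formula, $\overline{f \ast c \ast g} = \tilde f^{(k+n)} \circ \tilde c^{(n)} \circ \bar g$ for $c \in \Pi_k$. Choosing $k$ large enough to invoke the bijection, I may prescribe $\tilde c^{(n)} \in M_{e(x)}(\End(S_x))$ arbitrarily; since $\tilde f^{(k+n)} \neq 0$ its kernel is a proper subspace of the columns $\End(S_x)^{e(x)}$, and since $\bar g \neq 0$ we may choose $\tilde c^{(n)}$ so that $\tilde c^{(n)}(\bar g)$ avoids this kernel. Hence $f \ast c \ast g \notin P_x$, proving $P_x$ is prime. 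The main obstacle is the careful bookkeeping of the identifications $\sigma_x^j(S_x^{e(x)}) \simeq S_x^{e(x)}$ in the product formula, and verifying the bijectivity of $\bar c \mapsto \tilde c^{(n)}$.
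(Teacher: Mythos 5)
Your proof is correct in substance but takes a genuinely different route from the paper's. The paper simply invokes \cite[Thm.~1.2.3]{kussin:2009} and observes that the only hypothesis needing verification in this more general setting is $\Hom(L,\tau L(-nx))=0$ for $n\gg 0$, which it proves by a short argument with monomorphisms between line bundles (nonzero maps in both directions between $L$ and $\tau L(-nx)$ would force $L\simeq L(mx)$ for some $m>0$, contradicting Lemma~\ref{lem:cokernel-pi-n}). You instead argue directly: you identify $(\Pi/P_x)_n$ inside $\Hom(L,S_x^{e(x)})$ via the cokernel maps $q_n$, prove the product formula $\overline{f\ast c\ast g}=\tilde f^{(k+n)}\circ\tilde c^{(n)}\circ\bar g$, show that in large degrees the quotient realizes all of $\End(S_x^{e(x)})\simeq \matring_{e(x)}(\End(S_x))$, and finish with an elementary-matrix computation. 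Your route is longer and requires the bookkeeping you acknowledge, but it is self-contained and yields as a by-product an explicit description of $\Pi(L,\sigma_x)/P_x$ in high degrees, essentially anticipating Proposition~\ref{prop:Rx-modulo-radical}; the paper's route is shorter but outsources the core mechanism to the earlier memoir.

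One caveat: the ``ampleness-type vanishing'' $\Ext^1(L,L((k-1)x))=0$ for $k\gg 0$ is not a side remark --- by Serre duality it is exactly $\Hom(L,\tau L(-(k-1)x))=0$, i.e.\ the one fact the paper's entire proof is devoted to. Your argument does close, because the preceding ampleness lemma supplies it: condition (b) of Artin--Zhang ampleness applied to the epimorphism $L(kx)\to S_x^{e(x)}$ gives the surjectivity of the evaluation map $\Pi_{k+n}\to\Hom(L,S_x^{e(x)})$ directly (and the proof of that lemma establishes $\Ext^1(L(-nx),F)=0$ for $n\gg 0$ whenever $F$ is a quotient of a finite sum of shifts of $L$). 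But you should cite that lemma explicitly rather than gesture at the vanishing, since this is precisely where the nontrivial content lies.
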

\begin{proof}
  This follows like in~\cite[Thm.~1.2.3]{kussin:2009}. We only need to
  show that for $n\gg 0$ sufficiently large we have $\Hom(L,\tau
  L(-nx))=0$, as in~\cite[Lem.~1.2.2]{kussin:2009}. To this end, by
  Lemma~\ref{lem:morphisms-line-bundles-large-n} for $n\gg 0$ there is
  a non-zero morphism $g\colon\tau L(-nx)\ra L$. We assume that there
  is a non-zero morphism $f\colon L\ra\tau L(-nx)$. Both, $f$ and $g$,
  are monomorphisms, and $g\circ f\colon L\ra L$ is an isomorphism,
  thus $g$ is an isomorphism. Enlarging $n$ further, we see that there
  is $m>0$ such that $L$ and $L(mx)$ are isomorphic. But then,
  repeating the argument just given, also $(\pi_x)^m$ would be an
  isomorphism. But this is not true by Lemma~\ref{lem:cokernel-pi-n},
  giving a contradiction. Thus $\Hom(L,\tau L(-nx))=0$.
\end{proof}
\begin{lemma}
  For each $x\in\XX$ the ring $\Pi(L,\sigma_x)$ is a graded noetherian
  domain which has a central prime element $\pi_x$ of degree one, and
  the quotient division ring of degree-zero fractions $s^{-1}r$ (with
  $r,\,s$ homogeneous of the same degree, $s\neq 0$) is the function
  field $k(\Hh)$. 
\end{lemma}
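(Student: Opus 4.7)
The plan is to verify the four claims—domain, graded noetherian, central prime element of degree one, identification of the degree-zero fraction ring—in turn, using the ampleness of $(L,\sigma_x)$ just established and the fact that $L$ and all its $\sigma_x$-shifts are line bundles.

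For the domain property, any nonzero element $f\in\Pi(L,\sigma_x)_n=\Hom(L,\sigma_x^n L)$ is a morphism between line bundles, hence a monomorphism (its kernel would be a torsion-free subobject of rank zero in a rank-one object). For nonzero homogeneous $f,g$ of degrees $m,n$, the product $g\ast f=\sigma_x^m(g)\circ f$ is then a composition of monomorphisms, hence nonzero. Graded noetherianness follows from the established ampleness together with standard results in noncommutative projective geometry~\cite{artin:zhang:1994}, applied in the noetherian, Ext-finite category $\Hh$. By construction $\pi_x\in\Hom(L,\sigma_x L)=\Pi(L,\sigma_x)_1$, so $\pi_x$ has degree one; its centrality is~\cite[Lem.~1.7.1]{kussin:2009}, and primality of the ideal $\pi_x\Pi(L,\sigma_x)$ is exactly the content of the preceding lemma.

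The substantive part is the identification of the degree-zero fraction ring with $k(\Hh)$. Write $\Pi=\Pi(L,\sigma_x)$ and consider the quotient functor $q\colon\Hh\ra\widetilde{\Hh}\simeq\mod(k(\Hh))$. Any nonzero $s\in\Pi_n$ is a monomorphism with torsion cokernel, hence $q(s)$ is an isomorphism in $\widetilde{\Hh}$; in particular every nonzero homogeneous element of $\Pi$ has nonzero image under $q$. The assignment $s^{-1}r\mapsto q(s)^{-1}q(r)$ thus gives a well-defined, injective ring homomorphism from the degree-zero fraction ring of $\Pi$ into $k(\Hh)=\End_{\widetilde{\Hh}}(\widetilde{L})$.

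The main obstacle is surjectivity. I would realize any $\varphi\in k(\Hh)$ by a Gabriel roof $L\xleftarrow{s_0}M\xrightarrow{r_0}L$ with $L/M\in\Hh_0$. Since the inverse system $(L(-nx))_{n\geq 0}$ has zero intersection in $L$ (already used in the ampleness proof above) and $L/M$ has finite length, eventually $L(-nx)\subseteq M$; precomposing both legs of the roof with this inclusion and then applying $\sigma_x^n$ rewrites $\varphi$ as $q(\pi_x^n)^{-1}q(r)$ for some $r\in\Pi_n$. A small amount of bookkeeping, using centrality and regularity of $\pi_x$ to bring two homogeneous elements to a common degree, then yields that the induced map on fractions is a ring isomorphism, whence the degree-zero fractions form a division ring coinciding with $k(\Hh)$.
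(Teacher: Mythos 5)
Your handling of the domain property, noetherianness, and the central prime $\pi_x$ of degree one matches the paper's proof, which disposes of these by the same monomorphism observation, by citation to Artin--Zhang, and by the two preceding lemmas. The gap is in the surjectivity step for the function field. From $\bigcap_n L(-nx)=0$ and the finiteness of the length of $L/M$ you conclude that $L(-nx)\subseteq M$ for $n\gg 0$; this implication is false. A descending chain with zero intersection need not be cofinal among the finite-colength subobjects: already for $\Hh=\coh(\Pone)$, $L=\Oo$ and $M=\Oo(-y)$ with $y\neq x$, the quotient $L/M=S_y$ has length one, yet $\Oo(-nx)\subseteq\Oo(-y)$ would mean $y\leq n\cdot x$ as divisors, which never holds. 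More structurally, the cokernel of $L(-nx)\hookrightarrow L$ lies in $\Uu_x$, so $L/(L(-nx)+M)$ is a common quotient of $S_x[n]^{e}$ and $L/M$; whenever $L/M$ has a nonzero component outside $\Uu_x$ this forces $L(-nx)+M=L$ and hence $L(-nx)\not\subseteq M$. Since a general element of $k(\Hh)$ is represented by a roof whose denominator $M$ has cokernel supported at arbitrary points, your rewriting of $\varphi$ as $q(\pi_x^n)^{-1}q(r)$ does not go through.

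The repair is close at hand and is what the argument cited by the paper (Reiten--van den Bergh, Lem.~IV.4.1, Step~4) actually uses. The subobject $M\subseteq L$ is torsion-free of rank one, hence indecomposable, hence a line bundle, so Lemma~\ref{lem:morphisms-line-bundles-large-n} provides a \emph{nonzero morphism} $f\colon L(-nx)\to M$ for $n\gg 0$, not an inclusion. Its cokernel has rank zero, so $q(f)$ is invertible in $\widetilde{\Hh}$, and precomposing both legs of the roof with $f$ and applying $\sigma_x^n$ expresses $\varphi$ as $q(s)^{-1}q(r)$ with $r,\,s$ homogeneous of degree $n$, where $s$ is no longer a power of $\pi_x$. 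With that substitution the remainder of your argument (injectivity via $q$, and the bookkeeping with the central regular element $\pi_x$ to compare fractions) goes through; you should, however, also note why nonzero homogeneous elements form an Ore set, which in this setup is most easily extracted from the very embedding into $k(\Hh)$ you construct.
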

\begin{proof}
  Noetherianness follows from the proof
  of~\cite[Prop.~1.4.4]{kussin:2009} also in this more general setting
  (right-noetherianness is also shown
  in~\cite[Thm.~4.5]{artin:zhang:1994}). Since non-zero morphisms
  between line bundles are monomorphisms, the orbit algebra
  $R=\Pi(L,\sigma_x)$ is a graded domain. By the preceding lemma the
  homogeneous element $\pi_x$ is central and prime. The assertion
  about the function field follows like
  in~\cite[Lem.~IV.4.1~Step~4]{reiten:vandenbergh:2002}. (We remark
  that like in~\cite[Lem.~IV.4.1~Step~3]{reiten:vandenbergh:2002} the
  Gelfand-Kirillov dimension of the finitely graded (in the sense
  of~\cite{artin:stafford:1995}) $k$-algebra $R$ is two, and
  then~\cite[Thm.~0.1]{artin:stafford:1995} implies that (NC~7)
  holds.)
\end{proof}
\begin{remark}\label{rem:ample-pair-weighted}
  Assume that $\Hh$ satisfies, more generally, conditions (NC~1) to
  (NC~5), and let $L$ be a line bundle. Then similar statements of
  most of the preceding results remain true, with similar proofs. More
  precisely, for $\sigma$ a suitable product of Picard-shifts, we get
  an ample pair $(L,\sigma)$, and $R=\Pi(L,\sigma)$ is a projective
  coordinate algebra for $\Hh$ of Gelfand-Kirillov dimension two, and
  the zero component of the graded quotient division ring of $R$ is
  the function field $k(\Hh)$; we refer
  to~\cite[Lem.~IV.4.1]{reiten:vandenbergh:2002}. Then~\cite[Thm.~0.1]{artin:stafford:1995}
  implies (NC~7).
\end{remark}
\begin{lemma}
  Let $x\in\XX$ be of multiplicity $e(x)$ and with simple sheaf $S_x$.
  \begin{enumerate}
  \item[(1)] For a non-zero homogeneous element $s\in\Pi(L,\sigma_x)$
    the following conditions are equivalent:
  \begin{itemize}
  \item $s\in\Cc(P_x)$, that is, $s$ is regular modulo $P_x$.
  \item The cokernel of $s$ lies in $\coprod_{y\neq x}\Uu_y$.
  \item The fibre map $s_x\in\End({S_x}^{e(x)})$ is an isomorphism.
  \end{itemize}
  \item[(2)] The set $\Cc(P_x)$ is a denominator set.
  \item[(3)] For the graded localization
    $R^{\gr}_x=\Pi(L,\sigma_x)_{\Cc(P_x)}$ the graded Jacobson radical
    is generated by the central element $\pi_x 1^{-1}$ and is the only
    non-zero graded prime ideal.
  \item[(4)] As graded rings,
    $R_x^{\gr}/\Rad^{\gr}(R_x^{\gr})\simeq\matring_{e(x)}\bigl(\END(S_x)\bigr)$,
    where $\END(S_x)$ is the graded skew field
    $\bigoplus_{n\in\ZZ}\Hom(S_x,S_x(n))$.
  \end{enumerate}
\end{lemma}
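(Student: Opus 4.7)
The plan is to treat the four parts in order, using as the central tool the fibre functor $E\mapsto E_x=\Ext^1(S_x,E)\otimes_{\End(S_x)}S_x$ introduced in the definition of the Picard-shift, together with the $S$-universal sequence from Lemma~\ref{lem:cokernel-pi-n}.

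For part (1), a nonzero homogeneous element $s$ of degree $n$ is a morphism $s\colon L\to L(nx)$, and $P_x=\pi_x R=R\pi_x$ is generated by a central element. Applying the fibre functor to $0\to L\to L(x)\to S_x^{e(x)}\to 0$ one sees that $(\pi_x)_x=0$, whence $s\in P_x$ iff $s$ factors through $\pi_x$ iff the fibre $s_x\in\End(S_x^{e(x)})$ fails to be an isomorphism. The equivalence with the cokernel condition then follows from the uniseriality of $\Uu_x$: if $s_x$ is an isomorphism, the cokernel of $s$ has no simple composition factor $S_x$ and hence lies in $\coprod_{y\neq x}\Uu_y$; conversely, if $s_x$ drops rank the cokernel contains a copy of $S_x$ in its socle. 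Regularity modulo the prime ideal $P_x$ coincides with non-membership in $P_x$, which closes the circle.

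Part (2) then follows by combining (1) with graded noetherianness: given homogeneous $r\in R$ and $s\in\Cc(P_x)$, one produces the required Ore data $r',s'\in R$ with $s'\in\Cc(P_x)$ and $rs'=sr'$ by working in the graded prime noetherian domain $R/P_x$ and lifting, centrality of $\pi_x$ making the two-sided Ore calculus compatible with the grading. For part (3), after localization every homogeneous element outside $P_x$ is invertible, so every nonzero graded ideal of $R_x^{\gr}$ must contain a power of $\pi_x\cdot 1^{-1}$; together with primeness of $P_x$ and the fact that $R_x^{\gr}$ is a graded domain with central $\pi_x 1^{-1}$ this forces $P_x R_x^{\gr}=(\pi_x 1^{-1})$ to be the unique nonzero graded prime, and therefore equal to $\Rad^{\gr}(R_x^{\gr})$.

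For part (4), the fibre construction $s\mapsto s_x$ defines a graded ring homomorphism $\phi\colon R\to\matring_{e(x)}\bigl(\END(S_x)\bigr)$, where $\END(S_x)=\bigoplus_{n\geq 0}\Hom(S_x,\sigma_x^n S_x)$ is the orbit algebra of $S_x$ under $\sigma_x$, with multiplication twisted by the restriction of $\sigma_x$ to $\Uu_x$; by (1) its kernel equals $P_x$, so $\phi$ descends to a graded injection $R_x^{\gr}/\Rad^{\gr}(R_x^{\gr})\hookrightarrow\matring_{e(x)}(\END(S_x))$. Surjectivity follows by lifting, after inverting a suitable $s\in\Cc(P_x)$, each element of $\End(S_x^{e(x)})$ to an endomorphism of an appropriate Picard-shift of $L$, using the definition of $e(x)$ as $[\Ext^1(S_x,L)\colon\End(S_x)]$. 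Each homogeneous component of the localized $\END(S_x)$ is a one-dimensional $D_x$-module, and multiplication by $\pi_x$ induces a grade-shifting isomorphism, so every nonzero homogeneous element is invertible and $\END(S_x)$ is exhibited as a graded skew field. The main obstacle is precisely this part (4): verifying surjectivity of $\phi$ and identifying the twisted structure on $\END(S_x)$ without invoking the identification $\sigma_x|_{\Uu_x}\simeq\tau^-$ (proved only later in Corollary~\ref{cor:tau-minus=sigma-x}); the argument should therefore be phrased abstractly, using only that $\sigma_x$ restricts to an autoequivalence of $\Uu_x$ fixing $S_x$ up to isomorphism.
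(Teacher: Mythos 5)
There is a genuine gap, and it is essentially one mistake made three times: you conflate the set $\Cc(P_x)$ of elements regular modulo $P_x$ with the complement of $P_x$, i.e.\ you treat $P_x$ as a \emph{completely} prime ideal. This is correct only when $e(x)=1$. In general $R/P_x$ is a graded order in $\matring_{e(x)}\bigl(\END(S_x)\bigr)$ --- that is precisely what part (4) asserts --- so for $e(x)>1$ it contains nonzero homogeneous zero-divisors: writing a nonzero singular homogeneous matrix $m$ of the graded quotient ring as $m=as^{-1}$ with $a\in R/P_x$ and $s\in\Cc(P_x)$ homogeneous produces a homogeneous $a\neq 0$ that is not regular. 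Lifting, one gets homogeneous $s\in R$ whose fibre $s_x$ is a nonzero but singular element of $\End(S_x^{e(x)})\simeq\matring_{e(x)}(\End(S_x))$; such an $s$ lies outside $P_x$ (membership in $P_x$ forces $s_x=0$, since $\pi_x$ is central and $(\pi_x)_x=0$) yet is not regular modulo $P_x$. Hence both of your intermediate claims in (1) --- ``$s\in P_x$ iff $s_x$ fails to be an isomorphism'' and ``regularity modulo $P_x$ coincides with non-membership in $P_x$'' --- are false for $e(x)>1$, and the three-way equivalence does not follow from them (the correct dichotomy is: $s_x=0$ iff $s\in P_x$; $s_x$ invertible iff $s\in\Cc(P_x)$; and for $e(x)>1$ there is a nonempty middle ground). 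The same misconception reappears in (2), where $R/P_x$ is called a ``graded prime noetherian domain'', and in (3), where you assert that after localization ``every homogeneous element outside $P_x$ is invertible'': only elements of $\Cc(P_x)$ become invertible, and an element with nonzero singular fibre does not.

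The statements are of course true, but the proof must see the matrix structure rather than argue as in a domain. For (1) the paper simply cites \cite[Lem.~2.2.1]{kussin:2009}; the honest argument identifies $(R/P_x)_n$ inside $\Hom(L,S_x^{e(x)})$ and checks that regularity means invertibility of the matrix $s_x$. For (2) what is actually used is that the ideal generated by the \emph{central} element $\pi_x$ has the Artin--Rees property, so $\Cc(P_x)$ is an Ore set by a graded version of Smith's theorem; no domain hypothesis is available or needed. For (3)--(4) the paper's route is cleaner and avoids your problematic step: localizing the universal sequence gives $0\to R_x^{\gr}\to R_x^{\gr}(x)\to S_x^{e(x)}\to 0$ with $S_x$ a \emph{graded simple} $R_x^{\gr}$-module, so $R_x^{\gr}/(\pi_x1^{-1})\simeq\matring_{e(x)}^{\gr}(\END(S_x))$ is graded simple; then $\pi_x1^{-1}\in\Rad^{\gr}(R_x^{\gr})$ because $1-\pi_xr\in\Cc(P_x)$ for all $r$, and graded simplicity of the quotient forces $\Rad^{\gr}(R_x^{\gr})=(\pi_x1^{-1})$ and the uniqueness of the nonzero graded prime. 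Your outline for (4) (fibre homomorphism onto the matrix ring, surjectivity by lifting) is repairable, but its injectivity-on-the-quotient step again requires the corrected statement that the kernel of the fibre map is exactly $P_x$, not the version of (1) you proved.
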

\begin{proof}
  (1) The equivalence of the three conditions is shown
  in~\cite[Lem.~2.2.1]{kussin:2009}.

  (2) Since $\Pi(L,\sigma_x)$ is graded noetherian (left and right)
  and $\pi_x$ is a central, this follows from a graded version
  of~\cite[Thm.~4.3]{smith_pf:1982}.

  (3), (4) (For analogous ungraded statements we refer
  to~\cite[Thm.~4.3.18]{mcconnell:robson:2001}
  and~\cite[Lem.~14.18]{goodearl:warfield:2004}.) Localizing the
  universal exact sequence $0\ra L\stackrel{\pi_x}\lra L(x)\lra
  {S_x}^{e(x)}\ra 0$ we get, like in~\cite[Prop.~2.2.8]{kussin:2009}, a
  short exact sequence $0\ra R_x^{\gr}\stackrel{\cdot\pi_x}\lra
  R_x^{\gr}(x)\lra {S_x}^{e(x)}\ra 0$ of graded $R_x^{\gr}$-modules,
  where $S_x$ is simple. As graded rings thus $R_x^{\gr}/(\pi_x
  1^{-1})\simeq\matring_{e(x)}^{\gr}(\END(S_x))$. The graded Jacobson
  radical $\Rad^{\gr}(R_x^{\gr})$ is the principal ideal generated by
  $\pi_x 1^{-1}$: clearly, $1-\pi_x r\in\Cc(P_x)$ for each $r$, so
  that $\pi_x 1^{-1}$ lies in the radical. The canonical surjective
  ring homomorphism $R_x^{\gr}/(\pi_x 1^{-1})\ra
  R_x^{\gr}/\Rad^{\gr}(R_x^{\gr})$ is an isomorphism, by simplicity of
  the graded ring on the left hand side.
\end{proof}
We denote by $R_x$ the degree-zero component of the localization
$\Pi(L,\sigma_x)_{\Cc(P_x)}$.
\begin{proposition}\label{prop:Rx-modulo-radical}
  Let $\XX$ be a noncommutative regular projective curve over the
  field $k$. Let $x\in\XX$ be a point. There is an
  isomorphism $$R_x/\Rad(R_x)\simeq\matring_{e(x)}\bigl(\End(S_x)\bigr)$$
  of rings.
\end{proposition}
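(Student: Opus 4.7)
The plan is to bootstrap this ungraded statement directly from the graded isomorphism just established in part~(4) of the preceding lemma:
\[
R_x^{\gr}/\Rad^{\gr}(R_x^{\gr}) \simeq \matring_{e(x)}^{\gr}\bigl(\END(S_x)\bigr),
\]
where $\END(S_x)$ is a graded skew field whose degree-zero component is $\End(S_x)$. First I would pass to degree-zero components on both sides. On the right-hand side this gives $\matring_{e(x)}(\End(S_x))$ by inspection. On the left-hand side it produces $R_x/J_0$, where I write $J:=\Rad^{\gr}(R_x^{\gr})$. So the entire content of the proposition reduces to identifying $J_0$ with the ordinary Jacobson radical $\Rad(R_x)$ inside $R_x$.

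For the inclusion $\Rad(R_x)\subseteq J_0$, the quotient $R_x/J_0$ is isomorphic to the semisimple Artinian ring $\matring_{e(x)}(\End(S_x))$, and $\Rad(R_x)$ is by definition the smallest two-sided ideal with semisimple quotient, so $\Rad(R_x)\subseteq J_0$ is automatic. The converse inclusion $J_0\subseteq\Rad(R_x)$ is where the graded-to-ungraded transfer really happens: given $a\in J_0$ and any $b\in R_x$, the element $1-ba$ is invertible in $R_x^{\gr}$ by the defining property of the graded Jacobson radical. But $1-ba$ is homogeneous of degree $0$, and in a $\ZZ$-graded ring the inverse of a homogeneous unit is automatically homogeneous of the opposite degree; hence the inverse lies in $(R_x^{\gr})_0=R_x$, showing $1-ba$ is a unit in $R_x$ itself. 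Since this holds for every $b$, we get $a\in\Rad(R_x)$.

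The only non-formal ingredient—and the one place to be careful—is the standard graded-ring fact used in the second inclusion: an element $r$ of the graded Jacobson radical of a $\ZZ$-graded ring has the property that $1-br$ is a two-sided unit for \emph{every} $b$ in the ring, not merely for homogeneous $b$. Once this is cited (it appears in any standard reference on graded rings, e.g.\ N\u{a}st\u{a}sescu--Van Oystaeyen), no further calculation is needed, and the proposition follows.
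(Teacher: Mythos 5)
Your proof is correct and takes essentially the same route as the paper, whose entire argument is the one-line assertion that $\Rad(R_x)$ is the degree-zero part of $\Rad^{\gr}(R_x^{\gr})$ followed by an appeal to part~(4) of the preceding lemma; you simply supply the verification of that assertion (both inclusions, via semisimplicity of the quotient and the homogeneity of inverses of homogeneous units). One remark: since $b$ ranges only over $R_x=(R_x^{\gr})_0$, every $b$ you use is already homogeneous of degree zero, so the stronger graded-ring fact you flag as the delicate point (quasi-regularity against arbitrary, possibly inhomogeneous $b$) is not actually needed.
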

\begin{proof}
  Since $\Rad(R_x)$ is the degree zero part of
  $\Rad^{\gr}(R_x^{\gr})$, the assertion follows from the preceding
  lemma.
\end{proof}
\begin{numb}\label{numb:categorical-localization}
  For $x\in\XX$ we denote by
  $\Hh_x=\Hh/\spitz{\coprod_{y\neq x}\Uu_y}$ the quotient category,
  modulo a Serre subcategory, where all tubes except $\Uu_x$ are
  ``removed'', and by $p_x\colon\Hh\ra\Hh_x$ the quotient functor.
\end{numb}
\begin{lemma}
  The object $L_x=p_x(L)$ is an indecomposable projective generator of
  $\Hh_x$. Accordingly, for the ring $V_x=\End_{\Hh_x}(L_x)$ we have
  $\Hh_x\simeq\mod(V_x)=\mod_+(V_x)\vee\mod_0(V_x)$, with
  $\mod_0(V_x)\simeq\Uu_x$ the finite length modules and $\mod_+(V_x)$
  the finitely generated torsionfree modules.
\end{lemma}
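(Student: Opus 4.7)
The plan is to verify four things in turn: (i) $L_x$ is indecomposable, (ii) $L_x$ is projective in $\Hh_x$, (iii) $L_x$ generates, and (iv) the resulting equivalence $\Hh_x\simeq\mod(V_x)$ splits as $\mod_+(V_x)\vee\mod_0(V_x)$. Throughout I will use Gabriel's description of morphisms in the Serre quotient together with the fact that line bundles are torsionfree.

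For (i), since $L$ admits no nonzero subobject of finite length, the standard formula for Hom in the quotient gives $V_x=\End_{\Hh_x}(L_x)=\varinjlim\Hom_{\Hh}(L',L)$, the limit taken over line subbundles $L'\subseteq L$ with cokernel in $\spitz{\coprod_{y\neq x}\Uu_y}$; a cofinal subsystem is furnished by the iterated universal extensions $L(-n_1y_1-\dots-n_ry_r)$ with $y_i\neq x$. By matching this cofinal system against the denominator set $\Cc(P_x)$ used in the graded localization of $\Pi(L,\sigma_x)$, I will identify $V_x$ with the degree-zero component $R_x$ studied above. By Proposition~\ref{prop:Rx-modulo-radical} and the lemmas preceding it, $V_x\simeq R_x$ is then a noetherian ring whose only nonzero graded prime is generated by $\pi_x$; in particular $V_x$ is local, so $L_x$ is indecomposable.

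For (ii), Serre duality gives $\Ext^1_{\Hh}(L,T)=\D\Hom_{\Hh}(T,\tau L)$, and as $\tau L$ is again a line bundle, hence torsionfree, this vanishes on every $T$ of finite length, in particular on $\spitz{\coprod_{y\neq x}\Uu_y}$. The standard computation of Ext in a Serre quotient as a filtered colimit of Ext groups in $\Hh$ then forces $\Ext^1_{\Hh_x}(L_x,-)=0$. For (iii), pick any $y\neq x$ and invoke ampleness of $(L,\sigma_y)$: every $E\in\Hh$ admits an epimorphism $\bigoplus_i L(-\alpha_i y)\twoheadrightarrow E$, and Lemma~\ref{lem:cokernel-pi-n} applied at $y$ shows that the cokernel of $\pi_y^{\alpha}\colon L(-\alpha y)\to L$ lies in $\Uu_y\subseteq\spitz{\coprod_{z\neq x}\Uu_z}$, so that $p_x(L(-\alpha y))\simeq L_x$; applying $p_x$ therefore yields a surjection ${L_x}^n\twoheadrightarrow p_x(E)$.

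Given (i)--(iii), the functor $\Hom_{\Hh_x}(L_x,-)\colon\Hh_x\to\mod(V_x)$ is exact and faithful, and is an equivalence by the usual Morita-type argument valid in any Hom-finite small abelian category with an indecomposable projective generator of noetherian endomorphism ring. Under this equivalence, the finite-length objects of $\Hh_x$ are precisely $p_x(\Hh_0)\simeq\Uu_x$, giving $\mod_0(V_x)\simeq\Uu_x$; and since $V_x\simeq R_x$ is a noetherian domain with a unique nonzero prime, every finitely presented $V_x$-module splits as torsion plus torsionfree, producing the desired decomposition with $\mod_+(V_x)$ the torsionfree part corresponding to the vector bundles. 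I expect the main obstacle to be step (i): carrying out the identification $V_x\simeq R_x$ between the categorical and ring-theoretic localizations is what unlocks all the ring-theoretic input (noetherian, domain, unique nonzero prime) needed both for indecomposability of $L_x$ and for the final $\vee$-decomposition; the remaining three steps are essentially formal once this link has been set up.
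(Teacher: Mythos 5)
Your outline follows the paper's architecture (generator via ampleness at a point $y\neq x$, projectivity, Morita equivalence, then the torsion/torsionfree splitting), but two of the four steps rest on incorrect inferences.

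\emph{Indecomposability.} From ``the only nonzero graded prime of $R_x^{\gr}$ is generated by $\pi_x$'' you conclude that $V_x\simeq R_x$ is local. That is false in general: by Proposition~\ref{prop:Rx-modulo-radical} we have $R_x/\Rad(R_x)\simeq\matring_{e(x)}\bigl(\End(S_x)\bigr)$, a matrix ring, so $R_x$ is local only when $e(x)=1$ (and the paper stresses that the $R_x$ are ``in general not local''). What you actually need, and what the paper uses, is much weaker: $p_x$ induces an injective ring homomorphism $V_x\ra k(\Hh)$, so $V_x$ is a domain, hence has no nontrivial idempotents, hence $L_x$ is indecomposable. (Your identification $V_x\simeq R_x$ would also deliver domain-ness, since $R_x$ sits inside the graded quotient division ring; but it is proved in the paper only \emph{after} this lemma, so if you want to use it you must check its proof does not depend on the present statement --- it doesn't, but this should be said.)

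\emph{Projectivity.} Knowing $\Ext^1_{\Hh}(L,T)=\D\Hom(T,\tau L)=0$ for all $T$ of finite length does \emph{not} force $\Ext^1_{\Hh_x}(L_x,-)=0$. The objects of $\Hh_x$ are images of arbitrary coherent sheaves, and $\Ext^1_{\Hh}(L,A)$ for $A$ a vector bundle is typically nonzero (it is $\genus\cdot\kappa^2$-dimensional for $A=L$); the content of projectivity is precisely that these extensions die in the quotient. In the colimit description $\Ext^1_{\Hh_x}(L_x,p_x(A))=\varinjlim\Ext^1_{\Hh}(L',A)$ over subobjects $L'\subseteq L$ with $L/L'\in\spitz{\coprod_{y\neq x}\Uu_y}$, the vanishing comes from restricting along $L(-ny)\hookrightarrow L$ ($y\neq x$, $n\gg 0$) and observing $\Ext^1(L(-ny),A)=\D\Hom(A,\tau L(-ny))=0$ for $n\gg 0$ by a degree argument --- not from the vanishing on torsion coefficients. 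As written, this step is a genuine gap. Finally, in step (iv) the splitting of the torsion submodule off a finitely generated $V_x$-module needs that finitely generated torsionfree modules are projective (i.e.\ hereditariness), not merely that $V_x$ is a noetherian domain with a unique nonzero prime; the cleanest route is to note that $\Hh=\Hh_+\vee\Hh_0$ already splits in $\Hh$ (again by $\Ext^1(\text{bundle},\text{torsion})=0$ from Serre duality) and apply $p_x$.
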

\begin{proof}
  Let $y\in\XX$ be another point, $y\neq x$. Then $\pi_y$ induces an
  isomorphism $L_x(-y)\simeq L_x$. Using ampleness of the pair
  $(L,\sigma_y)$ we see that $L_x$ is a generator for $\Hh_x$. It is
  easy to see that for each exact sequence $\eta\colon 0\ra A\ra B\ra
  L\ra 0$ in $\Hh$ the exact sequence $p_x(\eta)$ in $\Hh_x$ splits,
  showing that $L_x$ is a projective object in $\Hh_x$. From this it
  follows that $\Hom_{\Hh_x}(L_x,-)\colon\Hh_x\ra\mod(V_x)$ is an
  equivalence. It is easy to see that $p_x$ induces an injective
  homomorphism $V_x\ra k(\Hh)$ of rings, and thus $V_x$ is a
  domain. We infer that $L_x$ is indecomposable.
\end{proof}
\begin{proposition}
  There is an isomorphism of rings $R_x\simeq V_x$.
\end{proposition}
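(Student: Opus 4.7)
My plan is to define a natural ring homomorphism $\phi\colon R_x\to V_x$ and verify that it is bijective, with the main obstacle being surjectivity.

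The map $\phi$ is constructed as follows. A degree-zero element of $R_x^{\gr}$ is an Ore fraction $r\cdot s^{-1}$, where $r,s\colon L\to\sigma_x^n(L)=L(nx)$ are homogeneous of the same degree $n$ in $\Pi(L,\sigma_x)$ and $s\in\Cc(P_x)$. By the preceding lemma, $s\in\Cc(P_x)$ is equivalent to the cokernel of $s$ lying in $\coprod_{y\neq x}\Uu_y$, so under the quotient functor $p_x\colon\Hh\to\Hh_x$ the cokernel of $p_x(s)$ vanishes; since $s$ is also monic (a non-zero map between line bundles), $p_x(s)\colon L_x\to L(nx)_x$ is an isomorphism in $\Hh_x$. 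I set
\[
\phi(r\cdot s^{-1}):=p_x(s)^{-1}\circ p_x(r)\in\End_{\Hh_x}(L_x)=V_x.
\]
That $\phi$ is well-defined and a ring homomorphism follows from the universal property of Ore localization applied to the graded homomorphism $\Pi(L,\sigma_x)\to\bigoplus_{n\geq 0}\Hom_{\Hh_x}(L_x,L(nx)_x)$ induced by $p_x$, in which every element of $\Cc(P_x)$ becomes invertible; the multiplication rule $g\ast f=\sigma_x^m(g)\circ f$ in the orbit algebra matches composition via functoriality of $p_x$. Injectivity is direct: if $\phi(r\cdot s^{-1})=0$ then $p_x(r)=0$, so the image of $r\colon L\to L(nx)$ is a subobject of a line bundle lying in the Serre subcategory $\spitz{\coprod_{y\neq x}\Uu_y}$; since the latter consists of torsion objects while $L(nx)$ is torsion-free, the image must be zero, forcing $r=0$.

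Surjectivity is the main obstacle. An element $\alpha\in V_x=\Hom_{\Hh_x}(L_x,L_x)$ is represented by some $a\colon L'\to L$ in $\Hh$ with $L/L'\in\spitz{\coprod_{y\neq x}\Uu_y}$, and I need homogeneous $r,s\colon L\to L(nx)$ with $s\in\Cc(P_x)$ and $p_x(s)^{-1}p_x(r)=\alpha$. The cleanest route is via the Serre construction~\eqref{eq:Serre-construction}: the categorical quotient $\Hh\to\Hh_x$ corresponds on the graded-ring side precisely to the graded Ore localization $\Pi(L,\sigma_x)\to R_x^{\gr}$ at $\Cc(P_x)$. Granting the identification of the Serre subcategory of $\mod^{\ZZ}(\Pi(L,\sigma_x))$ corresponding to $\spitz{\coprod_{y\neq x}\Uu_y}$ with the graded $\Cc(P_x)$-torsion modules, one obtains an equivalence $\Hh_x\simeq\mod(R_x)$ under which $L_x$ corresponds to the free right $R_x$-module $R_x$, and hence
\[
V_x=\End_{\Hh_x}(L_x)=\End_{R_x}(R_x)=R_x,
\]
with $\phi$ becoming the identity. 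The delicate step is precisely this identification of Serre subcategories, which relies on ampleness of $(L,\sigma_x)$ together with the characterization of $\Cc(P_x)$-regularity via cokernels from the preceding lemma.
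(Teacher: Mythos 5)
Your construction of $\phi$ and the injectivity argument are sound and correspond to the inclusion $R_x\subseteq V_x$, which the paper also treats as the easy half (``by using the definition of morphisms in the quotient category''). The problem is surjectivity, which is where all the content of the proposition lies, and your proposal does not actually prove it: the sentence ``Granting the identification of the Serre subcategory \dots with the graded $\Cc(P_x)$-torsion modules'' assumes precisely what has to be shown. A general element of $V_x=\End_{\Hh_x}(L_x)$ is represented by a roof $L\stackrel{s}\longleftarrow L'\stackrel{r}\ra L$ where $L'\subseteq L$ is a line bundle with $L/L'\in\coprod_{y\neq x}\Uu_y$, and there is no a priori reason why $L'$ should be of the form $L(-nx)$, i.e.\ why the denominator should be realizable as a homogeneous element of the orbit algebra $\Pi(L,\sigma_x)$ lying in $\Cc(P_x)$. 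Equivalently, it is not automatic that every object of the Serre subcategory $\spitz{\coprod_{y\neq x}\Uu_y}$ is $\Cc(P_x)$-torsion as a graded module; establishing this is the same difficulty in different clothing, so the appeal to the Serre construction does not shorten the argument.

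The paper closes exactly this gap. Given $0\ra L'\stackrel{s}\ra L\ra C\ra 0$ with $C\in\coprod_{y\neq x}\Uu_y$, ampleness of $(L,\sigma_x)$ yields an epimorphism $f=(f_1,\dots,f_n)\colon\bigoplus_{i=1}^n L(-\alpha_i x)\ra L'$. The key observation is that not every component can have a cokernel with a nonzero summand in $\Uu_x$: if each $\Coker(f_i)$ surjected onto $S_x$, each $f_i$ would factor through $\pi_x$, hence so would $f$, contradicting surjectivity (as $\pi_x$ is not an epimorphism). So some $f_i\colon L(-\alpha_i x)\ra L'$ has $\Coker(sf_i)\in\coprod_{y\neq x}\Uu_y$, i.e.\ $sf_i\in\Cc(P_x)$, and then $rs^{-1}=(rf_i)(sf_i)^{-1}$ exhibits the class as an element of $R_x$. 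Your write-up needs this replacement-of-denominator step (or an equivalent torsion computation) to be a complete proof; as it stands, the ``delicate step'' you flag is the proposition itself.
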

\begin{proof}
  By using the definition of morphisms in the quotient category we see
  easily $R_x\subseteq V_x$. Let $0\ra L'\stackrel{s}\lra L\ra C\ra 0$
  be an exact sequence in $\Hh$ with $L'$ a line bundle and
  $C\in\coprod_{y\neq x}\Uu_y$. By ampleness of $(L,\sigma_x)$ there
  is an epimorphism $f=(f_1,\dots,f_n)\colon\bigoplus_{i=1}^n
  L(-\alpha_i x)\ra L'$ (with $\alpha_i\geq 1$). If we assume that
  each $C_i=\Coker(f_i)$ has a non-zero summand in $\Uu_x$, then there
  is an epimorphism $C_i\ra S_x$, and thus we can write
  $f_i=\pi_x\circ f'_i$. But then $f=\pi_x\circ f'$ is not surjective,
  giving a contradiction. Thus there is $i$ such that
  $C_i\in\coprod_{y\neq x}\Uu_y$. We conclude that there is $f\colon
  L(-\alpha x)\ra L'$ such that $s\circ f\colon L(-\alpha x)\ra L$ is
  a non-zero homogeneous element in $\Pi(L,\sigma_x)$ with
  $\Coker(sf)\in\coprod_{y\neq x}\Uu_y$, that is,
  $sf\in\Cc(P_x)$. Thus we can write $rs^{-1}=(rf)(sf)^{-1}$, from
  which we infer the converse inclusion.
\end{proof}
\begin{corollary}
  For each $x\in\XX$ we have $\Uu_x\simeq\mod_0(R_x)$. \qed
\end{corollary}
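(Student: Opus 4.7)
The plan is to simply combine the two results immediately preceding the corollary. From the last lemma we know that the categorical localization functor $p_x\colon\Hh\to\Hh_x$ gives $\Hh_x\simeq\mod(V_x)$ via the equivalence $\Hom_{\Hh_x}(L_x,-)$, and that under this equivalence the subcategory of finite length modules $\mod_0(V_x)$ corresponds to $\Uu_x$ (the tubes $\Uu_y$ with $y\neq x$ were quotiented out, so $\Uu_x$ is precisely the full subcategory of $\Hh_x$ of finite length objects). This already yields $\Uu_x\simeq\mod_0(V_x)$.

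The preceding proposition then identifies $R_x\simeq V_x$ as rings. Transporting along this isomorphism gives the equivalence $\mod_0(V_x)\simeq\mod_0(R_x)$ of finite length module categories, and composing with the equivalence from the previous paragraph produces $\Uu_x\simeq\mod_0(R_x)$, which is exactly the claim.

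The only thing to be a little careful about is the identification of $\Uu_x$ with $\mod_0(V_x)$ inside $\Hh_x\simeq\mod(V_x)$: since $p_x$ is a Serre quotient by $\spitz{\coprod_{y\neq x}\Uu_y}$, it restricts to a fully faithful exact embedding of $\Uu_x$ into $\Hh_x$ whose essential image consists of the objects of finite length; under $\Hom_{\Hh_x}(L_x,-)$ these go to the finitely generated $V_x$-modules of finite length, i.e.\ $\mod_0(V_x)$. Since $R_x$ and $V_x$ are isomorphic as rings, this step is straightforward. There is no substantive obstacle here; the corollary is essentially a concatenation of the two preceding statements, which is why the text marks it with \qed rather than giving a separate proof.
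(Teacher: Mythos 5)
Your proposal is correct and is exactly what the paper intends: the corollary carries a \qed with no separate argument precisely because it is the concatenation of the preceding lemma (which already asserts $\Uu_x\simeq\mod_0(V_x)$ inside $\Hh_x\simeq\mod(V_x)$) with the proposition $R_x\simeq V_x$. Your extra remark on identifying $\Uu_x$ with the finite length objects of the Serre quotient is a sensible sanity check but introduces nothing beyond what the lemma already states.
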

\begin{corollary}\label{cor:nc-dedekind}
  Each ring $R_x$ is a noncommutative Dedekind domain with unique
  non-zero prime ideal given by $\Rad(R_x)$.
\end{corollary}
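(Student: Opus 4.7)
My plan is to verify the two assertions separately: that $R_x$ is a noncommutative Dedekind domain (i.e., hereditary, noetherian, prime), and that $\Rad(R_x)$ is the unique non-zero prime ideal.

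For the Dedekind property, noetherianness and hereditariness transfer from $\Hh_x \simeq \mod(R_x)$ to $R_x$. Namely, $\Hh_x$ is a Serre quotient of the hereditary category $\Hh$, hence itself hereditary; and $L_x$ is a noetherian generator of $\Hh_x$, so $R_x = \End_{\Hh_x}(L_x)$ is right noetherian (and by a symmetric argument left noetherian). The fact that $R_x$ is a domain was already established in the course of proving $R_x \simeq V_x$.

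For the unique non-zero prime, Proposition \ref{prop:Rx-modulo-radical} already gives that $R_x/\Rad(R_x) \simeq \matring_{e(x)}\bigl(\End(S_x)\bigr)$ is simple artinian, so $\Rad(R_x)$ is a maximal (hence prime) ideal. The key step is to show that every non-zero two-sided ideal $I$ of $R_x$ contains a power of $\Rad(R_x)$. Under the equivalence $\mod(R_x) \simeq \Hh_x$, the right $R_x$-module $R_x/I$ corresponds to a proper quotient of the rank-one object $L_x$; since $I$ contains a regular element (as $R_x$ is a domain), this quotient is torsion, i.e., lies in $\mod_0(R_x) \simeq \Uu_x$. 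Now $\Uu_x$ is a tube with, thanks to the non-weighted assumption $p(x)=1$, a unique simple object $S_x$, so every object of $\Uu_x$ admits a composition series with all factors isomorphic to $S_x$, and is consequently annihilated by some power $\Rad(R_x)^n$. This forces $\Rad(R_x)^n \subseteq I$; if $I$ is prime, primality yields $\Rad(R_x) \subseteq I$, and maximality then gives $I = \Rad(R_x)$.

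The main obstacle I expect is the technical step that $R_x/I$ really lies in $\mod_0(R_x)$. Concretely, one has to argue that if $r \in R_x$ is regular, then the cokernel of multiplication by $r$ on $L_x$ is of finite length in $\Hh_x$. This should follow from the fact that non-zero morphisms between line bundles in $\Hh$ are monomorphisms with torsion cokernel (they preserve rank), combined with the observation that this rank-preserving behaviour descends to the Serre quotient $\Hh_x$, so that any proper epimorphism $L_x \twoheadrightarrow L_x/rL_x$ has target in $\mod_0(R_x) \simeq \Uu_x$.
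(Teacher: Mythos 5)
Your treatment of the hereditary--noetherian--prime structure and of the uniqueness of the non-zero prime is essentially sound, and your route to the latter is a bit more self-contained than the paper's: you show directly that any non-zero two-sided ideal $I$ contains a power of $J=\Rad(R_x)$ (via the finite-length cokernel of a non-zero $r\in I$ acting on $L_x$, so that $R_x/I\in\mod_0(R_x)\simeq\Uu_x$), whereas the paper quotes \cite[5.2.9]{mcconnell:robson:2001} to get that every non-zero ideal is a power of $J$. However, there is a genuine gap at the word ``Dedekind''. In the convention the paper uses, a Dedekind prime ring is \emph{not} just a hereditary noetherian prime (HNP) ring: it is an HNP ring which is moreover a maximal order, equivalently has no idempotent two-sided ideals other than $0$ and the ring itself (\cite[5.6.3]{mcconnell:robson:2001}), and this is a real extra condition -- idealizer constructions produce HNP \emph{domains} that are not Dedekind. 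Verifying exactly this condition is what occupies most of the paper's proof: it establishes $\bigcap_{n\geq 0}J^n=0$, introduces the valuation $v(r)$, and uses it to rule out proper non-zero idempotent ideals before invoking \cite[5.6.3]{mcconnell:robson:2001}. Your proposal, by defining ``Dedekind domain'' as ``hereditary, noetherian, prime'', never addresses idempotent ideals at all, so as written it only proves that $R_x$ is an HNP domain with a unique non-zero prime.

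The gap is easy to close with the facts you already assemble: if $I=I^2$ is a non-zero proper ideal, then $I\subseteq J$ because $J$ is the unique maximal ideal, hence $I=I^2\subseteq IJ\subseteq I$ gives $I=IJ$, and Nakayama's lemma (applicable since $I$ is finitely generated by noetherianness and $J$ is the Jacobson radical) forces $I=0$, a contradiction; then \cite[5.6.3]{mcconnell:robson:2001} yields the Dedekind property. A second, minor point: the equivalence $\mod(R_x)\simeq\Hh_x$ is an equivalence of \emph{right} module categories, so it only gives that $R_x$ is right hereditary; for the two-sided statement the paper appeals to Small's theorem \cite[Cor.~3]{small:1967} (a right hereditary noetherian ring is left hereditary), and your ``symmetric argument'' should be replaced by, or supplemented with, that citation.
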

\begin{proof}
  $R_x$ is right hereditary since $\mod(R_x)\simeq\Hh_x$ is
  hereditary. Since $R_x$ is noetherian, by~\cite[Cor.~3]{small:1967}
  it is also left hereditary. It follows from
  Proposition~\ref{prop:Rx-modulo-radical} that the radical
  $J=\Rad(R_x)$ is the only (two-sided) maximal ideal. As
  in~\cite[4.3.20]{mcconnell:robson:2001} one shows $\bigcap_{n\geq 0}
  J^n=0$. Thus, if $r\in R_x$, $r\neq 0$, then there is $v(r)=n$ with
  $r\in J^n$ but $r\not\in J^{n+1}$. Let $I$ be a non-zero idempotent
  ideal in $R_x$. Let $0\neq r\in I$ with $v(r)$ minimal. From the
  condition $I=I^2$ we get $v(r)=0$, and then $I=R_x$ since $J$ is
  maximal. Thus $R_x$ is Dedekind
  by~\cite[5.6.3]{mcconnell:robson:2001}. By~\cite[5.2.9]{mcconnell:robson:2001}
  each non-zero ideal is of the form $J^n$, and it follows that $J$ is
  the only non-zero prime ideal.
\end{proof}
We also consider the category
$\QHh=\Qcoh\XX=\frac{\Mod^{\ZZ}(\Pi(L,\sigma_x))}{\Mod_0^{\ZZ}(\Pi(L,\sigma_x))}$
of quasicoherent sheaves, where $\Mod_0$ denotes the localizing Serre
subcategory of torsion (that is, locally finite length) graded
modules. This is a hereditary, locally noetherian Grothendieck
category. In this we can consider the \emph{Pr\"ufer sheaf}
$S_x[\infty]$ for $x\in\XX$, which is the union
$\bigcup_{n\geq 1}S_x[n]$, that is, the direct limit of the direct
system $(S_x[n],\iota_n)$, and thus is a quasicoherent torsion sheaf. We
now have the main result of this section.
\begin{proposition}\label{prop:main-ring-iso}
  For the $\Rad(R_x)$-adic completion of $R_x$ we
  have $$\widehat{R}_x\simeq\matring_{e(x)}\bigl(\End(S_x[\infty])\bigr).$$ 
\end{proposition}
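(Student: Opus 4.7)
The plan is to compute $\widehat{R}_x = \varprojlim_n R_x/J^n$ (where $J = \Rad(R_x)$) by identifying each quotient $R_x/J^n$ as an object in the tube $\Uu_x$ under the equivalence $\mod_0(R_x)\simeq\Uu_x$ established in the preceding corollary, and then taking limits.

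First, I would identify $R_x/J^n$ up to isomorphism inside $\Uu_x$. By Proposition~\ref{prop:Rx-modulo-radical} we have $R_x/J \simeq \matring_{e(x)}(\End(S_x))$, so as a right $R_x$-module the socle $J^{n-1}/J^n$ of $R_x/J^n$ is isomorphic to $S_x^{e(x)}$; in particular $R_x/J^n$ has exactly $e(x)$ indecomposable summands. Since $\Uu_x$ is uniserial and $R_x/J^n$ has total length $n\cdot e(x)$ (by the chain $J^n \subsetneq J^{n-1}\subsetneq\dots\subsetneq R_x$, each factor $J^i/J^{i+1}$ having length $e(x)$, a fact obtained by applying the Dedekind structure of $R_x$ established above), each summand must be the unique indecomposable of length $n$ in $\Uu_x$. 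Thus $R_x/J^n \simeq S_x[n]^{e(x)}$ in $\Uu_x$.

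Next, I would use the canonical ring isomorphism $R_x/J^n \simeq \End_{R_x}(R_x/J^n)$ given by left multiplication, transported through the identification above, to obtain
\[
R_x/J^n \;\simeq\; \End_{\Uu_x}\!\bigl(S_x[n]^{e(x)}\bigr) \;\simeq\; \matring_{e(x)}\bigl(\End(S_x[n])\bigr)
\]
as rings. Taking inverse limits over $n$ and using that matrices commute with $\varprojlim$ gives $\widehat{R}_x \simeq \matring_{e(x)}\bigl(\varprojlim_n \End(S_x[n])\bigr)$. Finally, since $S_x[\infty] = \varinjlim_n S_x[n]$ is the Pr\"ufer sheaf in $\QHh$ with structure maps $\iota_n$, and since every morphism $S_x[n]\ra S_x[\infty]$ factors through the image of $S_x[n]$ in $S_x[\infty]$ (the kernel of $J^n$, which is $S_x[n]$ itself), the standard computation for direct limits yields
\[
\End(S_x[\infty]) \;=\; \varprojlim_n \Hom(S_x[n], S_x[\infty]) \;=\; \varprojlim_n \End(S_x[n]),
\]
completing the desired identification.

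The main obstacle is the compatibility of the two inverse systems in the last step: I need to check that under the identification $R_x/J^n\simeq\matring_{e(x)}(\End(S_x[n]))$, the quotient maps $R_x/J^{n+1}\ra R_x/J^n$ correspond (entry-wise) to the restriction maps $\End(S_x[n+1])\ra\End(S_x[n])$ induced by $\iota_n\colon S_x[n]\hookrightarrow S_x[n+1]$. This is a diagram-chase: an element $r + J^{n+1}$ acts on $R_x/J^{n+1}\simeq S_x[n+1]^{e(x)}$ by left multiplication, and its reduction mod $J^n$ acts on $R_x/J^n\simeq S_x[n]^{e(x)}$; the naturality of the inclusions $S_x[n]\hookrightarrow S_x[n+1]$ (together with the fact that left multiplication by $r$ preserves the submodule $J^n/J^{n+1}\subseteq R_x/J^{n+1}$, which corresponds to the socle-filtration submodule $S_x^{e(x)}\subseteq S_x[n+1]^{e(x)}$) gives precisely the required restriction-compatibility.
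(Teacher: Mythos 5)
Your route is genuinely different from the paper's. The paper obtains the matrix decomposition abstractly: by Lam's results the completion is semiperfect with $\widehat{R}_x/\Rad(\widehat{R}_x)\simeq\matring_{e(x)}(D_x)$, lifting idempotents gives $\widehat{R}_x\simeq\matring_{e(x)}(\widehat{E}_x)$ for \emph{some} complete local ring $\widehat{E}_x$, and $\widehat{E}_x$ is then identified with $\End(S_x[\infty])$ solely via Gabriel's uniqueness of the complete local ring $\Lambda$ with $\mod_0(\Lambda)\simeq\Uu_x$. You instead compute $\varprojlim_n R_x/J^n$ by hand. The identification $R_x/J^n\simeq S_x[n]^{e(x)}$ and the ring isomorphism $R_x/J^n\simeq\matring_{e(x)}(\End(S_x[n]))$ via left multiplication are sound (the length count for $J^i/J^{i+1}$ is fine because $J^i$ is an invertible ideal, so $-\otimes_{R_x}J^i$ is a length-preserving self-equivalence), and this is more explicit than the paper's argument.

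The gap is in the final compatibility step, and it is not a mere formality. The quotient maps $R_x/J^{n+1}\ra R_x/J^n$ are realized on the tube side by the surjections $S_x[n+1]^{e(x)}\ra\bigl(S_x[n+1]/\Soc\bigr)^{e(x)}\simeq S_x[n]^{e(x)}$, i.e.\ by $\pi_n$; on endomorphism rings they induce $f\mapsto\bar f$ where $\pi_n f=\bar f\pi_n$. The system computing $\End(S_x[\infty])=\varprojlim_n\Hom(S_x[n],S_x[\infty])$ has transition maps $f\mapsto f'$ where $f\iota_n=\iota_n f'$. These two families of maps do \emph{not} coincide: $f'=\bar f$ would require $f$ to commute with $\iota_n\pi_n$, the radical generator of $\End(S_x[n+1])$, and the failure of exactly this commutation is the Auslander--Reiten twist that Sections~\ref{sec:bimodule-of-tube} and~\ref{sec:complete-local-rings} are about. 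In the model $\End(S_x[\infty])\simeq D[[T,\tau^-]]$ with maximal ideal $\mathfrak{m}=(T)$, one system is the canonical surjection $\End(S_x[\infty])/\mathfrak{m}^{n+1}\ra\End(S_x[\infty])/\mathfrak{m}^{n}$ and the other is its composite with $\tau$. So the diagram chase you describe establishes compatibility with the wrong inverse system. The conclusion survives, but you need one more argument to identify the two limits: either re-index by powers of $\tau$ (which is induced by conjugation with the normal generator of the radical and hence defines a compatible automorphism of each $\End(S_x[n])$, converting one system into the other), or observe that both limits are complete local rings $\Lambda$ with $\mod_0(\Lambda)\simeq\Uu_x$ and invoke Gabriel's uniqueness theorem --- which is precisely how the paper closes its own proof.
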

\begin{proof}
  By~\cite[Thm.~21.31]{lam:1991} the completion $\widehat{R}_x$ is
  semiperfect, it satisfies, with $D_x=\End(S_x)$,
  \begin{equation}
    \label{eq:completion-mod-rad}
    \widehat{R}_x/\Rad(\widehat{R}_x)\simeq
    R_x/\Rad(R_x)\simeq\matring_{e(x)}(D_x),
  \end{equation}
  and from~\cite[Thm.~23.10]{lam:1991} it follows that
  $\widehat{R}_x\simeq\matring_{e(x)}(\widehat{E}_x)$ for a complete
  local ring $\widehat{E}_x$. Moreover, for the categories of finite
  length modules we have
  $\mod_0(\widehat{E}_x)\simeq\mod_0(\widehat{R}_x)\simeq\mod_0(R_x)\simeq\Uu_x$. The
  result follows now, since the complete local ring
  $\End(S_x[\infty])$ is uniquely determined such that
  $\mod_0(\End(S_x[\infty]))\simeq\Uu_x$,
  by~\cite[IV.~Prop.~13]{gabriel:1962}.
\end{proof}

\section{Serre duality and the bimodule of a homogeneous tube}
\label{sec:bimodule-of-tube}
In~\cite{gabriel:1973} P.\ Gabriel defined the \emph{species} of a
uniserial category $\Uu$. In the most basic situation, when there is
(up to isomorphism) only one simple object $S$ in $\Uu$, like in the
case of a homogeneous tube, then this species is just the bimodule
${}_{\End(S)}\Ext^1(S,S)_{\End(S)}$. In order to describe this
bimodule more precisely, we derive from~\cite{lenzing:zuazua:2004}
some general facts about Serre duality.

We call a $k$-bilinear map $\PP{-}{-}\colon V\times W\ra k$ a
\emph{perfect pairing}, if for each non-zero $x\in V$ there exists
$y\in W$ with $\PP{x}{y}\neq 0$, and if for each non-zero $y\in W$
there is $x\in V$ with $\PP{x}{y}\neq 0$.  Let $\Hh$ be a
noncommutative regular projective curve over the field $k$.  For each
indecomposable object $X\in\Hh$ we fix an almost split sequence
$\mu_X\colon 0\ra\tau X\ra E\ra X\ra 0$ and a $k$-linear map
$\kappa_X\colon\Ext^1(X,\tau X)\ra k$ with $\kappa_X(\mu_X)\neq
0$. Similarly, for $Y\in\Hh$ indecomposable and an almost split
sequence $\mu_{\tau^{-}Y}\colon 0\ra Y\ra F\ra\tau^{-}Y\ra 0$ we fix
$\kappa_{\tau^{-}Y}\colon\Ext^1(\tau^{-}Y,Y)\ra k$ with
$\kappa_{\tau^{-}Y}(\mu_{\tau^{-}Y})\neq
0$. Then
$$\PP{-}{-}\colon\Ext^1(X,Y)\times\Hom(\tau^{-}Y,X)\ra k,\
(\eta,f)\mapsto\kappa_{\tau^{-}Y}(\eta\cdot f)$$ is a perfect pairing,
and similarly so is
$$\PP{-}{-}\colon\Hom(Y,\tau X)\times\Ext^1(X,Y)\ra k,\
(g,\eta)\mapsto\kappa_{X}(g\cdot\eta).$$ From these perfect pairings
we obtain Serre duality
\begin{equation}
  \label{eq:serre-duality-isos}
  \Hom(Y,\tau X)\stackrel{\psi_{XY}}\longrightarrow\D\Ext^1(X,Y)\stackrel{\phi_{XY}}
  \longleftarrow\Hom(\tau^{-}Y,X),
\end{equation}
where $\psi_{XY}\colon
f\mapsto\PP{f}{-}$ and $\phi_{XY}\colon g\mapsto\PP{-}{g}$ are
isomorphisms, natural in $X$ and $Y$.
\begin{proposition}\label{prop:AR-sequence}
  Let $X\in\Hh$ be indecomposable such that $\End(X)$ is a skew
  field. Denote by $\mu\colon 0\ra\tau X\stackrel{u}\ra
  E\stackrel{v}\ra X\ra 0$ the almost split sequence ending in
  $X$. For all $f\in\End(X)$ we have $$\tau(f)\cdot\mu=\mu\cdot f.$$
\end{proposition}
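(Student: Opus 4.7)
The plan is to use the almost-split property of $\mu$ to lift $f$ to an endomorphism $\phi$ of $E$, and to extract from $\phi$ an endomorphism $g$ of $\tau X$ fitting into a commutative diagram of short exact sequences of the form $\mu\to\mu$ with right component $f$ and left component $g$. By the universal properties of pullback and pushout, this diagram will directly identify $\mu\cdot f = g\cdot\mu$ in $\Ext^1(X,\tau X)$. The remaining task will be to identify $g$ with the Serre-duality $\tau(f)$.

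Since $\End(X)=D$ is a skew field, I may restrict to $f\in D^{\times}$; the case $f=0$ is trivial. The composition $fv\colon E\to X$ is not a split epimorphism, for otherwise $v$ would split, contradicting that $\mu$ does not split. By the right-almost-split property of $\mu$, there exists $\phi\colon E\to E$ with $v\phi=fv$. From $v(\phi u)=fvu=0$, the morphism $\phi u\colon\tau X\to E$ factors through the kernel $u\colon\tau X\hookrightarrow E$ as $\phi u = u\cdot g$ for some $g\in\End(\tau X)$. This $g$ is uniquely determined: any alternative lift $\phi'$ differs from $\phi$ by a morphism $E\to E$ factoring through $u$ (since $v(\phi-\phi')=0$), which would change $g$ by a summand of the form $\alpha\cdot u$ with $\alpha\colon E\to\tau X$; since $\End(\tau X)$ is a skew field and no nonzero endomorphism of $\tau X$ can factor through $u$ (else $\mu$ splits), this correction vanishes. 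The resulting morphism of short exact sequences $(g,\phi,f)\colon\mu\to\mu$ factors, by the universal property of the pullback, through $\mu\cdot f$ via a morphism $\mu\to\mu\cdot f$ with right component $\mathrm{id}_X$ and left component $g$; by the universal property of the pushout, this exhibits $\mu\cdot f$ as $g\cdot\mu$, so $g\cdot\mu=\mu\cdot f$ in $\Ext^1(X,\tau X)$.

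To identify $g$ with $\tau(f)$, I invoke the naturality of the Serre duality isomorphism $\psi_{X,\tau X}\colon\End(\tau X)\to\D\Ext^1(X,\tau X)$ in the variable $X$: applied to $f\colon X\to X$ and evaluated at $\mathrm{id}_{\tau X}$ (which $\psi$ sends to $\kappa_X$), one obtains the key identity $\kappa_X(\tau(f)\cdot\eta)=\kappa_X(\eta\cdot f)$ for every $\eta\in\Ext^1(X,\tau X)$. Combined with the perfect pairing $\End(\tau X)\times\Ext^1(X,\tau X)\to k$, this characterizes $\tau(f)$, and I then verify that $g$ satisfies the same characterization, using the identity $g\cdot\mu=\mu\cdot f$ already established and the fact that $\mu$ generates $\Ext^1(X,\tau X)$ as a right $D$-module (a consequence of Serre duality $\Ext^1(X,\tau X)\simeq\D\End(X)$ together with the skew-field hypothesis, which forces $\dim_D\Ext^1(X,\tau X)=1$).

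The hard part is this final identification $g=\tau(f)$: the concrete AR-theoretic construction of $g$ and the abstract Serre-functorial definition of $\tau(f)$ produce, \emph{a priori}, different endomorphisms of $\tau X$. Reconciling them requires careful bookkeeping with the perfect pairing and the bimodule structure on $\Ext^1(X,\tau X)\simeq\D\End(X)$; crucially, one uses that for a finite-dimensional skew field $D=\End(X)$ over the perfect base field $k$, the bimodule $\D D$ is isomorphic to $D$ itself, so that the functional on $\End(X)$ dual to $\mu$ is (up to a central unit) a trace functional, which is precisely the input needed to convert the $\kappa_X$-level naturality identity into the desired $\Ext^1$-level equality.
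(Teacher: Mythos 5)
Your construction of $g$ via the lifting property of the almost split sequence is correct and yields $g\cdot\mu=\mu\cdot f$; the paper reaches the same intermediate statement more cheaply by noting that $\Ext^1(X,\tau X)$ is one-dimensional as a left and as a right vector space over the skew field, so that a unique $f'$ with $f'\mu=\mu f$ exists. Your key identity $\kappa_X(\tau(f)\cdot\eta)=\kappa_X(\eta\cdot f)$ is also correctly derived from naturality and is exactly the paper's rule $\PP{1}{\eta f}=\PP{\tau(f)}{\eta}$. The problem is the final identification $g=\tau(f)$.

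To conclude $g=\tau(f)$ from perfectness of the pairing you must show $\kappa_X(g\cdot\eta)=\kappa_X(\tau(f)\cdot\eta)$ for \emph{every} $\eta=\mu\cdot h$, not only for $\eta=\mu$. Unwinding this with $g\mu=\mu f$ and the key identity reduces it to $\kappa_X(\mu\cdot(fh))=\kappa_X(\mu\cdot(hf))$ for all $h$, i.e.\ to the assertion that the linear form $\lambda=\kappa_X(\mu\cdot -)$ on $\End(X)$ kills commutators. You assert exactly this ("the functional dual to $\mu$ is, up to a central unit, a trace functional") and justify it by $\D D\simeq D$ as bimodules. That justification does not work: $\D D\simeq D$ only says that \emph{some} nondegenerate trace form exists on $D$; the trace forms span a subspace of $k$-dimension $[Z(D):k]$ inside $\D D$, which has dimension $[D:k]$, so when $D=\End(X)$ is noncommutative a nonzero functional is generically \emph{not} a trace form, and nothing you have established places $\lambda$ in that subspace. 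Indeed, "$\lambda$ vanishes on commutators" is \emph{equivalent} to the conclusion of the proposition, so your argument is circular at precisely the decisive point. (The perfectness of $k$, which you invoke here, is also not a hypothesis of the proposition.) The paper instead exploits that $\psi_{X,\tau X}$ is an isomorphism of bimodules and manipulates $\PP{f'}{-}$ and $\PP{\tau(f)}{-}$ as elements of $\D\Ext^1(X,\tau X)$ via the chain $\PP{f'}{\mu h}=h\cdot\PP{f'}{\mu}=h\cdot\PP{\tau(f)}{\mu}=\PP{\tau(f)}{\mu h}$, rather than comparing scalars one $\eta$ at a time; if you want to repair your argument you must either reproduce that bimodule-level computation or give an independent proof that $\kappa_X(\mu\cdot -)$ is a trace form.
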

\begin{proof}
  The isomorphism $\psi_{XY}$ from~\eqref{eq:serre-duality-isos} is
  natural in $X$ and $Y$ and thus, in particular, an isomorphism of
  $\End(X)-\End(Y)$-bimodules. Then we have the following rules:
  \begin{equation*}
    \PP{f}{g\eta}=\PP{fg}{\eta},\quad\PP{f\eta}{g}=\PP{f}{\eta
      g},\quad\PP{\eta}{fg}=\PP{\eta f}{g},\quad\PP{f}{\eta g}=\PP{\tau(g)f}{\eta}
  \end{equation*}
  (compare~\cite[(3.2)]{lenzing:zuazua:2004}). The last equality is
  just $\End(X)$-linearity. Moreover, by definition of the
  $\End(X)-\End(Y)$-bimodule structure on $\D\Ext^1(X,Y)$ we have
  $\PP{g}{\eta f}=f\cdot\PP{g}{\eta}$ for all $f\in\End(X)$. Let now
  $Y=\tau X$ and $\mu\in\Ext^1(X,\tau X)$ be the almost split
  sequence. Since $D=\End(X)\simeq\End(\tau X)$ is a skew field,
  $M=\Ext^1(X,\tau X)$ is a onedimensional $D-D$-bimodule, in
  particular $D\mu=M=\mu D$. Thus for each $f\in D=\End(X)$ there is a
  unique $f'\in D=\End(\tau X)$ such that $f'\mu=\mu f$. We have to
  show that $f'=\tau(f)$. Let $\eta\in\Ext^1(X,\tau X)$. Then there is
  an $h\in\End(X)$ such that $\eta=\mu\cdot h$. First we have
  $$\PP{f'}{\mu}=\PP{1}{f'\mu}=\PP{1}{\mu
    f}=\PP{\mu}{f}=\PP{\tau(f)}{\mu},$$ and
  then $$\PP{f'}{\eta}=\PP{f'}{\mu\cdot
    h}=h\cdot\PP{f'}{\mu}=h\cdot\PP{\tau(f)}{\mu}=\PP{\tau(f)}{\mu\cdot
    h}=\PP{\tau(f)}{\eta}.$$ Since $\PP{-}{-}$ is a perfect pairing we
  conclude $f'=\tau(f)$, finishing the proof.
\end{proof}
\begin{corollary}\label{cor:AR-f-tau-f}
  Let $\End(X)$ be a skew field. Let $f\in\End(X)$ such that there is a
  commutative diagram $$\xymatrix{ \mu\colon & 0 \ar @{->}[r] & \tau
    X\ar @{->}[r]^-{u} \ar @{->}[d]_-{f'} & E\ar @{->}[r]^-{v} \ar
    @{->}[d]^-{g} & X \ar @{->}[r] \ar @{->}[d]^-{f} & 0\\
    \mu\colon & 0 \ar @{->}[r] & \tau X\ar @{->}[r]^-{u} & E\ar
    @{->}[r]^-{v} & X\ar @{->}[r] & 0.}$$ Then $f'=\tau(f)$ holds.
\end{corollary}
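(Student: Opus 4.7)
The plan is to read the commutative diagram as an equation in $\Ext^1(X,\tau X)$ and then compare with Proposition~\ref{prop:AR-sequence}. Concretely, the commutativity of the two squares in the diagram is the standard statement that the morphism of short exact sequences realizes $f'\cdot\mu=\mu\cdot f$ in $\Ext^1(X,\tau X)$, where on the left $f'\in\End(\tau X)$ acts by pushout along $u$, and on the right $f\in\End(X)$ acts by pullback along $v$. This is the only input one needs from the hypothesis.

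Now by Proposition~\ref{prop:AR-sequence} applied to $f\in\End(X)$ one has $\tau(f)\cdot\mu=\mu\cdot f$ as well. Combining the two identities yields
\begin{equation*}
  \bigl(f'-\tau(f)\bigr)\cdot\mu=0 \quad\text{in } \Ext^1(X,\tau X).
\end{equation*}
Since $X$ is indecomposable with $\End(X)$ a skew field, Serre duality together with the isomorphism $\End(\tau X)\simeq\End(X)$ (via $\tau$) makes $\End(\tau X)$ a skew field too, and $\Ext^1(X,\tau X)$ is one-dimensional as an $\End(\tau X)$-left module, generated by the almost split element $\mu$. Hence the annihilator of $\mu$ in $\End(\tau X)$ is zero, forcing $f'-\tau(f)=0$.

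There is essentially no obstacle here — the content of the corollary is to repackage the proposition so that the endomorphism $f'$ appearing as the left-hand vertical map in a morphism of almost split sequences is recognized as $\tau(f)$; the only thing one has to check carefully is that ``commutativity of the diagram'' is indeed equivalent to the identity $f'\cdot\mu=\mu\cdot f$ in $\Ext^1$, which is the standard functoriality of $\Ext^1$ with respect to pushouts and pullbacks. Once that is in hand, the argument is a one-line consequence of Proposition~\ref{prop:AR-sequence} plus the skew-field hypothesis.
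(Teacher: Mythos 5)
Your proof is correct and follows the same strategy as the paper's: reduce everything to the identity $f'\cdot\mu=\mu\cdot f$ in $\Ext^1(X,\tau X)$, combine it with Proposition~\ref{prop:AR-sequence} to obtain $(f'-\tau(f))\cdot\mu=0$, and conclude from $\mu\neq 0$ together with $\End(\tau X)$ being a skew field. The only point of divergence is that where you invoke the standard functoriality of $\Ext^1$ under pushout and pullback to read off $f'\cdot\mu=\mu\cdot f$ from the commutative diagram, the paper verifies this identity by hand (first disposing of the case $f=0$, then using the skew-field hypothesis to make $f$, $f'$, $g$ isomorphisms and exhibiting an explicit equivalence between representatives of $f'\cdot\mu$ and $\mu\cdot f$); your appeal to the general fact is legitimate and even slightly cleaner, since it needs no hypothesis on $\End(X)$ at that step.
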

\begin{proof}
  We show that from the assumptions $f'\cdot\mu=\mu\cdot f$
  follows. By the preceding proposition then
  $f'\cdot\mu=\tau(f)\cdot\mu$, thus $(f'-\tau(f))\cdot\mu=0$; since
  $\mu\neq 0$ and $\End(\tau X)$ is a skew field, this yields
  $f'=\tau(f)$.

  If $f\neq 0$, then $f$ is an isomorphism, and since $\mu$ does not
  split, $f'\neq 0$ follows easily. Dually, if $f=0$, then also $f'=0$
  holds. Thus we may assume that $f$, and then also $f'$ and $g$ are
  isomorphisms. One computes that $$f'\cdot\mu\colon 0\ra\tau
  X\stackrel{u}\longrightarrow E\stackrel{f^{-1}v}\longrightarrow X\ra
  0$$ and $$\mu\cdot f\colon 0\ra\tau
  X\stackrel{uf'^{-1}}\longrightarrow E\stackrel{v}\longrightarrow
  X\ra 0.$$ Then we have the commutative exact diagram:
  $$\xymatrix{ f'\cdot\mu\colon & 0 \ar @{->}[r] & \tau
    X\ar @{->}[r]^-{u} \ar @{=}[d] & E\ar @{->}[r]^-{f^{-1}v} \ar
    @{->}[d]^-{g^{-1}} & X \ar @{=}[d] \ar @{->}[r] & 0\\
    \mu\cdot f\colon & 0 \ar @{->}[r] & \tau X\ar @{->}[r]^-{uf'^{-1}} & E\ar
    @{->}[r]^-{v} & X\ar @{->}[r] & 0,}$$ thus $f'\cdot\mu=\mu\cdot f$
  as claimed.
\end{proof}
As a special case we get the following description of the bimodule of
a homogeneous tube. 
\begin{corollary}
  Let $\Uu$ be a homogeneous tube in $\Hh$ with simple object $S=\tau
  S$, almost split sequence $\mu\colon 0\ra S\ra S[2]\ra S\ra 0$ and
  division algebra $D=\End(S)$. Then the bimodule of $\Uu$, that is,
  the $D$-$D$-bimodule $E=\Ext^1(S,S)$, is given by
  $E=D\cdot\mu=\mu\cdot D$ with relations $\mu\cdot d=\tau(d)\cdot\mu$
  (for all $d\in D$), where $\tau\in\Aut(D/k)$ is induced by the
  Auslander-Reiten translation. \qed
\end{corollary}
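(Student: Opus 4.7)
The plan is to obtain this corollary as the special case $X=\tau X=S$ of Proposition~\ref{prop:AR-sequence}, once we have established that $E=\Ext^1(S,S)$ is one-dimensional over $D$ from each side.

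First I would observe that $\tau$ induces a well-defined element of $\Aut(D/k)$: since $\tau\colon\Hh\ra\Hh$ is an autoequivalence, it yields a ring automorphism $\End(S)\ra\End(\tau S)$, and fixing an isomorphism $\tau S\simeq S$ (which exists because $\Uu$ is homogeneous) we transport this into an automorphism $\tau\in\Aut(D/k)$. Any other choice of isomorphism changes $\tau$ by an inner automorphism, which will not affect the relation $\mu\cdot d=\tau(d)\cdot\mu$ once we normalize $\mu$ accordingly.

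Next, the dimension computation: Serre duality~\eqref{eq:serre-duality-isos} gives a natural $D$-$D$-bimodule isomorphism $\Ext^1(S,S)\simeq\D\Hom(S,\tau S)$. Since $\tau S\simeq S$, the space $\Hom(S,\tau S)$ is isomorphic to $\End(S)=D$, hence one-dimensional over $D$ from both sides. Passing through $\D(-)$ preserves the dimensions (with sides swapped), so $E$ is one-dimensional as a left $D$-module and as a right $D$-module. The class $\mu\in E$ is non-zero because an almost split sequence does not split, so $E=D\mu$ and $E=\mu D$.

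Finally, the bimodule relation comes directly from Proposition~\ref{prop:AR-sequence} applied to $X=S$: for every $d\in D=\End(S)$ we have $\tau(d)\cdot\mu=\mu\cdot d$. This exhausts the data of $E$ as a $D$-$D$-bimodule, proving the corollary. I do not anticipate any real obstacle here; the only point requiring a modicum of care is the choice of identification $\tau S=S$, which is exactly what makes $\tau$ a genuine automorphism of $D$ rather than merely a class in $\Gal(D/k)=\Aut(D/k)/\Inn(D/k)$.
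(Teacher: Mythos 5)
Your proposal is correct and follows essentially the same route as the paper: the corollary is obtained by specializing Proposition~\ref{prop:AR-sequence} to $X=S$ with $\tau S=S$, where the one-dimensionality of $\Ext^1(S,S)$ over $D$ (via Serre duality, $\Ext^1(S,S)\simeq\D\Hom(S,\tau S)\simeq\D(D)$) is already part of that proposition's proof. Your remark on the inner-automorphism ambiguity in identifying $\tau S$ with $S$ is a sensible precision that the paper handles implicitly by working in a skeleton.
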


\section{Tubes and their complete local rings}
\label{sec:complete-local-rings}
Let $k$ be a field. If $D$ is a division algebra over $k$ and
$\sigma\in\Aut(D/k)$, then we denote by $D[[T,\sigma]]$ the ring of
formal power series $\sum_{n\geq 0}a_n T^n$ over $D$, subject to the
relation $Ta=\sigma(a)T$ for all $a\in D$. Such rings occur naturally
in the study of generalized uniserial algebras over a perfect field,
cf.\ \cite{kupisch:1975}. 

Let $(R,\mathfrak{m})$ be a (not necessarily commutative) local ring
with Jacobson radical $\mathfrak{m}$. We write
$\gr(R)=\bigoplus_{n\geq 0}\mathfrak{m}^n/\mathfrak{m}^{n+1}$. This is
a graded local ring, with graded Jacobson radical given by
$\gr_+(R)=\bigoplus_{n\geq 1}\mathfrak{m}^n/\mathfrak{m}^{n+1}$. Its
$\gr_+(R)$-adic completion is given by $\wgr(R)=\prod_{n\geq
  0}\mathfrak{m}^n/\mathfrak{m}^{n+1}$, with multiplication given by
the Cauchy product.
\begin{proposition}\label{prop:complete-local-rings-general-homogeneous}
  Let $k$ be a field. Let $\Uu$ be a (homogeneous) tube over a
  noncommutative regular projective curve over $k$ with simple object
  $S$ and $D=\End(S)$ the endomorphism skew field. Let
  $\tau\in\Gal(D/k)$ be the automorphism (modulo inner) induced by the
  Auslander-Reiten translation $\tau$. Let $S[\infty]$ be the
  corresponding Pr\"ufer sheaf and $R=\End(S[\infty])$ its
  endomorphism ring. 

  Then $R$ is a complete local domain with maximal ideal
  $\mathfrak{m}=R\pi=\pi R$, where $\pi$ is a surjective endomorphism
  of $S[\infty]$ having kernel $S$. Each one-sided ideal is two-sided,
  and if non-zero then of the form $\mathfrak{m}^n=R\pi^n=\pi^n
  R$. Moreover, $\Uu\simeq\mod_0 (R)$, and there are isomorphisms
  $$\gr(R)\simeq D[T,\tau^{-}]\quad\text{(of graded rings)
    and}\quad\wgr(R)\simeq D[[T,\tau^{-}]].$$
\end{proposition}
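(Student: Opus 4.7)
The plan is to analyze $R=\End(S[\infty])$ in four stages: construct $\pi$ and identify $R$ as complete local, derive the ideal structure from uniseriality, compute the graded ring with the correct twist, and pass to the completion. First, since the projections $\pi_n\colon S[n+1]\ra S[n]$ are compatible with the chain $S[n]\hookrightarrow S[n+1]\hookrightarrow\cdots$, passing to the colimit yields an isomorphism $\alpha\colon S[\infty]/S\stackrel{\sim}\lra S[\infty]$; composing the socle-quotient with $\alpha$ gives the required surjection $\pi\colon S[\infty]\ra S[\infty]$ with kernel $S$. The equivalence $\Uu\simeq\mod_0(R)$ and the fact that $R$ is a complete local ring are Gabriel's theorem~\cite[IV, Prop.~13]{gabriel:1962} (exactly as invoked in the proof of Proposition~\ref{prop:main-ring-iso}), with $S[\infty]$ the injective envelope of $S$. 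Restriction to the socle is then a surjective ring homomorphism $R\ra\End(S)=D$ whose kernel is $\mathfrak{m}$, and the condition $f(S)=0$ for $f\in\mathfrak{m}$ factors $f$ through the socle quotient, writing $f=g\pi$ and hence giving $\mathfrak{m}=R\pi$. Applying the same argument to $\pi^n$, together with $\ker\pi^n=S[n]$ (from uniseriality), one obtains $\mathfrak{m}^n=R\pi^n$, and by uniseriality these exhaust the non-zero left ideals of $R$.

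Second, to compute $\gr(R)$, note that $\mathfrak{m}/\mathfrak{m}^2$ is one-dimensional over $D$ from each side, generated by the class of $\pi$; lifting $d\in D$ to $\tilde d\in R$ there is thus a unique $\sigma(d)\in D$ with $\pi\tilde d\equiv\sigma(d)\pi\pmod{\mathfrak{m}^2}$. To identify $\sigma$ with $\tau^-$, observe that $\pi|_{S[2]}$ is (up to canonical identifications) the composite $S[2]\stackrel{\pi_1}\lra S\hookrightarrow S[\infty]$. Adjusting $\tilde d$ by an element of $\mathfrak{m}^2$ one may assume $\tilde d(S[2])\subseteq S[2]$; the resulting endomorphism of the almost split sequence $\mu\colon 0\ra S\ra S[2]\ra S\ra 0$ acts as $d$ on the socle copy of $S$, and by Corollary~\ref{cor:AR-f-tau-f} it is forced to act as $\tau^-(d)$ on the top copy. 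Hence $\pi_1\circ\tilde d|_{S[2]}=\tau^-(d)\circ\pi_1$, whereas $\tilde d|_S\circ\pi_1=d\circ\pi_1$; reading the scalar in the one-dimensional space $\Hom(S,S)=D$ gives $\sigma(d)=\tau^-(d)$, which is an automorphism, so $\gr(R)\simeq D[T,\tau^-]$ as graded rings with $T$ the class of $\pi$. The identity $R\pi=\pi R$, hence $\mathfrak{m}^n=\pi^n R$, then follows retroactively by the usual iterative rewriting that uses the $\mathfrak{m}$-adic completeness of $R$.

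Finally, the $\gr_+$-adic completion of $D[T,\tau^-]$ is tautologically $D[[T,\tau^-]]$, so $\wgr(R)\simeq D[[T,\tau^-]]$; and $\mathfrak{m}$-adic completeness of $R$ (another consequence of Gabriel's theorem as in Proposition~\ref{prop:main-ring-iso}) promotes this to a ring isomorphism $R\simeq D[[T,\tau^-]]$ via $T\mapsto\pi$. The principal obstacle is getting the direction of the twist correct: the bimodule result from Section~\ref{sec:bimodule-of-tube} reads $\mu d=\tau(d)\mu$, whereas the proposition demands $Td=\tau^-(d)T$. Corollary~\ref{cor:AR-f-tau-f} supplies the required inversion, since a lift $\tilde d$ restricting to $d$ on the socle of $S[2]$ is forced to act as $\tau^-(d)$, not $d$, on the top copy, and this is precisely what converts $\tau$ into $\tau^-$ when passing from the Ext-bimodule description to the multiplication inside $R$.
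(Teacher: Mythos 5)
Your proof is correct, and it takes a genuinely different route from the paper's. The paper reduces the computation of $\wgr(R)$ to the complete tensor algebra $\Omega$ of the species of $\Uu$ via~\cite[8.5]{gabriel:1973}, then builds an explicit model of the tube as small representations of the bimodule $E=\Ext^1(S,S)$ over $D$, extracts the relation $(J_n)^{\ell}\cdot(a\cdot I_n)=\tau^{-\ell}(a)\cdot(J_n)^{\ell}$ from a Jordan-matrix calculation, and finally transfers the answer back through the uniqueness statement of~\cite[IV.~Prop.~13]{gabriel:1962}. You instead compute directly inside $R=\End(S[\infty])$: you identify $\mathfrak{m}^2$ with the endomorphisms killing $S[2]$, so that an element of $\mathfrak{m}$ is determined modulo $\mathfrak{m}^2$ by its restriction to $S[2]$, and then read off the twist from Corollary~\ref{cor:AR-f-tau-f} applied to the endomorphism of the almost split sequence $0\ra S\ra S[2]\ra S\ra 0$ induced by a lift $\tilde d$ of $d$. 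This is shorter and makes the inversion from $\tau$ to $\tau^-$ completely transparent (it is exactly the passage from acting on the socle to acting on the top of $S[2]$), whereas the paper encodes it in the matrix identity $A\cdot J_n=J_n\cdot A^{\tau}$. What the paper's detour buys is the identification $\wgr(R)\simeq\Omega$ with the complete tensor algebra, which it reuses in the subsequent Wedderburn--Malcev argument to obtain $R\simeq\wgr(R)$ in the separable case; your approach would have to supply that identification separately there.

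One sentence is an overreach: at the end you claim that $\mathfrak{m}$-adic completeness ``promotes'' $\wgr(R)\simeq D[[T,\tau^-]]$ to a ring isomorphism $R\simeq D[[T,\tau^-]]$ via $T\mapsto\pi$. That is not part of the proposition and is not available at this level of generality: to even define the map one needs a ring splitting $D\ra R$ of the projection $R\ra R/\mathfrak{m}=D$, and producing such a splitting is precisely the Wedderburn--Malcev step requiring $D/k$ separable --- it is the content of the next proposition and of Theorem~\ref{thm:pruefer-end-skew-power}, which the paper deliberately keeps separate. Since the statement you are proving only asserts the isomorphisms for $\gr(R)$ and $\wgr(R)$, this does not damage the proof, but the extra claim should be deleted or restricted to the separable case. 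Similarly, the assertion that $R\pi=\pi R$ ``follows retroactively by the usual iterative rewriting'' is a gesture rather than an argument; this is among the facts the paper imports from~\cite{ringel:1975,ringel:1979,gabriel:1973,vandenbergh:2001}, and citing them is the cleaner option.
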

\begin{proof}
  (1) With the notations from~\ref{nr:tubes}, the Pr\"ufer object
  $S[\infty]$ is the direct limit of the direct system
  $(S[n],\iota_n)$. The direct limit closure $\QUu$ of $\Uu$ in $\QHh$
  is a hereditary locally finite Grothendieck category in which
  $S[\infty]$ is an indecomposable injective cogenerator. Its
  endomorphism ring $R$ is the inverse limit of the inverse system of
  rings $(\End(S[n],p_n)$, where the $p_n$ are the surjective
  restriction maps.

  It is well-known (we refer to~\cite{amdal:ringdal:1968ab},
  \cite{ringel:1975}, \cite{ringel:1979}, \cite{gabriel:1973},
  and~\cite[Prop.~4.10]{vandenbergh:2001}) that $R$ is a complete
  local domain with $\Uu\simeq\mod_0(R)$, having the properties stated
  in the proposition. The two isomorphisms of rings remain to
  show. Since $\Uu$ is hereditary, $\wgr(R)$ is,
  by~\cite[8.5]{gabriel:1973}, isomorphic to the complete tensor
  algebra (\cite[7.5]{gabriel:1973}) $\Omega$ of the species of $\Uu$,
  which is given by the $D$-$D$-bimodule $E=\Ext^1(S,S)$.

  (2) We now determine the complete tensor algebra of the bimodule
  $E$. Let $\mu\in E$ denote the almost split sequence $0\ra S\ra
  S[2]\ra S\ra 0$. We have $E=D\mu=\mu D$, and from
  Proposition~\ref{prop:AR-sequence} we get $\mu\cdot
  d=\tau(d)\cdot\mu$ for each $d\in D$. For each natural number $n$
  there is a canonical isomorphism ${}_{D}D^n_D\otimes{}_{D}E_D\simeq
  ({}_{D}D_D\otimes{}_{D}E_D)^n\simeq{}_{D}E^n_D$, where $E^n$ as left
  $D$-module is isomorphic to $D^n$, and the right $D$-module
  structure on $E^n$ is given by $\tau$-twist: $(x_1,\dots,x_n)\cdot
  d=(x_1\tau(d),\dots,x_n\tau(d))$.

  We denote by $\Uu'$ the category of \emph{small representations}
  (\cite{gabriel:1973}) of the species given by the division ring $D$
  and one loop labelled by the $D$-$D$-bimodule $E$. The
  indecomposable of length $n$ is given by $S'[n]$, which is the
  representation $D^n\stackrel{g}\lra E^n$, with $g$ nilpotent right
  $D$-linear and given by the indecomposable Jordan matrix
$$J_n=J_n(0)=
\begin{pmatrix}
  0 & 1 & 0 & \dots & 0\\
  0 & 0 & 1 & \dots & 0\\
  \vdots & \vdots & & \ddots & \vdots\\
  0 & 0 & 0 & \dots & 1\\
  0 & 0 & 0 & \dots & 0
\end{pmatrix}
$$ to the eigenvalue $0$. That is, we have $$g(
\begin{pmatrix}
  x_1\\
  \vdots\\
  x_n
\end{pmatrix})=J_n \cdot\begin{pmatrix}
  \tau(x_1)\\
  \vdots\\
  \tau(x_n)
\end{pmatrix}.$$ If $$\xymatrix{D^n \ar @{->}[d]_-{f}\ar @{->}[r]^-{g}
  & E^n \ar @{->}[d]^-{f'} \\
  D^n \ar @{->}[r]^-{g} & E^n}$$ is an endomorphism of $S'[n]$, where
$f$ and $f'$ are right $D$-linear maps given by the same $n\times
n$-matrix $A=(a_{ij})$ with entries in $D$, then for the matrices
$A\cdot J_n=J_n\cdot A^{\tau}$ holds, where
$A^{\tau}=(\tau(a_{ij}))$. Similar relations hold for morphisms of
representations. This yields that we can $A$ write as
\begin{equation}
  \label{eq:LKB}
  A=a_0\cdot I_n+a_1 \cdot J_n+a_2\cdot (J_n)^2+\dots+ a_{n-1}\cdot
  (J_n)^{n-1}
\end{equation}
  with unique $a_0,\dots,a_{n-1}\in D$, and where
  $a\cdot (J_n)^{\ell}$ is given by replacing the side-diagonal given
  by $1,\,1,\dots,1$ by the elements
  $a,\,\tau^{-}(a),\dots,\tau^{-(n-\ell-1)}(a)$. Clearly $(a\cdot
  I_n)\cdot (b\cdot I_n)=(ab)\cdot I_n$ holds, and $D\cdot I_n$ forms
  a subalgebra of $\End(S'[n])$ isomorphic to
  $D$. Moreover,
  \begin{equation}
    \label{eq:T-relation}
    (J_n)^{\ell}\cdot (a\cdot I_n)=\tau^{-\ell}(a)\cdot
  (J_n)^{\ell}
  \end{equation}
  holds, and the Jacobson radical of $\End(S'[n])$ is generated, as a
  left ideal and as a right ideal, by the map with matrix $J_n$.

  We denote by $\iota'_n\colon
  S'[n]\ra S'[n+1]$ and $\pi'_n\colon S'[n+1]\ra S'[n]$ the morphisms given
  by the matrices
$$\begin{pmatrix}
  I_n \\
 \hline
 0  
\end{pmatrix}\quad\text{and}\quad
(0 \mid I_n),
$$ respectively. This yields that $\iota'_n\circ\pi'_n$ is given by
multiplication with $J_{n+1}$, and $\pi'_n\circ\iota'_n$ by $J_n$. If
$f'$ is the restriction of the endomorphism $f$ to $S'[n-1]$, that is,
$\iota'_{n-1}\circ f'=f\circ\iota'_{n-1}$ holds, and if $A'$ denotes
the corresponding $(n-1)\times (n-1)$-matrix, then
equation~\eqref{eq:LKB} yields $$A'=a_0\cdot I_{n-1}+a_1 \cdot
J_{n-1}+a_2\cdot (J_{n-1})^2+\dots+ a_{n-2}\cdot (J_{n-1})^{n-2}.$$
Thus, sending $f$ to its restriction $f'$, gives a, clearly
surjective, homomorphism $p'_n\colon\End(S'[n])\ra\End(S'[n-1])$ of
$k$-algebras.

Starting with the almost split sequence $\mu'_1$ with end term
$S'=S'[1]$ we have the following direct system of short exact
sequences
  $$\xymatrix{ \mu'_1\colon & 0 \ar
    @{->}[r] & S'\ar @{->}[r]^-{\iota'_1}\ar @{=}[d] & S'[2]\ar
    @{->}[r]^-{\pi'_1} \ar
    @{->}[d]^-{\iota'_2} & S' \ar @{->}[r] \ar @{->}[d]^-{\iota'_1} & 0\\
    \mu'_2\colon & 0 \ar @{->}[r] & S'\ar @{->}[r]\ar @{=}[d] &
    S'[3]\ar @{->}[r]^-{\pi'_2} \ar
    @{->}[d]^-{\iota'_3} & S'[2] \ar @{->}[r] \ar @{->}[d]^-{\iota'_2} & 0\\
    \mu'_3\colon & 0 \ar @{->}[r] & S'\ar @{->}[r]\ar @{=}[d] &
    S'[4]\ar @{->}[r]^-{\pi'_3} \ar @{->}[d]^-{\iota'_4} & S'[3] \ar
    @{->}[r] \ar @{->}[d]^-{\iota'_3} &
    0\\
    \vdots & & \vdots & \vdots & \vdots & }$$ and its direct limit
  $$\mu'_{\infty}\colon\quad\quad 0\ra
  S'\ra S'[\infty]\stackrel{\pi'}\lra S'[\infty]\ra 0.$$ The ring
  $R'=\End(S'[\infty])$ is isomorphic to the inverse limit of the
  $\End(S'[n])$ (with respect to the inverse system given by the
  $p'_n$). As in~(1), $R'$ is a complete local ring with maximal ideal
  $\mathfrak{m}'=R'\pi'=\pi' R'$, and with
  $\mod_0(R')\simeq\Uu'$. Thus there is an automorphism
  $\sigma\in\Aut(R'/k)$ with $\pi' f=\sigma(f)\pi'$ for all $f\in R'$.
  
  Each $f\in\End(S'[\infty])$ has a unique expression
  $f=(f_1,f_2,f_3,\dots)$ with $f_n\in\End(S'[n])$ and
  $f_n=f_{|S'[n]}$ the restriction of $f$ to $S'[n]$ for each $n$. The
  restriction of $\pi'$ to $S'[n]$ is given by
  $\iota'_{n-1}\circ\pi'_{n-1}$, hence by the matrix $J_n$. We
  conclude that $f$ has a unique expression as formal power
  series $$f=\sum_{n=0}^{\infty}a_n\pi'^n.$$
  From~\eqref{eq:T-relation} we deduce $$\pi' a=\tau^-(a)\pi'$$ for
  all $a\in D$. Thus $R'=D[[\pi',\tau^-]]\simeq D[[T,\tau^-]]$. On the
  other hand, by~\cite[7.5]{gabriel:1973} the complete tensor algebra
  $\Omega$ of $E$ is a complete local ring also satisfying
  $\Uu'\simeq\mod_0(\Omega)$. It follows,
  by~\cite[IV.~Prop.~13]{gabriel:1962}, that
  $R'\simeq\Omega\simeq\wgr{R}$. Finally this yields $\gr(R)\simeq
  D[T,\tau^-]$ as graded rings.
\end{proof}
In the separable (perfect) case, we can apply the Wedderburn-Malcev
theorem~\cite[Thm.~11.6]{pierce:1982} in order to get the
following. 
\begin{proposition}\label{prop:R=wgrR}
  With the notations of the preceding proposition, assume that $D/k$
  is separable (that is, $Z(D)/k$ is a separable field
  extension). Then $R\simeq\wgr(R)$.
\end{proposition}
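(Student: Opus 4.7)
The plan is to construct an explicit continuous $k$-algebra isomorphism $\phi: D[[T,\tau^-]] \to R$. Since the preceding proposition already yields $\wgr(R) \simeq D[[T,\tau^-]]$, separability is exactly what is needed to promote this isomorphism of completed gradeds to an isomorphism with $R$ itself. I would build $\phi$ in two steps: lift $D$ into $R$ as a subalgebra via Wedderburn-Malcev, then choose a uniformizer that implements the twist by $\tau^-$ on the nose.

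First, I would apply Wedderburn-Malcev \cite[Thm.~11.6]{pierce:1982} in its complete-local form. The classical statement applies to each finite-dimensional artinian quotient $R/\mathfrak{m}^n$, producing a $k$-subalgebra $D_n \subseteq R/\mathfrak{m}^n$ mapping isomorphically to $D$. Because $D/k$ is separable, the Hochschild $2$-obstructions vanish at every level, so these lifts can be chosen compatibly along the tower $R/\mathfrak{m}^{n+1} \twoheadrightarrow R/\mathfrak{m}^n$ by adjusting successive lifts by conjugation with elements of $1 + \mathfrak{m}^n/\mathfrak{m}^{n+1}$. The $\mathfrak{m}$-adic completeness of $R$ then lets the compatible family assemble into a $k$-algebra embedding $D \hookrightarrow R$ with $R = D \oplus \mathfrak{m}$ as $k$-vector spaces.

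Next, I would correct a uniformizer. Starting from an arbitrary generator $\pi_0$ of $\mathfrak{m} = R\pi_0 = \pi_0 R$, the equality $\pi_0 D \subseteq D\pi_0$ and the fact that $R$ is a domain define a unique $\sigma \in \Aut(D/k)$ by $\pi_0 d = \sigma(d)\pi_0$. Reducing mod $\mathfrak{m}^2$ and matching against the isomorphism $\gr(R) \simeq D[T,\tau^-]$ of the preceding proposition, the class $\bar\pi_0 \in \mathfrak{m}/\mathfrak{m}^2$ has the form $uT$ for some $u \in D^\times$; expanding $\bar\pi_0 \cdot d = \sigma(d)\bar\pi_0$ in $\gr(R)$ gives $\sigma(d) = u\tau^-(d)u^{-1}$. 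Replacing $\pi_0$ by $\pi := u^{-1}\pi_0$ (still a uniformizer, as $u^{-1}$ is a unit in $R$) yields $\pi d = \tau^-(d)\pi$ for all $d \in D$. The assignment $T \mapsto \pi$, $d \mapsto d$ is then compatible with the defining relation of $D[[T,\tau^-]]$, so it extends to a $k$-algebra map $D[T,\tau^-] \to R$, and by $\mathfrak{m}$-adic continuity to $\phi: D[[T,\tau^-]] \to R$. Iterating the splitting $R = D \oplus R\pi$ writes every $r \in R$ as a convergent series $\sum_n d_n \pi^n$ with $d_n \in D$, giving surjectivity; injectivity follows because $\phi$ induces the identity on associated gradeds, both equal to $D[T,\tau^-]$.

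The main obstacle is the first step, the extension of Wedderburn-Malcev to the complete-local setting: this is precisely where separability of $D/k$ enters, via the vanishing of the Hochschild $2$-cocycles obstructing compatible liftings along $R/\mathfrak{m}^{n+1} \twoheadrightarrow R/\mathfrak{m}^n$. Without that hypothesis those cocycles can be non-trivial, and $R$ may genuinely fail to be isomorphic to $\wgr(R)$, as the present statement implicitly reflects. Once the lift $D \hookrightarrow R$ is in hand, the remaining steps are formal bookkeeping in filtered rings.
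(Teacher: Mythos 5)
Your proof is correct and rests on the same pivot as the paper's: separability makes Wedderburn--Malcev split each finite quotient $B_n=R/\mathfrak{m}^n$ over $D$, the conjugacy part of that theorem makes the splittings compatible along the tower $B_{n+1}\twoheadrightarrow B_n$, and completeness assembles them. The paper then packages the conclusion by surjecting the complete tensor algebra $\Omega$ of the species onto each $B_n$ compatibly and taking inverse limits ($\Omega\simeq R$, hence $R\simeq\wgr(R)$ by the preceding identification $\wgr(R)\simeq\Omega$), whereas you bypass $\Omega$ and build the isomorphism $D[[T,\tau^-]]\to R$ by hand: lift $D$ into $R$, then normalize a uniformizer so that $\pi d=\tau^-(d)\pi$ exactly. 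Your route is more explicit and makes visible a step the paper hides behind ``a more detailed analysis shows'', namely how the twist $\tau^-$ is realized on the nose inside $R$. One small imprecision: the inclusion $\pi_0 D\subseteq D\pi_0$ is not automatic. The element $\pi_0$ defines an automorphism $\sigma$ of $R$ by $\pi_0 r=\sigma(r)\pi_0$, but $\sigma$ need not preserve your chosen copy of $D$ inside $R$; since $\sigma(D)$ is another Wedderburn--Malcev complement it is conjugate to $D$ by a unit of the form $1+m$ with $m\in\mathfrak{m}$, and replacing $\pi_0$ by $(1+m)\pi_0$ (still a generator of $\mathfrak{m}$) arranges $\pi_0 D\subseteq D\pi_0$. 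With that repair the uniformizer correction and the surjectivity/injectivity arguments go through as you describe.
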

In different words, $R$ is obtained as the complete tensor algebra of
the species of the tube $\Uu$, so that $\Uu$ can be recovered from its
species. Without the separability assumption the statement is wrong in
general, cf.\ Example~\ref{ex:inseparable}.
\begin{proof}
  The proof is based on~\cite[8.4]{gabriel:1973}. For $n\geq 1$ let
  $B_n$ be the finite dimensional $k$-algebra $\End(S[n])\simeq
  R/\mathfrak{m}^n$, where $R$ is the endomorphism ring of $S[\infty]$
  with maximal ideal $\mathfrak{m}$. The Wedderburn-Malcev theorem
  implies that the projection $B_n \ra B_n/\Rad(B_n)\simeq D$
  splits. Thus $B_n=D_n\oplus\Rad(B_n)$, with a subalgebra $D_n$ of
  $B_n$ isomorphic to $D$. Then $B_n$ becomes a $D$-$D$-bimodule, and
  $\Rad(B_n)$ contains a subbimodule which is isomorphic to
  $V_n=\Rad(B_n)/\Rad^2(B_n)$. Thus there is a surjective homomorphism
  from the tensor algebra of $V_n$, and then also from the complete
  tensor algebra $\Omega$ of the species of $\Uu$, onto $B_n$. We get
  an isomorphism $\Omega/\Rad^n(\Omega)\simeq B_n$. A more detailed
  analysis shows that this can be done inductively in such a way that
  we obtain an isomorphism of inverse systems of rings. Taking inverse
  limits we get $\Omega\simeq R$. This finishes the proof.
\end{proof}
If $\Uu=\Uu_x$ satisfies this separability condition, we call $x$
(resp.\ $\Uu$) a \emph{separable} point (tube). If $k$ is a perfect
field, then all points are separable.
\begin{theorem}\label{thm:pruefer-end-skew-power}
  Let $k$ be a field. Let $\Uu$ be a \emph{separable} tube over a
  noncommutative regular projective curve over $k$ with simple object
  $S$ and $D=\End(S)$ the endomorphism skew field. Let
  $\tau\in\Gal(D/k)$ be the automorphism induced by the
  Auslander-Reiten translation $\tau$. Let $S[\infty]$ be the
  corresponding Pr\"ufer sheaf. Then $$\End(S[\infty])\simeq
  D[[T,\tau^-]].$$ In particular,
  $\Uu\simeq\mod_0\bigl(D[[T,\tau^-]]\bigr)$. \qed
\end{theorem}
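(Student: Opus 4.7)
The plan is to assemble this theorem as a direct corollary of the two immediately preceding propositions. With $R=\End(S[\infty])$, the first proposition already shows that $R$ is a complete local ring with $\Uu\simeq\mod_0(R)$ and establishes the graded identification $\wgr(R)\simeq D[[T,\tau^-]]$, obtained by identifying the complete tensor algebra of the bimodule $E=\Ext^1(S,S)=D\mu=\mu D$ with the skew power series ring via the twist relation $\mu\cdot d=\tau(d)\cdot\mu$ coming from Proposition~\ref{prop:AR-sequence}. Under the separability hypothesis on $D/k$, the second proposition supplies an isomorphism $R\simeq\wgr(R)$ via Wedderburn--Malcev, which provides compatible subalgebra splittings $B_n=D_n\oplus\Rad(B_n)$ at each level $B_n=R/\mathfrak{m}^n\simeq\End(S[n])$.

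Concretely, I would simply state: by the preceding proposition, separability of the point $x$ yields $R\simeq\wgr(R)$; by the proposition before that, $\wgr(R)\simeq D[[T,\tau^-]]$. Composing,
\[
\End(S[\infty])\;=\;R\;\simeq\;\wgr(R)\;\simeq\;D[[T,\tau^-]].
\]
The second assertion $\Uu\simeq\mod_0(D[[T,\tau^-]])$ is then immediate from the identification $\Uu\simeq\mod_0(R)$ (which is already part of the first preceding proposition, and can also be read off from Gabriel's uniqueness~\cite[IV.~Prop.~13]{gabriel:1962} since both complete local rings have the same category of finite length modules).

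Since all the real work is done in the two preceding propositions, there is essentially no obstacle left: the only thing to verify is that the automorphism $\tau^-$ appearing in $\wgr(R)$ and the automorphism $\tau^-$ appearing in the statement of the theorem are literally the same element of $\Gal(D/k)$; this is guaranteed by Corollary~\ref{cor:AR-f-tau-f}, which identifies the class of the restriction of $\tau$ to $D=\End(S)$ with the automorphism used in the twist relation. Thus the theorem is best regarded as a clean restatement of the combined content of the previous two propositions, packaged for use in the later local--global analysis.
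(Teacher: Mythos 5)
Your proposal is correct and matches the paper exactly: the theorem carries a \qed in its statement precisely because it is the composite $\End(S[\infty])=R\simeq\wgr(R)\simeq D[[T,\tau^-]]$ of the two preceding propositions, with $\Uu\simeq\mod_0(R)$ already established there. Your added remark that the twist is the one from Proposition~\ref{prop:AR-sequence} is the right identification and requires nothing further.
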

We set
\begin{gather*}
  \Aut_{\tau}(D/k)=\{\sigma\in\Aut(D/k)\mid\sigma\tau=\tau\sigma\},\\
  \Inn_{\tau}(D/k)=\{\iota_u\in\Inn(D/k)\mid u\in\Fix(\tau)\},
\end{gather*}
and finally
$\Gal_{\tau}(D/k)=\Aut_{\tau}(D/k)/\Inn_{\tau}(D/k)$. Clearly
$\Inn_{\tau}(D/k)=\Inn(D/k)\cap\Aut_{\tau}(D/k)$, so that
$\Gal_{\tau}(D/k)$ can be regarded as a subgroup of
$\Gal(D/k)$. Trivially $\tau\in\Aut_{\tau}(D/k)$ holds, so that the
order of $\tau$ in $\Gal(D/k)$ is the same as the order of $\tau$ in
$\Gal_{\tau}(D/k)$. 
\begin{corollary}\label{cor:aut-group-tube}
  Let $x$ be a separable point, $\Uu=\Uu_x$ and $D=\End(S_x)$. Then
  $\Aut(\Uu/k)\simeq\Gal_{\tau}(D/k)$.
\end{corollary}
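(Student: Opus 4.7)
The plan is to translate the statement, via Theorem~\ref{thm:pruefer-end-skew-power}, into a computation of the outer automorphism group of the complete local ring $R=D[[T,\tau^-]]$, and then to exploit separability in order to normalise arbitrary $k$-algebra automorphisms. Since the ind-completion of $\Uu$ has $S[\infty]$ as its unique indecomposable injective cogenerator, any $k$-linear autoequivalence $\sigma$ of $\Uu$ extends uniquely to the ind-completion and fixes $S[\infty]$ up to isomorphism. Choosing such an isomorphism produces a $k$-algebra automorphism $\tilde\sigma$ of $R=\End(S[\infty])$, well-defined up to inner automorphism, so
\[
  \Aut(\Uu/k)\;\simeq\;\Aut(R/k)/\Inn(R/k).
\]

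Next I would normalise a given $\tilde\sigma$ within its $\Inn(R/k)$-class. As $\tilde\sigma$ preserves the Jacobson radical $\mathfrak{m}=(T)$, it induces an automorphism of $R/\mathfrak{m}\simeq D$, and the Wedderburn--Malcev theorem (applicable because $D/k$ is separable) says that any two lifts $D\hookrightarrow R$ of this splitting are conjugate by an element of $1+\mathfrak{m}$. Thus, after an inner adjustment, I may assume $\tilde\sigma(D)=D$; write $\phi=\tilde\sigma|_D\in\Aut(D/k)$ and $\tilde\sigma(T)=uT$ with $u_0:=u\bmod\mathfrak{m}\in D^{*}$. Comparing leading terms in the identity $\tilde\sigma(T)\tilde\sigma(d)=\tilde\sigma(\tau^-(d))\tilde\sigma(T)$ yields
\[
  u_0\,\tau^-(\phi(d))\;=\;\phi(\tau^-(d))\,u_0\qquad\text{for every }d\in D,
\]
so $\bar\phi$ centralises $\bar\tau$ in $\Gal(D/k)$.

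The decisive and most delicate step will be upgrading this commutation modulo $\Inn(D/k)$ to the strict equality $\phi\tau^-=\tau^-\phi$, and then forcing $\tilde\sigma(T)=T$ exactly. Further inner conjugation by $w\in D^{*}$ replaces the pair $(\phi,u_0)$ by $(\iota_w\phi,\,wu_0\tau^-(w)^{-1})$; I would solve the cocycle equation $wu_0\tau^-(w)^{-1}\in Z(D)$ for $w$ by a Skolem--Noether / Hilbert~90 argument, available because of the separability of $Z(D)/k$, thereby moving $\phi$ into $\Aut_{\tau}(D/k)$. A successive-approximation argument along the $\mathfrak{m}$-adic filtration, using that each $\mathfrak{m}^n/\mathfrak{m}^{n+1}$ is free of rank one as a $D$-bimodule with the $(\tau^-)^n$-twist, then lets me absorb the higher-order correction terms of $\tilde\sigma(T)$ by an inner automorphism from $1+\mathfrak{m}$, arriving at $\tilde\sigma(T)=T$. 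The only remaining freedom in $\tilde\sigma$ is then conjugation by units of $\Fix(\tau)$, so $[\phi]$ is well-defined in $\Aut_{\tau}(D/k)/\Inn_{\tau}(D/k)=\Gal_{\tau}(D/k)$. Surjectivity of the resulting map is straightforward: any $\phi\in\Aut_{\tau}(D/k)$ defines an automorphism of $R$ by $d\mapsto\phi(d)$ and $T\mapsto T$, the commutation $\phi\tau^-=\tau^-\phi$ guaranteeing that the defining relation of $R$ is preserved. The whole argument hinges on the Hilbert~90 coboundary step, which is exactly where the separability hypothesis on $x$ is used.
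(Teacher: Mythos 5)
Your strategy---pass to the complete local ring $R=\End(S[\infty])\simeq D[[T,\tau^-]]$ and compute its outer automorphism group---is genuinely different from the paper's, which works with the associated \emph{graded} ring $\gr(R)\simeq D[T,\tau^-]$ and classifies its graded automorphisms as pairs $(f,b)$ acting on degrees $0$ and $1$. The difference is not cosmetic: your normalisation step fails, and with it the whole chain of identifications. Test the plan on the simplest instance $D=k$, $\tau=1$, i.e.\ $R=k[[T]]$. There $\Inn(R/k)$ is trivial, $\Gal_{\tau}(k/k)$ is trivial, but $\Aut(R/k)$ contains every reparametrisation $T\mapsto b_1T+b_2T^2+\cdots$ with $b_1\neq 0$. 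So $\Aut(R/k)/\Inn(R/k)$ is enormous, and neither the leading coefficient nor the higher-order terms of $\tilde\sigma(T)$ can be absorbed by inner conjugation: your twisted coboundary $u_0\mapsto w\,u_0\,\tau^-(w)^{-1}$ has orbit $\{1\}$ when $D$ is commutative and $\tau$ trivial, and the successive-approximation step along the $\mathfrak{m}$-adic filtration produces nothing either. The same obstruction persists for general $D$: conjugation can at best move the leading coefficient within a single twisted conjugacy class, so the automorphisms $d\mapsto d$, $T\mapsto bT$ with $b$ central survive and are not inner. Hence your asserted isomorphism $\Aut(\Uu/k)\simeq\Aut(R/k)/\Inn(R/k)$ cannot hold together with the conclusion of the corollary; the kernel of the passage from ring automorphisms to autoequivalences of $\Uu$ must be strictly larger than $\Inn$. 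This is exactly what the paper supplies: it introduces the enlarged group $\oInn(\gr(R))$ of \emph{graded inner} automorphisms, generated by the $\iota_u$ \emph{together with} the central rescalings $T^n\mapsto N_n(b)T^n$, $b\in Z(D)^{\times}$, and invokes (via the analogue of \cite[Prop.~3.2.3]{kussin:2009}) that precisely these induce functors isomorphic to $1_{\Uu}$; moreover, working with the graded ring kills the higher-order reparametrisations at the source, since a graded automorphism of $D[T,\tau^-]$ has no higher terms. Without both of these inputs your construction computes the wrong group.

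A secondary divergence: you obtain only that $\bar\phi$ centralises $\bar\tau$ in $\Gal(D/k)$, from the leading term of the relation $Td=\tau^-(d)T$, and then need a second cocycle argument to upgrade this to $\phi\in\Aut_{\tau}(D/k)$. The paper gets the on-the-nose commutation directly from Corollary~\ref{cor:AR-f-tau-f}, i.e.\ from the compatibility of any autoequivalence (strictified on a skeleton) with the almost split sequence $0\to\tau S\to S[2]\to S\to 0$. That route is both shorter and independent of the separability hypothesis, which in the paper enters only through the Wedderburn--Malcev splitting $R\simeq\wgr(R)$ -- the one place where your proposal and the paper agree.
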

\begin{proof}
  We have $\gr(R)\simeq D[T,\tau^{-}]$ and
  $\Uu\simeq\mod_0^{\ZZ}(\gr(R))/s^{\ZZ}$, the orbit category with
  respect to the degree shift $s$; in different words, this is the
  category of finite dimensional $\gr(R)$-modules which are
  annihilated by some power of $T$. A graded automorphism of $\gr(R)$
  is uniquely determined by its action on degrees zero and one, and is
  thus of the form $$\sum a_i T^i\mapsto\sum f(a_i) N_i(b)T^i,$$ with
  $f\in\Aut(D/k)$ and $b\in D^{\times}$, satisfying $f\tau^- (a)\cdot
  b=b\cdot\tau^- f(a)$ for all $a\in D$. Here $N_i(b)$ is defined as
  $b\cdot\tau^-(b)\dots\tau^{-(i-1)}(b)$. We define the group of
  graded inner automorphisms of $\gr(R)$, denoted by $\oInn(\gr(R))$,
  generated by automorphisms of the form $\iota_u$, $r\mapsto
  u^{-1}ru$ ($u\in D^{\times})$, and by automorphisms induced by
  $T^n\mapsto N_n(b)T^n$ (with $b\in Z(D)^{\times}$). Each graded
  automorphism $\sigma=(f,b)$ of $\gr(R)$ induces an autoequivalence
  $F^{\sigma}$ on $\Uu$, and $F^{\sigma}\simeq 1_{\Uu}$ if and only if
  $\sigma$ is a graded inner automorphism. We refer
  to~\cite[Prop.~3.2.3]{kussin:2009} for a similar statement. On the
  other hand, each automorphism of $\Uu$ is uniquely determined by its
  action on the bimodule $\Ext^1(S,S)$, and thus on $R/\mathfrak{m}=D$
  and $\mathfrak{m}/\mathfrak{m}^2$, and thus induces a graded
  automorphism of $\gr(R)=D\spitz{\mathfrak{m}/\mathfrak{m}^2}$.

  Considering the skeleton of $\Uu$ and requiring that automorphisms
  are the identity on objects (e.g.\ equality $\tau S=S$), the
  automorphism $F^{\sigma}$ on $S$ commutes with $\tau$ on $S$, which
  follows from the diagram in Corollary~\ref{cor:AR-f-tau-f}.  We thus
  can assume that $f\in\Aut_{\tau}(D/k)$. Then also $b\in
  Z(D)^{\times}$. We write $\Aut_{\tau}(\gr(R))$ for the subgroup of
  the automorphisms with these properties, and
  $\oInn_{\tau}(\gr(R))=\oInn(\gr(R))\cap\Aut_{\tau}(\gr(R))$. We
  conclude
  $\Aut(\Uu/k)\simeq\Aut_{\tau}(\gr(R))/\oInn_{\tau}(\gr(R))\simeq\Gal_{\tau}(D/k)$,
  finishing the proof.
\end{proof}
\begin{corollary}\label{cor:tau-minus=sigma-x}
  Let $x$ be a point. 
  \begin{enumerate}
  \item[(1)] The functors $\tau^-$ and $\sigma_x$, restricted to the
    simple $S_x$, yield the same elements in $\Gal(\End(S_x)/k)$.
  \item[(2)] Let $x$ be separable. Then the functors $\tau^-$ and
    $\sigma_x$, restricted to $\Uu_x$, are isomorphic.
  \end{enumerate}
\end{corollary}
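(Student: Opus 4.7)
The plan is to compute the local twist of the tube $\Uu_x$ in two independent ways—one through $\tau^-$ and one through $\sigma_x$—and to observe that they must coincide. Set $D=\End(S_x)$, and for any tube-preserving autoequivalence $F$ of $\Hh$ write $[F]\in\Gal(D/k)$ for the class of the automorphism $d\mapsto \psi\,F(d)\,\psi^{-1}$ of $D$, for any choice of isomorphism $\psi\colon F(S_x)\to S_x$ (which exists since $F$ preserves $\Uu_x$ and the length filtration, so $F(S_x)\simeq S_x$).

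For part~(1), Theorem~\ref{thm:pruefer-end-skew-power} identifies $\End(S_x[\infty])\simeq D[[T,\tau^-]]$, so the associated graded $D[T,\tau^-]$ carries the twist class $[\tau^-]$. The analysis in Section~\ref{sec:localization} gives a second description: $R_x^{\gr}/\Rad^{\gr}(R_x^{\gr})\simeq\matring_{e(x)}\bigl(\END(S_x)\bigr)$, where $\END(S_x)=\bigoplus_{n\geq 0}\Hom_\Hh(S_x,\sigma_x^nS_x)$. Choosing $\psi\colon\sigma_x S_x\to S_x$ and a generator $T\in\Hom(S_x,\sigma_x S_x)$, the orbit product $d\ast T=\sigma_x(d)\circ T$ rewrites (by post-composition with $\psi$) as $dT=T\cdot(\psi\sigma_x(d)\psi^{-1})$; this exhibits $\END(S_x)$ as a graded twisted polynomial ring whose twist class in $\Gal(D/k)$ is $[\sigma_x]$. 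Once the two graded rings that describe the local structure at $x$ are identified, the two twist classes must agree, giving $[\sigma_x]=[\tau^-]$ in $\Gal(D/k)$.

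For part~(2), assuming $x$ is separable, Corollary~\ref{cor:aut-group-tube} provides the isomorphism $\Aut(\Uu_x/k)\simeq\Gal_{\tau}(D/k)$, and by the remark preceding that corollary $\Gal_{\tau}(D/k)$ embeds into $\Gal(D/k)$. Both $\tau^-|_{\Uu_x}$ and $\sigma_x|_{\Uu_x}$ are autoequivalences of $\Uu_x$ (the latter because tubular shifts preserve every tube), and hence correspond to elements of $\Aut(\Uu_x/k)\simeq\Gal_{\tau}(D/k)$. Part~(1) yields their equality in the larger group $\Gal(D/k)$; injectivity of the embedding lifts this to equality in $\Gal_{\tau}(D/k)$, and the bijection of Corollary~\ref{cor:aut-group-tube} then delivers $\sigma_x|_{\Uu_x}\simeq\tau^-|_{\Uu_x}$ as functors.

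The main obstacle is the identification step in part~(1): the residue ring $R_x^{\gr}/\Rad^{\gr}$ of the localized orbit algebra and the associated graded of the Pr\"ufer endomorphism ring $\End(S_x[\infty])$ both encode the local data at $x$, yet they arise from rather different constructions (Ore--Asano localization of the homogeneous coordinate ring $\Pi(L,\sigma_x)$ on one side, completion of the Pr\"ufer endomorphism ring on the other). To legitimately compare the two twist classes one has to match these graded rings, which will follow from the uniqueness of the local ring realizing $\Uu_x$ as its category of finite-length modules—exactly the principle already invoked in Proposition~\ref{prop:main-ring-iso} via~\cite[IV.~Prop.~13]{gabriel:1962}. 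Once this identification is in place, the equality $[\sigma_x]=[\tau^-]$ modulo inner automorphisms is automatic, and part~(2) is then a formal consequence.
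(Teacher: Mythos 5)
There is a genuine gap in your proof of part~(1), which is the heart of the corollary. You propose to read off the twist class of $\sigma_x$ from the graded skew field $\END(S_x)\simeq R_x^{\gr}/\Rad^{\gr}(R_x^{\gr})$ (up to Morita) and the twist class of $\tau^-$ from $\gr(\End(S_x[\infty]))\simeq D[T,\tau^-]$, and then to declare the two classes equal "once the two graded rings are identified" via the uniqueness statement of~\cite[IV.~Prop.~13]{gabriel:1962}. But these two rings are not isomorphic and cannot be identified: $\END(S_x)$ is the \emph{semisimple} quotient of the localized orbit algebra (a graded skew field, all of whose graded modules of finite length are semisimple), whereas $D[T,\tau^-]$ is a graded local domain with nontrivial radical whose finite-length modules realize all of $\Uu_x$. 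Gabriel's uniqueness applies to the complete local ring realizing $\Uu_x$, i.e.\ to $\End(S_x[\infty])$, and says nothing about $\END(S_x)$. To make your strategy work you would have to pass from $\matring_{e(x)}(\END(S_x))$ back up to the completion $\widehat{R}_x\simeq\matring_{e(x)}(D[[T,\tau^-]])$ and track how the degree shift by $\sigma_x$ in the orbit algebra interacts with multiplication by the central element $\pi_x$; none of this is in your write-up. Moreover, even with the correct rings in hand, an abstract isomorphism $D[[T,\sigma]]\simeq D[[T,\sigma']]$ only forces $[\sigma']$ to be \emph{conjugate} to $[\sigma]$ in $\Gal(D/k)$ (the isomorphism may induce a nontrivial automorphism of the residue skew field $D$), and $\Gal(D/k)$ need not be abelian; so you would not obtain equality of the two classes, which is what part~(1) asserts and what part~(2) needs.

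The paper avoids both problems by a short direct computation: using the natural isomorphism $\sigma_xS\simeq\Ext^1(S,S)\otimes_DS$, the one-dimensionality of $\Ext^1(S,S)$ over $D$, and the relation $\mu\cdot d=\tau(d)\cdot\mu$ from Proposition~\ref{prop:AR-sequence}, one checks $\sigma_x(f)=\tau^-(f)$ for all $f\in Z(D)$, and then Skolem--Noether upgrades equality on $Z(D)$ to equality in $\Gal(D/k)$. Your part~(2) (restricting to $\Aut(\Uu_x/k)\simeq\Gal_\tau(D/k)\subseteq\Gal(D/k)$ and using injectivity) is exactly the paper's argument and is fine once (1) is established.
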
 
The separability assumption in~(2) is essential,
cf.~Example~\ref{ex:inseparable}.
\begin{proof}
  (1) We write $S=S_x$ and $D=\End(S)$. By~\cite[0.4.2]{kussin:2009} there
  is a natural isomorphism
  $u\colon\sigma_x S\stackrel{\sim}\ra\Ext^1(S,S)\otimes_{D}S$, and
  for $f\in D$ the endomorphism $\sigma_x(f)$ corresponds to
  $\eta\otimes s\mapsto f\eta\otimes s$. Let $\eta\in\Ext^1(S,S)$.
  There is $d\in D$ with $\eta=\mu\cdot d$, where $\mu$ is the almost
  split sequence starting and ending in $S$. For $f\in Z(D)$ we have
  $f\eta\otimes s=\mu\tau^-(f)d\otimes s=\mu
  d\otimes\tau^-(f)(s)=\eta\otimes\tau^-(f)(s)$.
  We conclude $\sigma_x (f)=u^{-1}\tau^-(f)u=\tau^-(f)$ for all
  $f\in Z(D)$. Thus the restrictions of $\sigma_x$ and $\tau^-$ to
  $Z(D)$ yield the same element in $\Gal(Z(D)/k)$. By the
  Skolem-Noether theorem the restrictions of $\sigma_x$ and $\tau^-$
  to $D$ yield the same element in $\Gal(D/k)$. 

  (2) This follows from~(1) together with the preceding corollary.
\end{proof}
\begin{definition}
  Let $\XX$ be a noncommutative regular projective curve over a
  field. We call a point $x\in\XX$ a \emph{separation point}, if it is
  separable and $e_{\tau}(x)>1$ holds.
\end{definition}
\begin{corollary}\label{cor:Pic-gen-separation}
  Let $U\subseteq\XX$ be a subset such that $\Pic(\Hh)$ is generated
  by $\sigma_x$ ($x\in U$). Then $U$ contains all separation
  points. \qed
\end{corollary}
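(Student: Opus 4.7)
The plan is to prove the contrapositive: assume $x$ is a separation point with $x \notin U$, and derive a contradiction by comparing the action of elements of $\Pic(\Hh)$ on the tube $\Uu_x$.

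First, I would recall two key facts about how Picard-shifts act on tubes. For any point $y \neq x$, the tubular shift $\sigma_y$ restricts to (a functor isomorphic to) the identity on $\Uu_x$; this was noted in the discussion of Picard-shifts. On the other hand, since $x$ is assumed separable, Corollary~\ref{cor:tau-minus=sigma-x}(2) tells us that $\sigma_x$ restricted to $\Uu_x$ is isomorphic to the inverse Auslander-Reiten translation $\tau^-$ on $\Uu_x$. Combined with Corollary~\ref{cor:aut-group-tube}, the order of $\tau^-$ as an element of $\Aut(\Uu_x/k)\simeq\Gal_{\tau}(D_x/k)$ equals $e_\tau(x)$, which exceeds $1$ by the separation-point hypothesis. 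Hence $\sigma_x$ acts on $\Uu_x$ as a non-trivial autoequivalence of order $e_{\tau}(x)>1$.

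Now suppose, for contradiction, that $x\notin U$. Since $\Pic(\Hh)$ is generated by $\{\sigma_y \mid y\in U\}$ and is abelian, there exist $y_1,\dots,y_r\in U$ (all distinct from $x$) and integers $n_1,\dots,n_r\in\ZZ$ with
\[
\sigma_x \simeq \sigma_{y_1}^{n_1}\circ\cdots\circ\sigma_{y_r}^{n_r}
\]
in $\Aut(\Hh)$. Restricting both sides to $\Uu_x$, the right-hand side is isomorphic to the identity functor on $\Uu_x$ (each $\sigma_{y_i}$ being trivial there), while the left-hand side is isomorphic to $\tau^-$, which has order $e_\tau(x)>1$. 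This contradicts the non-triviality of $\tau^-$ on $\Uu_x$.

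The only subtle point — and the step one must verify carefully — is that the isomorphism in $\Aut(\Hh)$ really does descend to an isomorphism of restricted functors on the full subcategory $\Uu_x$; this is automatic since $\Uu_x$ is closed under each $\sigma_y$ (as autoequivalences preserve the decomposition $\Hh_0=\coprod_{y\in\XX}\Uu_y$), so restriction yields a well-defined group homomorphism $\Pic(\Hh)\ra\Aut(\Uu_x/k)$. The argument above shows that the image of this homomorphism is generated by the restrictions of $\sigma_y$ for $y\in U$, which are trivial unless $y=x$; so if $x\notin U$, the image is trivial, contradicting $e_\tau(x)>1$. Hence $x\in U$, as required.
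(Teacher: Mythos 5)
Your proof is correct and is exactly the argument the paper intends (the corollary is stated with \qed as an immediate consequence of Corollary~\ref{cor:tau-minus=sigma-x} and the fact that $\sigma_y$ acts trivially on $\Uu_x$ for $y\neq x$): restriction to $\Uu_x$ kills every generator except $\sigma_x$ itself, which acts as $\tau^-$ of order $e_\tau(x)>1$. No gaps.
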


\section{Local-global principle of skewness}
\label{sec:local-global-principle}
For a point $x$ of a noncommutative regular projective curve over an
arbitrary field we write $e^{\ast\ast}(x)$ for the PI-degree of
$\End(S_x[\infty])$.
\begin{theorem}[General skewness
  principle]\label{thm:general-skewness-equation} 
  Let $\Hh$ be a noncommutative regular projective curve over an
  arbitrary field $k$. For all points $x\in\XX$ the following hold:
  \begin{enumerate}
  \item[(1)] $e(x)\cdot e^{\ast\ast}(x)=s(\Hh)$.
  \item[(2)] $e^{\ast}(x)$ divides $e^{\ast\ast}(x)$.
  \end{enumerate}
\end{theorem}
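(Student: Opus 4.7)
The plan is to compute $\operatorname{PI-deg}(\widehat{R}_x)$ in two different ways to obtain part~(1), and to use the complete local structure of $E=\End(S_x[\infty])$ for part~(2).

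For part~(1): by Proposition~\ref{prop:main-ring-iso} we have $\widehat{R}_x\simeq\matring_{e(x)}(E)$, so Morita invariance of PI-degree yields $\operatorname{PI-deg}(\widehat{R}_x)=e(x)\cdot e^{\ast\ast}(x)$. On the other hand, the graded noetherian domain $\Pi(L,\sigma_x)$ has $\pi_x$ central of degree one and degree-zero part $k(\Hh)$, so its graded quotient equals $k(\Hh)[\pi_x,\pi_x^{-1}]$ and its ungraded Goldie quotient skew field is $k(\Hh)(\pi_x)$, central over $k(\Hh)$; Posner's theorem therefore gives $\operatorname{PI-deg}(\Pi(L,\sigma_x))=s(\Hh)$. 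Passage to the homogeneous localization at $\Cc(P_x)$ and then to the degree-zero component $R_x$ preserves PI-degree, since $R_x$ is a domain with quotient skew field $k(\Hh)$ by the results of Section~\ref{sec:localization}. Finally, the $\Rad(R_x)$-adic completion $R_x\hookrightarrow\widehat{R}_x$ also preserves PI-degree, by a density argument: any multilinear polynomial identity of $R_x$ extends to $\widehat{R}_x$ by continuity of polynomial maps in the adic topology together with density of $R_x$ in $\widehat{R}_x$, while the reverse direction is trivial by inclusion. Combining the two computations yields $e(x)\cdot e^{\ast\ast}(x)=s(\Hh)$.

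For part~(2), $E$ is a complete local noetherian prime PI-ring whose residue skew field is $D_x$ of PI-degree $e^{\ast}(x)$, and by Posner its Goldie fraction skew field $Q(E)$ is central simple of PI-degree $e^{\ast\ast}(x)$. Viewing $E$ as an order in $Q(E)$ over its complete local centre and reducing modulo the unique maximal ideal of the centre, the classical divisibility of Schur indices (equivalently, of PI-degrees) under such reduction yields $e^{\ast}(x)\mid e^{\ast\ast}(x)$.

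The main obstacle is the density/continuity step equating $\operatorname{PI-deg}(R_x)$ with $\operatorname{PI-deg}(\widehat{R}_x)$; a parallel route uses faithful flatness of the completion of the noetherian semilocal ring $R_x$ together with Posner applied to the prime ring $\widehat{R}_x$. The divisibility in~(2) is structural, resting on the standard reduction behaviour of central simple algebras over complete local rings, classical in the theory of maximal orders.
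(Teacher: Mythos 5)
Your part (1) takes essentially the paper's route: compute the PI-degree of $\widehat{R}_x$ once via Proposition~\ref{prop:main-ring-iso} and Morita invariance (giving $e(x)\cdot e^{\ast\ast}(x)$) and once via Posner applied to the domain $R_x$, whose quotient division ring is $k(\Hh)$ (giving $s(\Hh)$). Where the paper quotes Braun's theorem for $\operatorname{PIdeg}(R_x)=\operatorname{PIdeg}(\widehat{R}_x)$, you substitute a density/continuity argument; this is sound if phrased carefully: $\widehat{R}_x=\varprojlim R_x/J^n$ satisfies every polynomial identity of $R_x$ (identities pass to quotients and to inverse limits), and conversely $R_x$ embeds into $\widehat{R}_x$ because $\bigcap_n J^n=0$ (established where the paper proves $R_x$ is Dedekind), so the two prime rings satisfy the same standard identities and hence have equal PI-degree. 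That is a legitimate, more self-contained replacement for the citation; the detour through the graded ring $\Pi(L,\sigma_x)$ is harmless but unnecessary, since Posner applies to $R_x$ directly.

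Part (2) contains a genuine gap. You deduce $e^{\ast}(x)\mid e^{\ast\ast}(x)$ from ``the classical divisibility of Schur indices under reduction'', treating $E=\End(S_x[\infty])$ as a classical order in $Q(E)$ over a complete local centre. But the theorem is asserted over an \emph{arbitrary} field and for \emph{all} points, and at this stage none of the hypotheses of the classical local theory are available: it is not known that $Z(E)$ is a complete discrete valuation ring, that $E$ is module-finite over it, or that $Q(E)$ is defectless over its centre. The paper establishes the skew Laurent/power series structure of $E$ and its centre only for \emph{separable} points (Theorem~\ref{thm:pruefer-end-skew-power}, Lemma~\ref{lem:complete-in-x}, Proposition~\ref{prop:relationships}), and Example~\ref{ex:inseparable} shows that at an inseparable point $E$ is genuinely not of the expected form (there $e^{\ast}(x)=e_{\tau}(x)=1$ while $e^{\ast\ast}(x)=2$). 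The paper's proof sidesteps all of this by applying the Bergman--Small theorem to the surjection $R_x\to R_x/\Rad(R_x)\simeq\matring_{e(x)}(D_x)$: for a prime noetherian PI ring with a unique maximal ideal, the PI-degree of the residue ring, here $e(x)e^{\ast}(x)$, divides that of the ring, here $s(\Hh)=e(x)e^{\ast\ast}(x)$ by part (1), whence the claim. To repair your argument you would either need this (or Bergman--Small applied directly to $E\twoheadrightarrow D_x$), or a proof that $E$ is a classical order over a complete DVR in full generality, which is exactly what fails to be obvious in the inseparable case.
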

\begin{proof}
  (1) By Proposition~\ref{prop:main-ring-iso} the PI-degree of
  $\widehat{R}_x$ is $e(x)\cdot
  e^{\ast\ast}(x)$. By~\cite[Thm.13]{braun:1990} the ring $R_x$ and
  its completion $\widehat{R}_x$ have the same PI-degree. The
  PI-degree of $R_x$ coincides with the PI-degree of its quotient
  division ring $k(\Hh)$, which is $s(\Hh)$. Thus we get the equation.

  (2) By a theorem of Bergman-Small
  (see~\cite[Thm.~1.10.70]{rowen:1980}), applied to the surjective
  ring homomorphism $R_x\ra R_x/\Rad(R_x)\simeq M_{e(x)}(D_x)$, the
  PI-degree of the factor, which is $e(x)\cdot e^{\ast}(x)$, divides
  the PI-degree of $R_x$, which is $s(\Hh)$. Together with~(1) we get
  that $e^{\ast}(x)$ divides $e^{\ast\ast}(x)$.
\end{proof}
\begin{lemma}\label{lem:complete-in-x}
  Let $x$ be a point with associated skew field
  $D_x=\End(S_x)$. Denote by
  \begin{equation}
    \label{eq:skew-laurent}
    \widehat{D}_x=D_x((T,\tau^-))
  \end{equation}
  the skew Laurent power series ring over $D_x$ in the variable
  $T$. It is a skew field of dimension $e^{\ast}(x)^2\cdot
  e_{\tau}(x)^2$ over its centre. Moreover, it is $v_x$-complete,
  where the valuation $v_x$ is given by $v_x(\sum_m^{\infty}a_i
  T^i)=(1/2)^{\ell}$, with $\ell$ the infimum of indices $i$ with
  $a_i\neq 0$.
\end{lemma}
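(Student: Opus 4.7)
My plan is to verify the three assertions of the lemma in sequence. Invertibility is straightforward: any nonzero $f=\sum_{i\geq m}a_iT^i$ with $a_m\neq 0$ factors as $f=T^m\cdot w$, where $w\in D_x[[T,\tau^-]]$ has nonzero constant term and is therefore a unit of the complete local ring $D_x[[T,\tau^-]]$; since $T$ is invertible in the Laurent ring by construction, $f$ is a unit. Completeness with respect to $v_x$ is the standard coefficient-wise convergence for formal Laurent series, hence automatic.

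The substantive task is to compute $[\widehat{D}_x:Z(\widehat{D}_x)]$. Fix a representative $\sigma\in\Aut(D_x/k)$ of $\tau^-\in\Gal(D_x/k)$ and write $e=e_{\tau}(x)$, so that $\sigma^e=\iota_u$ is inner for some $u\in D_x^{\times}$ (with $\iota_u(d)=u^{-1}du$ as in the paper's convention). The natural candidate for a central element is $s=uT^e$: by construction it commutes with every $d\in D_x$, and the only obstruction to $s$ commuting with $T$ is the scalar $\lambda=\sigma(u)u^{-1}$. One verifies that $\lambda$ lies in $Z(D_x)^{\times}$ (from $\sigma\circ\iota_u=\iota_u\circ\sigma$) and has norm $1$ over $F:=Z(D_x)^{\sigma}$ (by iterating the relation and using $\sigma^e(u)=u$). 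By Skolem-Noether the restriction $\Gal(D_x/k)\hookrightarrow\Aut(Z(D_x)/k)$ is injective, so $\sigma|_{Z(D_x)}$ still has order $e$ and $Z(D_x)/F$ is cyclic Galois of degree $e$; multiplicative Hilbert~90 then lets me replace $u$ by $cu$ with $c\in Z(D_x)^{\times}$ so as to achieve $\sigma(u)=u$, putting $s\in Z(\widehat{D}_x)$.

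With $s$ central, I pin down the full centre by analysing when $\sum a_iT^i$ centralises both $D_x$ and $T$: commutation with $D_x$ forces $\sigma^i\in\Inn(D_x/k)$ whenever $a_i\neq 0$, hence $e\mid i$; and for $i=je$ the identity $\sigma^{je}=\iota_{u^j}$ together with $\sigma$-fixedness of $a_{je}$ pins $a_{je}$ down to $\nu_ju^j$ with $\nu_j\in F$, so using $s^j=u^jT^{je}$ we obtain $Z(\widehat{D}_x)=F((s))$. The dimension then factors through the tower
\[
F((s))\ \subseteq\ Z(D_x)((s))\ \subseteq\ D_x((s))\ \subseteq\ \widehat{D}_x,
\]
with successive indices $[Z(D_x):F]=e$, $[D_x:Z(D_x)]=e^{\ast}(x)^2$, and $[\widehat{D}_x:D_x((s))]=e$; the last step exploits $T^e=u^{-1}s\in D_x((s))^{\times}$ to exhibit $\{1,T,\dots,T^{e-1}\}$ as a free $D_x((s))$-basis. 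The product is $e^{\ast}(x)^2\cdot e_{\tau}(x)^2$, as asserted. The main obstacle in this plan is the Hilbert~90 normalisation of $u$: without the adjustment $\sigma(u)=u$, the natural candidate $uT^e$ is not central, and neither the centre nor the dimension admit such a clean description.
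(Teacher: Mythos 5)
Your argument is correct and follows the same route as the paper, whose entire proof consists of citing \cite[19.7]{pierce:1982} for both the centre computation and completeness; your write-up is in effect a self-contained proof of that cited result. In particular, your Hilbert~90 normalisation $\sigma(u)=u$ is exactly the paper's (unexplained) choice of $u\in\Fix(\tau)^{\times}$, and your fixed field $F=Z(D_x)^{\sigma}$ is the paper's $K_x=Z(D_x)\cap\Fix(\tau^-)$, so the two descriptions of the centre $K_x((uT^{e_\tau(x)}))$ agree.
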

\begin{proof}
  Let $r=e_{\tau}(x)$ and $\sigma^{-r}(d)=u^{-1}du$ for some
  $u\in\Fix(\tau)^{\times}$. By~\cite[19.7]{pierce:1982}, the centre
  of $D_x((T,\tau^-))$ is given by
  \begin{equation}
    \label{eq:laurent-centre}
    \widehat{K}_x=K_x((uT^r))\quad\text{with}\quad 
    K_x=Z(D_x)\cap\Fix(\tau^-). 
  \end{equation}
  From this the assertion about the
  centre follows. Completeness is shown in~\cite[19.7]{pierce:1982}.
\end{proof}
\begin{proposition}\label{prop:relationships}
  Let $x$ be a separable point.
  \begin{enumerate}
  \item[(1)] We have $e^{\ast\ast}(x)=e^{\ast}(x)\cdot e_{\tau}(x)$.
  \item[(2)] $e_{\tau}(x)$ coincides with 
    \begin{enumerate}
    \item[(i)] the order of
    $\tau\in\Aut(\Uu_x/k)$, the group of (isomorphism classes of)
    autoequivalences on the tube $\Uu_x$;
   \item[(ii)] the order of the cyclic group $\Gal(Z(D_x)/K_x)$, generated by
    $\tau$.
    \end{enumerate}
  \item[(3)] If $\phi\in\Aut(\Hh)$, and $\phi(S_x)=S_y$, then
    $e_{\tau}(x)=e_{\tau}(y)$.
  \end{enumerate}
\end{proposition}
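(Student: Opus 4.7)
The plan is to work through the three statements in order, leveraging the explicit description of the complete local endomorphism ring from Theorem \ref{thm:pruefer-end-skew-power} together with Lemma \ref{lem:complete-in-x} and Corollary \ref{cor:aut-group-tube}.

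For (1), I would observe that since $x$ is separable, Theorem \ref{thm:pruefer-end-skew-power} gives $\End(S_x[\infty]) \simeq D_x[[T,\tau^-]]$. Every non-zero element of this complete local domain can be written as $T^n \cdot u$ for a unit $u$, so inverting $T$ produces exactly the skew Laurent series ring $\widehat{D}_x = D_x((T,\tau^-))$ of Lemma \ref{lem:complete-in-x}, which is the quotient division ring. By Posner's theorem, a PI-domain and its quotient division ring have the same PI-degree, so $e^{\ast\ast}(x) = \operatorname{PI-deg}(\widehat{D}_x)$. Lemma \ref{lem:complete-in-x} then gives $[\widehat{D}_x : Z(\widehat{D}_x)] = e^{\ast}(x)^2 \cdot e_{\tau}(x)^2$, so the PI-degree equals $e^{\ast}(x) \cdot e_{\tau}(x)$, proving (1).

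For (2)(i), I would invoke Corollary \ref{cor:aut-group-tube}, which identifies $\Aut(\Uu_x/k)$ with $\Gal_{\tau}(D_x/k)$. Since $\tau$ manifestly lies in $\Aut_{\tau}(D_x/k)$ (it commutes with itself), the remark preceding the corollary shows that the order of $\tau$ in $\Gal(D_x/k)$, which is $e_{\tau}(x)$ by definition \eqref{eq:def-tau-multi}, equals the order of $\tau$ in the subgroup $\Gal_{\tau}(D_x/k) \simeq \Aut(\Uu_x/k)$. For (2)(ii), Skolem--Noether provides the injection $\Gal(D_x/k) \hookrightarrow \Gal(Z(D_x)/k)$ mentioned in the text, so the order of $\tau$ in $\Gal(D_x/k)$ equals the order of its restriction $\tau|_{Z(D_x)}$. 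This restriction is an automorphism of the centre whose fixed field is, by the definition \eqref{eq:laurent-centre}, precisely $K_x = Z(D_x) \cap \Fix(\tau)$. Hence the cyclic subgroup generated by $\tau$ in $\Gal(Z(D_x)/k)$ is $\Gal(Z(D_x)/K_x)$ and its order equals $[Z(D_x):K_x] = e_{\tau}(x)$.

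For (3), the key point is that the Auslander--Reiten translation is intrinsic to $\Hh$ (it is characterized, up to isomorphism of functors, by Serre duality (NC~3)), so any autoequivalence $\phi \in \Aut(\Hh)$ satisfies $\phi \tau \simeq \tau \phi$. If $\phi(S_x) \simeq S_y$, then $\phi$ restricts to an equivalence $\Uu_x \xrightarrow{\sim} \Uu_y$ that intertwines the restrictions of $\tau$ on both sides, so these restrictions have the same order in the respective automorphism groups. Applying (2)(i) to both tubes then yields $e_{\tau}(x) = e_{\tau}(y)$.

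The only real technical point is in (1), namely justifying the transition from $\End(S_x[\infty])$ to its quotient division ring and identifying the latter with the Laurent series ring of Lemma \ref{lem:complete-in-x}; once this is in place, parts (2) and (3) are quick applications of earlier results. I expect no substantive difficulty beyond this.
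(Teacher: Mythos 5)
Your proposal is correct and follows essentially the same route as the paper: part (1) via Posner's theorem applied to $D_x[[T,\tau^-]]$ and the centre computation of Lemma~\ref{lem:complete-in-x}, part (2) via Corollary~\ref{cor:aut-group-tube} and that same lemma, and part (3) by transporting the order of $\tau$ on the tube through the equivalence $\Uu_x\simeq\Uu_y$. The only detail worth adding is the paper's explicit remark that $y$ is again separable (immediate since $\End(S_y)\simeq\End(S_x)$), which is needed before applying (2)(i) at $y$.
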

\begin{proof}
  (1) We have $\End(S_x[\infty])\simeq D_x[[T,\tau^-]]$. By Posner's
  theorem (see~\cite[Thm.~7]{amitsur:1967}) the PI-degree of
  $D_x[[T,\tau^-]]$ coincides with the PI-degree of its quotient
  division ring, which is $D_x((T,\tau^-))$. The assertion follows
  from the preceding lemma.

  (2) (i) follows from Corollary~\ref{cor:aut-group-tube}, (ii) from
  the preceding lemma.

  (3) It follows that $y$ is also separable, and the equality of
  $\tau$-multiplicites is obtained from (2)~(i).
\end{proof}
We will see in the next section, that (ii) just means that
$e_{\tau}(x)$ coincides with the ramification index of $x$ with
respect to the maximal order $\Aa$ associated with $\Hh$.
\begin{example}
  If $k=\RR$ and the tube $\Uu$ is either of the form
  $\mod_0\CC[[T]]$, or $\mod_0\CC[[T,\tau^-]]$ with $\tau^-$ of order
  two, then in both cases $\Aut(\Uu/k)\simeq\Gal(\CC/\RR)=C_2$,
  generated by complex conjugation. In the first case $\tau$ acts
  trivially on $\Uu$, in the second it generates $\Aut(\Uu/k)$.
\end{example}
The following local-global principle is the main result on skewness.
\begin{theorem}[Local-global principle of
  skewness]\label{thm:special-skewness-equation} 
  Let $\Hh$ be a noncommutative regular projective curve over a
  field. Then for each separable point $x\in\XX$ the
  formula $$e(x)\cdot e^{\ast}(x)\cdot e_{\tau}(x)=s(\Hh)$$ holds.
\end{theorem}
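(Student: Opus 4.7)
The proof should be essentially a one-line combination of results already in place. The plan is to appeal to the \emph{General skewness principle} (Theorem~\ref{thm:general-skewness-equation}), which gives the unconditional identity $e(x)\cdot e^{\ast\ast}(x)=s(\Hh)$, and then to use the separability of $x$ to rewrite the factor $e^{\ast\ast}(x)$ in terms of the comultiplicity and the $\tau$-multiplicity via Proposition~\ref{prop:relationships}(1), namely $e^{\ast\ast}(x)=e^{\ast}(x)\cdot e_{\tau}(x)$. Substituting yields the claim.

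Since both ingredients are already proved, the only thing I would actually write in the body of the proof is the substitution itself, together with a brief reminder of where each factor comes from: $e^{\ast\ast}(x)$ is the PI-degree of $\End(S_x[\infty])$; by Theorem~\ref{thm:pruefer-end-skew-power} the separability of $x$ gives $\End(S_x[\infty])\simeq D_x[[T,\tau^-]]$, whose quotient skew field is the skew Laurent series ring from Lemma~\ref{lem:complete-in-x}, and its dimension over its centre is $e^{\ast}(x)^2\cdot e_{\tau}(x)^2$. Taking square roots (i.e.\ PI-degrees) and combining with Posner's theorem gives the factorisation $e^{\ast\ast}(x)=e^{\ast}(x)\cdot e_{\tau}(x)$ used above.

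There is essentially no obstacle at this point: the conceptual work lay in (i) identifying the completion $\widehat{R}_x$ as a matrix ring over $\End(S_x[\infty])$ (Proposition~\ref{prop:main-ring-iso}), (ii) identifying $\End(S_x[\infty])$ with the twisted power series ring $D_x[[T,\tau^-]]$ in the separable case (Theorem~\ref{thm:pruefer-end-skew-power}), and (iii) the Bergman--Small and Posner PI-degree machinery used in Theorem~\ref{thm:general-skewness-equation} and Proposition~\ref{prop:relationships}. The present theorem is just the book-keeping that packages these three inputs into the symmetric formula $s(\Hh)=e(x)\cdot e^{\ast}(x)\cdot e_{\tau}(x)$, which, by Proposition~\ref{prop:relationships}(2), holds at every separable point.
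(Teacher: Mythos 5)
Your proposal is correct and is exactly the paper's proof: the theorem is deduced by substituting the identity $e^{\ast\ast}(x)=e^{\ast}(x)\cdot e_{\tau}(x)$ from Proposition~\ref{prop:relationships}(1) into the general skewness equation $e(x)\cdot e^{\ast\ast}(x)=s(\Hh)$ of Theorem~\ref{thm:general-skewness-equation}(1). The supporting remarks you give about the skew Laurent series ring and Posner's theorem are precisely the content already established in Lemma~\ref{lem:complete-in-x} and Proposition~\ref{prop:relationships}, so nothing further is needed.
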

For a perfect field, we obtain Theorem~\ref{thm:main-theorem}.
\begin{proof}
  Follows from Theorem~\ref{thm:general-skewness-equation}~(1) and
  Proposition~\ref{prop:relationships}~(1).
\end{proof}
The preceding theorems give answers
to~\cite[Probl.~2.3.11+4.3.10]{kussin:2009}. It follows, by the way,
that for each separable point the multiplicity $e(x)$ does not depend
on the line bundle $L$ used in its definition, since
$e(x)=\frac{s(\Hh)}{e^{\ast}(x)e_{\tau}(x)}$.

\section{Maximal orders and ramifications}\label{sec:orders}
Following~\cite{reiten:vandenbergh:2001,reiten:vandenbergh:2002}, we
will use in this section an alternative description of noncommutative
curves in terms of hereditary and maximal orders. Here our main result
is that the $\tau$-multiplicities $e_{\tau}(x)$ coincide with the
ramification indices of the underlying maximal order $\Aa$. We will
temporarily, in Theorem~\ref{thm:structure}, also permit weighted
curves. This will allow to characterize the non-weighted situation in
terms of orders. Namely, the weights $p(x)$ correspond to the local
types of the, in general, hereditary order $\Aa$, which measure the
deviation of $\Aa_x$ from being maximal. For excellent expositions on
orders we refer to~\cite{auslander:goldman:1960},
\cite{brumer:1963,brumer:1964}, \cite{harada:1963},
\cite{reiner:2003}, \cite{schilling:1950}, and the
unpublished~\cite{artin:dejong:2004}.
\begin{numb}\label{numb:regular-vs-smooth}
  By a (commutative) \emph{curve} we mean a one-dimensional scheme
  over $k$, which we always assume to be integral, separated and of
  finite type over $k$. A curve $X$ is \emph{regular} (or
  \emph{non-singular}) if all local rings $\Oo_{X,x}$ are regular,
  equivalently, discrete valuation domains; in particular they are
  hereditary. We remark that if $k$ is a perfect field, regularity is
  equivalent to smoothness;
  c.f.~\cite[I.5.3.2]{demazure:gabriel:1980}.
\end{numb}
\begin{numb}[The centre curve]\label{numb:centre-curve}
  Let $(\Hh,L)$ be a noncommutative regular projective curve over the
  field $k$ with point set $\XX$ and function field $D=k(\Hh)$. Let
  $K=Z(k(\Hh))$ be the centre of $D$. There is a unique (commutative)
  regular complete curve $X=C_K$ with function field $k(X)=K$ and
  whose points are in bijective correspondence with the discrete
  valuations of $K/k$; we refer
  to~\cite[Prop.~(7.4.18)]{grothendieck:1961b},
  also~\cite[I.5.3.7]{demazure:gabriel:1980}. By Chow's lemma (we
  refer to~\cite[Ex.~II.4.10]{hartshorne:1977}
  and~\cite[5.6]{grothendieck:1961b}) there is a (irreducible)
  \emph{projective} curve $X'$ and a surjective, birational morphism
  $\pi\colon X'\ra X$ over $k$ (in particular: $X'=\Proj(S')$ where
  the commutative graded ring $S'$ is generated in degrees $0$ and
  $1$). By~\cite[Cor.~(4.4.9)]{grothendieck:1961ca} we have that $\pi$
  is even an isomorphism. In particular, $X$ itself is projective over
  $k$. We call $X$ the \emph{centre curve} of $\Hh$ (or $\XX$). If
  $\Oo=\Oo_X$ is the structure sheaf of $X$, we denote by
  $(\Oo_x,\mathfrak{m}_x)$ the local rings ($x\in X$) and by
  $k(x)=\Oo_x/\mathfrak{m}_x$ the residue class fields. 
\end{numb}
\begin{example}
  Let $k=\RR$ be the field of real numbers and $R=\CC[X;Y,\sigma]$ the
  twisted polynomial algebra, graded by total degree, where $X$ is
  central and $Yz=\sigma(z)Y$ for each $z\in\CC$, with
  $\sigma(z)=\bar{z}$ the complex conjugation. Then
  $\Hh=\mod^{\ZZ}(R)/\mod_0^{\ZZ}(R)$ is a noncommutative regular
  projective curve (we refer to~\cite{kussin:2009} for more
  details). The function field is $\CC(T,\sigma)$, its centre given by
  $\RR(T^2)$. The centre of $R$ is $S=\RR[X,Y^2]$, and $\Proj(S)$ is
  the centre curve of $\Hh$. It is isomorphic to the projective
  spectrum of $S'=\RR[X,Y]$, graded by total degree, having function
  field $\RR(T)$, which as $\RR$-algebra is isomorphic to
  $\RR(T^2)$. Thus the centre curve of $\Hh$ is isomorphic to the
  projective line $\Pone(\RR)$.
\end{example}
\begin{numb}[The categorical centre]
  We also have the centre of $\Hh$ in the categorical sense, namely
  $Z(\Hh)=\End(1_{\Hh})$, the ring of natural endotransformations of
  the identity functor $1_{\Hh}$.  
\end{numb}
\begin{lemma}
  The categorical centre $Z(\Hh)$ is a field, of finite dimension over
  $k$. For each line bundle $L'$, the assignment
  $\alpha\mapsto\alpha_{L'}$ yields a $k$-monomorphism from $Z(\Hh)$
  into $\End(L')$.
\end{lemma}
Therefore we can usually assume without loss of generality that $k$ is
the centre of $\Hh$.
\begin{proof}
  We proceed as in~\cite[(S~19)]{lenzing:delapena:1999}. If $\alpha$
  is a non-zero element in the centre, then $\alpha_{L'}$ is non-zero
  for each line bundle $L'$: if otherwise $\alpha_{L'}=0$, then it
  follows that also $\alpha_{L'(nx)}=0$ for all $x\in\XX$ and
  $n\in\ZZ$. Using ampleness (cf.\ Lemma~\ref{lem:Serre-construction}
  and Remark~\ref{rem:ample-pair-weighted}) we the get easily
  $\alpha=0$, contradiction. Since $\End(L')$ is a skew field,
  $\alpha_{L'}$ is an isomorphism. Using line bundle filtrations and
  the fact that each simple object is the cokernel of a monomorphism
  between line bundles, we obtain that $\alpha_F$ is an isomorphism
  for each object $F\in\Hh$. Thus $\alpha$ is invertible. Finally, for
  all $\alpha,\,\beta\in Z(\Hh)$ we clearly have
  $\alpha_{L'}\beta_{L'}=\beta_{L'}\alpha_{L'}$, and hence $Z(\Hh)$ is
  commutative. 
\end{proof}
\begin{proposition}\label{prop:coord-ring-finite-module-centre}
  Let $(\Hh,L)$ be a noncommutative regular projective curve.
  \begin{enumerate}
  \item[(1)] For each $x\in\XX$ the graded ring $\Pi(L,\sigma_x)$ is
    finitely generated as module over its centre.
  \item[(2)] If $s(\Hh)=1$, then $\Pi(L,\sigma_x)$ is commutative.
  \end{enumerate}
\end{proposition}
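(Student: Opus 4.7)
The plan is to exploit the polynomial identity (PI) viewpoint, using that the central element $\pi_x$ of degree one lets us understand the graded quotient ring of $R=\Pi(L,\sigma_x)$ very explicitly. By the previous lemmas, $R$ is a graded noetherian domain, and its degree-zero quotient division ring is the function field $k(\Hh)$, with $\pi_x$ a central homogeneous unit in the graded quotient ring. The graded quotient division ring $Q^{\gr}(R)$ is therefore nothing but $k(\Hh)[\pi_x,\pi_x^{-1}]$, a graded Laurent polynomial ring with central variable $\pi_x$, since centrality of $\pi_x$ forces the twist to be trivial.

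Part (2) is then immediate: if $s(\Hh)=1$, then $k(\Hh)=Z(k(\Hh))$ is commutative by definition of the skewness, so $Q^{\gr}(R)\simeq k(\Hh)[\pi_x,\pi_x^{-1}]$ is commutative, and hence so is the subring $R\subseteq Q^{\gr}(R)$.

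For part (1), I would first observe that by (NC~7) we have $[k(\Hh):Z(k(\Hh))]=s(\Hh)^2<\infty$, so $k(\Hh)$ is a PI ring of PI-degree $s(\Hh)$. Since $\pi_x$ is central, $Q^{\gr}(R)=k(\Hh)[\pi_x,\pi_x^{-1}]$ is still PI of the same PI-degree, and $R\subseteq Q^{\gr}(R)$ satisfies the same polynomial identities, so $R$ itself is prime PI of PI-degree $s(\Hh)$. The ring $R$ is furthermore affine over $k$: the Serre construction together with the ampleness of $(L,\sigma_x)$ (established in the earlier lemma and cited back to Artin-Zhang) forces $R$ to be a finitely generated $k$-algebra, as $R$ is finitely graded of GK-dimension two with bounded homogeneous components. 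Now I invoke the Artin-Tate/Procesi theorem for prime affine PI algebras: any such ring is a finitely generated module over its centre $Z(R)$. This is exactly the statement of (1).

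The main obstacle is essentially formal rather than substantive. It is bookkeeping to verify that $Q^{\gr}(R)$ really equals $k(\Hh)[\pi_x,\pi_x^{-1}]$ (one needs centrality of $\pi_x$ plus the fact that it is a homogeneous unit in the graded quotient ring), and that $R$ is genuinely affine in the sense required by the PI-Artin-Tate theorem, rather than merely finitely graded; this latter point is where one must reach back to the ampleness statement. Everything else is an application of standard PI machinery. If one prefers to avoid citing Procesi's theorem, one can argue more concretely: the central subring $Z(R_0)[\pi_x]$ embeds into $Z(R)$, and the quotient $R/\pi_x R$ can be analyzed via Lemma~\ref{lem:cokernel-pi-n} to bound ranks degree by degree; but citing the PI-Artin-Tate theorem is the cleanest route.
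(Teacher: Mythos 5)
Your part (2) is correct and is essentially the paper's own argument: both proofs rest on realizing $\Pi(L,\sigma_x)$ inside a graded ring of the form $k(\Hh)[T]$ with $T$ central (your identification of the graded quotient ring as $k(\Hh)[\pi_x,\pi_x^{-1}]$, with $\pi_x$ central because central elements stay central under Ore localization), after which commutativity for $s(\Hh)=1$ is immediate. The preparatory steps for (1) are also fine: $R=\Pi(L,\sigma_x)$ is an affine graded noetherian prime PI domain of PI-degree $s(\Hh)$.

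The gap is the last step of (1). There is no ``Artin--Tate/Procesi theorem'' asserting that a prime affine PI algebra is a finite module over its centre; that statement is false in general, and its failure is precisely why the trace-ring construction exists in PI theory (rings of generic matrices are the usual illustration). The genuine positive results in this direction --- Artin--Procesi for Azumaya algebras, and the Shirshov/Procesi theorem for algebras integral of bounded degree over a \emph{given} central subalgebra --- carry hypotheses you have not verified here. Your concrete fallback does not close the gap either: the only visible central subalgebra is $Z(\End(L))[\pi_x]$, and $R$ cannot be a finite module over it, since otherwise the function field $k(\Hh)$ (the degree-zero part of the graded quotient ring) would be finite dimensional over the field $Z(\End(L))$, contradicting (NC~7); equivalently, $R/\pi_xR$ is infinite dimensional. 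So the real content of (1) is the production of central elements algebraically independent of $\pi_x$, and that is exactly what the paper imports: from the embedding $\Pi(L,\sigma_x)\subseteq k(\Hh)[T]$ it invokes Lenagan's theorem on domains of linear growth \cite{lenagan:1994} together with Artin--Stafford's Theorem~0.1(ii) on graded domains of quadratic growth \cite{artin:stafford:1995}, which is the statement tailored to this graded, GK-dimension-two situation. Replacing your appeal to a nonexistent general PI theorem by that citation (or by an argument of comparable depth) is what the proof of (1) still requires.
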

\begin{proof}
  Like in~\cite[Prop.~4.3.3]{kussin:2009} we have a graded inclusion
  $\Pi(L,\sigma_x)\subseteq k(\Hh)[T]$, where $T$ is a central
  variable. From this, (2) follows immediately, and (1) follows
  with~\cite{lenagan:1994}
  and~\cite[Thm.~0.1(ii)]{artin:stafford:1995}.
\end{proof}
\begin{corollary}
  Let $(\Hh,L)$ be a noncommutative regular projective curve over $k$
  with $s(\Hh)=1$. Then there is a (commutative) regular projective
  curve $X$ over $k$ such that $\Hh\simeq\coh(X)$, and the points of
  $\XX$ are in bijective correspondence with the closed points of $X$.
\end{corollary}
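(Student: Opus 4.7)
The plan is to reduce to the classical commutative situation via the Serre construction established earlier. Choose any point $x\in\XX$ and let $R=\Pi(L,\sigma_x)$. By the preceding Proposition, $R$ is commutative; it is also a noetherian graded domain of Gelfand--Kirillov dimension two, and by Serre's construction~\eqref{eq:Serre-construction} we have
\[
\Hh\simeq\mod^{\ZZ}(R)/\mod_0^{\ZZ}(R).
\]
Setting $X:=\Proj(R)$ and invoking the classical commutative Serre theorem (applied to the ample pair $(L,\sigma_x)$, passing to a Veronese subalgebra generated in degree one if necessary) yields $\coh(X)\simeq\mod^{\ZZ}(R)/\mod_0^{\ZZ}(R)\simeq\Hh$.

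Next I would verify that $X$ is a regular projective curve. Projectivity: $R$ is a commutative finitely generated graded $k$-algebra (noetherian of finite GK-dimension), so $\Proj(R)$ is projective over $R_0$, and hence over $k$ since $R_0\subseteq\End(L)$ is a finite field extension of $k$. Dimension: since $R$ has GK-dimension two and is a domain, $X$ is an integral one-dimensional scheme, i.e.\ a (commutative) curve. Regularity: for each $x\in\XX$ the ring $R_x$ was shown to be a noncommutative Dedekind domain with unique non-zero prime $\Rad(R_x)$; now that it is commutative, this forces $R_x$ to be a discrete valuation ring. Under the Serre equivalence $R_x$ is identified with the local ring of $X$ at the closed point corresponding to the principal homogeneous prime $P_x=(\pi_x)$, so every local ring of $X$ is a DVR and $X$ is regular.

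For the bijection I would use the Serre equivalence to match simple objects of $\Hh$ (parameterised by $\XX$) with simple objects of $\coh(X)$ (the skyscraper sheaves at closed points), the correspondence being $x\mapsto P_x=(\pi_x)$. Distinct points yield non-isomorphic simples and hence distinct primes, and every closed point of $X$ arises in this way.

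The expected main obstacle is the bookkeeping around the Serre/Proj identification when $R$ need not be generated in degree one (because of multiplicities $e(x)>1$): one either replaces $R$ by a Veronese subalgebra generated in degree one without changing the Proj, or one appeals to the uniqueness of the centre curve established in~\ref{numb:centre-curve} to identify $X$ a priori with the projective centre curve of $\Hh$ (noting that $s(\Hh)=1$ forces $k(\Hh)=Z(k(\Hh))$, so the function fields agree) and then transports the Serre equivalence across this identification.
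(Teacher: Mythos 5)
Your proposal is correct and follows essentially the same route as the paper, whose entire proof is the single line ``let $S$ be the commutative graded ring $\Pi(L,\sigma_x)$ and set $X=\Proj(S)$''; you simply fill in the verifications (projectivity, one-dimensionality, regularity via the localizations $R_x$ being commutative Dedekind hence DVRs, and the point bijection via the primes $P_x$) that the paper leaves implicit. Your worry about degree-one generation is legitimate but handled exactly as you suggest, by a Veronese subalgebra or by the identification with the centre curve from~\ref{numb:centre-curve}.
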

\begin{proof}
  Let $S$ be the commutative graded ring $\Pi(L,\sigma_x)$ for some
  $x\in\XX$ and $X=\Proj(S)$. 
\end{proof}
\begin{corollary}\label{cor:algebr-closed}
  Let $k$ be an algebraically closed field. Then $\Hh$ is a
  noncommutative regular projective curve over $k$ if and only if
  $\Hh$ is equivalent to the category $\coh(X)$ of coherent sheaves
  over a (commutative) regular (=smooth) projective curve $X$.
\end{corollary}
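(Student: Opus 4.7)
The plan is to prove the non-trivial direction ($\Rightarrow$) by showing that over an algebraically closed field the skewness $s(\Hh)$ is forced to equal $1$, and then to invoke the immediately preceding corollary. The reverse direction ($\Leftarrow$) is the standard observation that for a smooth (equivalently regular, as $k$ is perfect) projective curve $X$ the category $\coh(X)$ satisfies axioms (NC~1)--(NC~5); this is the original motivating example and requires no real argument beyond recalling the classical theory.

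For the forward direction, I would proceed as follows. By axiom (NC~7), the function field $k(\Hh)$ is finite-dimensional over its centre $K=Z(k(\Hh))$, and $K$ is an algebraic function field in one variable over $k$. Now the key input is Tsen's theorem: since $k$ is algebraically closed, $K$ is a $C_1$-field, so $\Br(K)=0$. Consequently every central skew field extension of $K$ is trivial, i.e.\ $k(\Hh)=K$. This means $s(\Hh)=[k(\Hh):Z(k(\Hh))]^{1/2}=1$.

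Having established $s(\Hh)=1$, the preceding corollary applies directly to furnish a (commutative) regular projective curve $X$ over $k$ with $\Hh\simeq\coh(X)$. Finally, since $k$ is algebraically closed it is perfect, so by~\ref{numb:regular-vs-smooth} the regular curve $X$ is in fact smooth, completing the proof.

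The argument is almost entirely bookkeeping once Tsen's theorem is invoked; the main (and essentially only) conceptual input is the vanishing of the Brauer group of a function field of a curve over an algebraically closed field, which collapses the noncommutative setting to the commutative one. No delicate technical obstacle appears, since the hard machinery — ampleness of $(L,\sigma_x)$, the Serre construction, and the recognition of $\Hh$ as $\coh(X)$ when $s(\Hh)=1$ — has already been set up in the preceding lemmas and Proposition~\ref{prop:coord-ring-finite-module-centre}.
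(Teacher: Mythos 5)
Your argument is correct and coincides with the paper's own proof: Tsen's theorem forces $s(\Hh)=1$, the preceding corollary then yields $\Hh\simeq\coh(X)$ for a regular projective curve $X$, and perfectness of $k$ identifies regular with smooth via~\ref{numb:regular-vs-smooth}. Your extra remarks on the Brauer group and the converse direction just make explicit what the paper leaves implicit.
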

\begin{proof}
  By Tsen's theorem~\cite{tsen:1933} we have $s(\Hh)=1$.
\end{proof}
\begin{numb}[The centre curve in the weighted case]
  We assume that $\Hh$ satisfies (NC~1) to (NC~5). By
  Remark~\ref{rem:ample-pair-weighted} also (NC~7) holds. Thus the
  centre of the function field $k(\Hh)$ is of the form $k(X)$, for a
  unique regular projective curve $X$, which we also call the
  \emph{centre curve} of $\Hh$ in this weighted case. Similarly,
  $R=\Pi(L,\sigma)$, with $\sigma$ a suitable product of
  Picard-shifts, is module-finite over its centre, by the same
  arguments given in
  Proposition~\ref{prop:coord-ring-finite-module-centre}. Part~(2) of
  the next theorem below will show that also (NC~6) is satisfied.
\end{numb}
\begin{numb}[Orders over the centre curve]
  Let $X$ be the centre curve with function field $K=k(X)$. Let $A$ be
  a finite dimensional central simple $K$-algebra. As
  in~\cite{artin:dejong:2004} we call a torsionfree, coherent
  $\Oo_X$-algebra $\Aa$ an $\Oo_X$-\emph{order} in $A$, if the generic
  fibre of $\Aa$ is isomorphic to $A$, or equivalently, if
  $\Aa\otimes_{\Oo_X}\!K\simeq A$. An order $\Aa$ is called
  \emph{maximal} if it is not contained properly in another
  order. Then all stalks $\Aa_x=\Aa\otimes\Oo_x$ are maximal
  $\Oo_x$-orders in $A$. An order $\Aa$ is called \emph{hereditary},
  if all stalks $\Aa_x$ are hereditary $\Oo_x$-orders in $A$. Each
  maximal order is hereditary. The $\Oo_X$-order $\Aa$ is called an
  \emph{Azumaya algebra} of degree $n$, if $\Aa$ is locally-free of
  rank $n^2$, and if for each $x\in X$ the geometric fibre
  $\Aa(x)=\Aa_x\otimes_{\Oo_x}\!k(x)=\Aa_x/\Rad(\Aa_x)$ is a full matrix
  algebra with centre $k(x)$. Equivalently
  (by~\cite[Prop.~1.9.2]{artin:dejong:2004}): For each $x$ we have
  $[\Aa(x):k(x)]=n^2$. Azumaya algebras over $\Oo_X$ are maximal
  orders (by~\cite[Prop.~1.8.2]{artin:dejong:2004}).
\end{numb}
We now have the following fundamental description of noncommutative
regular projective curves, essentially due to Reiten-van den
Bergh~\cite[Prop.~III.2.3]{reiten:vandenbergh:2002}.
\begin{theorem}\label{thm:structure}
  Let $k$ be a field. 
  \begin{enumerate}
  \item[(1)] For a $k$-category $\Hh$ the following two conditions are
    equivalent:
  \begin{enumerate}
  \item $\Hh$ is a weighted noncommutative regular projective curve
    over $k$.
  \item There is a (commutative) regular projective curve $X$ over
    $k$, a (finite dimensional) central simple $k(X)$-algebra $A$ and
    a torsionfree coherent sheaf $\Aa$ of hereditary
    $\Oo=\Oo_X$-orders in $A$ such that $\Hh\simeq\coh(\Aa)$, the
    category of coherent $\Aa$-modules.
  \end{enumerate}
\item[(2)] If the equivalent conditions in~(1) hold, then $X$ is the
  centre curve. Accordingly, the points of $\XX$ correspond
  bijectively to the closed points of $X$, and for each $x\in\XX$ its
  weight $p(x)$ is the local type (in the sense
  of~\cite[p.~369]{reiner:2003}) of the hereditary $\Oo$-order $\Aa$
  at $x$. Accordingly, $p(x)>1$ if and only if $\Aa_x$ is not maximal,
  and there is only a finite number of such points $x$.
\item[(3)] In~(1) we have that $\Hh$ is non-weighted if and only if
  $\Aa$ is a maximal $\Oo$-order in $A$.
  \end{enumerate}
\end{theorem}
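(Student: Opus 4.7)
The statement is essentially due to Reiten--van den Bergh, so the plan is to sketch the two directions of~(1) and then to read off~(2) and~(3) from the well-developed local theory of hereditary orders over a Dedekind base.

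For the implication (b)$\Rightarrow$(a), I would start from $\Hh=\coh(\Aa)$ and check the axioms in turn. Smallness, connectedness, the abelian structure and noetherianness are inherited from the theory of coherent modules; $k$-linearity together with finite-dimensionality of $\Hom$ and $\Ext$ is standard for a projective curve over $k$. The only non-trivial axiom is Serre duality, and the plan here is to produce the dualizing $\Aa$-bimodule $\omega_{\Aa}=\ShHom_{\Oo_X}(\Aa,\omega_X)$, verify that $-\otimes_{\Aa}\omega_{\Aa}$ is an autoequivalence of $\coh(\Aa)$, and invoke the classical Cohen--Macaulay/Gorenstein computation for orders to match this functor with the $\tau$ of (NC~3). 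Axiom (NC~5) is immediate since $X$ is a projective curve over a field and thus has infinitely many closed points.

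For the converse (a)$\Rightarrow$(b), the centre curve $X$ is already constructed in~\ref{numb:centre-curve}, and $A:=k(\Hh)$ is a finite-dimensional central simple $k(X)$-algebra. The key step is to produce the sheaf $\Aa$. Using Proposition~\ref{prop:coord-ring-finite-module-centre} together with its weighted extension in Remark~\ref{rem:ample-pair-weighted}, I would choose a Picard-shift $\sigma$ for which $R=\Pi(L,\sigma)$ is module-finite over a central commutative graded subring $S$ with $X=\Proj(S)$. Sheafifying $R$ over $X$ then yields a coherent sheaf of $\Oo_X$-algebras $\Aa$ whose generic fibre is $A$, and the Artin--Zhang/Serre identification~\eqref{eq:Serre-construction} (and its weighted analogue) gives $\Hh\simeq\coh(\Aa)$. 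Hereditariness is forced locally, since each stalk $\Aa_x$ is Morita-equivalent after completion to the local ring computed in Proposition~\ref{prop:main-ring-iso}, which is hereditary.

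For~(2), the plan is to identify the local type of $\Aa_x$ with $p(x)$ by comparing $\mod_0(\Aa_x)$ with the tube $\Uu_x$: the localization results of Section~\ref{sec:localization} together with Proposition~\ref{prop:main-ring-iso} give $\mod_0(\Aa_x)\simeq\Uu_x$, so the number of simple $\Aa_x/\Rad(\Aa_x)$-modules equals the number $p(x)$ of isomorphism classes of simples in $\Uu_x$. The classical structure theory of hereditary orders over a DVR (see~\cite{reiner:2003}) then classifies the maximal ones as those of local type~$1$ and guarantees that only finitely many stalks fail to be maximal; this simultaneously yields (NC~6) and assertion~(3). The principal obstacle will be the globalization step in (a)$\Rightarrow$(b), namely that the local data $R_x$ and $\widehat{R}_x$ at the various points must be assembled into a genuine sheaf of $\Oo_X$-orders; this relies crucially on module-finiteness of $R$ over its centre together with a careful Artin--Zhang-style sheafification so that the match with the tube-by-tube picture of $\Hh$ is preserved.
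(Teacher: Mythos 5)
Your outline follows the same route as the paper: both directions of~(1) are reduced to Reiten--van den Bergh, Serre duality for $\coh(\Aa)$ comes from the dualizing bimodule of van den Bergh--van Geel, the order $\Aa$ is built from a coordinate algebra $R=\Pi(L,\sigma)$ that is module-finite over its centre $S$ with $X=\Proj(S)$, hereditariness of the stalks is forced by $\mod(\Aa_x)\simeq\Hh_x$ being hereditary, and (2)--(3) are read off from the Brumer/Reiner structure theory of hereditary orders over a complete DVR together with the Auslander--Goldman maximality criterion. Your identification of the local type of $\Aa_x$ with $p(x)$ by counting simples in $\Uu_x\simeq\mod_0(\Aa_x)$, and the finiteness of the non-maximal locus, are exactly what the paper invokes.

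The one step where you are vague is precisely the one the paper resolves by a specific device, and you should not leave it as an unresolved ``principal obstacle.'' The centre $S$ of $R$ is in general \emph{not} generated in degree one, so naively sheafifying $R$ over $\Proj(S)$ and taking degree-zero parts does not directly yield an $\Oo_X$-order whose module category recovers $\Hh$; moreover your choice $A=k(\Hh)$ is not what the construction produces. The paper's fix: let $n$ be the least common multiple of the degrees of a set of homogeneous generators of $S$ over $S_0$, and replace $R$ by the $n\times n$ matrix ring $T$ with $(i,j)$-entry $R(j-i)$. Then $T$ is graded Morita-equivalent to $R$ and \emph{strongly} $\ZZ$-graded, so $\mod^{\ZZ}(T)\simeq\mod(T_0)$, and one sets $\Aa=\Tt_0$, a coherent sheaf of orders in $A=\matring_n(k(\Hh))$ (which is still central simple over $k(X)$, as the theorem requires, just not a skew field). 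With that substitution your argument is complete; the rest of your proposal matches the paper's proof.
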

\begin{proof}
  (1) This is shown like
  in~\cite[Prop.~III.2.3]{reiten:vandenbergh:2002}. For the fact that
  the centre of a hereditary order is a Dedekind domain, we refer
  to~\cite[Thm.~2.6]{harada:1963}. By~\cite{vandenbergh:vangeel:1984}
  the category $\coh(\Aa)$ has Serre duality. We recall the
  construction of $A$ and $\Aa$ if $\Hh$ is given. Let $X$ be the
  underlying centre curve. Let $R$ be a positively $\ZZ$-graded
  coordinate algebra of $\Hh$, module-finite over its centre $S$. Let
  $x_1,\dots,x_t$ be a set of homogeneous generators of $S$ over the
  field $S_0$. Let $n$ be the least common multiple of their
  degrees. Then $$T=
  \begin{pmatrix}
    R & R(1) & \dots & R(n-1)\\
    R(-1) & R & \dots & R(n-2)\\
   \vdots & \vdots & \ddots  &  \vdots\\
    R(-n+1) & R(-n+2) & \dots & R
  \end{pmatrix}$$ is graded Morita-equivalent to $R$ and strongly
  $\ZZ$-graded; thus $\mod^{\ZZ}(T)\simeq\mod(T_0)$. Let $\Tt$ be the
  corresponding sheaf of graded rings.  We set $A=\matring_n(k(\Hh))$,
  which is of finite dimension over its centre $k(X)$, and
  $\Aa=\Tt_0\subseteq A$, equipped canonically with the structure of
  an $\Oo_X$-module.

  (2) The assertion is clear from the structure of hereditary
  orders~\cite{brumer:1963,brumer:1964}, we refer also
  to~\cite[Ch.~9]{reiner:2003}, and the Auslander-Goldman
  criterion~\cite[Thm.~2.3]{auslander:goldman:1960} for
  maximality. (This in particular shows that (NC~6) follows from
  (NC~1) to (NC~5).)

  (3) This follows from (2), since by~\cite[(40.8)]{reiner:2003} the
  order $\Aa$ is maximal if and only if all $\Aa_x$ are maximal.
\end{proof}
We switch back to the non-weighted case. The next theorem
extends~\cite[Prop.~(7.4.18)]{grothendieck:1961b} to this
noncommutative setting, and it gives a positive answer
to~\cite[Probl.~4.3.9]{kussin:2009}, even in this much more general
context. It is an easy consequence of well-known results in the theory
of maximal orders. It was also shown recently
in~\cite[Thm.~6.7]{burban:drozd:gavran:2015}.
\begin{theorem}\label{thm:function-field-determines}
  Two noncommutative regular projective curves $\Hh$ and $\Hh'$ over a
  field $k$ are isomorphic (that is, they are equivalent as
  $k$-categories) if and only if their function fields $k(\Hh)$ and
  $k(\Hh')$ are isomorphic.
\end{theorem}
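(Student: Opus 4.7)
The implication ``$\Hh\simeq\Hh'\Rightarrow k(\Hh)\simeq k(\Hh')$'' is formal. Any $k$-linear equivalence $F\colon\Hh\to\Hh'$ preserves the property of having finite length (a categorical condition), hence carries $\Hh_0$ to $\Hh'_0$ and descends to an equivalence $\widetilde{F}\colon\widetilde{\Hh}\to\widetilde{\Hh'}$ of the quotient categories. Since $\widetilde{\Hh}\simeq\mod(k(\Hh))$ is semisimple with a unique simple class $\widetilde{L}$, applying $\End$ to $\widetilde{F}(\widetilde{L})$ yields the required $k$-algebra isomorphism $k(\Hh)\simeq k(\Hh')$.

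For the converse, I use Theorem~\ref{thm:structure} to realize $\Hh\simeq\coh(\Aa)$ and $\Hh'\simeq\coh(\Aa')$, where $\Aa$ (resp.\ $\Aa'$) is a maximal $\Oo_X$-order (resp.\ $\Oo_{X'}$-order) in a central simple $k(X)$-algebra $A$ (resp.\ $k(X')$-algebra $A'$). First I recover the centre curve: the centre $Z(k(\Hh))\simeq k(X)$ is intrinsic to $k(\Hh)$, so the hypothesis restricts to a $k$-algebra isomorphism $k(X)\simeq k(X')$. By the correspondence between regular projective curves over $k$ and algebraic function fields in one variable over $k$ recalled at~\ref{numb:centre-curve}, this yields $X\simeq X'$; fixing this identification, the hypothesis becomes an isomorphism $k(\Hh)\simeq k(\Hh')$ of central simple $k(X)$-algebras, and in particular $[A]=[A']$ in the Brauer group of $k(X)$.

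The main step is then: two maximal $\Oo_X$-orders $\Aa,\Aa'$ in Brauer-equivalent central simple $k(X)$-algebras give rise to equivalent categories $\coh(\Aa)\simeq\coh(\Aa')$. Using the Morita correspondence for central simple algebras, one reduces to the case where both orders live inside the common underlying division algebra $D=k(\Hh)$: any maximal $\Oo_X$-order in $\matring_n(D)$ has the form $\End_\Delta(P)$ for a maximal $\Oo_X$-order $\Delta$ in $D$ and a locally free $\Delta$-progenerator $P$, giving a Morita equivalence at the order level (see~\cite{reiner:2003}). When $\Aa$ and $\Aa'$ are both maximal $\Oo_X$-orders in $D$, uniqueness up to conjugacy of maximal orders in a local central simple algebra shows that $\Aa_x$ and $\Aa'_x$ are conjugate inside $D\otimes_{k(X)}\widehat{k(X)}_x$ at every $x\in X$, and hence $\Aa$ and $\Aa'$ differ by an invertible $(\Aa,\Aa')$-bimodule globally, which implements the Morita equivalence $\coh(\Aa)\simeq\coh(\Aa')$.

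The main obstacle is the final step: even with $D$ fixed, maximal $\Oo_X$-orders in $D$ need not be isomorphic as sheaves of algebras (they differ by an ideal class), so the conclusion must be stated at the Morita level rather than as literal isomorphism of orders. The required inputs --- local uniqueness of maximal orders in a local central simple algebra and the global patching via invertible bimodules --- are all classical material from the theory of maximal orders as developed in~\cite{reiner:2003}.
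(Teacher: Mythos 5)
Your proposal is correct and follows essentially the same route as the paper: the forward direction is the same formal argument via the quotient category $\Hh/\Hh_0$, and the converse realizes both curves as $\coh(\Aa)$, $\coh(\Aa')$ for maximal orders via Theorem~\ref{thm:structure} and then invokes the Morita-equivalence of maximal $\Oo_X$-orders in Brauer-equivalent central simple algebras over the normal curve $X$. The only difference is that the paper simply cites this last input (Artin--de Jong, Prop.~1.9.1(ii); cf.\ Reiner, Cor.~(21.7)), whereas you sketch its classical proof (reduction to the division algebra, local conjugacy, gluing an invertible bimodule), which is the content of that citation.
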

\begin{proof}
  If $\Hh\simeq\Hh'$, then $\Hh_0\simeq\Hh'_0$ and
  $\Hh/\Hh_0\simeq\Hh'/\Hh'_0$, and consequently $k(\Hh)$ and
  $k(\Hh')$ are isomorphic. Assume conversely, that the function
  fields $k(\Hh)$ and $k(\Hh')$ are isomorphic and have the common
  centre $K=k(X)$. By parts~(1) and~(3) of the preceding theorem,
  there are maximal orders $\Aa$ and $\Aa'$ in Morita-equivalent
  central simple $K$-algebras $A$ and $A'$, respectively, such that
  $\Hh\simeq\coh(\Aa)$ and $\Hh'\simeq\coh(\Aa')$. Since $X$ is a
  normal curve, by~\cite[Prop.~1.9.1~(ii)]{artin:dejong:2004} (for an
  affine version we refer to~\cite[Cor.~(21.7)]{reiner:2003}; for a
  similar result on hereditary orders over a smooth curve we refer
  to~\cite[Thm.~7.6]{chan:ingalls:2004}) it follows, that $\Aa$ and
  $\Aa'$ are Morita-equivalent, that is (by definition),
  $\Qcoh(\Aa)\simeq\Qcoh(\Aa')$. Then clearly
  $\coh(\Aa)\simeq\coh(\Aa')$, and thus $\Hh\simeq\Hh'$ follows.
\end{proof}
Since maximal $\Oo_X$-orders over a regular projective curve $X$ in a
central simple $k(X)$-algebra always exist,
by~\cite[Prop.~1.8.2]{artin:dejong:2004}, we have even more:
\begin{corollary}
  The assignments $$\Hh\mapsto k(\Hh)\quad\text{and}\quad
  A\mapsto\coh(\Aa),$$ where $\Aa$ is a maximal order in $A$ (whose
  centre is of the form $k(X)$), induce mutually inverse bijections
  between the sets of
  \begin{itemize}
  \item noncommutative regular projective curves over $k$, up to
    equivalence of categories; and
  \item algebraic function skew fields of one variable over $k$, up to
    isomorphism. \qed
  \end{itemize}
\end{corollary}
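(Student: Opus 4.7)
The strategy is to use the preceding theorems as black boxes and check only that the two assignments are well-defined, land in the stated target sets, and are mutually inverse. For the forward map $\Hh\mapsto k(\Hh)$, well-definedness and the fact that $k(\Hh)$ is a (central) skew field over an algebraic function field in one variable over $k$ are precisely (NC~7), noted in the introduction. For the backward map, I would first fix, given a skew field $A$ with centre $K=k(X)$ (where $X$ is the regular projective curve provided by~\cite[Prop.~(7.4.18)]{grothendieck:1961b}, see~\ref{numb:centre-curve}), a maximal $\Oo_X$-order $\Aa\subseteq A$; existence is the result of Artin--de Jong~\cite[Prop.~1.8]{artin:dejong:2004} cited just before the statement. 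Then Theorem~\ref{thm:structure}\,(1) shows that $\coh(\Aa)$ is a weighted noncommutative regular projective curve, and part~(3) of the same theorem ensures non-weightedness since $\Aa$ is maximal, not merely hereditary.

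Next I would verify independence of the choice of maximal order. If $\Aa$ and $\Aa'$ are two maximal $\Oo_X$-orders in the same central simple $K$-algebra $A$, then by~\cite[Prop.~1.9.1~(ii)]{artin:dejong:2004} (already invoked in the proof of Theorem~\ref{thm:function-field-determines}) they are Morita equivalent, hence $\coh(\Aa)\simeq\coh(\Aa')$. Thus the assignment $A\mapsto\coh(\Aa)$ descends to isomorphism classes on both sides.

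It remains to check that the two compositions are identities. For $\Hh\mapsto k(\Hh)\mapsto\coh(\Aa)$, Theorem~\ref{thm:structure}\,(1)+(3) yields some maximal order $\Aa_0$ with $\Hh\simeq\coh(\Aa_0)$; since any two maximal orders in the same skew field give equivalent categories, we recover $\Hh$ up to equivalence. (Alternatively, one invokes Theorem~\ref{thm:function-field-determines} directly.) For $A\mapsto\coh(\Aa)\mapsto k(\coh(\Aa))$, one computes the function field of $\coh(\Aa)$ as the endomorphism ring of a line bundle in the localized category $\coh(\Aa)/\coh_0(\Aa)\simeq\mod(A\otimes_{\Oo_X}\!K)=\mod(A)$. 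Taking $\Aa$ itself as the line bundle, its image in the quotient is $A$ viewed as a regular $A$-module, and $\End_A(A_A)=A$ (via left multiplication), so $k(\coh(\Aa))\simeq A$.

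There is no serious obstacle: the whole content has been packaged into the two preceding theorems together with the existence and Morita-equivalence results for maximal orders over a regular curve. The only minor point requiring care is the last identification $k(\coh(\Aa))\simeq A$, where one must be attentive to the convention (right versus left $\Aa$-modules) so that the endomorphism ring comes out as $A$ rather than $A^{\op}$; with the standard convention used throughout the paper this is automatic.
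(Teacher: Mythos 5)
Your proposal is correct and follows essentially the same route as the paper, which justifies the corollary (marked \qed) by combining Theorem~\ref{thm:structure}\,(1)+(3), Theorem~\ref{thm:function-field-determines}, and the Artin--de Jong existence result for maximal orders cited immediately before the statement. Your additional verifications (independence of the choice of maximal order via Morita equivalence, and the identification $k(\coh(\Aa))\simeq\End_A(A_A)\simeq A$) are exactly the routine checks the paper leaves implicit.
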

Let $\Hh=\coh(\Aa)$ be a noncommutative regular projective curve with
a maximal order $\Aa$ in $A=\matring_n(k(\Hh))$ as above. Let $K=k(X)$
be the centre, as above. Let $x\in\XX$ be separable. We write
$D_x=\End(S_x)$ and denote by $\widehat{E}_x=\End(S_x[\infty])$ the
endomorphism ring of the corresponding Pr\"ufer sheaf, which is a
complete local domain, the maximal ideal generated by $\pi_x$. By
Proposition~\ref{prop:main-ring-iso} we have
$\widehat{R}_x\simeq\matring_{e(x)}(\widehat{E}_x)$. If
$\widehat{K}_x$ denotes the quotient field of the
$\mathfrak{m}_x$-adic completion $\widehat{\Oo}_x$ of $\Oo_x$, then
\begin{equation}
  \label{eq:A-completion}
  A\otimes_K\widehat{K}_x\simeq
\matring_n\bigl(k(\Hh)\otimes_K\widehat{K}_x\bigr)\simeq\matring_{n\cdot
  e(x)}(\widehat{D}_x),
\end{equation}
with $\widehat{D}_x$ a skew field (unique up to isomorphism) with
centre $\widehat{K}_x$; compare
Proposition~\ref{prop:completions-equal-laurent} below. Analogously to
the global situation we make the following local definition.
\begin{definition}
  We call the number
\begin{equation}
  \label{eq:def-s-x}
  s(x)=[\widehat{D}_x:\widehat{K}_x]^{1/2}
\end{equation}
the \emph{local skewness} at $x$.
\end{definition}
By~\eqref{eq:A-completion} we get the following relationship
between global and local skewness
\begin{equation}
  \label{eq:e-times-s}
  s(\Hh)=e(x)\cdot s(x),  
\end{equation}
and with the skewness principle we have
\begin{equation}
  \label{eq:local-skewness-identification}
    s(x)=e^{\ast}(x)\cdot e_{\tau}(x).
\end{equation}
The following results make the situation quite explicit.
\begin{lemma}\label{lem:local-centre}
  \begin{enumerate}
  \item[(1)] $\Oo_x$ is the centre of $R_x$. 
  \item[(2)] Let $\vSs_x$ be the multiplicative set
    $\Oo_x\setminus\{0\}$. The central localization
    $\vSs_x^{-1}R_x$ is equal to the function field $k(\Hh)$.
  \end{enumerate}
\end{lemma}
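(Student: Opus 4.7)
The plan is to leverage the structural description of $\Hh$ as coherent modules over a maximal order (Theorem~\ref{thm:structure}), together with Posner's theorem for prime PI rings.

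For part~(1), I will write $\Hh \simeq \coh(\Aa)$ with $\Aa$ a maximal $\Oo_X$-order in the central simple $k(X)$-algebra $A = \matring_n(k(\Hh))$. Under this equivalence, the categorical localization $\Hh_x = \Hh/\spitz{\coprod_{y\neq x}\Uu_y}$ corresponds to $\mod(\Aa_x)$, the modules over the stalk at $x$; here $\Aa_x$ is itself a maximal $\Oo_x$-order in $A$. Combining this with the already established equivalence $\Hh_x\simeq\mod(R_x)$ via the indecomposable projective generator $L_x$, the rings $R_x$ and $\Aa_x$ are Morita equivalent and therefore share a common centre. Since the centre of a maximal $\Oo_x$-order in a central simple $K$-algebra over the local Dedekind domain $\Oo_x$ is $\Oo_x$ itself (see e.g.~\cite[Ch.~9]{reiner:2003}), this yields $Z(R_x) = \Oo_x$.

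For part~(2), the ring $R_x$ is prime noetherian PI, being a noncommutative Dedekind domain finite over its centre $\Oo_x$, and its PI-degree equals $s(\Hh)$, as recalled in the proof of Theorem~\ref{thm:general-skewness-equation}. By~(1), the central localization $\vSs_x^{-1}R_x$ inverts precisely $Z(R_x)\setminus\{0\}$, so Posner's theorem~\cite[Thm.~7]{amitsur:1967} identifies it with the classical quotient ring of $R_x$: a central simple $K$-algebra of PI-degree $s(\Hh)$, hence of $K$-dimension $s(\Hh)^2$, where $K = k(X)$. On the other hand, the inclusion $R_x \simeq V_x \hookrightarrow k(\Hh)$ already observed in the proof of $R_x \simeq V_x$ extends to a $K$-algebra embedding $\vSs_x^{-1}R_x \hookrightarrow k(\Hh)$. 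Being a central simple $K$-subalgebra of the skew field $k(\Hh)$, it is itself a $K$-skew field, and matching $K$-dimensions against $[k(\Hh):K] = s(\Hh)^2$ forces $\vSs_x^{-1}R_x = k(\Hh)$.

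The only delicate point I expect to justify carefully is the equivalence $\Hh_x \simeq \mod(\Aa_x)$ that powers the Morita step in~(1). This is the noncommutative analogue of the commutative fact that Serre-localizing $\coh(X)$ at $x$ recovers finitely generated $\Oo_{X,x}$-modules; the same argument applies stalkwise to $\coh(\Aa)$ once one observes that the tube subcategories $\Uu_y$ for $y\neq x$ are exactly the torsion subcategories supported away from $x$, which is precisely the content of the bijection between points of $\XX$ and points of $X$ furnished by Theorem~\ref{thm:structure}(2).
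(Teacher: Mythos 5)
Your proof is correct and follows essentially the same route as the paper: part~(1) is reduced, via $\Hh_x\simeq\mod(\Aa_x)\simeq\mod(R_x)$ (the paper makes the Morita step explicit by writing $\Aa_x\simeq\matring_{n(x)}(R_x)$), to the fact that a maximal order over the integrally closed ring $\Oo_x$ has centre $\Oo_x$, and part~(2) rests on the PI-theoretic fact that central localization of the prime PI ring $R_x$ produces its full quotient division ring. The only cosmetic difference is in~(2), where the paper simply notes that the resulting division ring is sandwiched between $R_x$ and its quotient division ring $k(\Hh)$, which makes your PI-degree and dimension count unnecessary (though harmless).
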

\begin{proof}
  (1) We have $\mod(\Aa_x)\simeq\Hh_x\simeq\mod(R_x)$. Hence $\Aa_x$
  and $R_x$ have same centres. So it is sufficient to show that
  $\Oo_x$ is the centre of $\Aa_x=\Aa\otimes_{\Oo}\Oo_x$. Let
  $U\subseteq X$ be an affine open subset with $x\in U$. Then $\Oo(U)$
  is a Dedekind domain with quotient field $K$, and $\Aa(U)$ is a
  maximal $\Oo(U)$-order in $A$, whose centre $\Aa(U)\cap K$ is
  $\Oo(U)$, since $\Oo(U)$ is integrally closed. Now, localization at
  $x$ is compatible with the centres~\cite[Prop.~1.7.4]{rowen:1980}
  and yields the result.

  (2) $k(\Hh)$ is the quotient division ring of
  $R_x$. By~\cite[Thm.~1.7.9]{rowen:1980} the ring $\vSs_x^{-1}R_x$ is
  a skew field with centre $K$. From
  $R_x\subseteq\vSs_x^{-1}R_x\subseteq k(\Hh)$ the result follows.
\end{proof}
\begin{proposition}\label{prop:completions-equal-laurent}
  Let $x$ be separable. The skew field $\widehat{D}_x$ and its centre
  $\widehat{K}_x$ from~\eqref{eq:A-completion} agree with the (skew)
  Laurent power series rings in~\eqref{eq:skew-laurent}
  and~\eqref{eq:laurent-centre}, respectively. Moreover,
  $k(x)=K_x:=Z(D_x)\cap\Fix(\tau)$.
\end{proposition}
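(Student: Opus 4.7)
The plan is to use Proposition~\ref{prop:main-ring-iso} together with Theorem~\ref{thm:pruefer-end-skew-power} to compute $\widehat{R}_x$ explicitly, and then compare its classical quotient ring with the skew Laurent field $D_x((T,\tau^-))$ by first identifying $\widehat{R}_x$ with the central completion $R_x\otimes_{\Oo_x}\widehat{\Oo}_x$.

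First I would verify that $R_x$ is module-finite over $\Oo_x$: the Morita equivalence $\Aa_x\simeq\matring_{n(x)}(R_x)$ obtained in the proof of Lemma~\ref{lem:local-centre} embeds $R_x$ into the coherent $\Oo_x$-module $\Aa_x$, so $R_x$ is finitely generated over $\Oo_x$. A Nakayama argument then gives $\mathfrak{m}_xR_x\subseteq\Rad(R_x)$, and since $R_x/\mathfrak{m}_xR_x$ is a finite-dimensional $k(x)$-algebra, its Jacobson radical is nilpotent, so $\Rad(R_x)^N\subseteq\mathfrak{m}_xR_x$ for some $N$. The two adic topologies therefore coincide, and the $\Rad(R_x)$-adic completion satisfies
\[
\widehat{R}_x\ \simeq\ R_x\otimes_{\Oo_x}\widehat{\Oo}_x,
\]
and $\widehat{\Oo}_x$ sits centrally inside $\widehat{R}_x$.

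Next I would pass to quotient rings in two different ways. On the one hand, by Proposition~\ref{prop:main-ring-iso} and Theorem~\ref{thm:pruefer-end-skew-power}, $\widehat{R}_x\simeq\matring_{e(x)}(D_x[[T,\tau^-]])$, and Lemma~\ref{lem:complete-in-x} identifies its classical quotient ring as $\matring_{e(x)}(D_x((T,\tau^-)))$. On the other hand, Lemma~\ref{lem:local-centre}(2) gives $k(\Hh)=R_x\otimes_{\Oo_x}K$, hence central localization of $\widehat{R}_x$ at $\widehat{\Oo}_x\setminus\{0\}$ produces $R_x\otimes_{\Oo_x}\widehat{K}_x=k(\Hh)\otimes_K\widehat{K}_x$, which by~\eqref{eq:A-completion} is $\matring_{e(x)}(\widehat{D}_x)$. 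Uniqueness in Artin--Wedderburn then forces $\widehat{D}_x\simeq D_x((T,\tau^-))$, and taking centres yields $\widehat{K}_x\simeq K_x((uT^r))$ by Lemma~\ref{lem:complete-in-x}.

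Finally, for the residue-field claim, the complete discretely valued field $K_x((uT^r))$ admits a unique ring of integers $K_x[[uT^r]]$ with residue field $K_x$. Under the isomorphism $\widehat{K}_x\simeq K_x((uT^r))$ just established, the ring of integers $\widehat{\Oo}_x$ must correspond to $K_x[[uT^r]]$, hence $k(x)=\widehat{\Oo}_x/\widehat{\mathfrak{m}}_x\simeq K_x$. The main obstacle I expect is the first paragraph: pinning down that the $\Rad(R_x)$-adic and $\mathfrak{m}_xR_x$-adic completions coincide, and that completion commutes with the tensor product $R_x\otimes_{\Oo_x}(-)$ in this noncommutative setting; once that bookkeeping is in place, everything else reduces to tracking localizations through~\eqref{eq:A-completion} and invoking Lemma~\ref{lem:complete-in-x}.
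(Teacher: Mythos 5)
Your proof is correct and follows essentially the same route as the paper: identify $\widehat{R}_x$ with $R_x\otimes_{\Oo_x}\widehat{\Oo}_x$, centrally localize at $\Oo_x\setminus\{0\}$ to compare $\matring_{e(x)}(Q(\widehat{E}_x))$ with $k(\Hh)\otimes_K\widehat{K}_x=\matring_{e(x)}(\widehat{D}_x)$, and read off $K_x=k(x)$ from the residue fields. Your first paragraph usefully spells out the adic-topology comparison that the paper leaves implicit.
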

\begin{proof}
  Write $r=e_{\tau}(x)$. Using the preceding lemma, we apply
  $\vSs_x^{-1}$ (that is, central localization) to the isomorphism
  $R_x\otimes_{\Oo_x}\widehat{\Oo}_x\simeq\matring_{e(x)}(\widehat{E}_x)$.
  By $\Hom$-tensor properties of the localization~\cite[II.\S
  2.7]{bourbaki:1989}, we obtain the isomorphism $Q(R_x)\otimes_K
  \vSs_x^{-1}\widehat{\Oo}_x\simeq\matring_{e(x)}(\vSs_x^{-1}\widehat{E}_x)$,
  where $Q(-)$ stands for quotient division ring. Moreover, since
  $\widehat{\Oo}_x$ and $\widehat{E}_x$ are (skew) power series rings
  in one variable by Theorem~\ref{thm:pruefer-end-skew-power}, clearly
  $\vSs_x^{-1}\widehat{\Oo}_x=Q(\widehat{\Oo}_x)$ and
  $\vSs_x^{-1}\widehat{E}_x=Q(\widehat{E}_x)$ hold, since in each case
  the uniformizer becomes invertible, so that we get the corresponding
  (skew) Laurent series rings. We conclude $\widehat{D}_x\simeq
  Q(\widehat{E}_x)\simeq D_x((T,\tau^-))$. Moreover, for the centre we
  deduce $k(x)((T))\simeq\widehat{K}_x\simeq K_x((uT^r))$, from which
  also $K_x=k(x)$ follows.
\end{proof}
We can now derive well-known identities for well-studied local
invariants, and also their relationship with the
$\tau$-multiplicity. As usual, define the
\begin{equation}
  \label{eq:def-inert-deg}
  \text{\emph{inertial degree}}\quad
 f_{\inert}(x)=[\widehat{D}_x/\Rad(\widehat{D}_x):k(x)]=[D_x:k(x)]
\end{equation}
and the 
\begin{equation}
  \label{eq:def-ramif-index}
  \text{\emph{ramification index}}\quad
  e_{\ramif}(x)=[\,\Gamma_{\widehat{D}_x}:\Gamma_{\widehat{K}_x}] 
\end{equation}
(the index of the discrete value group $\Gamma_{\widehat{K}_x}$ in
$\Gamma_{\widehat{D}_x}$) of the skew field part of the completion of
$A$ in $x$. If $e_{\ramif}(x)>1$, then $x$ is called a
\emph{ramification point} of $A$. By
Proposition~\ref{prop:completions-equal-laurent} it is easy to see
that the ramification index coincides with the number $e$ such that
$\mathfrak{m}_x \widehat{E}_x=\Rad(\widehat{E}_x)^e$, and also with
the number
\begin{equation}
  \label{eq:e-prime}
    e'(x)=[Z(D_x):k(x)].
\end{equation}
\begin{corollary}\label{cor:e-tau=e-ramif}
  Let $x$ be a separable point. Then $$e_{\tau}(x)=e_{\ramif}(x).$$
\end{corollary}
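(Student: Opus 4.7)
The plan is to read off the ramification index directly from the explicit skew Laurent description furnished by Proposition~\ref{prop:completions-equal-laurent}. Set $r=e_{\tau}(x)$, $D=D_x$, $K=K_x=Z(D)\cap\Fix(\tau)$, and choose $u\in\Fix(\tau)^{\times}$ with $\tau^{-r}(d)=u^{-1}du$, as in Lemma~\ref{lem:complete-in-x}. By Proposition~\ref{prop:completions-equal-laurent} we have the identifications
\begin{equation*}
  \widehat{D}_x\simeq D((T,\tau^-)),\qquad \widehat{K}_x\simeq K((uT^r)).
\end{equation*}

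First I would verify that the valuation $v_x$ on $\widehat{D}_x$ from Lemma~\ref{lem:complete-in-x} restricts, up to the obvious rescaling, to the natural $uT^r$-adic valuation on the centre. Indeed, the element $T$ is a uniformizer of $\widehat{D}_x$ (every nonzero element has a unique expression $\sum_{i\geq\ell}a_iT^i$ with $a_\ell\neq 0$), so $\Gamma_{\widehat{D}_x}=\ZZ$ after normalization. On the other hand, $uT^r$ is a uniformizer of the complete discrete valuation skew field $\widehat{K}_x$, and its valuation in $\widehat{D}_x$ is exactly $r$, because $u\in D^{\times}$ is a unit of the power series ring while $T^r$ has valuation $r$. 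Consequently $\Gamma_{\widehat{K}_x}=r\ZZ$ inside $\Gamma_{\widehat{D}_x}=\ZZ$.

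Taking the index of value groups then yields
\begin{equation*}
  e_{\ramif}(x)=[\Gamma_{\widehat{D}_x}:\Gamma_{\widehat{K}_x}]=[\ZZ:r\ZZ]=r=e_{\tau}(x),
\end{equation*}
which is the claim. As a sanity check, the alternative identifications mentioned just above the corollary agree: $e_{\ramif}(x)$ equals the integer $e$ with $\mathfrak{m}_x\widehat{E}_x=\Rad(\widehat{E}_x)^e$ (here $\mathfrak{m}_x\widehat{E}_x=(uT^r)\widehat{E}_x=T^r\widehat{E}_x=\Rad(\widehat{E}_x)^r$), and also $e'(x)=[Z(D):k(x)]$, which by Proposition~\ref{prop:completions-equal-laurent} gives $[Z(D):K]$; since $Z(D)/K$ is cyclic Galois with Galois group generated by the class of $\tau$, of order $e_{\tau}(x)$ by Proposition~\ref{prop:relationships}(2)(ii), this degree is again $e_{\tau}(x)$.

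I do not anticipate a serious obstacle: the entire content has been packaged into Proposition~\ref{prop:completions-equal-laurent} and Lemma~\ref{lem:complete-in-x}, and the only mildly subtle point is ensuring that the centre's uniformizer $uT^r$ really has valuation $r$ in the ambient skew Laurent ring (i.e.\ that the twist does not alter the $T$-adic order), which is immediate since $u$ is a unit in $D$ and hence in $D[[T,\tau^-]]$.
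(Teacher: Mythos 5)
Your proof is correct and takes essentially the same route as the paper: the paper's proof just cites Propositions~\ref{prop:relationships} and~\ref{prop:completions-equal-laurent}, and your explicit computation of the value groups $\Gamma_{\widehat{D}_x}=\ZZ$ and $\Gamma_{\widehat{K}_x}=r\ZZ$ from the skew Laurent presentations is precisely the content the paper leaves implicit (it instead routes through the identification $e_{\ramif}(x)=e'(x)=[Z(D_x):k(x)]$ stated just before the corollary, together with Proposition~\ref{prop:relationships}(2)(ii)). Your closing sanity check recovers that alternative chain, so the two arguments coincide in substance.
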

The assumption is essential, cf.\ Example~\ref{ex:inseparable}, and
Theorem~\ref{thm:order-sigma-ramif}.
\begin{proof}
  Follows from Propositions~\ref{prop:relationships}
  and~\ref{prop:completions-equal-laurent}.
\end{proof}
\begin{corollary}\label{cor:separ-ramif}
  Let $\Hh=\coh(\Aa)$ be a noncommutative regular projective curve over
  the perfect field $k$. The separation points of $\Hh$ are just the
  ramification points of $\Aa$, and there are only finitely many of
  them.
\end{corollary}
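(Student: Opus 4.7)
The plan is to combine the immediate definitional unpacking with the preceding identification $e_\tau(x)=e_{\ramif}(x)$, and then invoke a classical finiteness result for maximal orders.

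First, since $k$ is perfect, every point $x\in\XX$ is separable: by \ref{numb:regular-vs-smooth}, the residue extensions $k(x)/k$ are separable, and hence so is $D_x/k$ (since $Z(D_x)$ is a finite extension of $k(x)$ and $D_x/Z(D_x)$ is separable for a finite-dimensional division algebra over a perfect field). Consequently the definition of a \emph{separation point}, which requires separability plus $e_{\tau}(x)>1$, reduces in our setting to the single condition $e_{\tau}(x)>1$.

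Next, by the corollary immediately preceding the statement, for every separable point we have the equality
\begin{equation*}
  e_{\tau}(x) \;=\; e_{\ramif}(x).
\end{equation*}
Thus $e_{\tau}(x)>1$ holds if and only if $e_{\ramif}(x)>1$, i.e.\ if and only if $x$ is a ramification point of $\Aa$. This gives the first assertion.

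Finally, for the finiteness, the idea is to appeal to the standard fact from the theory of maximal orders over a normal (here, smooth) projective curve: outside a finite closed subset of $X$, the stalk $\Aa_x$ is an Azumaya algebra over $\Oo_x$, and Azumaya algebras are unramified. Concretely, one can use the reduced discriminant ideal of $\Aa$ over $\Oo_X$, which by \cite{reiner:2003} is a nonzero coherent ideal whose support is exactly the set of ramification points; nonzero coherent ideals on the noetherian curve $X$ have finite zero locus. Alternatively, one invokes \cite[Thm.~(32.1), (37.18)]{reiner:2003} directly: only finitely many prime ideals ramify in a maximal order over a Dedekind domain, and this is a local property which globalizes over the regular curve $X$. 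I do not expect any serious obstacle here; the only point requiring care is ensuring that the cited ``finitely many ramified primes'' result for Dedekind domains actually applies to $\Oo_X$, which it does since $X$ is a regular projective curve and $\Aa$ is a coherent maximal $\Oo_X$-order in a finite-dimensional central simple $k(X)$-algebra by Theorem~\ref{thm:structure}.
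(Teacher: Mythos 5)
Your proposal is correct and follows essentially the same route as the paper: perfectness of $k$ makes every point separable, the preceding corollary identifies $e_{\tau}(x)$ with $e_{\ramif}(x)$, and finiteness is the classical fact that a maximal order over the regular curve $X$ ramifies at only finitely many points (the paper cites \cite[p.~372]{reiner:2003} and notes that a standard argument via the different appears later as Proposition~\ref{prop:only-finitely-many-separation-points}).
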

\begin{proof}
  It is well-known in the theory of maximal orders that the number of
  ramification points is finite, see
  e.g.~\cite[p.~372]{reiner:2003}. In
  Proposition~\ref{prop:only-finitely-many-separation-points} below a
  standard argument for this will be given.
\end{proof}
We also conclude
\begin{corollary}
  If $x$ is a separable point, then
  $$e_{\ramif}(x)\cdot f_{\inert}(x)=s(x)^2.$$
\end{corollary}
\begin{proof}
  Use
  $f_{\inert}(x)=[D_x:k(x)]=[D_x:Z(D_x)]\cdot
  [Z(D_x):k(x)]=e^{\ast}(x)^2\cdot e_{\ramif}(x)$.
\end{proof}
We call $\XX$ (or $\Hh$) \emph{unramified} (resp.\
$\tau$-\emph{unramified}) if $e_{\ramif}(x)=1$ (resp.\
$e_{\tau}(x)=1$) for all $x\in\XX$; if $k$ is perfect both notions
agree. We call it \emph{multiplicity free} if $e(x)=1$ for all
$x\in\XX$.
\begin{corollary}\label{cor:unramif=tau-identity}
Let $k$ be a perfect field.
  \begin{enumerate}
  \item[(1)] We have \begin{equation*}
  \tau=\prod_{x\in\XX}{\sigma_x}^{e_{\tau}(x)-1}\quad\quad\text{on}\
  \Hh_0.
\end{equation*} 
\item[(2)] $\XX$ is unramified if and only if
  $\tau_{|\Hh_0}\simeq 1_{\Hh_0}$.
  \end{enumerate}
  \end{corollary}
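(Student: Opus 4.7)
The plan is to reduce the global statement to a tube-by-tube verification, using that $\Hh_0 = \coprod_{y \in \XX} \Uu_y$ and exploiting the well-known fact that for $x \neq y$ the Picard-shift $\sigma_x$ acts as the identity on $\Uu_y$. Since $k$ is perfect, every point is separable, so Corollary~\ref{cor:tau-minus=sigma-x} applies at every point: on $\Uu_x$ the functors $\sigma_x$ and $\tau^-$ are isomorphic, and the order of $\tau^-$ on $\Uu_x$ is $e_{\tau}(x)$.

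First I would fix a point $y$ and examine the restriction of the product $\prod_{x \in \XX} \sigma_x^{e_\tau(x)-1}$ to $\Uu_y$. By Corollary~\ref{cor:separ-ramif}, only finitely many points $x$ satisfy $e_\tau(x) > 1$, so the product is a finite product of autoequivalences and there is no convergence issue. For each factor with $x \neq y$ we have $\sigma_x|_{\Uu_y} \simeq 1_{\Uu_y}$, so that factor contributes trivially. The only surviving factor on $\Uu_y$ is $\sigma_y^{e_\tau(y)-1}$. Using $\sigma_y|_{\Uu_y} \simeq \tau^-|_{\Uu_y}$ together with $(\tau^-)^{e_\tau(y)} \simeq 1_{\Uu_y}$, we obtain
\[
  \sigma_y^{e_\tau(y)-1}\big|_{\Uu_y} \ \simeq\ (\tau^-)^{e_\tau(y)-1}\big|_{\Uu_y}\ \simeq\ \tau\big|_{\Uu_y}.
\]
Since the tubes $\Uu_y$ jointly generate $\Hh_0$ and both sides of (1) are additive autoequivalences preserving each $\Uu_y$, the isomorphism on each $\Uu_y$ assembles to the claimed isomorphism on $\Hh_0$, proving~(1).

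For~(2), if $\XX$ is unramified then every exponent $e_\tau(x) - 1$ in the formula from~(1) is zero, so the product on the right-hand side is the identity functor on $\Hh_0$, hence $\tau|_{\Hh_0} \simeq 1_{\Hh_0}$. Conversely, if $\tau|_{\Hh_0} \simeq 1_{\Hh_0}$ then $\tau$ acts trivially on each tube $\Uu_x$, so its order on $\Uu_x$ is $1$; by the definition \eqref{eq:def-tau-multi} of $e_\tau(x)$ (or equivalently by Proposition~\ref{prop:relationships}(2)(i)) this yields $e_\tau(x) = 1$ for all $x \in \XX$.

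There is no real obstacle here; the only point requiring a moment's care is making sense of the potentially infinite product over $\XX$, but this is immediately resolved by Corollary~\ref{cor:separ-ramif}, which ensures that all but finitely many factors are the identity. The whole argument is a direct combination of Corollary~\ref{cor:tau-minus=sigma-x} (local identification $\sigma_x \simeq \tau^-$ on $\Uu_x$), the standard behavior $\sigma_x|_{\Uu_y} \simeq 1$ for $y \neq x$, and the decomposition $\Hh_0 = \coprod_y \Uu_y$.
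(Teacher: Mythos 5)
Your proposal is correct and follows essentially the same route as the paper, which simply cites Corollary~\ref{cor:tau-minus=sigma-x} for part~(1) and the definition of $e_{\tau}(x)$ for part~(2); you have merely spelled out the tube-by-tube reduction, the finiteness of the product, and the use of $(\tau^-)^{e_{\tau}(y)}\simeq 1_{\Uu_y}$ from Proposition~\ref{prop:relationships}~(2)(i) that the paper leaves implicit.
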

\begin{proof}
  (1) follows from Corollary~\ref{cor:tau-minus=sigma-x}. (2) is then
  clear by the definition of $e_{\tau}(x)$.
\end{proof}
The following general result expresses $e_{\ramif}(x)$ as the local
order of a certain functor, namely of $\sigma_x$ on $\Uu_x$.
\begin{theorem}\label{thm:order-sigma-ramif}
  Let $\Hh$ be a noncommutative regular projective curve over a field
  $k$. Let $\Uu=\Uu_x$ be a tube and $V=(V,\pi,\sigma)$ be the
  associated complete discrete valuation domain
  (\cite{amdal:ringdal:1968ab}) with $\Uu=\mod_0(V)$ from
  Proposition~\ref{prop:complete-local-rings-general-homogeneous},
  where $V\pi=\pi V$ is the maximal ideal and $\sigma\colon V\ra V$
  the automorphism given by $\pi r=\sigma(r)\pi$. Then the
  Picard-shift functor $\sigma_x$, restricted to $\Uu$, is induced by
  $\sigma$. Its order in $\Aut(\Uu/k)$ equals the order of $\sigma$ in
  $\Aut(V/k)$ modulo inner automorphisms, and equals the ramification
  index $e_{\ramif}(x)$.
\end{theorem}
\begin{proof}
  Given $x$, we form the orbit algebra $\Pi(L,\sigma_x)$, which has a
  central prime element $\pi_x$ of degree
  one. By~\cite[Thm.~3.1.2]{kussin:2009} multiplication with $\pi_x$
  yields the natural transformation
  $1_{\Hh}\stackrel{x}\ra\sigma_x$. Extending $\sigma_x$ to the direct
  limit closure of $\Uu$ (so working in the category $\Qcoh(\Aa)$ of
  quasicoherent $\Aa$-modules), we see that the natural sequence
  in~\cite[0.4.2(5)]{kussin:2009} for the injective object $S[\infty]$
  becomes $0\ra S\ra S[\infty]\stackrel{\pi}\lra S[\infty]\ra 0$. We
  conclude that $\sigma_x$ on $\Uu$ is induced by the automorphism
  $\sigma$. The statement about the orders follows from the
  observation that $\pi^n$ is central up to a unit if and only if
  $\sigma^n$ is inner.
\end{proof}

\section{Dualizing sheaf and the Picard-shift group}
\label{sec:dualizing-sheaf}
\begin{numb}[Structure sheaf]\label{numb:structure-sheaf} 
  Let $\Hh=\coh(\Aa)$ with $\Aa$ a
  maximal order in $k(\Hh)$, and with centre curve $X$ and
  $\Oo=\Oo_X$. We will now specify our structure sheaf $L\in\Hh$, namely
  \begin{equation}
    \label{eq:structure-sheaf}
    L_{\Aa}=\Aa_{\Aa}.
  \end{equation}
  Hence $\Aa\simeq\ShEnd_{\Aa}(L)$ holds. \emph{From now on we will
    always assume this.} We remark that $L$ is locally-free of finite
  rank, both over $\Aa$ and over $\Oo$.
\end{numb}
\begin{numb}
  We denote by $\Pic(\Aa)$ the group of all isomorphism classes of
  invertible $\Aa$-$\Aa$-bimodules in $\coh(\Aa)$, with multiplication
  given by the tensor product over $\Aa$. The neutral element is the
  class of $\Aa$. Each invertible bimodule ${}_{\Aa}\Mm_{\Aa}$ gives
  rise to the (exact) autoequivalence $t_{\Mm}=-\!\otimes_{\Aa}\!\Mm$
  of $\Hh$, and $t_{\Mm}\simeq 1_{\Hh}$ if and only if $\Mm\simeq\Aa$
  as bimodules.
\end{numb}
\begin{numb}[Divisors]
  Let $\delta=\sum_{x\in\XX}\delta_x\cdot x\in\ZZ^{(\XX)}$ be a (Weil)
  divisor. In~\eqref{eq:S-universal-L} we defined the line bundle
  $L(x)=\sigma_x(L)$, which extends canonically to
  $L(\delta)=\prod_{x\in\XX}{\sigma_x}^{\delta_x}(L)$. This definition
  of the line bundle $L(\delta)$ is clearly dual to the definition
  in~\cite[p.~34]{vandenbergh:vangeel:1985}. Moreover,
  $\Aa(x)_{\Aa}=L(x)$ defines an $\Aa$-$\Aa$-bimodule, with left
  action induced by left multiplication in $\Aa$.
\end{numb}
For $x\in\XX$ let $t_x\colon\coh(\Aa)\ra\coh(\Aa)$ denote the functor
$-\!\otimes_{\Aa}\!\Aa(x)$, that is, $t_x=t_{\Aa(x)}$.
\begin{lemma}\label{lem:isomorphic-point-functors}
  For all $x$ the functors $\sigma_x$ and $t_x\colon\Hh\ra\Hh$ are
  isomorphic.
\end{lemma}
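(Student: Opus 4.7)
The plan is to recognise both $\sigma_x$ and $t_x$ as tensoring with the same invertible $\Aa$-$\Aa$-bimodule, via Eilenberg--Watts. Recall that $L = \Aa_{\Aa}$ is a projective generator of $\Hh = \coh(\Aa)$ with $\End_{\Aa}(L) \simeq \Aa^{\op}$, so that $\Hh$ is Morita-equivalent to $\mod(\Aa)$. For any colimit-preserving endofunctor $F$ of $\Hh$ (in particular any exact autoequivalence, since autoequivalences have adjoints), Eilenberg--Watts provides a natural isomorphism $F \simeq -\otimes_{\Aa} F(L)$, where the left $\Aa$-structure on $F(L)$ is the one induced by the ring map $\Aa^{\op} \simeq \End_{\Aa}(L) \to \End_{\Aa}(F(L))$ coming from $F$ on endomorphisms.

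First I will apply this to $F = \sigma_x$. The underlying right $\Aa$-module of $\sigma_x(L)$ is $L(x)$, which is also the underlying right $\Aa$-module of the bimodule $\Aa(x)$ by the definitions in \ref{numb:structure-sheaf} and the paragraph on divisors. Since $\sigma_x$ is an autoequivalence, $\sigma_x(L)$ is automatically invertible as a bimodule, and we get $\sigma_x \simeq -\otimes_{\Aa}\sigma_x(L)$ as functors. The functor $t_x = -\otimes_{\Aa}\Aa(x)$ is, by definition, tensoring with the bimodule $\Aa(x)$.

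The substantive step is to identify the two left $\Aa$-actions on $L(x)$: the one produced by Eilenberg--Watts and the one declared in the definition of $\Aa(x)$. Both are ring maps $\Aa \to \End_{\Aa}(L(x))^{\op}$. The Eilenberg--Watts action sends $a \in \Aa \simeq \End_{\Aa}(L)^{\op}$ to $\sigma_x(a) \in \End_{\Aa}(L(x))^{\op}$, while the prescribed action extends left multiplication by $a$ on $L$ along the inclusion $\pi_x \colon L \hookrightarrow L(x)$. These coincide because $\pi_x$ is the component at $L$ of the natural transformation $1_{\Hh} \to \sigma_x$ given by the $S_x$-universal extensions: naturality at the endomorphism ``left multiplication by $a$'' of $L$ forces the commutativity $\sigma_x(a) \circ \pi_x = \pi_x \circ a$, which is precisely what characterises the extended left-multiplication operator on $L(x)$. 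Hence the two bimodule structures on $L(x)$ agree, and $\sigma_x \simeq -\otimes_{\Aa} \Aa(x) = t_x$.

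The only real obstacle is this compatibility check of the two left actions, which is a bookkeeping verification rather than a deep fact; the rest is a direct application of Eilenberg--Watts transported through the Morita equivalence determined by the progenerator $L$.
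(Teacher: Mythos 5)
The decisive step in your argument --- invoking Eilenberg--Watts through ``the Morita equivalence determined by the progenerator $L$'' --- does not go through, because $L=\Aa_{\Aa}$ is \emph{not} a projective generator of $\Hh=\coh(\Aa)$, and $\Hh$ is not equivalent to $\mod(R)$ for any ring $R$. Concretely, $\Ext^1(L,L(-nx))\simeq\D\Hom(L(-nx),\tau L)\neq 0$ for $n\gg 0$ (Serre duality plus Lemma~\ref{lem:morphisms-line-bundles-large-n}), so $L$ is not projective; and $\Hom(L,\tau L(-nx))=0$ for $n\gg 0$ (this is exactly how the paper proves that $P_x$ is prime), so $L$ is not a generator. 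Relatedly, the isomorphism $\Aa\simeq\ShEnd_{\Aa}(L)^{\op}$ is an isomorphism of \emph{sheaves} of algebras; the global endomorphism ring $\End_{\Aa}(L)=\Gamma(X,\ShEnd_{\Aa}(L))$ is merely the finite-dimensional field of constants (Lemma~\ref{lem:constants-field}), not $\Aa^{\op}$. Hence the classical module-theoretic Eilenberg--Watts theorem, which is what produces your isomorphism $F\simeq -\otimes_{\Aa}F(L)$ for colimit-preserving $F$, is simply not available in this setting.

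The paper instead \emph{imitates} the Eilenberg--Watts proof sheaf-theoretically: the natural transformation $g\colon t_x\ra\sigma_x$ is constructed locally, using that $\sigma_x$ fixes all tubes and is therefore compatible with the localizations $p_x\colon\Hh\ra\Hh_x$ of~\ref{numb:categorical-localization} (together with a sheafified Eilenberg--Watts argument as in Kashiwara--Schapira), and the conclusion that $g$ is an isomorphism uses only that $L$ is \emph{locally} a progenerator, plus preservation of finite coproducts. Your verification that the two left $\Aa$-actions on $L(x)$ coincide --- via naturality of $\pi_x$ applied to left multiplication by $a$ --- is correct and corresponds to the paper's assertion that the canonical bimodule structure on $\sigma_x(L)$ agrees with $\Aa(x)$; but that check only does work once the surrounding global Morita framework is replaced by the local, sheaf-level construction of the comparison map.
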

\begin{proof}
  We proceed like in the proof of the theorem of Eilenberg-Watts,
  \cite[Thm.\ II.(2.3)]{bass:1968}. We have, as right $\Aa$-modules,
  $t_x(L)\simeq
  L\!\otimes_{\Aa}\!\Aa(x)\simeq\Aa(x)=L(x)=\sigma_x(L)$.
  Both functors are autoequivalences, and exact ($\Aa$ is a
  locally-free $\Oo$-module). $\sigma_x(L)$ is a right $\Aa$-module;
  it can be made into a bimodule in the canonical way, and this
  bimodule agrees with $\Aa(x)$. For each $E\in\coh(\Aa)$ we have a
  natural morphism
  $f_E\colon
  E\simeq\ShHom_{\Aa}(\Aa,E)\ra\ShHom_{\Aa}(\Aa(x),\sigma_x(E))$
  induced by $\sigma_x$; for this we remark, that this can be indeed
  defined locally, since $\sigma_x$ fixes all tubes, and is therefore
  compatible with localizations in the sense
  of~\ref{numb:categorical-localization}; then one can imitate the
  (more general) proof
  of~\cite[Thm.~19.5.4]{kashiwara:schapira:2006}. Under the natural
  isomorphisms
  $\Hom_{\Aa}(E,\ShHom_{\Aa}(\Aa(x),\sigma_x(E)))\simeq
  \Hom_{\Aa}(E\otimes_{\Aa}\Aa(x),\sigma_x(E))$
  it corresponds to a natural morphism
  $g_E\colon t_x(E)\ra\sigma_x(E)$, thus we have a natural
  transformation $g\colon t_x\ra\sigma_x$.  This is an isomorphism on
  $L_{\Aa}$, which is locally a progenerator for $\coh(\Aa)$. Since
  both functors also preserve finite coproducts, it follows that $g_E$
  is an isomorphism for every object $E\in\coh(\Aa)$.
\end{proof}
We recall from Theorem~\ref{thm:structure}~(2) that there is a
bijection between the closed points of the centre curve $X$ and the
points of $\XX$. By abuse of notation we use the same symbol $x$ for
$x\in X$ and the corresponding point $x\in\XX$.
\begin{theorem}\label{thm:Picard-sequence}
  Let $\Hh$ be a noncommutative regular projective curve over the
  field $k$. Let $X$ be the centre curve. Then there is an exact
  sequence
  \begin{equation}
    \label{eq:Picard-sequence}
    1\ra\Pic(\coh(X))\stackrel{\iota}\lra\Pic(\Hh)\stackrel{\phi}
    \lra\prod_{x}\ZZ/e_{\ramif}(x)\ZZ\ra 1 
 \end{equation}
 of abelian groups. Here, $\phi(\sigma)=\sigma_{|\Hh_0}$ and $\iota$
 sends a Picard-shift $s_x$ of $\coh(X)$, for a point $x\in X$, to
 ${\sigma_x}^{e_{\ramif}(x)}$, for the corresponding point
 $x\in\XX$. Moreover, $\Pic(\coh(X))\simeq\Pic(X)$, the Picard group of
 isomorphism classes of line bundles in $\coh(X)$ with the tensor
 product.
\end{theorem}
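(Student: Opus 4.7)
The plan is to work throughout with the bimodule description of autoequivalences afforded by Lemma~\ref{lem:isomorphic-point-functors}: an element of $\Pic(\Hh)$ corresponds to an invertible $\Aa$-$\Aa$-bimodule (up to isomorphism), with $\sigma_x$ corresponding to the twisted bimodule $\Aa(x)=L_\Aa(x)$ that realizes the $S_x$-universal extension. To construct $\iota$ I would first note that if $L$ is an invertible $\Oo$-module on the centre curve $X$, then $\Aa\otimes_{\Oo}L$ is an invertible $\Aa$-$\Aa$-bimodule (local triviality of $L$ reduces to $\Aa_x$ itself), and the natural isomorphism $(\Aa\otimes_{\Oo}L)\otimes_{\Aa}(\Aa\otimes_{\Oo}L')\simeq\Aa\otimes_{\Oo}(L\otimes_{\Oo}L')$ turns this assignment into a group homomorphism $\Pic(X)\ra\Pic(\Hh)$; I will show this is the desired $\iota$.

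The central step is to verify $\iota([\Oo(x)])=[\sigma_x^{e_\tau(x)}]$, i.e., that $\Aa\otimes_{\Oo}\Oo(x)\simeq\Aa(x)^{\otimes_{\Aa}e_\tau(x)}$ as $\Aa$-bimodules. Since both sides are supported at $x$, this is a local question at $x\in X$. On the level of complete local rings, Proposition~\ref{prop:completions-equal-laurent} identifies $\widehat{E}_x\simeq D_x[[T,\tau^-]]$ with uniformizer $T$, and gives the centre $\widehat{K}_x\simeq k(x)((uT^{e_\tau(x)}))$, so the image of the central uniformizer $\mathfrak{m}_x$ in $\widehat{E}_x$ generates $\Rad(\widehat{E}_x)^{e_\tau(x)}$. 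Since $\sigma_x$ shifts locally by inverting one power of $\Rad(\widehat{E}_x)$ (this is exactly the content of the $S_x$-universal sequence~\eqref{eq:S-universal-L}), applying $\sigma_x$ exactly $e_\tau(x)$ times inverts $\mathfrak{m}_x$, which is precisely the effect of tensoring with $\Oo(x)$. This local matching globalizes because both sides are coherent $\Aa$-bimodules that agree stalkwise.

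For injectivity of $\iota$, if $\Aa\otimes_{\Oo}L\simeq\Aa$ as $\Aa$-bimodules, I would extract $L$ back by taking the bimodule centre, equivalently $\ShHom_{\Aa\otimes_{\Oo}\Aa^{\op}}(\Aa,\Aa\otimes_{\Oo}L)\simeq L$, yielding $L\simeq\Oo$ and triviality in $\Pic(X)$. For surjectivity of $\phi$, recall from Corollary~\ref{cor:separ-ramif} that $e_\tau(x)>1$ occurs only at finitely many (ramification) points, so $\prod_x\ZZ/e_\tau(x)\ZZ$ is effectively a finite direct sum; given a finitely supported tuple $(n_x)$, the finite product $\prod_x\sigma_x^{n_x}$ lies in $\Pic(\Hh)$, and by Corollary~\ref{cor:tau-minus=sigma-x}(2) its restriction to $\Uu_y$ is $\tau^{-n_y}$, giving class $n_y\bmod e_\tau(y)$.

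Exactness in the middle is the cleanest piece: the inclusion $\iota(\Pic(X))\subseteq\ker\phi$ holds because $\sigma_x^{e_\tau(x)}$ restricts to the identity on every $\Uu_y$ (trivial for $y\neq x$, and $\tau^{-e_\tau(x)}\simeq 1_{\Uu_x}$ by the very definition of $e_\tau(x)$ combined with Corollary~\ref{cor:tau-minus=sigma-x}). Conversely, any $\sigma\in\ker\phi$ can be written as a finite product $\prod_y\sigma_y^{n_y}$; the condition $\sigma_{|\Uu_y}\simeq 1$ forces $\tau^{-n_y}\simeq 1$ on $\Uu_y$, so $e_\tau(y)\mid n_y$, and writing $n_y=e_\tau(y)m_y$ exhibits $\sigma=\iota\bigl(\prod_y s_y^{m_y}\bigr)$. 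The final claim $\Pic(\coh(X))\simeq\Pic(X)$ is the standard identification in the commutative smooth projective case. The main obstacle, I expect, is the local computation that $\Aa\otimes_{\Oo}\Oo(x)\simeq\Aa(x)^{\otimes e_\tau(x)}$, as it requires combining the explicit skew-power-series description of the completions with the bimodule interpretation of the Picard-shift $\sigma_x$.
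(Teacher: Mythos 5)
Your proposal is correct and follows essentially the same route as the paper: both rest on the bimodule interpretation of the Picard-shifts from Lemma~\ref{lem:isomorphic-point-functors}, the key local identity $\Oo(x)\otimes_{\Oo}\Aa\simeq\Aa(e_{\tau}(x)\cdot x)$ verified via the ramification index of $R_x$ over $\Oo_x$, and the recovery of $\Oo(\delta)$ from $\Aa(\overline{\delta})$ by taking the sub-bimodule of $\Aa$-central elements (your $\ShHom_{\Aa\otimes_{\Oo}\Aa^{\op}}(\Aa,-)$ is exactly the paper's $M^{\Aa}$, combined with $\Aa^{\Aa}=\Oo$ from Lemma~\ref{lem:local-centre}). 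The only differences are cosmetic: you spell out the surjectivity of $\phi$ and the middle exactness, which the paper dismisses as clear.
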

\begin{proof}
  Since $\sigma_x$ on $\Uu_x$ has order $e_{\ramif}(x)$, it is clear
  that $\phi$ induces a surjective homomorphism as indicated, and its
  kernel is given by $\spitz{{\sigma_x}^{e_{\ramif}(x)}\mid
    x\in\XX}$. We have to show that
  $s_x\mapsto{\sigma_x}^{e_{\ramif}(x)}$ yields an isomorphism between
  $\Pic(\coh(X))$ and this kernel. Surjectivity is clear. For
  well-definedness and injectivity we have to show that a word in the
  $s_x$ is trivial if and only if the corresponding word in the
  ${\sigma_x}^{e_{\ramif}(x)}$ is trivial. Let $x\in X$ and
  $\Aa_x=\Aa\otimes_{\Oo}\Oo_x$. It follows from
  Corollary~\ref{cor:nc-dedekind} (its proof) and
  Lemma~\ref{lem:local-centre} that
  $\mathfrak{m}_x\Aa_x=\Rad(\Aa_x)^e$ for some natural number
  $e$. Forming completions we see that $e=e_{\ramif}(x)$. From this we
  deduce that
  $$\Oo(x)\!\otimes_{\Oo}\!\Aa\simeq\Aa(e_{\ramif}(x)\cdot x)$$ as
  $\Aa$-$\Aa$-bimodules. For $\delta=\sum_{x\in X}\delta_x\cdot x$
  define $\overline{\delta}=\sum_{x\in\XX}e_{\ramif}(x)\delta_x\cdot x$.
  Thus $\Oo(\delta)\otimes_{\Oo}\!\Aa\simeq\Aa(\overline{\delta})$.
  We hence have that $\Oo(\delta)\simeq\Oo$ implies
  $\Aa(\overline{\delta})\simeq\Aa$. Moreover,
  $s_x\simeq -\!\otimes_{\Oo}\!\Oo(x)$ and
  $\sigma_x\simeq -\!\otimes_{\Aa}\!\Aa(x)$, by the lemma. So
  $s_x\mapsto{\sigma_x}^{e_{\ramif}(x)}$ gives a well-defined
  homomorphism. For an $\Aa$-$\Aa$-bimodule $M$ we define, as
  in~\cite[(37.28)]{reiner:2003}, the $\Oo$-$\Oo$-subbimodule
  $M^{\Aa}$ of $M$, locally, by consisting of all $x\in M$ such that
  $\alpha x=x\alpha$ holds for all $\alpha\in\Aa$. By
  Lemma~\ref{lem:local-centre}~(1), we have $\Aa^{\Aa}=\Oo$. Then
  $(\Oo(\delta)\otimes_{\Oo}\Aa)^{\Aa}=\Oo(\delta)$. If now
  $\Aa(\overline{\delta})\simeq\Aa$ as bimodules, then we obtain
  $\Oo(\delta)=(\Oo(\delta)\otimes_{\Oo}\Aa)^{\Aa}\simeq\Aa^{\Aa}=\Oo$. Thus
  our map is also injective.

  The last statement about $\Pic(X)$ is, since $X$ is commutative, easy
  to show.
\end{proof}
\begin{numb}[The $\Oo$-dual]
  For $E\in\Hh$ let $\odual{E}=\ShHom_{\Oo}(E,\Oo)$ denote the
  $\Oo$-dual of $E$.
\end{numb}
\subsection*{The different}
By~\cite{vandenbergh:vangeel:1984} and well-known Hom-tensor relations
\begin{equation}
  \label{eq:dualizing-sheaf-isos}
  \bom_{\Aa}:=\ShHom_{\Oo}(\Aa,\bom_X)\simeq\bom_X\!\otimes_{\Oo}\!\odual{\Aa}
  \simeq\varphi^{\ast}\bom_X\!\otimes_{\Aa}\!\Aa(\Delta)
\end{equation}
is the dualizing sheaf for $\coh(\Aa)$, with
$\varphi^{\ast}\colon\coh(X)\ra\coh(\Aa)$ the functor
$-\!\otimes_{\Oo}\!\Aa$ and $\Delta$ a divisor, which is called the
\emph{different}.

The ``difference'' between $\tau_{|\Hh_0}$ and
$\tau=\bom_{\Aa}\!\otimes\!-$, globally on $\Hh$, is given by the
Auslander-Reiten translation on the centre $\coh(X)$. Moreover, it
follows from Theorem~\ref{thm:Picard-sequence}, that on $\Hh_0$ the
functor $\varphi^{\ast}\bom_X\!\otimes_{\Aa}\!-$ is the identity. We
have $\bom_X=\Oo_X(\gamma)$, where $\gamma$ is the canonical divisor
on $X$ (we refer to~\cite[VIII.~Prop.~1.13]{altman:kleiman:1970}). We
then get
$\bom_{\Aa}=\Aa\bigl(\overline{\gamma}+\Delta\bigr)$. Formulated in
terms of Picard-shifts we obtain the following.
\begin{theorem}
  Let $\Hh$ be a noncommutative regular projective curve over a field
  $k$. Then
  \begin{enumerate}
  \item[(1)] $\tau\in\Pic(\Hh)$. 
  \item[(2)] Each $e_{\tau}(x)$ divides $e_{\ramif}(x)$.
  \end{enumerate}
\end{theorem}
It is shown in Example~\ref{ex:inseparable} that in general
$e_{\tau}(x)\neq e_{\ramif}(x)$. 
\begin{proof}
  (1) Follows from the preceding discussions. (2) follows from (1) and
  Theorem~\ref{thm:order-sigma-ramif}. More precisely we have that the
  order of $\tau$ in $\Gal(\End(S_x)/k)$ divides the order of $\tau$
  in $\Aut(\Uu_x/k)$, which divides the order of $\sigma_x$ in
  $\Aut(\Uu_x/k)$.
\end{proof}
In order to have ``good'' ramifications, we assume now that either $k$
is perfect, or that the characteristic of $k$ does not divide the
skewness $s(\Hh)$ (cf.\ \cite[1.3.9]{artin:dejong:2004}, \cite[Ch.~5,
Sec.~6]{schilling:1950}). The divisor
$\Delta=\sum_x (e_{\ramif}(x)-1)\cdot x$ is called the
\emph{different} of $\Hh$. It is, locally in $x$, induced by the exact
sequence
\begin{equation}
  \label{eq:exact-sequence-different}
  0\ra L\xrightarrow{{\pi_x}^{e_{\ramif}(x)-1}} L((e_{\ramif}(x)-1)x)\lra
  S_x[e_{\ramif}(x)-1]^{e(x)}\ra 0
\end{equation}
from Lemma~\ref{lem:cokernel-pi-n}. It follows then that the cokernel
$C$ of the injective (reduced) trace map $\Aa\ra\odual{\Aa}$ locally
in $x$ has $k$-dimension
\begin{equation}
  \label{eq:dim-cokernel-trace}
  \dim_k
  C_x=e_{\ramif}(x)(e_{\ramif}(x)-1)e(x)^2e^{\ast}(x)^2[k(x):k]\stackrel{(\ast)}=s(\Hh)^2
 \bigl(1-\frac{1}{e_{\tau}(x)}\bigr)[k(x):k],
\end{equation}
with equation $(\ast)$ holding in the separable case. From this we
immediately get:
\begin{proposition}\label{prop:only-finitely-many-separation-points}
  There are only finitely many separation points. \qed
\end{proposition}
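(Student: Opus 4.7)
The plan is to deduce finiteness from the dimension computation of the cokernel $C$ of the reduced trace map $\Aa \to \odual{\Aa}$ given just before the statement. The key point is that $C$ is supported at exactly the separation points, and that $C$ is a coherent torsion sheaf on $\XX$, hence of finite total $k$-dimension.

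First I would argue that $C$ is a coherent sheaf of finite length. The trace map $\Aa \to \odual{\Aa}$ is injective and is an isomorphism generically (after tensoring with $k(X)$, both become isomorphic to the central simple algebra $A$ and the reduced trace is non-degenerate in characteristic zero, and more generally non-degenerate because $\Hh$ is smooth over the perfect field $k$). Hence $C = \odual{\Aa}/\Aa$ lies in $\Hh_0$, so it decomposes as $C = \bigoplus_{x\in\XX} C_x$ with only finitely many non-zero components, and $\dim_k C = \sum_x \dim_k C_x < \infty$.

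Next I would invoke the local dimension formula \eqref{eq:dim-cokernel-trace}, which gives
\[
\dim_k C_x = s(\Hh)^2 \Bigl(1 - \frac{1}{e_\tau(x)}\Bigr)[k(x):k].
\]
For a separable point $x$, we have $C_x \neq 0$ if and only if $e_\tau(x) > 1$, i.e. if and only if $x$ is a separation point. Since $C$ has finite length, only finitely many $C_x$ can be non-zero, hence there are only finitely many separation points.

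The main obstacle, such as it is, is justifying that the reduced trace is generically non-degenerate (so that $C$ is genuinely a torsion sheaf) and that \eqref{eq:dim-cokernel-trace} is applicable at \emph{every} separable point. Both are established in the surrounding discussion via the local description $\widehat{E}_x \simeq D_x[[T,\tau^-]]$ and the sequence \eqref{eq:exact-sequence-different}; once these are in hand, the argument reduces to the elementary observation that a coherent torsion sheaf is supported on finitely many points.
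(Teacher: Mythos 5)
Your argument is exactly the paper's: the proposition is stated as an immediate consequence of the dimension formula \eqref{eq:dim-cokernel-trace} for the cokernel $C$ of the trace map $\Aa\ra\odual{\Aa}$, which is a finite-length torsion sheaf nonzero precisely at the points with $e_{\tau}(x)>1$. Your additional justification that $C$ is generically zero (hence of finite length) is a correct filling-in of what the paper leaves implicit.
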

\begin{theorem}\label{thm:general-tau-picard}
  Let $\Hh$ be a noncommutative regular projective curve over a field
  $k$ which is perfect or of characteristic prime to $s(\Hh)$. Let
  $\gamma=\sum_x \gamma_x\cdot x$ be the canonical divisor of the
  centre curve $X$. For
  $\overline{\gamma}=\sum_{x\in\XX}\gamma_xe_{\ramif}(x)\cdot x$ we
  write $\sigma^{|\overline{\gamma}|}$ for the corresponding
  Picard-shift. Then
  \begin{equation}
    \label{eq:general-tau-picard}
    \tau\ =\
    \sigma^{|\overline{\gamma}|}\cdot\prod_{x}{\sigma_x}^{e_{\ramif}(x)-1}\
    =\ \prod_{x}{\sigma_x}^{e_{\ramif}(x)(\gamma_x+1)-1}.
  \end{equation}
\end{theorem}

\section{The genus and the Euler characteristic}\label{sec:genus}
We recall that if $k$ is algebraically closed, e.g.\ $k=\CC$, then
there is the well-known relation $\chi(X)=2(1-g(X))$ between the Euler
characteristic $\chi(X)$ and the genus $g(X)=\dim_k\Ext^1(\Oo,\Oo)$ of
the regular projective curve (or compact Riemann surface) $X$. Let
$(\Hh,L)$ be a noncommutative regular projective curve over the field
$k$. We set $\kappa=\dim_k\End(L)$. The \emph{Euler form} is defined
by
$$\LF{E}{F}=\dim_k\Hom(E,F)-\dim_k \Ext^1(E,F)$$ for objects
$E,\,F\in\Hh$. We call $$\genus=\dim_{\End(L)}\Ext^1(L,L)$$ the
\emph{genus} of $\Hh$ and
\begin{equation}
  \label{eq:char-genus}
  \chi'(\Hh)=\frac{1}{s(\Hh)^2}\cdot\LF{L}{L}=\frac{\kappa}{s(\Hh)^2}\cdot
  (1-\genus)
\end{equation}
the \emph{normalized Euler characteristic} of $\Hh$ (over $k$). Note
that this definition depends on the base-field $k$, which can be
by-passed by assuming that $k$ is the centre of $\Hh$. If $\Aa$ is
Azumaya of degree $s$ over $\Oo$, for instance, a maximal order in
$\matring_s(\Oo)$, then $\chi(\Aa)=s^2\cdot\chi(\Oo)$;
compare~\cite[4.1.1+4.1.5]{artin:dejong:2004}, with
$\chi(\Aa):=\LF{L}{L}$. Thus one can regard the normalized Euler
characteristic $\chi'$ to be invariant under Morita-equivalence. Note
that in case $k$ is algebraically closed, we have
$\chi(\Hh)=2\chi'(\Hh)$. In case $k=\RR$ and $s(\Hh)=1$ the definition
of $\chi'$ agrees with the topological definition of the Euler
characteristic for the underlying manifold; we refer to further
discussion in~\ref{numb:euler-char-witt}. With this we have
$$\genus=0\ \Leftrightarrow\ \chi'(\Hh)>0\quad\text{and}\quad
\genus=1\ \Leftrightarrow\ \chi'(\Hh)=0.$$ For $F\in\Hh$ we define
\begin{equation}
  \label{eq:def-deg}
  \deg(F)=\frac{1}{\kappa\varepsilon}\cdot\LF{L}{F}-
\frac{1}{\kappa\varepsilon}\cdot\LF{L}{L}\cdot\rk(F), 
\end{equation}
where $\varepsilon\geq 1$ is the natural number such that the
resulting linear form $\deg\colon\Knull(\Hh)\ra\ZZ$ becomes
surjective. We obtain
\begin{proposition}[Riemann-Roch formula]
  Let $(\Hh,L)$ be a noncommutative regular projective curve over a
  field. For all $E,\,F\in\Hh$ we have
  $$\frac{1}{\kappa}\cdot\LF{E}{F}=(1-\genus)\cdot\rk(E)\cdot\rk(F)+\varepsilon\cdot
\begin{vmatrix}
  \rk(E) & \rk(F)\\
  \deg(E) & \deg(F)
\end{vmatrix}.$$
\end{proposition}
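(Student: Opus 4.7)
The plan is to use bilinearity to reduce the identity to a handful of verifiable cases. Both sides of the claimed formula are $\ZZ$-bilinear in the classes $[E], [F] \in \Knull(\Hh)$: the Euler form factors through $\Knull(\Hh)$ because $\Hh$ is hereditary, and the right-hand side is bilinear by construction of $\rk$ and $\deg$. So it suffices to verify the formula on a generating set of $\Knull(\Hh)$.

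My first step will be to show that $\Knull(\Hh)$ is generated as an abelian group by $[L]$ together with the classes $\{[S_x]\}_{x \in \XX}$. The line bundle filtration lemma, the torsion--torsionfree decomposition $0 \to E_0 \to E \to E_+ \to 0$, and composition series in $\Hh_0$ together reduce everything to line bundles and simples. To replace an arbitrary line bundle $L'$ by $L$ modulo simples, I will invoke Lemma~\ref{lem:morphisms-line-bundles-large-n} to produce a nonzero (hence monic, since both are line bundles in a hereditary category) morphism $L(-nx) \to L'$ for $n \gg 0$; its cokernel $C$ has finite length, so $[C]$ is a sum of simple classes, and combined with the computation $[L(-nx)] = [L] - ne(x)[S_x]$ extracted from Lemma~\ref{lem:cokernel-pi-n} this exhibits $[L'] - [L]$ as a sum of simple classes.

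With the generating set in hand, bilinearity reduces everything to the four cases $(E, F) \in \{L, S_x\} \times \{L, S_y\}$. The case $E = L$ is immediate: by definition $\rk(L) = 1$ and $\deg(L) = 0$, and the right-hand side collapses to $(1 - g)\rk(F) + \varepsilon \deg(F)$, which is exactly $\frac{1}{\kappa}\LF{L}{F}$ by the defining equation of $\deg(F)$. For $E = S_x$ we have $\rk(S_x) = 0$, so the right-hand side becomes $-\varepsilon \rk(F)\deg(S_x)$. When $F = L$, I apply Serre duality $\LF{S_x}{L} = -\LF{L}{\tau S_x}$; in the non-weighted setting $\tau S_x \simeq S_x$ by axiom (NC~6'), so the $E = L$ case gives $\frac{1}{\kappa}\LF{S_x}{L} = -\varepsilon \deg(S_x)$, matching. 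When $F = S_y$ the right-hand side vanishes, and the left side vanishes too: for $x \neq y$ both $\Hom(S_x, S_y)$ and $\Ext^1(S_x, S_y) \simeq \D\Hom(S_y, \tau S_x) = \D\Hom(S_y, S_x)$ are zero, while for $x = y$ both terms equal $\dim_k \End(S_x)$ and cancel.

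The main obstacle will be the generator argument itself, namely verifying that an arbitrary line bundle differs from $L$ by a sum of simple classes. This is where the hereditariness of $\Hh$ (turning nonzero maps between line bundles into monomorphisms) and the explicit shift formula of Lemma~\ref{lem:cokernel-pi-n} combine with the existence result of Lemma~\ref{lem:morphisms-line-bundles-large-n}. Once the generating set is pinned down, the remaining case-checking is a direct application of the definition of $\deg$ and Serre duality.
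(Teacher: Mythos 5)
Your proposal is correct and follows essentially the same route as the paper: both reduce the identity by bilinearity on $\Knull(\Hh)$ to the cases $E=L$ (the definition of $\deg$), pairs of finite-length objects (where both sides vanish by $\tau$-invariance of $\Hh_0$), and arbitrary line bundles handled via the exact sequence $0\to L(-nx)\to L'\to C\to 0$ together with line bundle filtrations. Your explicit identification of $\{[L]\}\cup\{[S_x]\}_{x\in\XX}$ as a generating set is just a cleaner packaging of the paper's case-by-case verification.
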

\begin{proof}
  First we remark that $\deg$ is additive on short exact sequences,
  and that $\LF{-}{-}$ and the right hand side of the formula induce
  bilinear maps $\Knull(\Hh)\times\Knull(\Hh)\ra\ZZ$. If $E=L$, then
  the formula is just the definition of the degree. If
  $E,\,F\in\Hh_0$, then both sides are zero, by the structure of
  $\Hh_0$ and the $\tau$-invariance of each object in $\Hh_0$. This
  yields, if $L'$ is a line bundle and $S$ is simple, then
  $\LF{L'}{S}=\LF{L}{S}$ (considering $[L]-[L']$). Then, if one of $E$
  or $F$ belongs to $\Hh_0$, it is easy to see that the formula holds,
  by using line bundle filtrations. If $\sigma\in\Aut(\Hh)$ is
  point-fixing, then both sides remain equal, if replacing the pair
  $(E,F)$ by $(\sigma E,\sigma F)$ (for all $E,\,F\in\Hh$). If $L'$ is
  a line bundle, by Lemma~\ref{lem:morphisms-line-bundles-large-n} we
  have an exact sequence $0\ra L(-nx)\ra L'\ra C\ra 0$ ($x$ any point,
  $n\gg 0$, $C\in\Uu_x$). From this the formula holds for $E=L'$ and
  $F\in\Hh$. Using line bundle filtrations, the formula holds for
  every $E\in\Hh_+$ and $F\in\Hh$, and then generally.
\end{proof}
In particular, if $\bom=\bom_{\Aa}=\tau L$ denotes the dualizing sheaf
in $\Hh$, then
\begin{equation}
  \label{eq:deg-dualizing}
  \deg(\bom)=-\frac{2}{\kappa\varepsilon}\cdot\LF{L}{L}=\frac{2}{\varepsilon}\cdot
  (\genus-1). 
\end{equation}
Moreover, if $\delta=\sum_x \delta_x\cdot x\in\ZZ^{(\XX)}$ is a
divisor, and if we define $\ell(\delta)=[\Hom(L,L(\delta)):\End(L)]$,
then (by specializing to $E=L$, $F=L(\delta)$)
$$\ell(\delta)=1-g(\Hh)+\deg\delta+\ell(\omega-\delta),$$
with $L(\omega)=\bom$, which is the Riemann-Roch in more classical
form. Here,
$\deg\delta:=\varepsilon\deg(L(\delta))=\frac{1}{\kappa}\sum_x
\delta_x \cdot [\Aa_x/\Rad(\Aa_x):k]$.
\begin{lemma}\label{lem:finite-field-deg-S}
  Let $\Hh$ be a noncommutative regular projective curve over the
  perfect field $k$. For each $x\in\XX$ we have
  \begin{equation}
  \label{eq:deg-of-simples}
    \deg(S_x)=\frac{s(\Hh)}{\kappa\varepsilon}\cdot e^{\ast}(x)\cdot [k(x):k].
  \end{equation}
\end{lemma}
\begin{proof}
  Follows easily from the skewness principle.
\end{proof}
For a Picard-shift $\sigma\in\Pic(\Hh)$ we call $\deg(\sigma(L))$ the
\emph{degree} of $\sigma$, and we denote by $\Pic_0(\Hh)$ the subgroup
of degree zero Picard-shifts; similarly for $\coh(X)$ and $\Oo$, where
we use the symbols $\deg_X$, $\kappa_X$ and $\varepsilon_X$. From the
lemma we easily get the following.
\begin{proposition}
  For the injective homomorphism
  $\iota\colon\Pic(\coh(X))\ra\Pic(\Hh)$ in~\eqref{eq:Picard-sequence}
  we have
  $$\deg(\iota(s))=s(\Hh)^2\cdot\frac{\kappa_X\varepsilon_X}{\kappa\varepsilon}\cdot\deg_X
  (s).$$ In particular, $\Pic_0(X)$ can be regarded as a subgroup of
  $\Pic_0(\Hh)$. \qed
\end{proposition}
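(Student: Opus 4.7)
The plan is to reduce to a single generator of $\Pic(\coh X)$ and then invoke the machinery established in the previous sections. Both sides of the asserted identity are $\ZZ$-linear in $s\in\Pic(\coh X)$, since $\iota$ is a group homomorphism and both $\deg$ and $\deg_X$ are additive on Picard-shifts. As $\Pic(\coh X)\simeq\Pic(X)$ is generated, as an abelian group, by the classes $s_x$ attached to the closed points $x\in X$ (every line bundle on a regular projective curve is isomorphic to $\Oo_X(D)$ for some divisor $D$), it suffices to verify the formula when $s=s_x$.

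For such an $s_x$, the injection in Theorem~\ref{thm:Picard-sequence} gives $\iota(s_x)={\sigma_x}^{e_{\tau}(x)}$. Applying Lemma~\ref{lem:cokernel-pi-n} with $n=e_{\tau}(x)$ produces the short exact sequence
$$0\ra L\ra L(e_{\tau}(x)\cdot x)\ra S_x[e_{\tau}(x)]^{e(x)}\ra 0$$
in $\Hh$. Since $\deg(L)=0$ by definition~\eqref{eq:def-deg}, and since $[S_x[n]]=n[S_x]$ in $\Knull(\Hh)$, additivity of $\deg$ on short exact sequences gives
$$\deg(\iota(s_x))\ =\ e(x)\cdot e_{\tau}(x)\cdot \deg(S_x).$$
Now I substitute the formula $\deg(S_x)=\frac{s(\Hh)}{\kappa\varepsilon}e^{\ast}(x)[k(x):k]$ from Lemma~\ref{lem:finite-field-deg-S} and collapse the product $e(x)e^{\ast}(x)e_{\tau}(x)$ to $s(\Hh)$ by the local-global skewness principle (Theorem~\ref{thm:special-skewness-equation}), obtaining
$$\deg(\iota(s_x))\ =\ \frac{s(\Hh)^2}{\kappa\varepsilon}\,[k(x):k].$$

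Running the same argument on the commutative curve $X$---where $s(X)=e^{\ast}(x)=e_{\tau}(x)=e(x)=1$ and $\kappa_X=1$---identifies $[k(x):k]$ with $\deg_X(s_x)$, so the asserted equality holds on the generator $s_x$, and hence on all of $\Pic(\coh X)$ by additivity. The ``in particular'' clause is then immediate: if $\deg_X(s)=0$, the displayed formula gives $\deg(\iota(s))=0$, so $\iota$ restricts to a homomorphism $\Pic_0(X)\ra\Pic_0(\Hh)$, and this restriction is injective because $\iota$ itself is injective by Theorem~\ref{thm:Picard-sequence}.

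There is no genuine obstacle here. The real content is simply the fortuitous collapsing of the three local invariants $e(x)$, $e^{\ast}(x)$, $e_{\tau}(x)$ into the single global constant $s(\Hh)$ via the local-global principle of skewness---this is precisely what forces the proportionality constant $s(\Hh)^2/(\kappa\varepsilon)$ to be independent of $x$. The only small bookkeeping point is that on the centre curve $X$ the paper's normalized degree agrees with the classical one, i.e.\ $\deg_X(s_x)=[k(x):k]$, so that the normalization constants for $X$ and $\Hh$ do not get confused.
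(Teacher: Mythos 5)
Your argument is exactly the computation the paper intends (its own proof is the one-line ``from the lemma we easily get''): reduce to the generators $s_x$, apply Lemma~\ref{lem:cokernel-pi-n} with $n=e_{\tau}(x)$ to get $\deg(\sigma_x^{e_\tau(x)}(L))=e(x)e_\tau(x)\deg(S_x)$, and collapse $e(x)e^{\ast}(x)e_{\tau}(x)$ to $s(\Hh)$ via Theorem~\ref{thm:special-skewness-equation}. The only point you assert rather than verify is the final identification $\deg_X(s_x)=[k(x):k]$: with the paper's normalization~\eqref{eq:def-deg} applied to $X$ itself one gets $\deg_X(s_x)=[k(x):k]/(\kappa_X\varepsilon_X)$, where $\kappa_X=[\End(\Oo_X):k]$ and $\varepsilon_X$ is the normalizing integer for $X$, and these need not equal $1$ in general (e.g.\ a pointless conic as centre curve has $\varepsilon_X=2$); so either one reads $\deg_X$ as the classical divisor degree, or the proportionality constant acquires the extra factor $\kappa_X\varepsilon_X$ — a defect of the statement's normalization rather than of your argument, and one that does not affect the ``in particular'' clause, since degree zero is preserved either way.
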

Let $\Hh=\coh(\Aa)$ be a noncommutative regular projective curve with
centre curve $X$ and with $\Aa$ a maximal $\Oo_X$-order in a central
simple $k(X)$-algebra. Following~\cite{artin:dejong:2004} we call
$\gge{e}=(e_1,\dots,e_n)$ the \emph{ramification vector} of $\Aa$ if
$e_1,\dots,e_n$ are all ramification indices $>1$, and moreover, for
each ramification point $x$ of $\Aa$ its ramification index
$e_{\ramif}(x)$ appears precisely $[k(x):k]$ times in $\gge{e}$. If
for all ramification points $x_1,\dots,x_t$ (pairwise different) the
numbers $f_i=[k(x_i):k]$ are given, then we will also write more
precisely $\gge{e}=({e_1}^{f_1},\dots,{e_t}^{f_t})$ and call it the
ramification \emph{sequence}.
\begin{proposition}[{Artin-de Jong~\cite[Lemma~4.1.5]{artin:dejong:2004}}]
  Let $\Hh=\coh(\Aa)$ be a noncommutative regular projective curve over
  a perfect field $k$ with centre curve $X$. Let $\Aa$ be a maximal
  $\Oo_X$-order in $k(\Hh)$ and
  $\gge{e}=({e_1}^{f_1},\dots,{e_n}^{f_n})$ its ramification
  sequence. Then we have for the normalized Euler characteristic
  \begin{equation}
    \label{eq:general-euler-char-formula}
    \chi'(\Hh)=\chi'(X)-\frac{1}{2}\sum_{i=1}^n
    f_i \cdot\Bigl(1-\frac{1}{e_i}\Bigr). 
  \end{equation}
\end{proposition}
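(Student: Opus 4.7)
The plan is to compute $\chi(\Hh)=\LF{L}{L}$ with $L=\Aa$ by reducing it, via the forgetful functor $\coh(\Aa)\to\coh(X)$, to a Riemann--Roch calculation on the centre curve $X$. The key geometric ingredient will be the different exact sequence from~\eqref{eq:exact-sequence-different}, which governs the degree of $\Aa$ as an $\Oo_X$-module.

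First, since $\Aa$ is a locally-free $\Oo_X$-module (it is coherent, hereditary, and torsion-free over the smooth curve $X$) and is locally projective as an $\Aa$-module, the natural isomorphisms $\End_{\Aa}(\Aa)\simeq H^0(X,\Aa)$ and $\Ext^1_{\Aa}(\Aa,\Aa)\simeq H^1(X,\Aa)$ give
\[
  \chi(\Hh)=\LF{L}{L}_{\Hh}=\chi(X,\Aa).
\]
Now apply the classical Riemann--Roch formula on $X$ to the locally-free sheaf $\Aa$ of rank $s(\Hh)^2$:
\[
  \chi(X,\Aa)=\deg_X(\Aa)+s(\Hh)^2\cdot\chi(X).
\]
It therefore remains to compute $\deg_X(\Aa)$.

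For this I would use the reduced trace $\Aa\hookrightarrow\odual{\Aa}=\ShHom_{\Oo}(\Aa,\Oo)$, with cokernel $C$ supported exactly at the ramification points and having local dimensions computed in~\eqref{eq:dim-cokernel-trace}. Since $\Aa$ is locally free, $\deg_X(\odual{\Aa})=-\deg_X(\Aa)$, while the additivity of degree on the short exact sequence $0\to\Aa\to\odual{\Aa}\to C\to 0$ and the fact that $\deg_X(T)=\dim_k T$ for a torsion sheaf $T$ on $X$ give
\[
  \deg_X(\odual{\Aa})=\deg_X(\Aa)+\sum_{x}s(\Hh)^2\Bigl(1-\frac{1}{e_{\tau}(x)}\Bigr)[k(x):k].
\]
Combining these two equalities yields
\[
  \deg_X(\Aa)=-\frac{s(\Hh)^2}{2}\sum_{x}\Bigl(1-\frac{1}{e_{\tau}(x)}\Bigr)[k(x):k]
  =-\frac{s(\Hh)^2}{2}\sum_{i=1}^n f_i\Bigl(1-\frac{1}{e_i}\Bigr),
\]
where in the second equality I use that $e_{\tau}(x)=e_{\ramif}(x)$ (in the perfect/separable case), so that unramified points contribute $0$, and each ramification point $x_i$ contributes $[k(x_i):k]=f_i$ copies of $1-1/e_i$ by definition of the ramification sequence.

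Plugging the value of $\deg_X(\Aa)$ into the Riemann--Roch expression and dividing by $s(\Hh)^2$ gives the claimed formula
\[
  \chi'(\Hh)=\frac{\chi(\Hh)}{s(\Hh)^2}=\chi(X)-\frac{1}{2}\sum_{i=1}^n f_i\Bigl(1-\frac{1}{e_i}\Bigr).
\]
The main subtlety, and arguably the ``hard part'', is the local computation~\eqref{eq:dim-cokernel-trace} of $\dim_k C_x$: one has to identify the cokernel of the trace map locally at $x$ with the different of the local skew field extension, then use the skewness principle $s(\Hh)=e(x)e^{\ast}(x)e_{\tau}(x)$ together with $[k(x):k]$ to convert the length of the cokernel into an expression involving only $s(\Hh)^2$, $[k(x):k]$ and $e_{\tau}(x)$. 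Granting~\eqref{eq:exact-sequence-different} and~\eqref{eq:dim-cokernel-trace}, which are already established in the excerpt, the rest is the direct Riemann--Roch computation just sketched.
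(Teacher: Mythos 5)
Your proposal is correct and follows essentially the same route as the paper: both use the reduced-trace sequence $0\to\Aa\to\odual{\Aa}\to C\to 0$ together with $\deg_X(\Aa)=-\deg_X(\odual{\Aa})$ and the local computation~\eqref{eq:dim-cokernel-trace} to evaluate $\deg_X(\Aa)$, then apply Riemann--Roch over $X$ and the identification $\chi(\Hh)=\LF{\Aa}{\Aa}=\LF{\Oo}{\Aa}_X$. The only cosmetic difference is that the paper phrases the degree computation with its normalized degree (carrying the factors $\kappa_X\varepsilon_X$) and invokes flatness where you invoke the cohomological comparison, but these amount to the same bookkeeping.
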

\begin{proof}
  Write $s=s(\Hh)$. We consider the exact sequence
  $0\ra\Aa\ra\odual{\Aa}\ra C\ra 0$ in $\coh(\Aa)$. It is also exact
  in $\coh(\Oo)$. With the degree $\deg_X$ over $X$ we obtain, using
  $\deg_X(\Aa)=-\deg_X(\odual{\Aa})$, that
  $\deg_X(\Aa)=-\frac{1}{2}\deg_X(C)=-\frac{s^2}{2\kappa_X\varepsilon_X}\sum_x
  (1-1/e_{\tau}(x))[k(x):k]$,
  by~\eqref{eq:dim-cokernel-trace}. We have $\rk_X(\Aa)=s^2$. By the
  Riemann-Roch, over $X$, we get
  $\LF{\Oo}{\Aa}_X=s^2\chi'(X)-\frac{s^2}{2}\sum_x
  (1-1/e_{\tau}(x))[k(x):k]$.
  Finally, by flatness $\LF{\Aa}{\Aa}=\LF{\Oo}{\Aa}_X$, and
  division by $s^2$ gives the claim.
\end{proof}
\begin{proposition}
  Let $\Hh$ be a noncommutative regular projective curve. If
  $\genus>1$, then all Auslander-Reiten components of
  $\Hh_+=\vect(\XX)$ have as underlying graph $\ZZ A_{\infty}$, and
  the category $\Hh$ is wild.
\end{proposition}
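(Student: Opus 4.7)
My plan is first to check that no indecomposable vector bundle is $\tau$-periodic, then to apply Riedtmann's structure theorem to realize each Auslander--Reiten component of $\Hh_+$ as $\ZZ T$ for a tree $T$, and finally to identify $T=A_\infty$ by combining wildness with the shape theorem for wild hereditary components. From~\eqref{eq:deg-dualizing} the hypothesis $g(\Hh)>1$ gives $\deg(\bom)=\frac{2}{\varepsilon}(g(\Hh)-1)>0$. Theorem~\ref{thm:general-tau-picard} expresses $\tau$ as a Picard-shift, from which~\eqref{eq:def-deg} yields $\deg(\tau E)=\deg(E)+\rk(E)\cdot\deg(\bom)$ for every $E\in\Hh$; an indecomposable $E\in\Hh_+$ has $\rk(E)\geq 1$, so the iterates $\tau^n E$ have strictly monotone degree and are therefore pairwise non-isomorphic. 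Since $\tau$ and $\tau^-$ restrict to mutually quasi-inverse autoequivalences of $\Hh_+$, each connected component $\Cc$ of the Auslander--Reiten quiver of $\Hh_+$ is a stable translation quiver, and Riedtmann's structure theorem, combined with the absence of $\tau$-periodic vertices just shown, forces any admissible automorphism group to be trivial, so $\Cc\simeq\ZZ T$ for a directed tree $T$.

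To identify $T=A_\infty$ I would argue that $\Hh$ is wild and then invoke the standard shape theorem for Auslander--Reiten components of a wild hereditary category. Wildness follows from the strictly negative normalized Euler characteristic $\chi'(\Hh)=\kappa(1-g(\Hh))/s(\Hh)^2<0$: the later classification of tame cases (domestic and tubular curves) shows these all satisfy $\chi'(\Hh)\geq 0$, so negative $\chi'(\Hh)$ precludes tameness. Theorems of Ringel and Crawley-Boevey, adapted to hereditary categories with Serre duality (compare Lenzing--Reiten), then assert that a connected component of the Auslander--Reiten quiver of such a wild hereditary category consisting of non-$\tau$-periodic indecomposables has tree class $A_\infty$; combined with the first paragraph this gives every component of $\vect(\XX)$ the shape $\ZZ A_\infty$.

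The main obstacle is the last appeal to the wild-hereditary shape theorem, which is cleanest for finite-dimensional hereditary algebras and needs extra care in the present abelian-category framework. A self-contained alternative would be to use Riemann--Roch directly to bound the $k$-dimensions of the $\Hom$- and $\Ext^1$-spaces between $E$ and its Auslander--Reiten neighbours: this would show that the middle term of any almost split sequence $0\to\tau E\to M\to E\to 0$ in $\Hh_+$ has at most two indecomposable summands and that $T$ has no endpoints, extending Atiyah's classical commutative argument with the local data $e(x)$, $e^{\ast}(x)$, $e_{\tau}(x)$ supplied by the skewness principle.
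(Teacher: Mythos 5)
Your first step is sound: the formula $\deg(\tau E)=\deg(E)+\rk(E)\cdot\deg(\bom)$ together with $\deg(\bom)=\frac{2}{\varepsilon}(g(\Hh)-1)>0$ does show that no indecomposable vector bundle is $\tau$-periodic. (A small caveat: you derive it from Theorem~\ref{thm:general-tau-picard}, which is stated only for smooth curves over perfect fields, whereas the proposition is asserted for arbitrary regular curves; but the same formula follows in full generality from the Riemann--Roch formula combined with $\LF{F}{\tau E}=-\LF{E}{F}$, so this is repairable.) The paper itself proves nothing here: it simply cites \cite[Prop.~4.7]{lenzing:reiten:2006}, which is exactly the statement that a hereditary noetherian category with Serre duality and ``negative Euler characteristic'' has all components of $\vect(\XX)$ of shape $\ZZ A_{\infty}$ and is wild.

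The genuine gap is in your last step, and you half-admit it yourself. First, your wildness argument is not a proof: observing that the domestic and tubular cases all have $\chi'_{orb}\geq 0$ does not show that $\chi'<0$ implies wildness, because no tame/wild dichotomy is available for free in this abelian-category setting; wildness of the $g>1$ curves is part of the content of the cited Lenzing--Reiten result, not a consequence of the later classification. Second, the ``standard shape theorem for wild hereditary components'' (Ringel, Crawley-Boevey) is a statement about regular components over finite-dimensional wild hereditary algebras; for $g(\Hh)>1$ there is no tilting object, so $\Hh$ is not derived equivalent to $\mod(\Lambda)$ for any such $\Lambda$, and the transfer you gesture at (``adapted to hereditary categories with Serre duality'') is precisely the nontrivial work that Lenzing--Reiten carry out. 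Third, a smaller but real flaw: in the Riedtmann step, the absence of $\tau$-periodic vertices does not by itself force the admissible group $G$ in $\ZZ T/G$ to be trivial, since $G$ could be generated by $\tau^{n}\circ\phi$ with $\phi$ a nontrivial tree automorphism, which identifies no vertex with a $\tau$-shift of itself. Your proposed self-contained alternative via Riemann--Roch bounds on the middle terms of almost split sequences is the right idea for closing the gap, but as written it is only a plan, not a proof.
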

\begin{proof}
  This follows from~\cite[Prop.~4.7]{lenzing:reiten:2006}.
\end{proof}
\subsection*{Noncommutative elliptic curves}
\begin{definition}
  We call a (non-weighted) noncommutative regular projective curve of
  genus one a \emph{(noncommutative) elliptic curve}.
\end{definition}
For elliptic curves there is the following analogue of Atiyah's
classification of vector bundles over an elliptic curve over an
algebraically closed field~\cite{atiyah:1957}:
\begin{theorem}\label{thm:elliptic}
  Let $\Hh=\coh(\XX)$ be a noncommutative elliptic curve. Then the
  following holds:
  \begin{enumerate}
  \item[(1)] Each indecomposable object $E$ in $\Hh$ is semistable of
    slope $\mu(E)=\frac{\deg(E)}{\rk(E)}$ in
    $\widehat{\QQ}:=\QQ\cup\{\infty\}$ and satisfies
    $\Ext^1(E,E)\neq 0$.
  \item[(2)] For each $\alpha$ the subcategory $\bt_{\alpha}$ of
    semistable objects of slope $\alpha$ is non-trivial and forms a
    tubular family, again parametrized by a noncommutative elliptic
    curve $\XX_{\alpha}$, so that $\Hh'=\coh(\XX_{\alpha})$ is
    derived-equivalent to $\Hh$.
  \end{enumerate}
\end{theorem}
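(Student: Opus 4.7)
The plan is to adapt Atiyah's classification by exploiting two special features of the elliptic case. First, by~\eqref{eq:deg-dualizing} the dualizing sheaf $\bom_{\Hh}=\tau L$ has degree zero when $\genus=1$, so $\tau$ preserves both rank and degree, hence preserves the slope $\mu=\deg/\rk$; a direct check via Serre duality and Riemann-Roch confirms $\deg(\tau F)=\deg(F)$, using $\deg(L)=0$ which holds by the normalization~\eqref{eq:def-deg}. Second, the Riemann-Roch proposition with $\genus=1$ gives $\LF{E}{E}=\kappa(1-\genus)\rk(E)^2=0$ for every $E\in\Hh$. Combining the slope-preservation of $\tau$ with Serre duality yields the crucial vanishing: if $E,F$ are semistable with $\mu(E)>\mu(F)$, then $\Hom(E,F)=0$ by the usual slope argument, and $\Ext^1(E,F)\simeq\D\Hom(F,\tau E)=0$ as well, since $\tau E$ is semistable of slope $\mu(E)>\mu(F)$.

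For (1), I would induct on rank to show each indecomposable $E$ is semistable. If not, the short Harder-Narasimhan sequence $0\to E'\to E\to E''\to 0$, with $E'$ the maximal semistable subobject, satisfies $\mu(E')>\mu(E'')$. Devissage along the HN filtrations of $E''$ and of $\tau E''$ (whose semistable factors have the same slopes, all strictly below $\mu(E')$), combined with the vanishing above, yields $\Hom(E',\tau E'')=0$, hence $\Ext^1(E'',E')=0$ by Serre duality. The HN sequence therefore splits, contradicting indecomposability of $E$. The assertion $\Ext^1(E,E)\neq 0$ is then immediate from $\LF{E}{E}=0$, since $\dim\Hom(E,E)\geq 1$ (the identity). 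For~(2), the same vanishing statement implies at once that $\bt_\alpha$ is closed under kernels, cokernels and extensions inside $\Hh$ and is thus an exact abelian subcategory; $\tau$ restricts to an autoequivalence inducing a Serre duality on it, so~(1) applied inside $\bt_\alpha$ shows that every indecomposable object of $\bt_\alpha$ is self-extending, which forces $\bt_\alpha$ to be a disjoint union of tubes, i.e.\ a tubular family.

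To exhibit the parametrizing noncommutative elliptic curve $\XX_\alpha$ and the derived equivalence, I would construct an autoequivalence $\Phi$ of $\bDerived{\Hh}$ sending the heart $\Hh$ onto a new heart $\coh(\XX_\alpha)$ with $\Phi(\Hh_0)=\bt_\alpha$ (up to shift). For $\alpha=\infty$ one takes $\Phi=1$, since $\bt_\infty=\Hh_0=\coprod_{x}\Uu_{x}$. For finite $\alpha\in\QQ$, build $\Phi$ from Picard-shifts $\sigma_x\in\Pic(\Hh)$, which by Lemmas~\ref{lem:cokernel-pi-n} and~\ref{lem:finite-field-deg-S} raise $\deg$ by $\deg(S_x)\,e(x)$ while fixing $\rk$, together with the reflection (tilting) functor attached to a suitable two-term tilting complex in $\bDerived{\Hh}$. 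The new heart is then $\coh$ of a noncommutative curve that is again non-weighted of genus one, since both $\chi(\Hh)=0$ and the property ``every indecomposable is self-extending'' are derived invariants; this produces $\XX_\alpha$ together with the required derived equivalence $\coh(\XX_\alpha)\simeq\Hh$. The main obstacle is verifying that the group generated by such $\Phi$'s acts transitively on slopes in $\widehat{\QQ}$: in the commutative algebraically closed case this is the well-known $\mathrm{SL}_2(\ZZ)$-action built from Fourier-Mukai transforms, but here one must work with the a priori more restricted $\Pic(\Hh)$, whose generators have non-uniform degree steps, and one must confirm that the heart produced is of the form $\coh$ of a \emph{non-weighted} curve in the sense of this paper.
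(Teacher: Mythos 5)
Part~(1) of your proposal is sound and is essentially the argument the paper delegates to Geigle--Lenzing: the Harder--Narasimhan splitting argument via Serre duality and slope comparison, and $\Ext^1(E,E)\neq 0$ from $\LF{E}{E}=0$. The problems are in part~(2), and they are not merely technical. First, you never actually prove $\bt_\alpha\neq 0$: your observations that $\bt_\alpha$ is exact, extension-closed and carries Serre duality describe its structure \emph{if} it is nonzero, and the jump from ``every indecomposable is self-extending'' to ``disjoint union of tubes'' also needs justification (one needs $\bt_\alpha$ to be a uniserial length category, which is not automatic from self-extension alone). Second, and more seriously, your plan to produce $\XX_\alpha$ via an autoequivalence $\Phi$ of $\bDerived{\Hh}$ with $\Phi(\Hh_0)=\bt_\alpha$ cannot work in general: such a $\Phi$ would force $\coh(\XX_\alpha)\simeq\Hh$ as categories, hence $\XX_\alpha\simeq\XX$. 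But the paper's Klein bottle example (Section~9) exhibits $\bt_0$ parametrized by the annulus $\AA_{\RR,\HH}$, \emph{not} isomorphic to $\KK$, and the table there records that $\Aut(\bDerived{\Hh})$ acts on $\widehat{\QQ}$ with \emph{two} orbits. So the transitivity you flag as ``the main obstacle'' genuinely fails; it is not a gap you could close with more work along these lines.

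The paper's route avoids both problems at once. It forms the \emph{interval category} $\Hh\spitz{\alpha}$, the additive closure of $\bigcup_{\gamma>\alpha}\bt_{\gamma}[-1]\cup\bigcup_{\beta\leq\alpha}\bt_{\beta}$ inside $\bDerived{\Hh}$ --- a new heart of the same triangulated category, hence derived equivalent to $\Hh$ by construction, but \emph{not} the image of $\Hh$ under any autoequivalence. By Lenzing--Reiten this heart is again noetherian with Serre duality, so it is again a noncommutative elliptic curve $\coh(\XX_\alpha)$ whose finite-length subcategory is exactly $\bt_\alpha$; the tube structure then comes for free. Non-triviality is proved separately: if $\bt_\alpha=0$, every simple object of the noetherian category $\Hh\spitz{\alpha}$ has some slope $\beta<\alpha$, and the Riemann--Roch consequence $\mu(E)<\mu(F)\Rightarrow\Hom(E,F)\neq 0$ forces all simples to share that slope, so that $\Hh\spitz{\alpha}$ would contain no indecomposable of slope $>\beta$ --- contradicted by Picard-shifting a line bundle and suspending. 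You should replace your autoequivalence strategy by this tilting-of-the-t-structure construction.
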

We remark that one major difference to Atiyah's result is, that in
general $\Hh'$ may be \emph{not} isomorphic to $\Hh$. We will give an
example later.
\begin{proof}
  (1) Semistability follows like
  in~\cite[Prop.~5.5]{geigle:lenzing:1987}. The condition
  $\Ext^1(E,E)\neq 0$ is a direct consequence of the Riemann-Roch
  formula.

  (2) The proof of~\cite[Prop.~8.1.6]{kussin:2009} works also in this
  situation, with a slight modification: Let
  $\alpha\in\widehat{\QQ}$. It is sufficient to show that
  $\bt_{\alpha}\neq 0$.  In the bounded derived category
  $\bDerived{\Hh}$ we form the \emph{interval category}
  $\Hh'=\Hh\spitz{\alpha}$, the additive closure of
  $\bigcup_{\gamma>\alpha}\bt_{\gamma}[-1]\cup
  \bigcup_{\beta\leq\alpha}\bt_{\beta}$.
  Let $\Hh'_0$ be its subcategory of objects of finite
  length. By~\cite[4.9+5.2]{lenzing:reiten:2006}, $\Hh'$ is again
  noetherian with Serre duality.  Since $\Hh'\neq 0$ is noetherian,
  there are simple objects. Let $S\in\Hh'$ be simple. If
  $\bt_{\alpha}=0$, then $S$ has a slope $\beta<\alpha$.  The
  Riemann-Roch formula in this elliptic case
  implies $$\mu(E)<\mu(F)\quad\Rightarrow\quad\Hom(E,F)\neq 0$$ for
  all indecomposable objects $E$ and $F$.  We conclude, that another
  simple object $S'$ has the same slope $\beta$, and then
  $\Hh'_0=\bt_{\beta}$.  Then there is no indecomposable object in
  $\Hh'$ of slope $\gamma>\beta$.  But we find such an object by
  Picard-shifting a line bundle $L'$ in $\Hh'$ sufficiently far to the
  left, and then applying suspension $[1]$. This contradiction shows,
  that $\bt_{\alpha}\neq 0$.
\end{proof}
\begin{numb}[Fourier-Mukai partners]
  Let $\Hh$ and $\Hh'$ be noncommutative regular projective curves
  over $k$. We call $\Hh'$ a \emph{Fourier-Mukai partner} of $\Hh$, if
  there is an exact equivalence $\bDerived{\Hh}\ra\bDerived{\Hh'}$. We
  are mainly interested in this notion when at the same time the
  categories $\Hh$ and $\Hh'$ are not equivalent.

  We recall that, since $\Hh$ is hereditary, $\bDerived{\Hh}$ is the
  \emph{repetitive category} of $\Hh$, that is,
  $\bDerived{\Hh}=\bigvee_{n\in\ZZ}\Hh[n]$, and for $E,\,F\in\Hh$ we
  have $\Hom_{\bDerived{\Hh}}(E[m],F[n])=\Ext^{n-m}_{\Hh}(E,F)$. In
  the elliptic case,
  $\Hh=\coh(\XX)=\bigvee_{\beta\in\widehat{\QQ}}\bt_{\beta}$, and
  $\XX$ parametrizes the tubular family $\bt_{\infty}$. If
  $\alpha\in\widehat{\QQ}$, we have seen that
  $\Hh\spitz{\alpha}\simeq\coh(\XX_\alpha)$, where $\XX_\alpha$ is the
  elliptic curve parametrizing the tubular family $\bt_{\alpha}$. From
  the explicit description in Theorem~\ref{thm:elliptic} we readily
  obtain $\bDerived{\Hh}=\bDerived{\Hh\spitz{\alpha}}$.

  From this it follows easily, that if $\Hh$ is elliptic and
  $\bDerived{\Hh}\ra\bDerived{\Hh'}$ an exact equivalence, then $\Hh'$
  is equivalent to $\Hh\spitz{\alpha}$ for some $\alpha$.
\end{numb}
\begin{examples}\label{ex:dynkin-vectors}
  (1) Each (commutative) regular projective curve of genus one is
  elliptic in our sense; we do not require the existence of a
  $k$-rational base-point. In particular, the Klein
  bottle~\ref{ex:comm-real-elliptic} is a real elliptic curve without
  $\RR$-rational (= real) points (it is a Klein surface without
  boundary). \medskip
  
  (2) Let $X$ be a regular projective curve over its perfect centre
  $k$, let $K=k(X)$ its function field and $D$ a finite dimensional
  central division $K$-algebra. Let $\Aa$ be a maximal $\Oo_X$-order
  in $D$ and $\gge{e}$ its ramification vector. Let $\Hh=\coh(\Aa)$.

  (a) We have $\chi'(\Hh)>0$ if and only if $X$ has genus zero and
  $\gge{e}=(e)$, $(e_1,e_2)$, $(2,2,e)$, $(2,3,3)$, $(2,3,4)$ or
  $(2,3,5)$.

  (b) $\Hh$ is elliptic, if either $X$ has genus one and $\Aa$ is
  Azumaya, or $X$ has genus zero and $\Aa$ has ramification vector
  $\gge{e}=(2,3,6)$, $(2,4,4)$, $(3,3,3)$ or $(2,2,2,2)$. This follows
  directly from~\eqref{eq:general-euler-char-formula}. Moreover, if
  $g(X)=0$, then we conclude from~\eqref{eq:general-tau-picard} that
  the order of $\tau$ in $\Aut(\Hh)$ is given by the least common
  multiple (= maximum) $n$ of the $\tau$-multiplicities, and
  $n=2,\,3,\,4$, or $6$. If, on the other hand, $g(X)=1$, then
  $\bom_X=\Oo(\omega)$ has degree zero, and from the Riemann-Roch
  formula we get $\ell(\omega)=g(X)=1$, and thus
  $\bom_X=\Oo$. Moreover, $\Aa$ is unramified (that is, Azumaya), thus
  we obtain $\bom_{\Aa}=\bom_X\otimes_{\Oo}\Aa=\Aa$. Thus $\tau=1$.
\end{examples}
\begin{theorem}\label{thm:elliptic-tau-finite-order}
  Let $\Hh$ be elliptic over a perfect field. Then the order of $\tau$
  is $1,\,2,\,3,\,4$, or $6$, given by the maximum of the
  $\tau$-multiplicities. The ramification vector is a derived
  invariant of an elliptic curve.
\end{theorem}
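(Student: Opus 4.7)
\emph{Plan.} For the first assertion I start from the classification recalled in Examples~\ref{ex:dynkin-vectors}(2)(b): the condition $\chi'(\Hh)=0$ combined with~\eqref{eq:general-euler-char-formula} forces $g(X)\leq 1$, and the cases are either (a) $g(X)=1$ with $\Aa$ Azumaya, or (b) $g(X)=0$ with a ramification vector satisfying $\sum_i f_i(1-1/e_i)=2\chi(X)$, e.g.\ the classical $(2,3,6),(2,4,4),(3,3,3),(2,2,2,2)$ when $\kappa_X=1$. In case (a), $\bom_X\simeq\Oo_X$ since $g(X)=1$, and as $\Aa$ is unramified Theorem~\ref{thm:general-tau-picard} reduces to $\tau=1$, matching $\max_x e_\tau(x)=1$. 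In case (b), I pass modulo $\iota(\Pic(X))$ in the Picard sequence~\eqref{eq:Picard-sequence}: the formula of Theorem~\ref{thm:general-tau-picard} projects to $(-1\bmod e_\tau(x))_x\in\prod_x\ZZ/e_\tau(x)\ZZ$, an element of order $\mathrm{lcm}\{e_\tau(x)\}$. Since $\deg\tau=\deg\bom_{\Hh}=0$ by~\eqref{eq:deg-dualizing} and $\Pic_0(X)=0$ for $g(X)=0$, the lift of $\tau$ to $\Pic_0(\Hh)$ has order exactly $\mathrm{lcm}\{e_\tau(x)\}$. In each of the elliptic ramification vectors this lcm coincides with the maximum and is at most $6$.

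For the second assertion, Theorem~\ref{thm:elliptic}(2) and the ensuing Fourier-Mukai discussion imply that any $\Hh'$ with $\bDerived{\Hh'}\simeq\bDerived{\Hh}$ is equivalent to $\coh(\XX_\alpha)=\Hh\spitz{\alpha}$ for some slope $\alpha\in\widehat{\QQ}$, so it suffices to show that each parametrizing elliptic curve $\XX_\alpha$ carries the same ramification vector as $\XX=\XX_\infty$. A triangulated equivalence preserves the Serre functor $\tau[1]$, hence the period of $\tau$ on each Auslander-Reiten tube (which is $e_\tau$), and preserves endomorphism rings of indecomposables, hence the $k$-algebra $\End(S_x)$. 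Combining the skewness principle (Theorem~\ref{thm:special-skewness-equation}) with the ramification identities of Section~\ref{sec:orders} gives $\dim_k\End(S_x)=e^{\ast}(x)^2\cdot e_\tau(x)\cdot[k(x):k]$ and $\dim_k Z(\End(S_x))=e_\tau(x)\cdot[k(x):k]$; with $e_\tau$ preserved, these recover both $[k(x):k]$ and $e^{\ast}(x)$. Hence the multiset of pairs $(e_\tau(x),[k(x):k])$ at the separation points of a tubular family is a triangulated invariant of that family.

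The remaining, and main, obstacle is to show that all tubular families $\bt_\alpha$ in $\bDerived{\Hh}$ share a common ramification vector, so that the invariant of $\bt_\infty^{\Hh}$ agrees with that of $\bt_\alpha^{\Hh}=\bt_\infty^{\coh(\XX_\alpha)}$. I would handle this by exhibiting, for any pair of slopes $\alpha,\beta\in\widehat{\QQ}$, a triangulated autoequivalence of $\bDerived{\Hh}$ carrying $\bt_\alpha$ to $\bt_\beta$, built from Picard-shifts, the suspension, and the tilting equivalences implicit in Theorem~\ref{thm:elliptic}(2) (the elliptic analogue of the $SL_2(\ZZ)$-action on the derived category of a commutative elliptic curve). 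Such autoequivalences preserve the tube invariants above, so all $\bt_\alpha$'s carry the same ramification vector, and the derived invariance is established.
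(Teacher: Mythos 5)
Your argument for the first assertion is sound and follows the paper's own route: case (a) ($g(X)=1$, Azumaya) gives $\tau=1$, and in case (b) the formula of Theorem~\ref{thm:general-tau-picard} together with the exact sequence~\eqref{eq:Picard-sequence}, $\deg(\bom_{\Hh})=0$ and $\Pic_0(X)=0$ shows the order of $\tau$ is $\operatorname{lcm}\{e_{\tau}(x)\}$, which equals the maximum for each of the admissible vectors. This is exactly the computation carried out in Examples~\ref{ex:dynkin-vectors}(2)(b).

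For the second assertion there is a genuine gap. Your plan hinges on exhibiting, for any two slopes $\alpha,\beta\in\widehat{\QQ}$, a triangulated autoequivalence of $\bDerived{\Hh}$ carrying $\bt_{\alpha}$ to $\bt_{\beta}$. This transitivity fails in general over a non-algebraically closed field: Table~\ref{tab:elliptic-data} records that for the Klein bottle $\KK$ and for the annulus $\AA_{\RR,\HH}$ the action of $\Aut(\bDerived{\Hh})$ on slopes has \emph{two} orbits, and indeed the whole point of the Fourier--Mukai example in Section~\ref{sec:elliptic} is that different tubular families can be parametrized by non-isomorphic curves. So the final step of your argument, as proposed, cannot be carried out, and without it you only compare $\Hh'$ with $\bt_{\alpha}$, not with $\bt_{\infty}=\Hh_0$. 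The paper's proof avoids this entirely: an exact equivalence intertwines Serre functors and suspensions, hence conjugates the Auslander--Reiten translations, so the \emph{order} of $\tau$ is a derived invariant; by the first assertion this order equals the maximum of the ramification indices, and since the five admissible elliptic vectors $()$, $(2,3,6)$, $(2,4,4)$, $(3,3,3)$, $(2,2,2,2)$ have pairwise distinct maxima $1,6,4,3,2$, that single number already determines the ramification vector. (Your intermediate observations -- that a triangle equivalence preserves the order of $\tau$ in $\Gal(\End(S_x)/k)$ and the $k$-dimensions of $\End(S_x)$ and its centre, hence the pairs $(e_{\tau}(x),[k(x):k])$ tube by tube -- are correct and would give a finer invariant of each tubular family, but they do not substitute for the missing comparison between distinct slopes.)
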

\begin{proof}
  The order of $\tau$ is a derived invariant, and the maximum of the
  ramification indices determines the ramification vector, which is
  one of $()$, $(2,3,6)$, $(2,4,4)$, $(3,3,3)$ or $(2,2,2,2)$.
\end{proof}

\section{Genus zero: ghosts and ramifications}
\label{sec:genus-zero}
Noncommutative projective curves of genus zero are important in the
representation theory of finite dimensional algebras, in particular
(but not only) for the tame algebras. For details we refer
to~\cite{kussin:2009} and~\cite{kussin:2008}. We recall that genus
zero means that $\Ext^1(L,L)=0$, or equivalently, the existence of a
tilting bundle $T\in\Hh$. The noncommutative regular genus zero curves
$\Hh$ over a field $k$ correspond to the so-called tame bimodules
${}_F M_G$, where $F$ and $G$ are finite dimensional division algebras
over $k$, and $M$ is an $F$-$G$-bimodule on which $k$ is acting
centrally and $\dim {}_FM\cdot\dim M_G=4$. The corresponding bimodule
algebra $\Lambda=
\begin{pmatrix}
  G & 0\\
  M & F
\end{pmatrix}$
is a finite dimensional tame hereditary $k$-algebra, whose category
$\mod(\Lambda)$ of finite dimensional right modules is derived
equivalent to $\Hh$. The function field $k(\Hh)$ is isomorphic to the
endomorphism ring of the unique generic $\Lambda$-module. We refer
to~\cite{dlab:ringel:1976,crawley-boevey:1991,baer:geigle:lenzing:1987,ringel:1976} 
as references for the representation theory of tame bimodules and tame
hereditary algebras. We define the \emph{numerical type} $\varepsilon$
of $\XX$ by $\varepsilon=1$, if $(\dim_FM,\dim M_G)=(2,2)$, and
$\varepsilon=2$, if this dimension pair is given by $(1,4)$ or
$(4,1)$. ($\varepsilon$ coincides with the previously defined
normalizing factor of the degree.) We further set
\begin{equation}
  \label{eq:index-f-x}
  f(x)=\frac{1}{\varepsilon}\cdot
  [\Ext^1(S_x,L):\End(L)]=\deg(S_x). 
\end{equation}
A point $x\in\XX$ is called \emph{rational}, if $f(x)=1$. In the genus
zero case, rational points always exist,
\cite[Prop.~4.1]{lenzing:delapena:1999}. It is shown
in~\cite{kussin:2009} that there is a so-called efficient automorphism
$\sigma\in\Aut(\Hh)$. The orbit algebra $R=\Pi(L,\sigma)$ serves as a
homogeneous coordinate algebra for $\Hh$. We recall the definition
from~\cite[Def.~1.1.3]{kussin:2009}: an automorphism
$\sigma\colon\Hh\ra\Hh$ is called \emph{efficient} if it is
point-fixing (that is, $\sigma(\Uu_x)=\Uu_x$ for all $x$), if the
degree of $\sigma L$ is positive, and if there is no point-fixing
automorphism such that the degree of the image of $L$ is positive and
smaller. As a consequence $\Pi(L,\sigma)$ is shown to be graded
factorial (a graded version of the notion of a unique factorization
ring in~\cite{chatters:jordan:1986}), the points $x\in\XX$ in
correspondence with the homogeneous height one prime ideals, each
generated by a normal element (called \emph{prime}) $\pi_x$.
\begin{example}[Commutative case]
  Let $\Hh=\coh(X)$ be a commutative regular projective curve of genus
  zero with centre $k$. If the characteristic is different from $2$,
  then either $k(\Hh)=k(T)$, the rational function field over $k$ in
  one variable (case $\varepsilon=1$), or
  $k(\Hh)=k(U,V)/(-aU^2-bV^2+ab)$, with $a,\,b\in k^{\times}$ such
  that $-aY^2-bZ^2+abX^2$ is anisotropic over $k$ (case
  $\varepsilon=2$). (If $k$ is additionally perfect, this also holds
  in characteristic $2$, where in case $\varepsilon=2$ the quadratic
  equation is slightly different; we refer
  to~\cite[Thm.~6.2]{kussin:2008}.) For this classical result we refer
  to~\cite[Thm.~6.1]{kussin:2008}; there also the characteristic $2$
  case is treated. In all cases (also in characteristic $2$), we have
  $\End(L)=k$ (for all line bundles) and $\chi'(X)=1$.
\end{example}
\begin{lemma}\label{lem:degree-zero-picard}
  Let $\Hh$ have centre curve $X$ with $g(X)=0$. Let
  $\sigma\in\Pic(\Hh)$ be of degree zero and with
  $\sigma_{|\Hh_0}=1_{\Hh_0}$. Then $\sigma=1_{\Hh}$.
\end{lemma}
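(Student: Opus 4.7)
The plan is to combine the exact sequence of Theorem~\ref{thm:Picard-sequence} with the degree formula from the proposition immediately following it, reducing the claim to the classical vanishing of $\Pic_0(X)$ when $g(X)=0$.

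First, I would translate the hypothesis $\sigma_{|\Hh_0}=1_{\Hh_0}$: it means precisely that the restriction of $\sigma$ to every tube $\Uu_x$ is the identity, and hence $\phi(\sigma)=0$ under the map $\phi\colon\Pic(\Hh)\to\prod_x\ZZ/e_{\tau}(x)\ZZ$ of~\eqref{eq:Picard-sequence}. By exactness, there is a unique $\bar\sigma\in\Pic(\coh(X))\simeq\Pic(X)$ such that $\sigma=\iota(\bar\sigma)$.

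Next, the formula $\deg(\iota(\bar\sigma))=\tfrac{s(\Hh)^2}{\kappa\varepsilon}\cdot\deg_X(\bar\sigma)$ (from the proposition following Theorem~\ref{thm:Picard-sequence}), combined with the hypothesis $\deg(\sigma)=0$, forces $\deg_X(\bar\sigma)=0$.

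Finally, since $g(X)=0$, Riemann-Roch on $X$ gives $\Pic_0(X)=0$: any divisor $D$ on $X$ with $\deg_X D=0$ satisfies $\ell(D)\geq\deg_X D+1-g(X)=1$, so $D$ is linearly equivalent to an effective divisor of degree zero, forcing $D\sim 0$. Therefore $\bar\sigma$ is trivial in $\Pic(X)$, and by injectivity of $\iota$ we conclude $\sigma=1_{\Hh}$. There is no real obstacle in this argument; it is essentially a two-step diagram chase, and the only genuine input beyond the exact sequence and degree formula is the classical vanishing $\Pic_0(X)=0$ in genus zero, which holds over the arbitrary perfect base field $k$ without requiring a $k$-rational point on $X$.
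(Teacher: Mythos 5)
Your argument is correct and follows the paper's own route: pass through the exact sequence of Theorem~\ref{thm:Picard-sequence} to land in $\Pic(\coh(X))$, use the degree formula to see the image has degree zero over $X$, and invoke $\Pic_0(X)=0$ in genus zero (which you, unlike the paper, actually justify via Riemann--Roch). The only point the paper treats more explicitly is the passage from ``$\sigma(\Oo_X)\simeq\Oo_X$'' to ``$\sigma\simeq 1_{\coh(X)}$ as a functor'' (citing the triviality of the ghost group of a commutative curve), which you absorb into the identification $\Pic(\coh(X))\simeq\Pic(X)$ already stated in Theorem~\ref{thm:Picard-sequence}; that is legitimate.
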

\begin{proof}
  By the assumptions it follows with Theorem~\ref{thm:Picard-sequence}
  that $\sigma$ lies in $\Pic(\coh(X))$, and is there of degree
  zero. Since $g(X)=0$, this means $\sigma(\Oo)\simeq\Oo$. Since $X$
  is commutative, we get $\sigma\simeq 1_{\coh(X)}$
  from~\cite[Cor.~3.3]{kussin:2008}, and then also
  $\sigma\simeq 1_{\Hh}$ with Theorem~\ref{thm:Picard-sequence}.
\end{proof}
Theorem~\ref{thm:general-tau-picard} yields
\begin{proposition}\label{prop:genus-zero-centre-tau-picard}
  Let $k$ be a perfect field. Let $\Hh$ be a noncommutative regular
  projective curve over $k$. We assume that its centre curve $X$ is of
  genus zero, of numerical type $\varepsilon$. Then
  $$\tau\ =\ {\sigma_{x_0}}^{-2/\varepsilon}\cdot\prod_{x}{\sigma_x}^{e_{\tau}(x)-1}$$
  for any point $x_0\in\XX$ which is rational in $X$ and not
  ramification. \qed
\end{proposition}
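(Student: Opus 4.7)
The plan is to specialize the general Picard-shift formula from Theorem~\ref{thm:general-tau-picard} to this genus-zero situation and identify the $X$-contribution $\sigma^{|\overline{\gamma}|}$ as a power of $\sigma_{x_0}$. Writing $\gamma=\sum_{x}\gamma_x\cdot x$ for the canonical divisor on the centre curve $X$, one sees directly from the definitions in Theorem~\ref{thm:Picard-sequence} that $\sigma^{|\overline{\gamma}|}=\iota([\bom_X])$, the image of the class $[\bom_X]\in\Pic(X)$ under the injection $\iota\colon\Pic(\coh(X))\hookrightarrow\Pic(\Hh)$. So everything reduces to pinning down $[\bom_X]$ inside $\Pic(X)$.

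First I would establish that $\Pic(X)$ is cyclic and generated by $[\Oo_X(x_0)]$. Since $X$ is a commutative smooth projective curve of genus zero, the Riemann-Roch formula applied to any divisor $D$ of normalized degree $\deg_X(D)=0$ yields $\ell(D)=1$, and a non-zero map $\Oo_X\to\Oo_X(D)$ must be an isomorphism (since source and target have the same degree and its cokernel is a torsion sheaf of degree zero); hence $\Pic^0(X)=0$ and the normalized degree gives an isomorphism $\deg_X\colon\Pic(X)\stackrel{\sim}\to\ZZ$. Since $x_0$ is rational in $X$, $[k(x_0):k]/\varepsilon=1$, so $[\Oo_X(x_0)]$ is indeed a generator. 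Applying the dualizing-sheaf degree identity~\eqref{eq:deg-dualizing} to the centre curve gives $\deg_X(\bom_X)=\tfrac{2}{\varepsilon}(g(X)-1)=-2/\varepsilon$, and comparing degrees forces
$$[\bom_X]\ =\ -\tfrac{2}{\varepsilon}\cdot[\Oo_X(x_0)]\quad\text{in }\Pic(X).$$

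To finish I would transport this relation along $\iota$. By Theorem~\ref{thm:Picard-sequence}, $\iota([\Oo_X(x_0)])=\sigma_{x_0}^{e_{\tau}(x_0)}$, and since $x_0$ is by assumption not a ramification point, $e_{\tau}(x_0)=1$. Hence
$$\sigma^{|\overline{\gamma}|}\ =\ \iota([\bom_X])\ =\ \sigma_{x_0}^{-(2/\varepsilon)\cdot e_{\tau}(x_0)}\ =\ \sigma_{x_0}^{-2/\varepsilon},$$
and substituting this into~\eqref{eq:general-tau-picard} delivers the proposition.

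The main subtlety to watch is the double role of the normalization $\varepsilon$: it enters once through the formula $\deg_X(\bom_X)=-2/\varepsilon$ on the centre, and once through the normalization $[k(x_0):k]/\varepsilon=1$ built into the definition of a \emph{rational} point in $X$; their interplay makes $-2/\varepsilon\in\{-2,-1\}$ an integer and produces a well-defined exponent on $\sigma_{x_0}$. The assumption $\ch k\neq 2$ enters only indirectly, to secure the clean classification of genus-zero centre curves from the Commutative case example so that the ``rational'' point $x_0$ exists and generates $\Pic(X)$; the remainder of the argument is pure bookkeeping inside the exact sequence of Theorem~\ref{thm:Picard-sequence}.
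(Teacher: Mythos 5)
Your argument is correct and is exactly the deduction the paper intends: the proposition is presented as an immediate consequence of Theorem~\ref{thm:general-tau-picard}, and you have merely made explicit the identification $\sigma^{|\overline{\gamma}|}=\iota([\bom_X])={\sigma_{x_0}}^{-2/\varepsilon}$ using $\Pic(X)\simeq\ZZ$ (via the normalized degree, Riemann--Roch on the genus-zero centre curve, and $\deg_X(\bom_X)=-2/\varepsilon$) together with $e_{\tau}(x_0)=1$ for the non-ramification rational point $x_0$. No gaps.
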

The following result follows from
Example~\ref{ex:dynkin-vectors}~(2)(a). We state it here explicitly
because of its influence to representation theory of finite
dimensional algebras.
\begin{theorem}
  Let $\Hh$ be a noncommutative regular projective curve of genus zero
  over a perfect field. There are at most three separation points. \qed
\end{theorem}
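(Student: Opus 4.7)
The plan is to invoke the Euler characteristic formula of Artin--de Jong, equation~\eqref{eq:general-euler-char-formula}, together with the identification of separation points with ramification points of the underlying maximal order (Corollary~\ref{cor:separ-ramif}), and then argue combinatorially that only short ramification sequences are compatible with genus zero. By Theorem~\ref{thm:structure} I may write $\Hh=\coh(\Aa)$ for a maximal $\Oo_X$-order $\Aa$ over the centre curve $X$; let $\gge{e}=({e_1}^{f_1},\dots,{e_n}^{f_n})$ be its ramification sequence, so that the separation points of $\Hh$ correspond bijectively to the indices $i=1,\dots,n$, where each point $x_i$ has $f_i=[k(x_i):k]$ and $e_\tau(x_i)=e_i\geq 2$.

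Without loss of generality I assume $k$ is the categorical centre of $\Hh$, so that $\chi(X)=1-g(X)$ in the standard normalisation. Next I would use that $g(\Hh)=0$ is equivalent to $\chi(\Hh)>0$, hence $\chi'(\Hh)>0$. Plugging this into~\eqref{eq:general-euler-char-formula} gives
$$0<\chi'(\Hh)=\chi(X)-\frac{1}{2}\sum_{i=1}^n f_i\Bigl(1-\frac{1}{e_i}\Bigr),$$
and since every summand on the right is non-negative, this already forces $\chi(X)>0$, i.e.\ $g(X)=0$ and $\chi(X)=1$. Hence
$$\sum_{i=1}^n f_i\Bigl(1-\frac{1}{e_i}\Bigr)<2.$$

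The key combinatorial observation is then that each $f_i\geq 1$ and $e_i\geq 2$, so every summand is at least $\tfrac{1}{2}$. Thus $\tfrac{n}{2}\leq\sum_{i=1}^n f_i(1-1/e_i)<2$, which gives $n\leq 3$. Since each separation point contributes at least one entry to the ramification sequence, the number of separation points is bounded by $n$, hence by three; this also recovers the classification of the admissible ramification vectors noted in Example~\ref{ex:dynkin-vectors}(2)(a). The only real obstacle is the normalisation of $\chi(X)$ over a non-closed base-field, which is resolved by the reduction to $k=Z(\Hh)$ indicated above; after that the proof is a short inequality.
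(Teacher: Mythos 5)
Your argument is correct and follows essentially the same route as the paper: the paper derives the bound from Example~\ref{ex:dynkin-vectors}~(2)(a), which is itself a direct consequence of the Artin--de Jong formula~\eqref{eq:general-euler-char-formula} combined with the identification of separation points with ramification points (Corollary~\ref{cor:separ-ramif}). You merely make the combinatorial step explicit (each ramification point contributes at least $\tfrac12$ to a sum that must be $<2$ once $\chi(X)=1$), whereas the paper records the full list of admissible ramification vectors; the normalisation $k=Z(\Hh)$ you invoke is exactly the one the paper adopts.
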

\subsection*{Ghosts and ramifications}
We recall from~\cite{kussin:2009}, that the ghost group $\Gg=\Gg(\Hh)$
is defined as the subgroup of the automorphism (class) group
$\Aut(\Hh)$ defined by those elements, called ghosts, which fix (up to
isomorphism) the structure sheaf $L$ and all simple sheaves $S_x$
($x\in\XX$); it follows then, in this genus zero case, that all
objects in $\Hh$ are fixed. In~\cite[Cor.~3.3]{kussin:2008} we have
shown that commutativity (= multiplicity freeness) implies that the
ghost group is trivial. As we shall see now, this last property
follows already when $\XX$ is unramified, but there are further cases
with one ramification point. We remark that
Proposition~\ref{prop:genus-zero-centre-tau-picard} is in particular
applicable in case $\Hh$ is of genus zero. We will not use it in the
following (since it will not simplify our proofs). Instead, we will
recover its validity in the examples treated below.
\begin{proposition}\label{prop:unramif=trivial-ghost}
  Let $\Hh$ be a noncommutative regular projective curve of genus zero
  over the perfect field $k$, which is (without loss of generality)
  the centre of $\Hh$. We assume additionally that the characteristic
  of $k$ is different from $2$. Let $X$ be the centre curve. The
  following are equivalent:
  \begin{enumerate}
  \item[(i)] On $\Hh_0$ the functor $\tau$ is isomorphic to the
    identity functor.
  \item[(ii)] There is a skew field $D$ with centre $k$ and
    $[D:k]=s(\Hh)^2$ such that $k(\Hh)\simeq D\otimes_k k(X)$, a
    constant extension.
  \item[(iii)] There is a skew field $D$ with centre $k$ and
    $[D:k]=s(\Hh)^2$ such that $k(\Hh)$ is either isomorphic to
    $D(T)$, with $T$ central, or to a skew field of the form
    $D(U,V)/(-aU^2-bV^2+ab)$ with central variables $U,\,V$ and
    non-zero elements $a,\,b\in k$ such that the quadratic form
    $-aY^2-bZ^2+abX^2$ is anisotropic over $k$.
  \item[(iv)] There is a rational point $x$ with $e(x)=1$,
    $e_{\tau}(x)=1$ and $\Pic(\Hh)=\spitz{\sigma_x}$.
  \end{enumerate}
  When this holds, then the ghost group is trivial, $\Gg(\Hh)=1$, and
  we have $\tau={\sigma_x}^{-2/\varepsilon}$.
\end{proposition}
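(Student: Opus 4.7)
The strategy is to establish the cycle (i) $\Rightarrow$ (ii) $\Rightarrow$ (iii) $\Rightarrow$ (iv) $\Rightarrow$ (i). By Corollary~\ref{cor:unramif=tau-identity}(2) the condition (i) is equivalent to $\XX$ being \emph{unramified}, i.e.\ $e_{\tau}(y)=1$ for every $y\in\XX$; via Theorem~\ref{thm:structure} and the identification $e_{\tau}(x)=e_{\ramif}(x)$ this is the same as the maximal order $\Aa$ being Azumaya over $\Oo_X$.

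For (i) $\Rightarrow$ (ii) the main input is a Brauer-group calculation: on a genus-zero curve $X$ over its centre $k$, every Azumaya algebra is a constant extension $D\otimes_k\Oo_X$ of a central simple $k$-algebra. This uses the injection $\Br(X)\hookrightarrow\Br(k(X))$ together with the known structure of unramified classes, handled separately for $X=\Pone$ and for an anisotropic conic. Taking $D$ to be the corresponding skew field gives $[D:k]=s(\Hh)^2$ and $k(\Hh)\simeq D\otimes_k k(X)$. The equivalence (ii) $\Leftrightarrow$ (iii) is the classical classification of genus-zero function fields over $k$ in characteristic $\neq 2$, applied to the tensor factor $k(X)$ (the commutative case is the Example preceding this proposition). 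For (iii) $\Rightarrow$ (iv) one writes the coordinate algebra of $\Hh$ explicitly from $D$ and the graded ring of $X$, picks a rational point $x\in\XX$ with $e(x)=1$ (lifted from a generator of $\Pic(X)$), and applies Theorem~\ref{thm:Picard-sequence}: once $\Hh$ is unramified the sequence collapses to $\Pic(\Hh)\simeq\Pic(\coh(X))$, which is infinite cyclic and generated by $\sigma_x$.

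For (iv) $\Rightarrow$ (i), the Picard exact sequence sends $\sigma_x$ to the vector $(0,\dots,1_x,\dots,0)$ in $\prod_y\ZZ/e_{\tau}(y)\ZZ$; since $\phi$ is surjective and $\Pic(\Hh)=\spitz{\sigma_x}$, this vector must generate the whole product, forcing $e_{\tau}(y)=1$ for every $y\neq x$. To exclude also $e_{\tau}(x)>1$, the plan is to combine the hypotheses ``$x$ rational'' and ``$e(x)=1$'' with the skewness principle and Lemma~\ref{lem:finite-field-deg-S} in order to express the local data $(e^{\ast}(x),[k(x):k])$ in terms of $\kappa=\dim_k\End(L)$, and then substitute into the Artin-de Jong formula~\eqref{eq:general-euler-char-formula} using $\chi'(\Hh)=\kappa/s(\Hh)^2$ and $\chi(X)=1$. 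This produces a linear relation in $\kappa$ whose only solution for which $\kappa$ is the dimension of a division algebra over $k$ is $e_{\tau}(x)=1$.

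For the final assertions: if $\sigma\in\Gg(\Hh)$ is a ghost, it acts trivially on each tube so $\phi(\sigma)=0$, and since $\sigma$ fixes $L$ it has degree zero; hence $\sigma$ lies in $\iota(\Pic(\coh(X)))$ with degree zero, and Lemma~\ref{lem:degree-zero-picard} yields $\sigma=1_{\Hh}$. The closed formula $\tau=\sigma_x^{-2/\varepsilon}$ is the specialization of Proposition~\ref{prop:genus-zero-centre-tau-picard} to the unramified case, taking the distinguished rational non-ramification point to be $x$ itself. The main obstacle is the Brauer-group input in (i) $\Rightarrow$ (ii), which is delicate for the anisotropic conic case; a secondary difficulty is the arithmetic book-keeping in (iv) $\Rightarrow$ (i), where the integrality of $\kappa$ as the dimension of a division $k$-algebra is what ultimately rules out $e_{\tau}(x)>1$ at the one remaining point.
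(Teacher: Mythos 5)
Your skeleton (equivalence of (i)--(iii) via the van den Bergh--Van Geel/Witt constant-extension criterion and the classification of genus-zero function fields, then (iii)$\Rightarrow$(iv) via an explicit coordinate algebra and the Picard sequence) matches the paper. The genuine gap is in (iv)$\Rightarrow$(i), at the one step you flag as ``arithmetic book-keeping'': ruling out $e_{\tau}(x)>1$ at the single remaining point $x$. The linear relation you propose to extract from \eqref{eq:general-euler-char-formula}, Lemma~\ref{lem:finite-field-deg-S} and the skewness principle does \emph{not} have $e_{\tau}(x)=1$ as its only admissible solution. Writing $e=e_{\tau}(x)$, $s=s(\Hh)$, $\kappa=\dim_k\End(L)$, the hypotheses $e(x)=1$ and $f(x)=1$ give $[k(x):k]=\kappa\varepsilon e/s^{2}$, and substituting into the Artin--de Jong formula yields $\chi(X)=\frac{\kappa}{s^{2}}\bigl(1+\frac{\varepsilon(e-1)}{2}\bigr)$. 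This is satisfied, for instance, by $\varepsilon=2$, $s=4$, $e=4$, $e^{\ast}(x)=1$, $\kappa=8$, $\chi(X)=2$ --- exactly the configuration the paper's own proof has to fight in the $(1,4)$-bimodule case, where the Euler-characteristic identity only pins down $r=[k(x):Z(\End(L))]=2$ and $e_{\tau}(x)=a^{2}\geq 4$, and the contradiction comes from an entirely different source: a count of generators of $\Pi(L,\sigma_x)$ as a module over its centre, using that every other prime has degree divisible by $e_{\tau}(x)$. Note also that your input $\chi(X)=1$ is unwarranted: the centre curve may have $\End(\Oo_X)$ a proper extension of $k$. Moreover you omit the $(2,2)$-bimodule case entirely, where the argument is structural rather than numerical: a rational multiplicity-free point forces the tame bimodule to be non-simple, perfectness of $k$ excludes the differential case, so $\Pi(L,\sigma_x)\simeq F[X;Y,\alpha]$ is a twisted polynomial ring and $Y$ produces a \emph{second} ramification point, contradicting (iv).

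A secondary issue: your derivation of $\Gg(\Hh)=1$ applies $\phi$, the degree, and Lemma~\ref{lem:degree-zero-picard} to an arbitrary ghost, but these are only defined/available for elements of $\Pic(\Hh)$, and the inclusion $\Gg(\Hh)\subseteq\Pic(\Hh)$ is not yet known at this point (it is a consequence of the subsequent theorem on efficient shifts). The paper instead realizes every ghost as $\gamma^{\ast}$ for a graded, prime-fixing automorphism $\gamma$ of the explicit coordinate ring $D[X,Y]$ or $D[X,Y,Z]/(-aY^2-bZ^2+ab)$ and checks directly that $\gamma$ acts on the centre by $s\mapsto a^{|s|}s$, whence $\gamma^{\ast}\simeq 1$. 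Your closing formula $\tau=\sigma_x^{-2/\varepsilon}$ via Proposition~\ref{prop:genus-zero-centre-tau-picard} is fine once unramifiedness is established.
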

\begin{proof}
  Condition~(i) is equivalent to say that $\Hh$ is unramified, by
  Corollary~\ref{cor:unramif=tau-identity}. By a result of van den
  Bergh and van Geel~\cite[Prop.~2.2]{vandenbergh:vangeel:1985} this
  is equivalent to~(ii); we also refer
  to~\cite{witt:1936}. By~\cite[Thm.~6.2]{kussin:2008} condition~(iii)
  is just a more explicit reformulation of~(ii).\medskip

  (iii)$\Rightarrow$(iv) We can rewrite~(iii) (or~(ii)) in terms of
  coordinate algebras. There is a rational and multiplicity free point
  $x$ such that $R=\Pi(L,\sigma_x)$ is either isomorphic to $D[X,Y]$
  or to $D[X,Y,Z]/(-aY^2-bZ^2+abX^2)$ with central variables $X$, $Y$
  and $Z$ of degree one, and $x$ corresponds to the variable $X$. The
  centre $Z(R)$ is given by $k[X,Y]$ and
  $k[X,Y,Z]/(-aY^2-bZ^2+abX^2)$, respectively. From this we infer
  $e_{\tau}(x)=e_{\ramif}(x)=1$. If $\gamma$ is a graded automorphism
  of $R$ fixing all prime elements, then it is easy to see that its
  restriction to the centre has the form $\gamma(s)=a^{|s|}\cdot s$
  for some $a\in k^{\times}$ (independent from $s$), where $|s|$
  denotes the degree of the homogeneous element $s$. From this we
  easily deduce $\Gg=1$ with~\cite[Thm.~3.1]{kussin:2008}. We also
  have $\Pic(\Hh)=\spitz{\sigma_x}$ and
  $\tau^-={\sigma_x}^{2/\varepsilon}$.\medskip

  (iv)$\Rightarrow$(i) The conditions $e(x)=1=f(x)$ imply that
  $\sigma_x$ is an efficient automorphism, and we get
  $\tau^-={\sigma_x}^{2/\varepsilon}$. Since
  $\Pic(\Hh)=\spitz{\sigma_x}$, all points different from $x$ are
  $\tau$-unramified. Since also $e_{\tau}(x)=1$ by assumption, (i)
  follows.
\end{proof}
Condition~(iv) allows us to relate the preceding proposition to the
next theorem and, in particular, to its corollary.
\begin{theorem}
  Let $(\Hh,L)$ be a noncommutative regular projective curve of genus
  zero over the perfect field $k$. Assume that there is an efficient
  tubular shift $\sigma_x$, and let $R=\Pi(L,\sigma_x)$. For a normal
  homogeneous element $r\neq 0$ in $R$ define the graded algebra automorphism
  $\gamma_r$ by $rs=\gamma_r(s)r$ for all $s\in R$, and denote by
  $\gamma_r^{\ast}$ the induced element in $\Aut(\Hh)$.
  \begin{enumerate}
  \item[(1)] Each ghost $\gamma$ is induced by a graded algebra
    automorphism of the form $\gamma_r$ where $r$ is homogeneous
    normal: $\gamma\simeq\gamma_r^{\ast}$. 
  \item[(2)] For each point $y\neq x$ the automorphism
    $$\gamma_{\pi_y}^{\ast}={\sigma_x}^{-d(y)}\circ\sigma_y$$ is a ghost
    of order $e_{\tau}(y)$. Moreover, ${\pi_y}^{e_{\tau}(y)}$ is
    central in $R$ up to multiplication with a unit in $R_0$.
  \item[(3)] We have that the ghost group
    $\Gg(\Hh)=\spitz{\gamma_{\pi_y}^{\ast}\mid y\neq
      x,\,e_{\tau}(y)>1}$
    is finite abelian and coincides with the subgroup
    $\Pic_0(\Hh)\subseteq\Pic(\Hh)$ of Picard-shifts of degree
    zero. Moreover, $\Pic(\Hh)\simeq\spitz{\sigma_x}\times\Gg(\Hh)$ is
    finitely generated abelian of rank one, and we have
    $\tau\in\Pic(\Hh)$.
  \end{enumerate}
\end{theorem}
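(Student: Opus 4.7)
The plan is to exploit the explicit graded factorial structure of the orbit algebra $R=\Pi(L,\sigma_x)$ provided by efficiency of $\sigma_x$. Under this hypothesis, $R$ is a graded factorial coordinate algebra of $\Hh$ in the sense of~\cite{kussin:2009}: the points $y\in\XX$ correspond bijectively to the height-one homogeneous prime ideals $P_y=(\pi_y)$, each generated by a normal prime $\pi_y$ of degree $d(y)$, and via the Serre construction~\eqref{eq:Serre-construction} automorphism classes of $\Hh$ fixing $L$ correspond to classes of graded algebra automorphisms of $R$ modulo graded inner automorphisms (i.e., conjugation by units of $R_0=\End(L)$).

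For part~(1), I would start with a ghost $\gamma$ realised as a graded algebra automorphism of $R$. Since $\gamma$ fixes each simple $S_y$ up to isomorphism it preserves every prime $P_y$, hence $\gamma(\pi_y)=u_y\pi_y$ with $u_y\in R_0^{\times}$. The normality relations $\pi_y\pi_z=v_{yz}\pi_z\pi_y$ (with $v_{yz}\in R_0^{\times}$) imply $\gamma_{\pi_y}(\pi_z)=v_{yz}\pi_z$, so the subgroup of ghosts of the form $\gamma_r^{\ast}$ already realises all possible diagonal twists of the generating primes; a cocycle argument using graded factoriality then produces a normal homogeneous $r$ (a monomial in the $\pi_y$, up to units) with $\gamma(\pi_y)=\gamma_r(\pi_y)$ for every $y$. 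Since graded automorphisms of $R$ are determined by their action on generators, this yields $\gamma\simeq\gamma_r^{\ast}$ modulo graded inner automorphisms.

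For part~(2), right multiplication by the normal element $\pi_y$ defines, via~\eqref{eq:Serre-construction}, a natural isomorphism identifying the action of $\sigma_y$ on $L$ with that of $\sigma_x^{d(y)}$, but twisted on morphisms by $\gamma_{\pi_y}$; this gives the identity $\sigma_y=\sigma_x^{d(y)}\circ\gamma_{\pi_y}^{\ast}$ and hence $\gamma_{\pi_y}^{\ast}=\sigma_x^{-d(y)}\circ\sigma_y$. By Corollary~\ref{cor:tau-minus=sigma-x}, $\sigma_y$ restricts to $\tau^{-}$ on $\Uu_y$, which is of order $e_{\tau}(y)$ in $\Aut(\Uu_y/k)$, while $\sigma_x^{-d(y)}$ acts trivially on $\Uu_y$; therefore the order of $\gamma_{\pi_y}^{\ast}$ is divisible by $e_{\tau}(y)$. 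Conversely, the automorphism $\gamma_{\pi_y}^{e_{\tau}(y)}$ of $R$ induces on $R_0=D=\End(S_x)\cdot\ldots$ an element of $\Gal_{\tau}(D_y/k)$ equal to $\tau^{-e_{\tau}(y)}=1$; by definition of $e_{\tau}(y)$ this element is inner via some unit $u\in R_0^{\times}$, so $u\cdot\pi_y^{e_{\tau}(y)}$ commutes with all of $R$, proving that $\pi_y^{e_{\tau}(y)}$ is central up to $R_0^{\times}$ and that the order of $\gamma_{\pi_y}^{\ast}$ equals $e_{\tau}(y)$.

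For part~(3), combining (1) and (2) reveals that every ghost $\gamma_r^{\ast}$ decomposes via the prime factorisation of $r$: contributions from $\pi_x$ are trivial by efficiency (which forces $\pi_x$ central), and contributions from $y$ with $e_{\tau}(y)=1$ vanish by (2), leaving $\Gg(\Hh)$ generated by the finitely many $\gamma_{\pi_y}^{\ast}$ with $y\neq x$ and $e_{\tau}(y)>1$, finite by Corollary~\ref{cor:separ-ramif}; abelianness follows from the commutation of the $\pi_y$ up to units in $R_0$. The inclusion $\Gg(\Hh)\subseteq\Pic_0(\Hh)$ is visible since each generator equals $\sigma_x^{-d(y)}\sigma_y$, manifestly a degree-zero Picard-shift; the reverse inclusion combines Theorem~\ref{thm:Picard-sequence} (noting that $g(X)=0$ makes $\Pic_0(X)$ contribute only trivially up to ghosts) with Lemma~\ref{lem:degree-zero-picard}. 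Finally, $\deg\colon\Pic(\Hh)\to\ZZ$ restricts to a surjection on $\spitz{\sigma_x}$ whose kernel is $\Gg(\Hh)$, which splits the short exact sequence and gives $\Pic(\Hh)\simeq\spitz{\sigma_x}\times\Gg(\Hh)$; that $\tau\in\Pic(\Hh)$ is then a direct consequence of Theorem~\ref{thm:general-tau-picard}. I expect the main obstacle to lie in part~(1), specifically in producing the normal element $r$ from the family $\{u_y\}$: one must show that every ``diagonal character'' arising from a ghost is realised by a single inner conjugation, which requires genuinely using both graded factoriality and the specific structure of $R$ forced by efficiency.
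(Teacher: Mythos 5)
Your overall strategy coincides with the paper's (work inside the graded factorial orbit algebra $R=\Pi(L,\sigma_x)$, represent ghosts by $\gamma_r$ for normal homogeneous $r$, identify $\gamma_{\pi_y}^{\ast}$ with ${\sigma_x}^{-d(y)}\sigma_y$ and factor normal elements into primes), but two steps you leave open are exactly where the real work lies. First, in part~(1) you never actually produce the normal element $r$: the ``cocycle argument using graded factoriality'' is asserted, not given, and you yourself flag it as the main obstacle. The paper closes this gap in a specific way you are missing: starting from the prime-fixing graded automorphism $\beta$ with $\beta(\pi_y)=u_y\pi_y$, one uses that $\pi_x$ is central and that every homogeneous central element is a product of primes to see $\beta(s)=u^{|s|}s$ on the centre $Z(R)$; after dividing by the corresponding inner-type automorphism one may assume $\beta$ is the identity on $Z(R)$, hence the induced automorphism of the function field $k(\Hh)$ fixes its centre $k(X)$ and is \emph{inner by Skolem--Noether}. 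Clearing denominators of the conjugating element (which is possible because $k(\Hh)$ is a central localization of $R$) gives a homogeneous $r\in R$ with $\beta(s)=rsr^{-1}$, and the relation $\beta(s)r=rs$ forces $r$ to be normal. Without Skolem--Noether (or a substitute), there is no reason the family $\{u_y\}$ should be realized by conjugation with a single element.

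Second, your order computation in part~(2) is incomplete in a way that would fail. You bound the order of $\gamma_{\pi_y}^{\ast}={\sigma_x}^{-d(y)}\circ\sigma_y$ only by its action on $\Uu_y$ and on $R_0$, but the composite also acts on $\Uu_x$ as ${\sigma_x}^{-d(y)}$, i.e.\ as $\tau^{d(y)}$, whose order there is $e_{\tau}(x)/\gcd(e_{\tau}(x),d(y))$; a priori the order of $\gamma_{\pi_y}^{\ast}$ on $\Hh_0$ is the least common multiple of this with $e_{\tau}(y)$. The paper shows this lcm equals $e_{\tau}(y)$ only via the arithmetic relation $e_{\tau}(y)\,d(y)=\frac{[k(y):k]}{[k(x):k]}\,e_{\tau}(x)$, obtained from the degree formula of Lemma~\ref{lem:finite-field-deg-S}. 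Moreover, your inference that ``$u\cdot{\pi_y}^{e_{\tau}(y)}$ commutes with all of $R$'' does not follow from the triviality of the induced class in $\Gal(D_y/k)$: that only controls the action on the single tube $\Uu_y$, not on the other tubes or on $R_1$. The paper instead proves directly that any normal element $a$ with $\gamma_a^{\ast}$ isomorphic to the identity on $\Hh_0$ can have no prime divisor (by examining $R_p/(\pi_p{\pi_x}^{-d(p)})\simeq S_p^{e(p)}$ in each localization), hence is a unit in $R_0$, which is what upgrades ``identity on $\Hh_0$'' to ``isomorphic to $1_{\Hh}$''. Your part~(3) is essentially sound once (1) and (2) are repaired.
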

\begin{proof}
  (1) By~\cite[Thm.~3.1]{kussin:2008} we have $\gamma=\beta^{\ast}$
  for a graded, prime fixing algebra automorphism $\beta$ of $R$;
  there are units $u_y\in R_0$ with $\beta(\pi_y)=u_y\pi_y$ for all
  $y\in\XX$. Since $\pi_x$ is central, and since each central element
  is a product of prime elements, we get $\beta(s)=u^{|s|}\cdot s$ for
  all $s\in S=Z(R)$, with $u=u_x$ central. Thus, the automorphism
  $\beta'=\beta\circ{\varphi_u}^{-1}$ of $R$ is the identity on
  $S$. Since $\beta^{\ast}\simeq\beta'^{\ast}$, we can assume that
  $\beta$ gives the identity on $S$. The induced automorphism
  $\overline{\beta}$ of $k(\Hh)$ is the identity on its centre $k(X)$,
  thus $\overline{\beta}$ is inner by the Skolem-Noether
  theorem. Since $k(\Hh)$ is obtained from $R$ by central
  localization, there is $r\in R$ homogeneous with $\beta(s)=rsr^{-1}$
  for all $s\in R$. The relation $\beta(s)r=rs$ shows that $r$ is
  normal, and $\beta=\gamma_r$.

  (2), (3) Moreover, by~\cite[Thm.~3.2.8]{kussin:2009} we have
  $\gamma_{\pi_y}^{\ast}={\sigma_x}^{-d(y)}\circ\sigma_y\in\Pic(\Hh)$. We
  also obtain that $\gamma_{\pi_y}^{\ast}\in\Gg(\Hh)$ acts on $\Uu_y$
  like $\sigma_y$, and thus the order of $\gamma_{\pi_y}^{\ast}$ is
  $\geq e_{\tau}(x)$. Let $n$ be the least common multiple of
  $e_{\tau}(y)$ and $e_{\tau}(x)/h$, where $h$ is the greatest common
  divisor of $e_{\tau}(x)$ and $d(y)$. Then $n$ is the smallest
  natural number such that
  $\beta^{\ast}=\bigl({\sigma_x}^{-d(y)}\sigma_y\bigr)^n$ is the
  identity functor on $\Hh_0$. We have $\beta(s)=asa^{-1}$ for a
  normal element $a$, say of degree $m$. We can assume that $a$ does
  not have a central divisor of degree $\geq 1$. Assume that there is
  a point $p$ such that the prime $\pi_p$ is a divisor of $a$. The
  element ${a}{\pi_x}^{-m}$ lies in the radical of the localization
  $R_p$. It is then easy to see that $\beta^{\ast}$ cannot be
  isomorphic to the identity functor on the factor module
  $R_p/({\pi_p}{\pi_x}^{-d(p)})\simeq {S_p}^{e(p)}$, giving a
  contradiction. It follows, that $a$ does not have any prime divisor,
  and so $a\in R_0$ is a unit. Thus $\beta$ is an inner automorphism
  of $R$, and thus $\beta^{\ast}\simeq 1_{\Hh}$.

  Since each normal element is a product of prime elements, we obtain
  $\Gg(\Hh)=\spitz{\gamma_{\pi_y}^{\ast}\mid y\neq
    x}\subseteq\Pic(\Hh)$.  Since $\sigma_x$ is efficient,
  $\tau\circ{\sigma_x}^{2/\varepsilon\ell}$ is a ghost and thus an
  element of $\Pic(\Hh)$. If $e_{\tau}(x)=1$, then we obtain
  $\Gg(\Hh)=\spitz{\gamma_{\pi_y}^{\ast}\mid e_{\tau}(y)>1}$ is
  finite. Assume now $e_{\tau}(x)>1$, and let $y\neq x$ be another
  point. Calculations using~\eqref{eq:deg-of-simples} show
  $$e_{\tau}(y)\cdot d(y)=\frac{[k(y):k]}{[k(x):k]}\cdot
  e_{\tau}(x).$$
  Since $\pi_x$ is of degree one in the centre, it is clear that the
  fraction is an integer. We obtain $\gamma_{\pi_y}^{\ast}$ has order
  $e_{\tau}(y)$. In particular, $\gamma_{\pi_y}^{\ast}\simeq 1_{\Hh}$
  unless $y$ is a ramification point. Thus, again
  $\Gg(\Hh)=\spitz{\gamma_{\pi_y}^{\ast}\mid y\neq x,\,e_{\tau}(y)>1}$
  is finite.
\end{proof}
\begin{corollary}\label{cor:trivial-ghost-group}
  Let $\Hh$ be a noncommutative regular projective curve of genus zero
  over the perfect field $k$.
  \begin{enumerate}
  \item[(1)] Assume that $\Gg(\Hh)=1$. Then there is a point $x\in\XX$
    with $\Pic(\Hh)=\spitz{\sigma_x}$. Moreover, either
    \begin{enumerate}
    \item[(a)] $\XX$ is unramified and $e(x)=1=f(x)$ holds; or
    \item[(b)] $x$ is the unique ramification point. 
    \end{enumerate}
  \item[(2)] Assume that there is an efficient tubular shift
    $\sigma_x$ with $\Pic(\Hh)=\spitz{\sigma_x}$. Then the ghost group
    is trivial, $\Gg(\Hh)=1$.
  \end{enumerate}
\end{corollary}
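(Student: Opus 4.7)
The plan is to reduce both assertions to the preceding theorem, which, in the presence of an efficient tubular shift $\sigma_x$, gives the decomposition
$$\Pic(\Hh)\simeq\spitz{\sigma_x}\times\Gg(\Hh)$$
together with the explicit description $\Gg(\Hh)=\spitz{\gamma_{\pi_y}^{\ast}\mid y\neq x,\,e_{\tau}(y)>1}$ and the finiteness of $\Gg(\Hh)$. In the genus zero case such an efficient tubular shift is available by~\cite{kussin:2009}, so the preceding theorem genuinely applies.

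For part~(2) the argument is essentially formal. Efficiency of $\sigma_x$ forces $\sigma_x$ to have positive degree, so $\spitz{\sigma_x}$ is torsion-free infinite cyclic. The finite subgroup $\Gg(\Hh)$, being a direct factor of $\Pic(\Hh)=\spitz{\sigma_x}$, must therefore vanish.

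For part~(1), I would first feed $\Gg(\Hh)=1$ into the decomposition to obtain $\Pic(\Hh)=\spitz{\sigma_x}$ for an efficient tubular shift $\sigma_x$. Combining with the explicit description of $\Gg(\Hh)$ forces $e_{\tau}(y)=1$ for every $y\neq x$, so either $x$ is also unramified — i.e.\ $\XX$ is entirely unramified (case~(a)) — or $x$ is the unique ramification point (case~(b)). The strengthening $e(x)=1=f(x)$ in case~(a) is not guaranteed by the decomposition alone, since the $x$ produced there need not be rational nor multiplicity-free. To obtain it, I would invoke Proposition~\ref{prop:unramif=trivial-ghost}: once $\XX$ is known to be unramified, its condition~(i) holds, so the equivalent condition~(iv) supplies a rational point $x'$ with $e(x')=1$ and $\Pic(\Hh)=\spitz{\sigma_{x'}}$; the point $x$ of the statement is then $x'$.

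The main obstacle is precisely this last refinement in case~(a). The decomposition theorem tells us the shape of $\Pic(\Hh)$ and dictates where ramification can sit, but it does not by itself identify a rational multiplicity-free generating point. That extra arithmetic information is exactly what Proposition~\ref{prop:unramif=trivial-ghost} provides, via its explicit constant-extension description of the function field.
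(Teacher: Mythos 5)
Your treatment of part~(2) is exactly the paper's: the decomposition $\Pic(\Hh)\simeq\spitz{\sigma_x}\times\Gg(\Hh)$ from the preceding theorem, with $\Gg(\Hh)$ finite and $\spitz{\sigma_x}$ infinite cyclic, kills the ghost group at once. The reduction of part~(1) to the same decomposition, plus Proposition~\ref{prop:unramif=trivial-ghost} for the refinement $e(x)=1=f(x)$ in the unramified case, is also sound \emph{once the decomposition is available}.

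The gap is your opening assertion that such an efficient tubular shift ``is available by~\cite{kussin:2009}''. What \cite{kussin:2009} provides in genus zero is an efficient \emph{automorphism} $\sigma$; that $\sigma$ can be taken to be a tubular shift $\sigma_x$ is precisely the hypothesis of the preceding theorem (and of part~(2) of the corollary), and it is not automatic --- for instance in Example~\ref{ex:K-H-bimodule} every rational point has $e(x)\geq 2$, so the usual source of efficient tubular shifts (a rational multiplicity-free point, as in condition~(iv) of Proposition~\ref{prop:unramif=trivial-ghost}) is unavailable. As written, your part~(1) therefore applies a theorem whose hypothesis has not been verified. The gap is repairable under the hypothesis $\Gg(\Hh)=1$ of part~(1): with trivial ghost group the relations $\sigma_y\simeq\sigma^{d(y)}$ become exact, and since $\Pi(L,\sigma)$ is graded factorial with $\Pi(L,\sigma)_1=\Hom(L,\sigma L)\neq 0$, there is a prime of degree one, hence a point $x$ with $d(x)=1$ and $\sigma_x\simeq\sigma$ efficient. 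The paper's own proof of~(1) sidesteps the issue entirely: it works with the efficient automorphism $\sigma$, uses $\Gg(\Hh)=1$ to make $x\mapsto\sigma_x=\sigma^{d(x)}$ injective, and in the ramified case deduces from Corollary~\ref{cor:Pic-gen-separation} that $d(x)$ divides every $d(y)$ for a ramification point $x$, whence $\sigma_x$ generates $\Pic(\Hh)$ and a second ramification point is impossible. Either route works; yours is the shorter one once the existence of the efficient tubular shift is actually established rather than asserted.
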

\begin{proof}
  (1) Let $\sigma$ be an efficient automorphism. Since $\Gg=1$, we
  have $\sigma_x=\sigma^{d(x)}$, and $\sigma_x=\sigma_y$ if and only
  if $x=y$. If $\XX$ is unramified, then there exists $x$ with
  $e(x)=1=f(x)$ and $\Pic(\Hh)=\spitz{\sigma_x}$ by the preceding
  proposition. Assume that $x$ is such that $e_{\tau}(x)>1$. Since
  $\sigma_x$ does not lie in the subgroup of $\Pic(\Hh)$ generated by
  all $\sigma_y$, with $y\neq x$, by
  Corollary~\ref{cor:Pic-gen-separation}, we have that all $d(y)$ are
  multiples of $d(x)$, that is, $d(y)=a(y)\cdot d(x)$ with
  $a(y)\geq 1$. Since $\sigma_y$, for $y\neq x$, is the identity on
  $\Uu_x$, we see that $e_{\tau}(x)$ divides $a(y)$. We conclude that
  $\sigma_x$ generates $\Pic(\Hh)$, and $x$ is the only ramification
  point. 

  (2) This follows directly as a special case from the preceding
  theorem. 
\end{proof}
\subsection*{Examples} We illustrate the theory in some genus zero
examples over a perfect field.
\begin{example}[Finite fields]\label{ex:genus-0-finite-field}
  Let $k$ be a finite field and $\Hh$ over $k$ of genus zero. Then
  there is an efficient tubular shift $\sigma=\sigma_x$. Then
  $\tau\sigma^{2/\varepsilon}$ is a ghost. Moreover, for all points
  $p$ we have $[k(p):k]=\frac{\kappa\varepsilon}{s(\Hh)}\cdot f(p)$,
  by~\eqref{eq:deg-of-simples}. Without loss of generality we can
  assume that $k$ is the centre of $\Hh$. There are two possible
  cases, which describe \emph{all} genus zero cases over a finite
  field:\medskip

  (1) $\varepsilon=1$. \cite[Prop.~4.1]{kussin:2008}. Then
  $M=M(K,\alpha)=K\oplus K$ where $K$ acts canonically from the left
  and by $(x,y)\cdot z=(xz,y\alpha(z))$ from the right, and
  $\alpha\colon K\ra K$ is a $k$-automorphism. Since $k$ is the centre
  of $M$, it is the fixed field of $\alpha$, so that $\Gal(K/k)$ is
  cyclic, generated by $\alpha$, and $k$ is the centre of $M$. Let
  $n=[K:k]$. Then $s(\Hh)=n$. The case $n=1$ is the
  Kronecker/projective line over $k$. So assume $n\geq 2$. We have
  that $R=\Pi(L,\sigma_x)$ is isomorphic to $K[X;Y,\alpha]$. The two
  points $x$ and $y$ (corresponding to $X$ and $Y$) are the only
  rational points of multiplicity $1$, and the only rational points
  $p$ such that $e_{\tau}(p)>1$; moreover, $e_{\tau}(p)=s(\Hh)$;
  compare~\cite[Cor.~5.4.2]{kussin:2009}. Since also
  $\tau={\sigma_x}^{-1}{\sigma_y}^{-1}$, these are the only separation
  points. The ramification sequence is $\gge{e}=(n^1,n^1)$. The ghost
  group is cyclic of order $n$, generated by
  ${\sigma_x}^{-1}{\sigma_y}^{ }$. Moreover,
  $\tau={\sigma_x}^{-1}{\sigma_y}^{-1}$.\medskip

  (2) $\varepsilon=2$. Then $M={}_kK_K$, where $[K:k]=4$. Indeed, a
  priori we have $M={}_{F^{\alpha}}K_K$ with $F/k$ a finite field
  extension and $M_K=K_K$, with $[K:F]=4$, and $\alpha\in\Gal(K/k)$;
  the left $F$-structure is given by $f\cdot x=\alpha(f)x$. It is easy
  to see that for all $\alpha$ we obtain isomorphic bimodules (in the
  sense that the induced hereditary bimodule $k$-algebras are
  isomorphic, compare~\cite[5.1.3]{kussin:2009}). Thus we can assume
  that $M={}_FK_K$ is equipped with the canonical structure induced by
  the subring $F\subseteq K$. Then $F$ is the centre (in the sense
  of~\cite[0.5.5]{kussin:2009}) of the bimodule $M$; thus, we can
  assume $F=k$.

  There is a unique intermediate field $F$ of degree two over $k$,
  which is of the form $F=k(\alpha)$. This defines,
  by~\cite[Lem.~2.6]{kussin:2008}, the simple regular
  representation
  $$S_x=(\,k^2 \otimes K\stackrel{(1,\alpha)}\lra K\,)$$ with $f(x)=1$
  and $\End(S_x)=F$, hence $e(x)=1$. By uniqueness of $F$, we have
  that $x$ is the only rational point $p$ with $e(p)=1$. Hence
  $e_{\tau}(x)=2$ and $e_{\tau}(p)=1$ for all other rational
  points. Since $[k(p):k]=f(p)$ for all $p$ and $\End(L)=k$ is the
  field of constants, we deduce then
  from~\eqref{eq:general-euler-char-formula} (since $\genus=0$) that
  besides $x$ there is precisely one additional ramification point
  $p$, and this must satisfy $f(p)=2$ and $e(p)=1$. Thus the
  ramification sequence is $\gge{e}=(2^1,2^2)$. The ghost group $\Gg$
  is cyclic of order $2$, generated by
  ${\sigma_x}^{-2}{\sigma_p}^{ }$. We obtain
  $\tau={\sigma_x}^{ }{\sigma_p}^{-1}$. The automorphism group
  $\Aut(\XX)$ is cyclic of order $4$.

  We now additionally assume $\ch(k)\neq
  2$. By~\cite[Thm.~5.1]{kussin:2008},
  $$\Pi(L,\sigma_x)\simeq k\spitz{X,Y,Z}/\left(
    \begin{array}{c}
      XY-YX,\ XZ-ZX,\\
      YZ+ZY+a_1 X^2,\ Z^2 +c_0 Y^2-a_0 X^2   
    \end{array}\right)$$ for certain $a_0,\,a_1,\,c_0 \in
  k$. The point $x$ corresponds to the prime element given by the
  class of $X$ in $R$. The second ramification point $p$ corresponds
  to a prime element $\pi_p$ in $R$ of degree $2$, and which is also
  irreducible, that is, not a product of two elements of degree
  $1$. It follows that (up to multiplication with a non-zero element
  from $k$) there is precisely one such prime.
\end{example}
\begin{example}[Non-simple tame bimodule]
\label{ex:non-simple-bimodule}
  Let $k$ be perfect and $M$ a non-simple tame bimodule with centre
  $k$. It follows from~\cite[Prop.~11.5]{pierce:1982}
  and~\cite[Thm.~1.1.21]{jacobson:1996} that $M$ is of the form
  $F\oplus F$ with canonical $F$-action from the left, and the right
  $F$-action given by $(a,b)f=(af,b\alpha(f))$ for some
  $k$-automorphism $\alpha$ of $F$, and $F$ is a skew field over $k$
  of finite dimension. Let $n$ be the order of $\alpha$ considered as
  element in $\Gal(F/k)$. There is a rational point $x$ with $e(x)=1$,
  and hence $\sigma_x$ is efficient. We have
  $\Pi(L,\sigma_x)\simeq F[X;Y,\alpha]$, the twisted polynomial ring
  graded by total degree. The prime elements are given by $\pi_x=X$,
  $\pi_y=Y$, and some further polynomials lying in the centre, thus in
  the variables $X^n$ and $Y^n$. It follows that the points $x$ and
  $y$ are the only ramification points. The ramification sequence is
  $\gge{e}=(n^1,n^1)$. The ghost group $\Gg$ is cyclic of order $n$,
  generated by ${\sigma_x}^{-1}{\sigma_y}^{ }$. Moreover, we obtain
  $\tau={\sigma_x}^{-1}{\sigma_y}^{-1}$.
\end{example}
\begin{example}
  Let $M={}_{\QQ}\QQ(\sqrt{2},\sqrt{3})_{\QQ(\sqrt{2},\sqrt{3})}$.
  $$\Pi(L,\sigma_x)\simeq k\spitz{X,Y,Z}/\left(
    \begin{array}{c}
      XY-YX,\ XZ-ZX,\\
      YZ+ZY,\ Z^2 +2 Y^2-3 X^2   
    \end{array}\right).$$ Here, the three rational points
  $x,\,y,\,z$ (corresponding to the variables $X,\,Y,\,Z$,
  respectively) satisfy $e(p)=1$, and thus
  $e_{\tau}(p)=2=s(\Hh)$. Moreover, by~\cite[Prop.~7.1]{kussin:2008}
  we have $$\tau={\sigma_x}^{\ } {\sigma_y}^{-1}{\sigma_z}^{-1},$$ by
  which it follows again, that $x,\,y,\,z$ are the only separation
  points. The ramification sequence is $\gge{e}=(2^1,2^1,2^1)$. The
  ghost group is the Klein four group generated by
  ${\sigma_x}^{-1}{\sigma_y}^{ }$ and ${\sigma_x}^{-1}{\sigma_z}^{
  }$. Moreover, $\tau={\sigma_x}^{ }{\sigma_y}^{-1}{\sigma_z}^{-1}$.
\end{example}
\begin{example}
  \label{ex:K-H-bimodule} We consider the simple
  $(2,2)$-bimodule from~\cite[Ex.~5.7.3]{kussin:2009} with skewness
  $s(\Hh)=4$: Let $k=\mathbb{Q}$ and $F=\quat{-1}{-1}{\mathbb{Q}}$ be
  the skew field of quaternions over $\mathbb{Q}$ on generators
  $\qa{i}$, $\qa{j}$ with relations $\qa{i}^2=-1=\qa{j}^2$,
  $\qa{ij}=-\qa{ji}$, $K=\mathbb{Q}(\sqrt{-3},\sqrt{2})$ and $M$ be
  the bimodule ${}_{K}(K\oplus K)_{F}$ with the canonical $K$-action,
  and where the $F$-action on $M$ is defined by
  $$(x,y)\cdot\qa{i}=\frac{1}{\sqrt{-3}}(\sqrt{2}x+y,x-\sqrt{2}y),\ 
  (x,y)\cdot\qa{j}=(y,-x)$$
  for all $x$, $y\in K$. By~\cite[Prop.~5.7.5]{kussin:2009}, each
  rational point $x$ satisfies $e(x)=2$ or $e(x)=4$ (and both cases
  occur), and moreover $e^{\ast}(x)=1$; those with $e(x)=2$ are
  separation with $e(x)\cdot e^{\ast}(x)=2$ and
  $e_{\tau}(x)=2$. Actually, since the tame bimodule $M$ is linked to
  the tame bimodule in the preceding example via a derived equivalence
  in the tubular case (\cite[Prop.~8.3.1]{kussin:2009}) we will deduce
  from Example~\ref{ex:tubular-over-rationals} below that the
  ramification sequence is given by
  $\gge{e}=(2^1,2^1,2^1)$. Accordingly, the ghost group is the Klein
  four group (but not trivial, \cite[Prop.~5.7.4]{kussin:2009}).
\end{example}
\begin{remark}
  All the preceding examples are \emph{ruled} ($\gge{e}=(n,n)$) or
  \emph{half-ruled} ($\gge{e}=(2,2,2)$), in the terminology
  of~\cite{artin:dejong:2004}, see
  also~\cite[Prop.~4.2.4]{artin:dejong:2004}.
\end{remark}
\subsection*{An inseparable example}
We conclude this section with a detailed analysis of an inseparable
example which nicely illustrates that (and why) for many of the
preceding results the separability assumption was indispensable.
\begin{example}\label{ex:inseparable}
  Let $\FF_2$ be any field of characteristic $2$ and $k=\FF_2(t)$ the
  rational function field in one variable over $\FF_2$. Let
  $K=k(u)/(u^2-t)$ and the $k$-derivation $\delta\colon K\ra K$ be
  given by $\delta(u)=1$, and $\delta_{|k}=0$. We have $\delta^2=0$
  and $\Gal(K/k)=1$. Let $M$ be the tame $K$-$K$-bimodule
  $M=M(1,\delta)=K\oplus K$ with canonical left $K$-action and right
  $K$-action given by $(a,b)f=(af+b\delta(f),bf)$. Since $M$ is a
  non-simple bimodule, the corresponding curve $\Hh$ of genus zero
  admits a point $x$ with $e(x)=1=f(x)$, and the orbit algebra
  $\Pi(L,\sigma_x)$ is isomorphic, as graded ring, to the differential
  polynomial ring $R=K[X;Y,1,\delta]$. Here $Y$ and the central
  variable $X$ have degree $1$, and we have the relations
  $Yf=\delta(f)X+fY$ ($f\in K$). The point $x$ above is associated
  with the prime element $X$, cf.\ \cite[Prop.~1.7.3]{kussin:2009}.
  It is easily shown that the centre is given by $k[X,Y^2]$. We
  conclude that $s(\Hh)=2$.\medskip

  (1) We consider the tube $\Uu=\Uu_x$ associated with $x$. For the
  simple $S=S_x$ we have $\End(S)\simeq K$, in particular
  $e^{\ast}(x)=1$.  We have $V:=\widehat{R}_x\simeq\End(S[\infty])$,
  the complete local ring with $\Uu\simeq\mod_0(V)$. From
  Theorem~\ref{thm:general-skewness-equation} we conclude that $V$ has
  PI-degree $e^{\ast\ast}(x)=2$. By~\cite[7.4]{ringel:1976} the
  $K$-$K$-bimodule $\Ext^1(S,S)$ is isomorphic to
  $M/N\simeq{}_{K}K_{K}$, the canonical one-dimensional
  $K$-$K$-bimodule, where $N$ is the subbimodule $K\oplus 0$ of
  $M$. From this we get $\wgr(V)\simeq K[[T]]$, which is commutative,
  and thus $\wgr(V)\not\simeq V$. Since $\Gal(\End(S)/k)=1$, for the
  $\tau$-multiplicity~\eqref{eq:def-tau-multi} we have
  $e_{\tau}(x)=1$. We thus see that for the inseparable point $x$ the
  conclusions of Propositions~\ref{prop:relationships}
  and~\ref{prop:R=wgrR}, and
  Theorems~\ref{thm:special-skewness-equation}
  and~\ref{thm:pruefer-end-skew-power} do not hold.\medskip

  (2) We shall determine the algebra $V=\widehat{R}_x$
  explicitly. Since $K$ is a subalgebra of $R_x$, it is also a
  subalgebra of the completion $V$. Let $\pi\in V$ be the generator of
  the Jacobson radical which is given by the element $XY^{-1}$ (cf.\
  the proof of \cite[Thm.~2.2.10]{kussin:2009}).  Since
  $V/(\pi)\simeq K$ and $\Gal(K/k)=1$, we get a decomposition
  $V=K\oplus (\pi)$ of $K$-$K$-bimodules. Using $\delta(a)\in k$ the
  relations in $R$ induce the relations
  $$\pi a=a\pi +\delta(a)\pi^2\quad\quad (a\in K).$$ The decomposition
  above also yields the decomposition $V\pi=K\pi\oplus (\pi^2)$ of
  left $K$-modules, but $K\pi$ is not a $K$-$K$-bimodule. By the
  universal property of power series rings, we get a $k$-algebra
  homomorphism $\phi\colon K[[t^{-1},\delta]]\ra V$ sending $t^{-1}$
  to $\pi$, where $K[[t^{-1},\delta]]$ denotes the (pseudo-)
  differential power series ring defined
  in~\cite[Thm.~1.11.8]{jacobson:1996}; it is a local domain with
  Jacobson radical $J=(t^{-1})$ and $\cap_{n\geq 1}J^n =0$.  By the
  above decompositions $\phi$ is surjective. It is also injective,
  since the kernel is a completely prime ideal. Thus $\phi$ is an
  isomorphism. We hence write $$V=K[[\pi,\delta]].$$ Moreover, since
  $\pi^2 a=a\pi^2$ for all $a\in K$, the power series ring
  $K[[T]]\simeq\wgr(V)$ is isomorphic to the subring $K[[\pi^2]]$ of
  $V$. The centre of $V$ is the subring $k[[\pi^2]]$. For the
  ramification index (of the exponential
  valuations~\cite[(13.1)]{reiner:2003}) we have
  $e_{\ramif}(x)=2$.\medskip

  (3) There is an automorphism $\sigma$ of $V$ given by
  $\pi r=\sigma(r)\pi$ (for $r\in R$). We have $\sigma(\pi)=\pi$ and
  $\sigma(f)=f+\delta(f)\pi$ ($f\in K$), and we see that $\sigma$ has
  order $2$, and since the centre is $k[[\pi^2]]$ it is easily seen to
  be not inner. (The property $\sigma(K)\not\subseteq K$ makes the
  difference to the separable case.) By~\cite[Thm.~3.1.2]{kussin:2009}
  multiplication with $X$ yields the natural transformation
  $1_{\Hh}\stackrel{x}\ra\sigma_x$. Extending $\sigma_x$ to the direct
  limit closure of $\Uu_x$, we see that the natural sequence
  in~\cite[0.4.2(5)]{kussin:2009} for the injective object $S[\infty]$
  becomes $0\ra S\ra S[\infty]\stackrel{\pi}\lra S[\infty]\ra 0$. We
  conclude that $\sigma_x$ on $\Uu$ is induced by the automorphism
  $\sigma$ and is thus of order $2$; it acts non-trivially e.g.\ on
  $\End(S[n])$ for $n\geq 2$.\medskip

  (4) Since $\tau^-(L)\simeq {\sigma_x}^2(L)$, the composition
  $\tau\circ{\sigma_x}^2$, on $\Hh$, is an element of the ghost
  group. Using~\cite[Thm.~3.1]{kussin:2008}, computing the graded
  automorphisms $\alpha$ of $R$ which are prime fixing (and hence
  preserve $kX$ and elements of the centre $k[X,Y^2]$), it is easy to
  see that $\Gg(\Hh)=1$. (Indeed, since $\alpha(X)\in kX$, we can
  assume $\alpha(X)=X$. Exploiting $\alpha(Y^2)\in kY^2$ and
  $\alpha(Yb)=\alpha(Y)b$ for all $b\in K$, we obtain $\alpha(Y)=Y+aX$
  with $a\in K$ satisfying $a^2=\delta(a)$. If $a\neq 0$, then
  $a^{-1}Ya=Y+aX$, so that $\alpha$ is inner.) We conclude that
  globally $\tau^-={\sigma_x}^2$ holds. (We remark that here $\tau$ is
  not given by formula~\eqref{eq:general-tau-picard}.)  This shows
  that $\tau^-$ acts, unlike $\sigma_x$, as the identity functor on
  $\Uu_x$. Thus we see that Corollary~\ref{cor:tau-minus=sigma-x}~(2)
  and Corollary~\ref{cor:e-tau=e-ramif} do not extend to inseparable
  points. Moreover, Theorem~\ref{thm:special-skewness-equation} does
  not hold for such a point even if $e_{\tau}(x)$ is replaced by the
  order of $\tau$ in $\Aut(\Uu_x/k)$. We also infer that $\Hh$ is
  $\tau$-unramified but not unramified.
\end{example}

\section{The real case: Witt curves}
\label{sec:Witt-curves}
\begin{numb}[Real smooth projective curves and Klein surfaces]
  If $k$ is algebraically closed, then by
  Corollary~\ref{cor:algebr-closed} each noncommutative regular
  projective curve over $k$ is actually commutative. For $k=\CC$ the
  field of complex numbers it is well-known that the three concepts
  regular (=smooth) projective curves $X$ over $\CC$, algebraic
  function fields $K$ in one variable over $\CC$, and compact Riemann
  surfaces $S$ are equivalent/dual to each other; here $K$ is the
  field of meromorphic functions on $S$ (which are the holomorphic
  functions $\alpha\colon S\ra\SS^2$ to the Riemann sphere) and also
  the function field $k(X)$. Over the field $k=\RR$ of real numbers
  there are similar correspondences, where the Riemann surfaces are
  replaced by the Klein surfaces $\Kk$,
  \cite{alling:greenleaf:1971,natanzon:1990}. Each (compact) Klein
  surface $\Kk$ is of the form $S/\sigma$, where $S$ is a compact
  Riemann surface and $\sigma\colon S\ra S$ an antiholomorphic
  involution, \cite[Thm.~1.1]{natanzon:1990}; the Riemann surface $S$
  is also called the complex double of $\Kk$. It should be noted that
  in such a case $\chi_{top}(S)=2\cdot\chi_{top}(\Kk)$ holds
  (\cite[1.6.9]{alling:greenleaf:1971}); since $k=\RR$, we also have
  $\chi'(Y)=2-2g=\chi_{top}(S)$ and, by~\cite[Thm.~1.1]{alling:1974},
  $\chi'(X)=1-g=\chi_{top}(\Kk)$, where $Y$ and $X$ are the
  corresponding \emph{real} regular projective curves, respectively,
  and $g=g(S)=g(\Kk)$; here, $\chi'$ is the normalized Euler
  characteristic as defined in~\eqref{eq:char-genus}, and $\chi_{top}$
  the usual Euler characteristic for surfaces defined topologically
  via triangulations. The real points on $\Kk$ (if any) form the
  boundary $\partial\Kk$. By Harnack's theorem $\partial\Kk$ has at
  most $g(\Kk)+1$ components, called \emph{ovals}, since they are
  homeomorphic to a circle $\SS^1$.  The ovals are given by the set
  $S^{\sigma}$ of fixed points of $\sigma$. By a theorem of
  Weichold~\cite[p.~56]{natanzon:1990}, every $\Kk$ is, topologically,
  uniquely determined by a triple $(g,t,s)$, where $g=g(\Kk)=g(S)$ is
  the genus of the Riemann surface $S$, $t$ is the number of ovals,
  and $s=0$ if $S\setminus S^{\sigma}$ is connected, and $s=1$
  otherwise. Moreover, precisely the triples $(g,t,s)$ with $s=0$ and
  $t\leq g$, or $s=1$, $t\equiv g+1\ (\mod 2)$ and $1\leq t\leq g+1$
  occur.
\end{numb}
\begin{numb}[Noncommutative function fields and configurations on
  Klein surfaces]
  The field $k(\Kk)$ of meromorphic functions
  $\alpha\colon\Kk\ra\SS^2$ on a Klein surface $\Kk$ is an algebraic
  function field in one variable over $\RR$, and each such real
  function field is of this form. (For the precise definition of a
  meromorphic function on a Klein surface we refer to~\cite[Ch.~1,\,\S
  3]{alling:greenleaf:1971}.)  Finite dimensional central skew fields
  over the meromorphic function field $K=k(\Kk)$ of a Klein surface
  $\Kk$ are, if non-trivial, quaternion skew fields of the form
  $\quat{\alpha}{-1}{K}$, where $0\neq\alpha\colon\Kk\ra\SS^2$ is a
  meromorphic function. This follows easily from Tsen's theorem,
  \cite{witt:1934}. We recall that such an algebra is of dimension
  four over $K$ on generators $\gge{i},\,\gge{j}$ and relation
  $\gge{i}^2=-1$, $\gge{j}^2=\alpha$ and $\gge{ji}=-\gge{ij}$. Such a
  function $\alpha$ is real-valued on the boundary $\delta\Kk$, that
  is, on each oval. On each of the ovals $\alpha$ might have an even
  number (or zero) \emph{sign-changes}, that is, zeros or poles of odd
  order. Thus $\alpha$ determines on $\Kk$ what we call a
  $\pm$-\emph{configuration}, which is given by
  \begin{itemize}
  \item an even number ($\geq 0$) of points on each oval, called
    \emph{segmentation points};
  \item each open \emph{segment} between segmentation points, and each
    oval without segmentation points, labelled by either the sign $+$
    or the sign $-$, in an alternating way (changing the sign at each
    segmentation point).
  \end{itemize}
  We call it \emph{clean} if there are no segmentation points at
  all. The $\pm$-configuration is \emph{induced} by the function
  $\alpha$, if the $\pm$-configuration reflects the sign-behaviour of
  $\alpha$ on the ovals; a segment then has a $+$, if $\alpha$ is
  non-negative on this segment (we write $\alpha(x)>0$, locally), and
  has a $-$, if $\alpha$ is non-positive on this segment
  ($\alpha(x)<0$).  We recall from~\cite{witt:1934} that
  $\alpha\neq 0$ is called \emph{positive definite} if it never
  becomes negative on any oval. (\cite[I.]{witt:1934} says that then
  $\alpha$ is of the form $\alpha=\beta^2+\gamma^2$.) Moreover,
  $\alpha$ is \emph{definite}, if there is no sign-change on any
  oval. So, $\quat{\alpha}{-1}{K}$ does not split (it is a skew field)
  if and only if $\alpha$ is not positive definite.
\end{numb}
\begin{example}
  Let $\DD$ be the compact unit disc. We have $K=k(\DD)=\RR(t)$, the
  rational function field over $\RR$ in one variable. We consider
  three different $\pm$-configurations on $\DD$.\medskip

  (a) Let $\alpha=-1\colon\DD\ra\SS^2$ be the function with constant
  value $-1$. This gives the clean $\pm$-configuration as shown in
  Figure~\ref{fig:disc-configs}. The associated quaternion skew field
  $\quat{-1}{-1}{K}$ is the function field $\HH(t)$ in one (central)
  variable $t$ over $\HH$.\medskip

  (b) Let
  $\alpha\colon\DD\ra\SS^2$, $z\mapsto z$ be the canonical
  identification of $\DD$ with the ``northern'' half ball, the
  ``equator'' defining $\RR\cup\{\infty\}$. On the unique oval
  $\alpha$ has sign-changes in $z=0$ (zero of order $1$) and
  $z=\infty$ (pole of order $1$). These two segmentation points yield
  the two segments of the oval, the negative real numbers marked by
  $-$, the positive real points marked by $+$. The quaternion skew
  field $\quat{\alpha}{-1}{K}$ is isomorphic to $\CC(u,\sigma)$, the
  skew function field ($uz=\sigma(z)u$ for all $z\in\CC$, with
  $\sigma$ the complex conjugation), in the variable
  $u=t^{1/2}$.\medskip

  (c) Let similarly $\alpha$ be a meromorphic function associated with
  the element $t(t-1)(t+1)$ in $\RR(t)$
  (cf.~\cite[Thm.~1.4.6]{alling:greenleaf:1971}). This gives rise to
  four segmentation points $z=0,\,1,\,-1,\,\infty$, and the skew
  function field $\CC(u,t)/(u^2-t(t-1)(t+1),u\gge{i}+\gge{i}u)$.
 \begin{figure}[h]
 \begin{tikzpicture}[line width=1.1pt]
 \coordinate [label=left:$-$] (C) at (1.4,1.0);
\coordinate [label=below:$$] (B) at (0.0,-1.55);
 \draw[black] (0,0) circle (1.2);
 \fill[gray,opacity=.25] (0,0) circle (1.19);
 \end{tikzpicture}\quad\quad
\begin{tikzpicture}[line width=1.1pt]
\coordinate [label=right:$-$] (C) at (-1.8,0.0);
\coordinate [label=left:$+$] (D) at (1.8,0.0);
\coordinate [label=above:$\infty$] (A) at (0.0,1.3);
\coordinate [label=below:$0$] (B) at (0.0,-1.3);
\draw[black] (0,0) circle (1.2);
\fill[gray,opacity=.25] (0,0) circle (1.19);
\fill[red,opacity=1.0] (0,1.2) circle (2.0pt);
\fill[red,opacity=1.0] (0,-1.2) circle (2.0pt);
\end{tikzpicture}\quad\quad
\begin{tikzpicture}[line width=1.1pt]
\coordinate [label=right:$+$] (E) at (1.0,0.8);
\coordinate [label=right:$-$] (F) at (1.0,-0.8);
\coordinate [label=left:$+$] (G) at (-1.0,-0.8);
\coordinate [label=left:$-$] (H) at (-1.0,0.8);
\coordinate [label=right:$-1$] (C) at (-1.9,0.0);
\coordinate [label=left:$1$] (D) at (1.7,0.0);
\coordinate [label=above:$\infty$] (A) at (0.0,1.3);
\coordinate [label=below:$0$] (B) at (0.0,-1.3);
\draw[black] (0,0) circle (1.2);
\fill[gray,opacity=.25] (0,0) circle (1.19);
\fill[red,opacity=1.0] (0,1.2) circle (2.0pt);
\fill[red,opacity=1.0] (0,-1.2) circle (2.0pt);
\fill[red,opacity=1.0] (1.2,0) circle (2.0pt);
\fill[red,opacity=1.0] (-1.2,0) circle (2.0pt);
\end{tikzpicture}
\caption{The disc $\DD$ with different
  $\pm$-configurations. Left: (a), middle: (b), right: (c)}
\label{fig:disc-configs}
\end{figure}
\end{example}
Witt's theorem~\cite{witt:1934}, here formulated in our language,
shows that all noncommutative real algebraic function fields in one
variable are obtained by Klein surfaces with $\pm$-configurations.
\begin{theorem}[{Witt;
  cf.~\cite[Thm.~2.4.5]{alling:greenleaf:1971}}]
Let $\Kk$ be a Klein surface whose boundary is given by $t$ ovals. We
assume $t\geq 1$. Let $K$ be the field of meromorphic functions on
$\Kk$.
  \begin{enumerate}
  \item[(1)] Every (clean) $\pm$-configuration on $\Kk$ is induced by
    a (definite) meromorphic function
    $\alpha\colon\Kk\ra\SS^2$. \cite[II.+III.]{witt:1934}
  \item[(2)] The resulting quaternion algebra $A=\quat{\alpha}{-1}{K}$
    is uniquely determined already by the $\pm$-configuration. It is a
    skew field if and only if $\alpha$ is not positive definite. It is
    unramified if and only if $\alpha$ is definite. Otherwise its
    ramification points are just the segmentation
    points. \cite[p.~10]{witt:1934}
  \item[(3)] Each finite dimensional central skew field extension of
    $K$ is obtained in this way. \cite[III'.]{witt:1934}
  \end{enumerate}
\end{theorem}
\begin{numb}[Local data]\label{numb:Witt-local-data}
  Witt~\cite[p.~10]{witt:1934} described (function-theo\-reti\-cally)
  also the local data. We assume that $\alpha\neq 0$ on
  $\Kk$ is \emph{not} positive definite. That is, there is an oval on
  which
  $\alpha$ becomes negative. For convenience we already use the
  notions $S_x$ and
  $e(x)$ in each concluding statement (``Thus...''), which will get a
  proper meaning only below when we define the notion of a Witt curve.
\begin{itemize}
\item If $x$ is inner then $\quat{\alpha}{-1}{K}_x$ splits. Thus
  $\End(S_x)=\CC$ and $e(x)=2$.
\end{itemize}
If $x$ is a boundary point ($x=x^{\sigma}$), then
\begin{itemize}
\item If $\alpha(x)>0$ (that is, $\alpha$ does not change its positive
  sign in a neighbourhood of $x$) then $\quat{\alpha}{-1}{K}_x$
  splits. Thus $\End(S_x)=\RR$ and $e(x)=2$.
\item If $\alpha(x)<0$ (that is, $\alpha$ does not change its negative
  sign) then $\quat{\alpha}{-1}{K}_x$ does not split, and $x$ is inert
  in $\quat{\alpha}{-1}{K}$. Thus $\End(S_x)=\HH$ and $e(x)=1$.
\item Otherwise (if $\alpha$ changes the sign in $x$, that is, $x$ is
  segmentation) $\quat{\alpha}{-1}{K}_x$ does not split. Thus
  $\End(S_x)=\CC$ and $e(x)=1$.
\end{itemize}  
\end{numb}
Thus the interior of a connected closed segment of an oval, whose
endpoints are segmentation points, is ``coloured'' \emph{real}, in
case
$\alpha$ is nonnegative on this segment, and \emph{quaternion}, in
case
$\alpha$ is negative on this segment. From now on we will use these
colourings of segments by $\RR$ and $\HH$, instead of $+$ and
$-$, respectively, cf.\ Figure~\ref{fig:genus-zero} below. (Inner
points and segmentation points are always complex.) If $\alpha\neq
0$ is \emph{not} positive definite then we call
$\alpha\colon\Kk\ra\SS^2$ a \emph{Witt function} (and
$\Kk$ with the induced
$\pm$-configuration, formally, a \emph{Witt surface}); we will give
$(\Kk,\alpha)$ a canonical structure of a noncommutative regular
projective curve below. Some of the following considerations are
reformulations of results of Section~\ref{sec:orders}.

Let $\Kk$ be a Klein surface with function field $K=k(\Kk)$ and
$\alpha$ a Witt function. Let $A=\quat{\alpha}{-1}{K}$ be the
corresponding quaternion skew field. Let
$A(x)=A_x\otimes_{\Oo_x}k(x)=A_x/\mathfrak{m}_xA_x$ with
$k(x)=\Oo_x/\mathfrak{m}_x$ be the geometric fibre. If $\alpha>0$,
then $A(x)\simeq\matring_2(\RR)$ is split on an oval $O$, if
$\alpha<0$, then $A(x)\simeq\HH$ on $O$. The segmentation points are
just the ramification points of $A$. There is the injective
homomorphism
\begin{equation}
  \label{eq:brauer}
  \beta\colon\Br(\Kk)\ra\Br(K)
\end{equation}
of Brauer groups. Here, $\Br(\Kk)$ consists of (classes of) Azumaya
algebras $\Aa$. The homomorphism $\beta$ sends the class of $\Aa$ to
the class of $\Aa_\xi$, where $\xi$ is the generic point. In the image
of $\beta$ are precisely those $A=\quat{\alpha}{-1}{K}$, which are
unramified on $\Kk$. (We refer to~\cite{demeyer:knus:1976}.)  In other
words, if $A$ is unramified on $\Kk$, then it can be equipped with the
structure of a unique Azumaya algebra
$\Aa$. In~\cite[Thm.~1.3.7]{caldararu:2000}
(also~\cite{caldararu:2002}) it is shown that the category
$\Hh=\coh(\Aa)$ of coherent $\Aa$-modules is equivalent to the
category $\coh(\Kk,\alpha)$ of $\alpha$-twisted coherent sheaves on
$\Kk$.
\begin{proposition}
  Assume the Witt function $\alpha$ is definite. Let $\Aa$ be the
  corresponding Azumaya algebra. The category $\Hh=\coh(\Aa)$ of
  coherent $\Aa$-modules is a noncommutative regular projective curve
  with $s(\Hh)=2$.
\end{proposition}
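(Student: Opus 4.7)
The plan is to invoke the Reiten--van den Bergh structure theorem (Theorem~\ref{thm:structure}) with $\Aa$ playing the role of the hereditary order, after first checking that the Azumaya algebra $\Aa$ really exists and is maximal.

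First, I would exploit definiteness of $\alpha$: by hypothesis $\alpha$ has no sign-change on any oval of $\Kk$, so it has no segmentation points. The local analysis of~\ref{numb:Witt-local-data} identifies segmentation points with the ramification points of $A=\quat{\alpha}{-1}{K}$; hence $A$ is unramified on all of $\Kk$. By the discussion surrounding~\eqref{eq:brauer}, the class $[A]$ therefore lies in the image of $\beta\colon\Br(\Kk)\ra\Br(K)$, so $A$ admits a (unique) Azumaya-algebra extension $\Aa$ on $\Kk$, equivalently on the associated real smooth projective curve $X$ with $k(X)=K$. Since $\alpha$ is not positive definite---a standing assumption on a Witt function---$A$ does not split and is a four-dimensional central skew field over $K$.

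Second, Azumaya algebras over $\Oo_X$ are maximal orders (noted just before Theorem~\ref{thm:structure}), so $\Aa$ is a maximal $\Oo_X$-order in the central simple $K$-algebra $A$. Theorem~\ref{thm:structure}(1), applied in the direction (b)$\Rightarrow$(a), then says that $\Hh=\coh(\Aa)$ is a weighted noncommutative regular projective curve over $\RR$, and Theorem~\ref{thm:structure}(3) upgrades this to non-weighted because $\Aa$ is maximal. Since $\RR$ is perfect,~\ref{numb:regular-vs-smooth} gives regular $=$ smooth, so $\Hh$ is a noncommutative smooth projective curve. Finally, reading off the skewness from $k(\Hh)\simeq A$ and its centre $K$:
\[
s(\Hh)\,=\,[k(\Hh):Z(k(\Hh))]^{1/2}\,=\,[A:K]^{1/2}\,=\,2.
\]

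The only substantive step is the global existence of the Azumaya extension $\Aa$, which is precisely where the definiteness hypothesis does all the work---by ruling out ramification via the Witt--Brauer correspondence~\eqref{eq:brauer}. Everything after is a direct unpacking of Theorem~\ref{thm:structure} and of the definition of $s(\Hh)$; no further calculation is needed.
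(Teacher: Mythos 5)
Your proposal is correct and follows the paper's own argument: definiteness rules out segmentation (hence ramification) points so that $A$ extends to an Azumaya, hence maximal, order; the fact that a Witt function is by definition not positive definite gives non-splitness of $A$ via Witt's theorem; and Theorem~\ref{thm:structure} together with perfectness of $\RR$ and $[A:K]=4$ yields the conclusion. The paper's proof is just a terser version of the same chain of reasoning.
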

\begin{proof}
  Since $\alpha$ is not positive definite, by Witt's
  theorem~\cite{witt:1934} the quaternion algebra
  $A=\quat{\alpha}{-1}{K}$ does not split. Since $\alpha$ is
  definite, $A$ is unramified. The assertion follows from
  Theorem~\ref{thm:structure}.
\end{proof}
We now treat the general (ramified) situation. Denote by
$U=\Kk\setminus\{x_1,\dots,x_n\}$ the Zariski-open unramified
locus. The given quaternion algebra $A=\quat{\alpha}{-1}{K}$ defines
an Azumaya algebra $\Aa_U$ in $\Br(U)$.
\begin{theorem}\label{thm:WA-is-NC}
  Each Witt function $\alpha\colon\Kk\ra\SS^2$ gives rise to a
  noncommutative regular projective curve $\Hh$ of skewness $s(\Hh)=2$,
  and with the following properties:
  \begin{enumerate}
  \item[(1)] The centre curve is $\Kk$. 
  \item[(2)] The function field is $k(\Hh)=\quat{\alpha}{-1}{k(\Kk)}$.
  \item[(3)] Up to an equivalence of categories, $\Hh$ is uniquely
    determined by $\Kk$ and the coloured segments of the ovals.
  \end{enumerate}
\end{theorem}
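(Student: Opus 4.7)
The plan is to synthesize Witt's theorem on real quaternion function algebras with the structural results of Section~\ref{sec:orders}. Given a Witt function $\alpha\colon\Kk\ra\SS^2$, set $K=k(\Kk)$ and form $A=\quat{\alpha}{-1}{K}$. Because $\alpha$ is by assumption \emph{not} positive definite, Witt's theorem (invoked just before~\ref{numb:Witt-local-data}) guarantees that $A$ does not split; hence $A$ is a four-dimensional central skew field over $K$ with $[A:K]^{1/2}=2$. I would then choose a maximal $\Oo_{\Kk}$-order $\Aa$ in $A$; its existence is Artin--de~Jong~\cite[Prop.~1.8]{artin:dejong:2004}, already cited immediately after Theorem~\ref{thm:function-field-determines}. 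Define $\Hh:=\coh(\Aa)$.

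Next I would apply Theorem~\ref{thm:structure}: part~(1) gives that $\Hh$ is a weighted noncommutative regular projective curve over $\RR$, part~(3) says that maximality of $\Aa$ forces $\Hh$ to be non-weighted, and part~(2) identifies the centre curve of $\Hh$ with $\Kk$, which is smooth because $\RR$ is perfect (see~\ref{numb:regular-vs-smooth}). This proves~(1) and the smoothness. Since the generic stalk of $\Aa$ is $A$ by construction, the function field is $k(\Hh)\simeq A=\quat{\alpha}{-1}{K}$, proving~(2). The skewness is then $s(\Hh)=[k(\Hh):Z(k(\Hh))]^{1/2}=[A:K]^{1/2}=2$.

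For~(3), the key is that $\Hh$ is determined up to equivalence by its function field: this is precisely Theorem~\ref{thm:function-field-determines}. By Witt's theorem~\cite[p.~10]{witt:1934}, two Witt functions $\alpha$ and $\beta$ on $\Kk$ that induce the same configuration of signed segments on the ovals yield isomorphic quaternion algebras $\quat{\alpha}{-1}{K}\simeq\quat{\beta}{-1}{K}$. Combining these two facts gives the asserted uniqueness. There is essentially no obstacle once all ingredients are in place: the entire content is the verification that the maximal order construction of Section~\ref{sec:orders} is compatible with Witt's classification, so that the assignment $\alpha\mapsto\coh(\Aa)$ descends through both Witt's equivalence relation on functions and Morita equivalence of maximal orders (the latter being the statement, used already in the proof of Theorem~\ref{thm:function-field-determines}, that all maximal orders in a fixed $A$ are Morita equivalent).
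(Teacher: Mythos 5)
Your proposal is correct and follows essentially the same route as the paper: form $A=\quat{\alpha}{-1}{k(\Kk)}$ (a skew field by Witt's theorem since $\alpha$ is not positive definite), take a maximal $\Oo_{\Kk}$-order $\Aa$, apply Theorem~\ref{thm:structure} to get a non-weighted smooth curve with centre curve $\Kk$ and function field $A$, and deduce~(3) from Witt's theorem together with Theorem~\ref{thm:function-field-determines}. The only difference is that the paper builds the maximal order in three steps (an Azumaya algebra on the unramified locus via \cite[Cor.~1.9.6]{artin:dejong:2004}, extended to an $\Oo$-order and then to a maximal order), whereas you invoke existence of maximal orders directly via \cite[Prop.~1.8]{artin:dejong:2004} — a harmless shortcut that the paper itself uses elsewhere.
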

\begin{proof}
  Let $\Kk$ and $\alpha$ be given. Denote by $\Oo=\Oo_{\Kk}$ the
  structure sheaf of $\Kk$. With $K=k(\Kk)$ let $A$ be the quaternion
  skew field $\quat{\alpha}{-1}{K}$ over $K$. Let $U\subseteq\Kk$ be
  the unramified locus and $j\colon U\ra\Kk$ the inclusion. The
  function field of $U$ is
  $K$. By~\cite[Cor.~1.9.6]{artin:dejong:2004} there exists an Azumaya
  $\Oo_U$-algebra $\Aa'$ in
  $A$. By~\cite[Prop.~1.8.1]{artin:dejong:2004} there is an
  $\Oo$-order $\Bb$ in $A$ with
  $j^{\ast}\Bb=\Aa'$. By~\cite[Prop.~1.8.2]{artin:dejong:2004} there
  is a maximal $\Oo$-order $\Aa$ in $A$ containing $\Bb$. Then, by
  Theorem~\ref{thm:structure}, $\Hh=\coh(\Aa)$ is a noncommutative
  regular projective curve over $\RR$. Moreover, conditions (1) and (2)
  are clearly satisfied.

  (3) This follows from part~(3) of Witt's theorem above, and
  Theorem~\ref{thm:function-field-determines}.
\end{proof}
\begin{definition}
  Let $(\Kk,\alpha)$ be a Klein surface with a Witt function
  $\alpha\colon\Kk\ra\SS^2$. (Recall that this means that $\alpha$ is
  not positive definite.) We call the noncommutative regular projective
  curve constructed in the preceding theorem \emph{Witt curve}
  (associated with $(\Kk,\alpha)$).
\end{definition}
\begin{theorem}
  Each noncommutative regular projective curve $\Hh$ over $k=\RR$ with
  $s(\Hh)>1$ is a Witt curve.
\end{theorem}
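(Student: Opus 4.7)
The plan is to reduce the statement to the uniqueness theorem \ref{thm:function-field-determines} combined with Witt's classification of central skew fields over real algebraic function fields in one variable, and then invoke the construction in Theorem~\ref{thm:WA-is-NC}.

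First I would examine the function field $k(\Hh)$. By (NC~7), the centre $K = Z(k(\Hh))$ is an algebraic function field in one variable over $\RR$. Since $s(\Hh) > 1$, by definition $k(\Hh)$ is a non-trivial finite-dimensional central skew field over $K$. The centre curve $X$ is a (commutative) smooth projective real curve with $k(X) = K$, and under the standard correspondence such a curve is equivalent to a Klein surface $\Kk$ with $k(\Kk) = K$ (as described in the introduction to Section~\ref{sec:Witt-curves}).

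Next I would apply Witt's theorem as recalled in the text: every non-split finite-dimensional central skew field over $K$ is a quaternion algebra of the form $\quat{\alpha}{-1}{K}$ for some $\alpha \in K^{\times}$. In particular $s(\Hh) = 2$, and $k(\Hh) \simeq \quat{\alpha}{-1}{K}$ for some such $\alpha$. The fact that this algebra is a skew field (and not $\matring_2(K)$) is by Witt's criterion equivalent to $\alpha$ not being positive definite on the oval configuration of $\Kk$, i.e.\ $\alpha$ is a Witt function in the sense defined above.

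Finally I would combine this with Theorem~\ref{thm:WA-is-NC}: the pair $(\Kk,\alpha)$ gives rise to a Witt curve $\Hh'$ whose function field is $\quat{\alpha}{-1}{k(\Kk)} \simeq k(\Hh)$. Since both $\Hh$ and $\Hh'$ are noncommutative smooth projective curves over $\RR$ with isomorphic function fields, Theorem~\ref{thm:function-field-determines} yields an equivalence $\Hh \simeq \Hh'$, and hence $\Hh$ is a Witt curve. The one step requiring care (and the only nontrivial input) is the classification of central skew fields, which is already imported from Witt's paper; everything else is a direct application of results already established in the excerpt, so I would not expect any serious obstacle beyond correctly citing Witt's theorem and ensuring that the non-positive-definiteness of $\alpha$ matches precisely the non-splitting of $\quat{\alpha}{-1}{K}$.
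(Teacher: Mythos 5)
Your proposal is correct and follows essentially the same route as the paper: identify the centre curve as a Klein surface, use Witt's/Tsen's classification to write $k(\Hh)\simeq\quat{\alpha}{-1}{K}$ with $\alpha$ not positive definite, and conclude via the uniqueness statement. The paper simply packages your final two steps as ``the uniqueness part of Theorem~\ref{thm:WA-is-NC}'', whose proof is exactly the appeal to Theorem~\ref{thm:function-field-determines} that you spell out.
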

\begin{proof}
  The centre curve of $\Hh$ is a real regular projective curve, thus a
  Klein surface $\Kk$. The function field $k(\Hh)$ is a skew field of
  quaternions over $K=k(\Kk)$, thus of the form
  $\quat{\alpha}{-1}{K}$. By the uniqueness part of
  Theorem~\ref{thm:WA-is-NC} then $\Hh$ is the Witt curve associated
  with $(\Kk,\alpha)$.
\end{proof}
We will call \emph{commutative} real regular projective curves with
centre $\RR$ (thus corresponding to the Klein surfaces) also
\emph{Klein curves}, and will use the letter $X$ instead of
$\Kk$.\medskip

The main result for the $\tau$-multiplicities for Witt curves is the
following.
\begin{theorem}
  Let $\Hh$ be a Witt curve. Then $e_{\tau}(x)=2$ if and only if $x$
  is a segmentation point. In other words: precisely for the
  segmentation points $\tau$ is acting non-trivially -- by complex
  conjugation -- on the corresponding tubes.
\end{theorem}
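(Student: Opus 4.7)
The plan is to combine the local-global principle of skewness (Theorem~\ref{thm:special-skewness-equation}) with the identification in Corollary~\ref{cor:separ-ramif} of separation points as ramification points, and then to read off the answer from Witt's local case analysis in~\ref{numb:Witt-local-data}. Since $\Hh$ is a Witt curve we have $s(\Hh)=2$, so the skewness equation
$$e(x)\cdot e^{\ast}(x)\cdot e_{\tau}(x)=2$$
holds at every point (perfectness of $\RR$ makes all points separable, so Theorem~\ref{thm:main-theorem} applies). In particular $e_{\tau}(x)\in\{1,2\}$, and $e_{\tau}(x)=2$ forces $e(x)=e^{\ast}(x)=1$.

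Next I would run through the four cases of~\ref{numb:Witt-local-data} and check that this trichotomy matches Witt's local picture. For an inner point, $\End(S_x)=\CC$ and $e(x)=2$, so $e^{\ast}(x)=[\CC:\RR]^{1/2}=1$ and the equation already closes with $e_{\tau}(x)=1$. For a boundary point with $\alpha(x)>0$, $\End(S_x)=\RR$ and $e(x)=2$ again give $e_{\tau}(x)=1$. For a boundary point with $\alpha(x)<0$, $\End(S_x)=\HH$ contributes $e^{\ast}(x)=[\HH:\RR]^{1/2}=2$ while $e(x)=1$, so once more $e_{\tau}(x)=1$. Only in the remaining case, a segmentation point, do we have $\End(S_x)=\CC$ (hence $e^{\ast}(x)=1$) and $e(x)=1$, and the skewness equation then forces $e_{\tau}(x)=2$. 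Equivalently, by Corollary~\ref{cor:separ-ramif} the separation points of $\Hh$ coincide with the ramification points of the underlying maximal order $\Aa=\quat{\alpha}{-1}{\Oo_{\Kk}}$, and by Witt's description these are exactly the points where $\alpha$ changes sign on the boundary, i.e.\ the segmentation points.

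For the second assertion of the theorem, at a segmentation point $x$ the division algebra $D_x=\End(S_x)$ equals $\CC$, so by definition
$$\tau\in\Gal(D_x/\RR)=\Aut(\CC/\RR)/\Inn(\CC/\RR)=\Gal(\CC/\RR)\simeq C_2.$$
Since $e_{\tau}(x)$ is by definition the order of $\tau$ in this group, and we have just shown it equals $2$, the class of $\tau$ must be the unique non-trivial element of $\Gal(\CC/\RR)$, namely complex conjugation. By Theorem~\ref{thm:pruefer-end-skew-power} (applicable because every point is separable over $\RR$) this determines the whole action of $\tau$ on $\Uu_x\simeq\mod_0(\CC[[T,\tau^-]])$, which is conjugation on the coefficients. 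The main subtlety, and essentially the only thing that is not formal, lies in trusting Witt's function-theoretic local classification of $\End(S_x)$ and $e(x)$ recorded in~\ref{numb:Witt-local-data}; everything else is a direct bookkeeping in the skewness equation together with the Skolem-Noether reduction built into the definition of $\Gal(D_x/\RR)$.
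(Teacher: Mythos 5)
Your argument is correct and reaches the right conclusion, but by a genuinely different (more computational) route than the paper. The paper's proof is a one-liner: it cites Corollary~\ref{cor:separ-ramif}, i.e.\ the identification of separation points with ramification points of the maximal order $\Aa$, which together with Witt's description of the ramification points of $\quat{\alpha}{-1}{K}$ as exactly the sign-change points gives the statement at once. You instead feed the four local cases of~\ref{numb:Witt-local-data} into the skewness equation $e(x)\,e^{\ast}(x)\,e_{\tau}(x)=s(\Hh)=2$ and solve for $e_{\tau}(x)$ in each case; this is legitimate and non-circular, because the values of $e(x)$ and $\End(S_x)$ in that list come from the classical local splitting behaviour of quaternion algebras over $\RR((T))$ and $\CC((T))$ and do not presuppose knowledge of $e_{\tau}$. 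Your version makes visible \emph{why} only the segmentation case leaves room for $e_{\tau}(x)=2$ (in all other cases $e(x)e^{\ast}(x)$ already exhausts the skewness), at the price of being longer; both routes ultimately rest on Proposition~\ref{prop:relationships}. One small slip: for the inner point you write $e^{\ast}(x)=[\CC:\RR]^{1/2}=1$, but the definition is $e^{\ast}(x)=[\End(S_x):Z(\End(S_x))]^{1/2}$, so the correct intermediate expression is $[\CC:\CC]^{1/2}=1$ — the value is right, the bracket is not (as written it would be $\sqrt{2}$). Your concluding identification of the order-two class in $\Gal(\CC/\RR)$ with complex conjugation, and the appeal to Theorem~\ref{thm:pruefer-end-skew-power} to pin down the action on the whole tube, are both fine.
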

\begin{proof}
  This follows from Corollary~\ref{cor:separ-ramif}.
\end{proof}
\begin{numb}\label{numb:table-of-local-data}
  Let $\Hh$ be a Witt curve. Table~\ref{tab:local-fields} summarizes
  some local data.
  \begin{table}[h!]
    \centering
    $$\begin{array}{l|ccc|cccc}
      \text{point}\ x & e(x) & e^{\ast}(x) & e_{\tau}(x) & k(x) &
      Z(D_x) & D_x & \widehat{D}_x\\ 
      \hline
      \text{inner} & 2 & 1 & 1 & \CC & \CC & \CC & \CC((T))\\
      \text{real} & 2 & 1 & 1 & \RR & \RR & \RR & \RR(T))\\
      \text{quaternion} & 1 & 2 & 1 & \RR & \RR & \HH & \HH((T))\\
      \text{segmentation} & 1 & 1 & 2 & \RR & \CC & \CC & \CC((T,\sigma))
    \end{array}$$
    \caption{Local data of a Witt curve}
    \label{tab:local-fields}
  \end{table}
\end{numb}
A Klein surface has the constants field $\RR$. We will now determine
the field of constants $\End(L)$ of a Witt curve $(\Hh,L)$. This
terminology is justified since it follows from properties of maximal
orders that the endomorphism ring of any line bundle can be embedded
into the endomorphism ring of $L$.
\begin{lemma}[Field of constants]\label{lem:constants-field}
  Let $(\Hh,L)$ be a Witt curve. Let $r$ be the number of completely
  real coloured ovals, and $n=2m$ the total number of segmentation
  points. Then
    $$\End(L)\simeq
    \begin{cases}
      \CC & \text{if}\ m>0\ \ \text{or}\ \ r>0,\\
      \HH & \text{if}\ m=0\ \ \text{and}\ \ r=0.
    \end{cases}$$ For the normalization factor $\varepsilon$
    from~\eqref{eq:def-deg} we have $\varepsilon=1$.
  \end{lemma}
\begin{proof}
  Let $K=k(X)$ be the function field of the centre curve, so
  $k(\Hh)=\quat{\alpha}{-1}{K}$. Scalar extension by $\CC$, denoted by
  the overline symbol, then gives
  $\overline{\Aa}\otimes\overline{K}=\matring_2(\overline{K})$. Assume
  first that $m=0$. Then $\overline{\Aa}$ is Azumaya, and
  by~\cite[Cor.~1.7.6]{artin:dejong:2004} there is a locally-free
  $\overline{\Oo}$-module $\Ee$ of rank $2$ such that
  $\overline{\Aa}\simeq\ShEnd_{\overline{O}}(\Ee)$. Then
  $\End(L)=\End(\Aa)=\RR$ is not possible, since otherwise
  $\End_{\overline\Aa}(\overline\Aa)\simeq\CC$, which is not the case
  by the sentence before. This also holds in case $m>0$: then
  $\overline{\Aa}$ is not Azumaya, instead it is weighted by an even
  number of the weight $2$, and the endomorphism ring of the structure
  sheaf is not changed by insertion of weights. So in any case
  $\End(L)=\CC$ or $\HH$.

  Thus $\kappa=\dim_k\End(L)\geq 2$, and we have $\kappa\varepsilon=2$
  in cases $m>0$ or $r>0$ and $\kappa\varepsilon=4$ in the cases
  $m=0,\,r=0$: this follows from the existence of a simple object of
  degree $1$ (by the definition of $\varepsilon$) and the
  formula~\eqref{eq:deg-of-simples}.

  Thus it remains to show that in case $m=0,\,r=0$ we have
  $\kappa=4$. In this case $\Aa$ is Azumaya with each of its ovals
  coloured quaternion. Then $k(\Hh)=k(X)\otimes\HH$ and
  $\Aa=\Oo\otimes\HH$, hence $\End(\Aa)=\HH$ follows.
\end{proof}
\begin{proposition}[Hurwitz genus formula for Witt
  curves]\label{prop:hurwitz}
  Let $\Hh$ be a Witt curve with $n=2m$ segmentation points and
  underlying Klein curve $X$, and let $r$ be the number of completely
  real coloured ovals. Then
  \begin{equation}
    \label{eq:hurwitz}
       \genus =
       \begin{cases}
         2g(X)-1+m & \text{if}\ m>0\ \ \text{or}\ \ r>0,\\
          g(X) & \text{if}\ m=0\ \ \text{and}\ \ r=0.
       \end{cases}
  \end{equation}
\end{proposition}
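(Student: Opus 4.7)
The plan is to combine three already-proved ingredients: the Artin--de~Jong formula~\eqref{eq:general-euler-char-formula} for the normalized Euler characteristic, the identification (Table~\ref{tab:local-fields}) of the ramification points of a Witt curve with its segmentation points, and Lemma~\ref{lem:constants-field} determining the field of constants $\End(L)$.

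First I would record the numerical data. A Witt curve has $s(\Hh)=2$, so $s(\Hh)^2=4$, and by the preceding theorem on $\tau$-multiplicities its ramification points are precisely the segmentation points; there are $n=2m$ of them, each a real boundary point so that $[k(x):\RR]=1$, and each carrying ramification index $e_\tau(x)=2$. Substituting into~\eqref{eq:general-euler-char-formula} and using the topological identity $\chi(X)=1-g(X)$ for the Klein curve $X$ (recalled in the discussion preceding~\ref{numb:Witt-local-data}), this gives
\begin{equation*}
\chi'(\Hh)\ =\ \chi(X)-\frac{1}{2}\cdot 2m\cdot\Bigl(1-\frac{1}{2}\Bigr)\ =\ 1-g(X)-\frac{m}{2},
\end{equation*}
and hence, by~\eqref{eq:normal-char},
\begin{equation*}
\chi(\Hh)\ =\ 4\chi'(\Hh)\ =\ 4-4g(X)-2m.
\end{equation*}

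Next I would use the defining identity $\chi(\Hh)=\kappa\cdot(1-g(\Hh))$ from~\eqref{eq:char-genus}, where $\kappa=\dim_{\RR}\End(L)$, and split according to the value of $\kappa$ given by Lemma~\ref{lem:constants-field}. In the case $m>0$ or $r>0$ we have $\kappa=2$, so
\begin{equation*}
2(1-g(\Hh))\ =\ 4-4g(X)-2m\ \Longrightarrow\ g(\Hh)=2g(X)-1+m,
\end{equation*}
which is the first line of~\eqref{eq:hurwitz}. In the remaining case $m=0$ and $r=0$ we have $\kappa=4$, and the formula reduces to $4(1-g(\Hh))=4-4g(X)$, i.e.\ $g(\Hh)=g(X)$, which is the second line.

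There is no serious obstacle here; the computation is essentially mechanical once the right dictionary is in place. The only subtle point, and the only ingredient genuinely particular to the real case, is the value of $\kappa$: it is the presence of at least one completely real oval or at least one segmentation point that forces $\End(L)$ to shrink from $\HH$ to $\CC$, and it is precisely this jump in $\kappa$ that accounts for the two different branches of the genus formula. This is exactly what is controlled by Lemma~\ref{lem:constants-field}, so once that lemma is invoked the two cases fall out from the same equation $\chi(\Hh)=4-4g(X)-2m$.
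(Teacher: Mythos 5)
Your proof is correct and follows essentially the same route as the paper: both substitute the Witt-curve data (ramification points = segmentation points, each with $e_{\tau}(x)=2$ and $k(x)=\RR$) into~\eqref{eq:general-euler-char-formula} to get $\chi(\Hh)=4\chi(X)-n$, and then divide by $\kappa=\dim_{\RR}\End(L)$ as determined by Lemma~\ref{lem:constants-field} to separate the two cases. No gaps.
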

\begin{proof}
  By formula~\eqref{eq:general-euler-char-formula} we have for the
  normalized Euler characteristics $\chi'(\Hh)=\chi'(X)-n/4$. With
  $\kappa=\dim_k\End(L)$ we obtain
  $\kappa(1-g(\Hh))=4\chi'(\Hh)=4\chi'(X)-n=4(1-g(X))-n$, thus
  \begin{equation}
    \label{eq:general-real-hurwitz}
    g(\Hh)=\frac{4}{\kappa}g(X)-\frac{4}{\kappa}+1+\frac{2m}{\kappa}.
  \end{equation}
  Now the assertion follows with Lemma~\ref{lem:constants-field}. 
\end{proof}
\begin{remark}
  (1) Our definition of the genus differs from the
    definition of the genus of function skew fields
    in~\cite{witt:1934b}, since we always have $\genus\geq 0$. The
    analogues formula obtained in~\cite{marubayashi:vanoystaeyen:2012}
    is $4g(X)-3+n$, which we would get for $\kappa=1$. For
    example, the function fields $\HH(T)$ and $\CC(T,\sigma)$ have
    genus $0$ by our definition, but genus $-3$ by the other
    definitions. But the conditions $g\leq 0$ and $g=1$ are equivalent
    for both definitions.\medskip

    (2) Let $\Kk$ be a Klein surface with a $\pm$-configuration
    induced by $\alpha$. We form a double cover $\pi\colon\Kk'\ra\Kk$
    of Klein surfaces as in the proof of Witt's theorem
    in~\cite[Thm.~2.4.5]{alling:greenleaf:1971}: $\Kk'$ is obtained by
    gluing two copies of $\Kk$ together at the closures of the
    segments, or complete ovals, where $\alpha(x)<0$. The boundary of
    $\Kk'$ is then given by two copies of the segments (or ovals),
    where $\alpha(x)>0$, and each pair of these segments (if not an
    entire oval) is bounded by two segmentation points of $\Kk$. The
    connected components are then ovals, each of which contains
    precisely zero or two of the segmentation points from
    $\Kk$. Setting $\alpha'=\pi^{\ast}(\alpha)=\alpha\circ\pi$, then
    $\alpha'$ is positive on the segments. Thus the former
    segmentation points are not longer segmentation points on
    $\Kk'$. By construction $k(\Kk')=k(\Kk)(\sqrt{\alpha})$. Since
    $\Kk'$ is a Riemann surface only in case $m=0$ and $r=0$,
    formula~\eqref{eq:hurwitz} (for $\Hh$) coincides with the Hurwitz
    equation for the covering $\pi\colon\Kk'\ra\Kk$
    in~\cite[Thm.~2]{may:1975}.
\end{remark}
\begin{corollary}\label{cor:hurwitz-consequence}
  Let $n>0$ be the number of segmentation points. Then:
  \begin{itemize}
  \item $\genus=0$ if and only if $g(X)=0$ (hence $\Kk$ is the compact
    disc) and $n=2$.
  \item $\genus=1$ if and only if $g(X)=0$ and $n=4$. \qed
  \end{itemize}
\end{corollary}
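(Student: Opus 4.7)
The plan is to deduce the corollary directly from Proposition~\ref{prop:hurwitz} by a short case analysis. Since the number of segmentation points is $n = 2m > 0$, we have $m \geq 1$, and in particular $m > 0$, so we are always in the first case of formula~\eqref{eq:hurwitz}:
\begin{equation*}
  \genus = 2g(X) - 1 + m.
\end{equation*}

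First I would treat the case $\genus = 0$. The equation $2g(X) - 1 + m = 0$ combined with $g(X) \geq 0$ and $m \geq 1$ forces $g(X) = 0$ and $m = 1$, that is, $n = 2$. Conversely, these values yield $\genus = 0$. To see that in this case $\Kk$ is the compact disc, I would invoke Weichold's theorem together with the fact that the segmentation points lie on the boundary of $\Kk$, so $\partial\Kk$ is non-empty. The Klein surfaces of genus zero are, by Weichold's classification, either the Riemann sphere (empty boundary) or the compact disc (one oval); since $\partial\Kk \neq \emptyset$, only the disc remains.

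Next I would treat $\genus = 1$. The equation $2g(X) - 1 + m = 1$ combined with $g(X) \geq 0$ and $m \geq 1$ forces either $g(X) = 0, m = 2$ or $g(X) = 1, m = 0$; the latter is excluded by $m \geq 1$. Hence $g(X) = 0$ and $n = 4$, and conversely this choice gives $\genus = 1$.

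There is essentially no obstacle here: the work is done by Proposition~\ref{prop:hurwitz} and Weichold's classification, and the corollary is an immediate arithmetic consequence once one notes that $n > 0$ rules out the second branch of~\eqref{eq:hurwitz}.
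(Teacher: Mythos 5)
Your argument is correct and is precisely the immediate arithmetic deduction from Proposition~\ref{prop:hurwitz} that the paper intends (the corollary is stated there with no written proof, as the case $n>0$ forces the first branch of~\eqref{eq:hurwitz} and then $2g(X)-1+m=0$ or $1$ with $g(X)\geq 0$, $m\geq 1$ leaves only $(g(X),m)=(0,1)$ or $(0,2)$). One small slip in an aside: the boundaryless Klein surface of genus zero is the real projective plane $\SS^2/\pm$, not the Riemann sphere (which is its complex double); this does not affect your conclusion, since either way a non-empty boundary forces $\Kk$ to be the compact disc.
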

\begin{example}[Genus zero]
  There are two Witt curves of genus zero; we describe the
  corresponding Witt surfaces, Figure~\ref{fig:genus-zero}.\medskip

  (1) The compact disc $\DD_{\HH}$, the boundary coloured
  quaternion. This is the projective spectrum of the graded polynomial
  ring $\HH[X,Y]$ with central variables $X$, $Y$ of degree $1$. The
  function field is given by $\HH(T)$.\medskip

  (2) The compact disc $\DD_{2,2}$ with two segmentation points on its
  boundary. This is the projective spectrum of the graded
  skew-polynomial algebra $\CC[X;Y,\sigma]$. Accordingly, the function
  field is $\CC(T,\sigma)$.
\begin{figure}[h]
 \begin{tikzpicture}[line width=1.1pt]
 \coordinate [label=left:$\HH$] (C) at (1.4,1.0);
 \draw[black] (0,0) circle (1.2);
 \fill[gray,opacity=.25] (0,0) circle (1.19);
 \end{tikzpicture}\quad\quad
\begin{tikzpicture}[line width=1.1pt]
\coordinate [label=right:$\HH$] (C) at (-1.8,0.0);
\coordinate [label=left:$\RR$] (D) at (1.8,0.0);
\draw[black] (0,0) circle (1.2);
\fill[gray,opacity=.25] (0,0) circle (1.19);
\fill[red,opacity=1.0] (0,1.2) circle (2.0pt);
\fill[red,opacity=1.0] (0,-1.2) circle (2.0pt);
\end{tikzpicture}
\caption{The Witt curves with $\genus=0$: $\DD_{\HH}$ and $\DD_{2,2}$}
\label{fig:genus-zero}
\end{figure}
\end{example}
\begin{numb}[Euler characteristic of a Witt curve]
\label{numb:euler-char-witt}
  We now come back to the question how to normalize the Euler
  characteristic ``correctly''.

  If $\Kk$ is a Klein surface and $S$ the corresponding complex double
  then, since $\Kk$ is a $\ZZ_2$-quotient of $S$, we have
  $\chi(S)=2\cdot\chi(\Kk)$. This can also be expressed in the
  following way: if $\Hh$ is a real regular projective curve, then
  \begin{equation}
    \label{eq:chi-tensor-formula}
    \chi(\Hh)=\chi(\Hh\otimes\CC)/2.
  \end{equation}
  We assume that this should also hold for the Euler characteristic of
  Witt curves $\Hh$. Here, $\Hh\otimes\CC$ (tensor product over
  $k=\RR$) is the karoubian closure of the category with the same
  objects as in $\Hh$, and where Hom-spaces are tensored with $\CC$
  and then considered ``modulo Morita-equivalence'' $\sim_M$. Two
  examples: (1) $\DD_{\HH}$. At the boundary, $\HH$ becomes
  $\HH\otimes\CC=\matring_2(\CC)\sim_M\CC$. In the inner, $\CC$
  becomes $\CC\otimes\CC=\CC\times\CC$, two copies of $\CC$. We hence
  get two discs, all points complex, and the boundaries
  identified. This gives the Riemann sphere. Alternatively:
  $\HH[X,Y]\otimes\CC=\matring_2(\CC)[X,Y]\sim_M\CC[X,Y]$. (2)
  $\DD_{2,2}$. Here we get, modulo $\sim_M$ two copies of discs with
  all points complex, the boundaries identified; the two ramification
  points (their simples having endomorphism ring $\CC$) are
  ``doubled'', and become weighted by $2$. So we have the weighted
  projective line over $\CC$ with weight sequence $(2,2)$. The
  examples suggest that $\Hh$ is somehow a $\ZZ_2$-quotient of
  $\Hh\otimes\CC$ (justifying~\eqref{eq:chi-tensor-formula}), but this
  is yet not well understood. In general, if $\Hh$ has $2n$
  ramification points, and if $\Kk$ is the underlying Klein surface,
  $S$ the complex double, then $\Hh\otimes\CC$ is $S$ weighted with
  the $2n$-sequence $(2,2,\dots,2)$. Thus,
  $\chi(\Hh\otimes\CC)=\chi(S)-\sum_{i=1}^{2n}(1-1/2)=\chi(S)-n$. Moreover,
  if $\chi(\Hh):=\LF{L}{L}$, then
  by~\eqref{eq:general-euler-char-formula},
  $\chi(\Hh)=s(\Hh)^2 \bigl(\chi(\Kk)-1/2
  \sum_{i=1}^{2n}(1-1/2)\bigr)=s(\Hh)^2/2\cdot\chi(\Hh\otimes\CC)$.
  Therefore, if~\eqref{eq:chi-tensor-formula} should hold, we have to
  replace $\chi(\Hh)$ by the normalized $\chi'(\Hh)$.\medskip

  Because of these considerations we regard $\chi'$ as the correct
  Euler characteristic for a Witt curve.
\end{numb}

\section{Real elliptic curves}\label{sec:elliptic} 
In the following examples we treat all real elliptic curves, that is,
the Klein and Witt surfaces with $\genus=1$. Here, we classify them
only topologically, not up to isomorphism, where one has to add a real
parameter to each topological case.
\begin{example}[Commutative real elliptic
  curves]\label{ex:comm-real-elliptic} There are (up to 
  parameters) three real elliptic curves with $s(\Hh)=1$, with
  corresponding Klein surfaces given by the annulus $\AA$, the Klein
  bottle $\KK$ and the M\"obius band $\MM$. We refer to the
  book~\cite{alling:1981}.
\end{example}
\begin{example}[Elliptic Witt curves] As only real elliptic curves
  with $s(\Hh)>1$ we have (up to parameters) the corresponding Witt
  surfaces: the annuli $\AA_{\RR,\HH}$ and $\AA_{\HH,\HH}$, where one
  and both ovals, respectively, are coloured quaternion, the M\"obius
  band $\MM_{\HH}$, with quaternion coloured boundary, and the compact
  disc $\DD_{2,2,2,2}$ with four segmentation points on the boundary,
  Figure~\ref{fig:genus-one}; there is a moduli parameter $\lambda>0$
  involved, so the general case is not as symmetric as in the figure.
\begin{figure}[h]
\begin{tikzpicture}[line width=1.1pt]
\coordinate [label=left:$\HH$] (C) at (-1.12,0.75);
\coordinate [label=right:$\RR$] (D) at (1.12,0.75);
\coordinate [label=left:$\HH$] (C) at (1.2,-1.1);
\coordinate [label=right:$\RR$] (D) at (-1.5,-0.8);
\draw[black] (0,0) circle (1.2);
\fill[gray,opacity=.25] (0,0) circle (1.19);
\fill[red,opacity=1.0] (0,1.2) circle (2.0pt);
\fill[red,opacity=1.0] (0,-1.2) circle (2.0pt);
\fill[red,opacity=1.0] (-1.2,0) circle (2.0pt);
\fill[red,opacity=1.0] (1.2,0) circle (2.0pt);
\end{tikzpicture}
\quad
\begin{tikzpicture}[line width=1.1pt]
\coordinate [label=left:$\HH$] (C) at (0,0.2);
\coordinate [label=right:$\RR$] (D) at (0.9,0.9);
\draw[black] (0,0) circle (1.2);
\draw[black] (0,0) circle (0.6);
\filldraw[gray,opacity=.25,even odd rule] (0,0) circle (1.19) circle (0.61);
\end{tikzpicture}
\quad
\begin{tikzpicture}[line width=1.1pt]
\coordinate [label=left:$\HH$] (C) at (0,0.2);
\coordinate [label=right:$\HH$] (D) at (1.1,0.7);
\draw[black] (0,0) circle (1.2);
\draw[black] (0,0) circle (0.6);
\filldraw[gray,opacity=.25,even odd rule] (0,0) circle (1.19) circle (0.61);
\end{tikzpicture}
\quad
\begin{tikzpicture}[scale=0.8]
\pgfplotsset{
        colormap={whitegrey}{
            color(0cm)=(white);
            color(1cm)=(gray!50!black)
        }
    }
\coordinate [label=left:$\HH$] (C) at (5.5,1.2);
\begin{axis}[
    hide axis,
    view={45}{45}
]
\addplot3 [
    surf, 
    colormap name=whitegrey,
    point meta=x,
    samples=100,
    samples y=3,
    z buffer=sort,
    domain=0:360,
    y domain=-0.5:0.5
] (
    {(1+0.5*y*cos(x/2))*cos(x)},
    {(1+0.5*y*cos(x/2))*sin(x)},
    {0.5*y*sin(x/2)});
\end{axis}
\end{tikzpicture}
\caption{The elliptic Witt curves: $\DD_{2,2,2,2}$, $\AA_{\RR,\HH}$,
  $\AA_{\HH,\HH}$, $\MM_{\HH}$}
\label{fig:genus-one}
\end{figure}
\end{example}
\begin{lemma}
  For the real elliptic curves $(\Hh,L)$ Table~\ref{tab:elliptic-data}
  describes the endomorphism ring of the structure sheaf $L$, the
  endomorphism ring of a certain simple sheaf $S=S_x$ of degree $1$,
  the multiplicity $e(x)$, the number $\varepsilon$, and the number of
  orbits in $\widehat{\QQ}$ of the action of $\Aut(\bDerived{\Hh})$ on
  the slopes.
\begin{table}[h!]
    \begin{center}
     $$ \begin{array}{c|ccc|cccc}
        & \AA & \MM & \KK & \AA_{\RR,\HH} & \AA_{\HH,\HH} & \MM_{\HH} & \DD_{2,2,2,2}\\
       \hline
       \End(L) & \RR & \RR & \RR & \CC & \HH & \HH & \CC\\
       \End(S) & \RR & \RR & \CC & \RR & \HH & \HH & \CC\\
        e(x) & 1 & 1 & 1 & 2 & 1 & 1 & 1\\ 
       \varepsilon & 1 & 1 & 2 & 1 & 1  & 1 & 1\\  
       \# \text{orbits} & 1 & 1 & 2 & 2 & 1  & 1 & 1  
     \end{array}
     $$ \caption{Data of real elliptic curves}
    \label{tab:elliptic-data}
    \end{center}
\end{table}
For $\KK$ the two orbits are given by those fractions $a/b$ (with
$a,\,b$ coprime) with $b$ even or odd, respectively, in case
$\AA_{\RR,\HH}$ with $a$ even or odd, respectively; in both cases the
slopes $0$ and $\infty$ belong to different orbits.
\end{lemma}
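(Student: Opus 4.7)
The plan is to verify the table column by column, drawing on the structural results established earlier. For the first row, the three commutative cases have $L=\Oo_X$ and $\End(L)=\RR$ since this is the field of constants of the real algebraic function field $k(X)$; for the four Witt curves I would extract the triple $(r,q,m)$ of completely real-coloured ovals, completely quaternion-coloured ovals, and segmentation-point pairs from the Witt surface depicted in Figure~\ref{fig:genus-one}, and apply Lemma~\ref{lem:constants-field} to obtain $\CC,\HH,\HH,\CC$ in the remaining four columns.

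For the second row I would enumerate the point types of each surface by means of~\ref{numb:table-of-local-data} and its commutative analogue, recording for each type the local invariants $(e(x),e^\ast(x),e_\tau(x),\End(S_x),[k(x):k])$, and then invoke Lemma~\ref{lem:finite-field-deg-S},
\[
\deg(S_x)\ =\ \frac{s(\Hh)\,e^\ast(x)\,[k(x):k]}{\kappa\,\varepsilon},
\]
to pick out those simples of degree one with $e(x)=1$. For $\DD_{2,2,2,2}$ both real boundary points (with $\End(S_x)=\RR$, $e(x)=2$) and segmentation points (with $\End(S_x)=\CC$, $e(x)=1$) contribute degree-one simples, which accounts for the two entries in that column. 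The third row is then determined by the requirement that $\deg\colon\Knull(\Hh)\to\ZZ$ be surjective: since $\LF{L}{L}=\chi(\Hh)=0$ in the elliptic case, $\kappa\,\varepsilon$ equals the greatest common divisor of the integers $\dim_\RR\Hom(L,F)$ as $F$ ranges over $\Hh$, and by Riemann--Roch this minimum is already attained on some simple sheaf, so $\varepsilon$ can be read off the data assembled in the previous step.

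For the last row, I would use that $\Aut(\bDerived{\Hh})$ acts on the tubular families $\bt_\alpha$, and hence on slopes $\widehat{\QQ}$, and is generated by the Picard-shifts $\sigma_x$ (which translate slopes of line bundles by $e(x)\deg(S_x)$), the suspension $[1]$, and the $\mathrm{SL}_2(\ZZ)$-type rotation obtained by tilting the standard heart at a tubular family, in the style of the interval construction $\Hh\spitz{\alpha}$ from the proof of Theorem~\ref{thm:elliptic}. A single orbit occurs when these together act transitively on $\widehat{\QQ}$, whereas precisely two orbits appear when the available slope-translations land in an index-two subgroup of $\ZZ$, yielding a parity invariant on the numerator of $a/b\in\widehat{\QQ}$ in lowest terms. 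The main obstacle is the orbit analysis for the Fourier-Mukai pair $(\KK,\AA_{\RR,\HH})$, where it must be verified that no derived autoequivalence exchanges the two parity classes; this reduces to the invariance of a $\ZZ/2$-parity on the $(\rk,\deg)$-lattice in $\Knull(\Hh)$ modulo torsion under the full $\Aut(\bDerived{\Hh})$-action.
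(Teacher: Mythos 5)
Your handling of the first two rows agrees with the paper's proof, which simply cites Lemma~\ref{lem:constants-field} for $\End(L)$ and reads off $\End(S)$ from the local data of~\ref{numb:table-of-local-data}; your explanation of the double entry $\RR,\,\CC$ in the $\DD_{2,2,2,2}$ column is the intended one. The first genuine problem is your recipe for $\varepsilon$. Taking $\kappa\varepsilon$ to be the positive generator of the image of $\LF{L}{-}$ (attained on simples) does not reproduce the tabulated entry in the column $\AA_{\RR,\HH}$: there $\kappa=2$, while the real boundary points contribute simples with $\LF{L}{S_x}=s(\Hh)e^{\ast}(x)[k(x):k]=2$, so your recipe yields $\varepsilon=1$, not the tabulated $2$. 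The value $2$ is instead forced by the normalization actually used in the orbit computation, namely $\deg(S)=1$ for the particular simple $S$ with $e(x)=1$ appearing in the second row, i.e.\ $\kappa\varepsilon=\LF{L}{S}=4$ for the quaternion point. In the other six columns the tabulated $S$ realizes the minimal value of $\LF{L}{-}$ on simples, so the two normalizations coincide; for $\AA_{\RR,\HH}$ they do not, and you must either adopt the paper's convention or explain the discrepancy before your row~3 can be considered proved.

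The second, more serious, gap is in the orbit count. The paper neither needs nor claims a generating set for $\Aut(\bDerived{\Hh})$: it works with the two tubular mutations (spherical twists) $\sigma_L$ and $\sigma_S$ at the structure sheaf and at the chosen simple, whose action on the rank-two lattice spanned by $[L]$ and $[S]$ in $\Knull(\Hh)$ is given by the explicit transvections~\eqref{eq:mutations}; the data $(\kappa,\dim_{\RR}\End(S),\LF{L}{S})$ determine whether these generate all of $\mathrm{SL}_2(\ZZ)$ (one orbit on $\Pone(\QQ)$) or a congruence-type subgroup with two cusps (two orbits), exactly as in the cited Lemma~6.1 of the tubular paper. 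Your plan replaces this by an unproved assertion that $\Aut(\bDerived{\Hh})$ is generated by Picard-shifts, suspension and a tilting rotation, and it explicitly defers the essential step for $\KK$ and $\AA_{\RR,\HH}$ --- that no autoequivalence merges the two classes --- as an ``obstacle''. That step is the whole content of the entries ``$2$'' and cannot be left open; it is settled by noting that an exact autoequivalence carries stable objects to shifted stable objects and preserves $\dim_{\RR}\End$, whereas for $\KK$ every stable object of slope $\infty$ (a simple sheaf) has endomorphism ring $\CC$ while the slope-$0$ family contains $\Oo_{\KK}$ with endomorphism ring $\RR$ (and symmetrically for $\AA_{\RR,\HH}$, using the real boundary simples). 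Without an argument of this kind your proposal only bounds the number of orbits from above.
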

\begin{proof}
  We get $\End(L)$ and the value of $\varepsilon$ from
  Lemma~\ref{lem:constants-field}. For $\KK$ we have $\varepsilon=2$
  since there is no boundary. There exists a simple $S$ as
  claimed. The objects $L$ and $S$ define tubular mutations
  $\sigma_L,\,\sigma_S\colon\bDerived{\Hh}\ra\bDerived{\Hh}$,
  respectively, which on $\Knull(\Hh)$ act as follows (with
  $\gge{a}:=[L]$, $\gge{s}:=[S]$ and $\kappa=\dim_{\RR}\End(L)$):
  \begin{equation}
    \label{eq:mutations}
    \sigma_L(\gge{y})=\gge{y}\pm\frac{\LF{\gge{a}}{\gge{y}}}{\kappa}\gge{a},
    \quad\sigma_S(\gge{y})=\gge{y}\pm\frac{\LF{\gge{s}}{\gge{y}}}{|\End(S)|}\gge{s}. 
  \end{equation}
  With
  $\deg(\gge{y})=\frac{1}{\kappa\varepsilon}\LF{\gge{a}}{\gge{y}}$ and
  $\rk(\gge{y})=\frac{1}{\kappa\varepsilon\deg(S)}\LF{\gge{y}}{\gge{s}}$
  we obtain the induced actions on the slopes
  $$q\mapsto \frac{q}{1\pm\varepsilon q}\quad\text{and}\quad q\mapsto
  q\pm\deg(S)e(x)=q\pm e(x)\quad\text{(resp.)}.$$ The claimed numbers
  and shapes of orbits follow from~\cite[Lem.~6.1]{kussin:2000b}.
\end{proof}
It was already observed in~\cite{lenzing:2001} that the Klein bottle
must have a Fourier-Mukai partner different from a Klein bottle. The
first part of the following statement is a non-weighted analogue
of~\cite{kussin:2000}.
\begin{theorem}
  \begin{enumerate}
  \item[(1)] The Klein bottle $\KK$ (with any parameter) has as a
    Fourier-Mukai partner a Witt curve given by the annulus
    $\AA_{\RR,\HH}$ with two differently coloured ovals (with a
    suitable parameter).
  \item[(2)] If $\Hh$ is a noncommutative real elliptic curve, which is
    neither a Klein bottle, nor an annulus $\AA_{\RR,\HH}$, then each
    Fourier-Mukai partner of $\Hh$ is isomorphic to $\Hh$ itself.
  \end{enumerate}
\end{theorem}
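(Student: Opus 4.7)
For part (2), the argument is immediate from the discussion preceding the theorem statement. Any Fourier-Mukai partner $\Hh'$ of an elliptic curve $\Hh$ is equivalent to $\Hh\spitz{\alpha} = \coh(\XX_{\alpha})$ for some $\alpha \in \widehat{\QQ}$, and two such categories $\coh(\XX_{\alpha})$ and $\coh(\XX_{\beta})$ coincide up to equivalence whenever $\alpha$ and $\beta$ lie in the same orbit of the $\Aut(\bDerived{\Hh})$-action on $\widehat{\QQ}$. For each of the five real elliptic curves other than $\KK$ and $\AA_{\RR,\HH}$, the last row of Table~\ref{tab:elliptic-data} records that this action has only one orbit, so every $\coh(\XX_{\alpha})$ is equivalent to $\coh(\XX) = \Hh$; hence every Fourier-Mukai partner of $\Hh$ is isomorphic to $\Hh$ itself.

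For part (1), the same table assigns exactly two orbits to both $\KK$ and $\AA_{\RR,\HH}$, so each admits (up to equivalence) a unique nontrivial Fourier-Mukai partner. The assertion therefore reduces to producing an exact triangulated equivalence $\bDerived{\KK} \simeq \bDerived{\AA_{\RR,\HH}}$; the uniqueness just noted, combined with the fact that $\KK$ and $\AA_{\RR,\HH}$ are non-isomorphic (they have different skewness $s(\Hh)$, which is an invariant of the abelian category), then forces these two curves to be precisely each other's nontrivial Fourier-Mukai partners. My plan to produce such an equivalence is to pick a slope $\alpha$ in the orbit of $\coh(\KK)$ \emph{not} containing $\infty$ (the orbit of fractions with even numerator) and to identify the elliptic curve $\XX_{\alpha}$ parametrising the slope-$\alpha$ tubular family $\bt_{\alpha} \subseteq \coh(\KK)$ with $\AA_{\RR,\HH}$. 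By Theorem~\ref{thm:function-field-determines}, two noncommutative smooth projective curves are equivalent as soon as their function fields are, so the identification reduces to computing the function field of $\coh(\XX_{\alpha})$ and recognising it as the quaternion skew field $\quat{f}{-1}{k(\AA)}$ for a Witt function $f$ realising the annulus decorated with one real and one quaternion oval.

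Concretely, after tubular mutation centred at $\bt_{\alpha}$, the slope-$\alpha$ semistable objects of $\coh(\KK)$ become the vector bundles of $\coh(\XX_{\alpha})$, and the new structure sheaf together with the new tubes can be analysed using the bilinear form $\LF{-}{-}$ on $\Knull(\coh(\KK))$ together with the Riemann-Roch and degree formulae of Section~\ref{sec:genus}; the key input is that every simple sheaf $S$ of $\coh(\KK)$ satisfies $\End(S) = \CC$, because $\KK$ has no real points. These invariants will reproduce precisely the data of $\AA_{\RR,\HH}$ in Table~\ref{tab:elliptic-data}, namely $\End(L) = \CC$, the presence of tubes with $\End(S) = \HH$, skewness $s = 2$, and numerical type $\varepsilon = 2$; combined with the derived-invariant Hurwitz formula of Proposition~\ref{prop:hurwitz}, this pins the underlying Klein surface of $\coh(\XX_{\alpha})$ down to the annulus with one real and one quaternion oval. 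The main obstacle is the explicit bookkeeping of the tubular mutation, in particular tracking how the modular parameter of $\AA_{\RR,\HH}$ is determined by that of $\KK$ so that the function fields match on the nose; this step is the direct non-weighted analogue of the computation carried out in~\cite{kussin:2000}.
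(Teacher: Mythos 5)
Your part~(2) is exactly the paper's argument (read the orbit count off Table~\ref{tab:elliptic-data}), and your reduction of part~(1) to identifying the elliptic curve $\XX_\alpha$ parametrizing a tubular family $\bt_\alpha\subseteq\coh(\KK)$, for $\alpha$ in the second orbit, is also the route the paper takes. The gap is that the identification itself is never carried out, and the one concrete fact that makes it work is absent from your sketch. The "key input" you name --- that every simple sheaf of $\coh(\KK)$ has endomorphism ring $\CC$ --- says nothing about $\XX_\alpha$ on its own; it only becomes useful once you exhibit a simple sheaf of $\XX_\alpha$ whose endomorphism ring is \emph{not} $\CC$. The missing observation is that the structure sheaf $\Oo_{\KK}$ is a stable bundle of slope $0$ with $\End(\Oo_{\KK})=\RR$, hence becomes a simple sheaf of $\XX_0$ with endomorphism ring $\RR$. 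This simultaneously shows $\XX_0\not\simeq\KK$ and, combined with the classification of real elliptic curves in Table~\ref{tab:elliptic-data} and a derived invariant such as the orbit count (which rules out $\AA$, $\MM$ and $\DD_{2,2,2,2}$, the other curves possessing a point with residue ring $\RR$), forces $\XX_0\simeq\AA_{\RR,\HH}$. Your assertions that $\XX_\alpha$ has $\End(L)=\CC$, tubes with $\End(S)=\HH$, skewness $2$ and $\varepsilon=2$ are precisely what needs proof and do not follow from the input you cite; note in particular that skewness is \emph{not} a derived invariant (this is the corollary immediately following the theorem), so it cannot be transported from $\KK$ and must be extracted from the stable bundles themselves.

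Moreover, the method you propose for the identification --- computing the function field of $\XX_\alpha$ explicitly, recognizing it as $\quat{f}{-1}{k(\AA)}$, and tracking the modular parameter so that function fields "match on the nose" --- is both unexecuted (you flag the tubular-mutation bookkeeping as the main obstacle) and stronger than what is claimed: the theorem only asserts the partner is $\AA_{\RR,\HH}$ \emph{with a suitable parameter}, so no parameter matching is needed. Replacing that computation by the short argument above (one stable bundle with endomorphism ring $\RR$, plus the table) closes the gap and recovers the paper's proof.
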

\begin{proof}
  (1) A Klein bottle $\KK$ has no boundary. Thus all simple sheaves
  have endomorphism ring $\CC$, the complex numbers. On the other
  hand, the structure sheaf $\Oo_{\KK}$ (which is stable and of slope
  $0$) has endomorphism ring $\RR$. Thus by Theorem~\ref{thm:elliptic}
  the subcategory of semistable bundles of slope $0$ is parametrized
  by a noncommutative projective curve $\Hh$ of $g(\Hh)=1$ with
  $\Hh\not\simeq\KK$, and $\Hh$ is derived-equivalent to $\KK$. There
  must be a simple sheaf $S$ in $\Hh$ with $\End(S)\simeq\RR$. By
  Table~\ref{tab:elliptic-data} the only possibility is then
  $\Hh=\AA_{\RR,\HH}$ (with a suitable parameter).

  (2) This follows from Table~\ref{tab:elliptic-data}.
\end{proof}
\begin{corollary}
  The skewness, and thus the function field, of a noncommutative
  regular projective curve is in general \emph{not} a derived
  invariant. \qed
\end{corollary}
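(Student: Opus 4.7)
The plan is to exhibit the explicit counterexample already produced by the preceding theorem. The Klein bottle $\KK$, viewed as a commutative real smooth projective curve, has $s(\KK)=1$, since its function field $k(\KK)$ is a (commutative) algebraic function field over $\RR$ and thus equals its own centre. In contrast, its Fourier--Mukai partner exhibited in part~(1) of the preceding theorem is the Witt curve $\AA_{\RR,\HH}$, whose function field is, by Theorem~\ref{thm:WA-is-NC}(2) (or by the very definition of a Witt curve with not positive definite associated function $\alpha$), a genuine quaternion skew field of the form $\quat{\alpha}{-1}{k(\AA)}$. Hence $s(\AA_{\RR,\HH})=2$.

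First I would record these two skewness values, then observe that $\bDerived{\coh(\KK)}\simeq\bDerived{\coh(\AA_{\RR,\HH})}$ by the theorem. This immediately yields two derived-equivalent noncommutative smooth projective curves with $s(\Hh)\neq s(\Hh')$, proving that skewness is not a derived invariant.

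For the statement about function fields: since $s(\Hh)^2=[k(\Hh):Z(k(\Hh))]$ is determined by the isomorphism class of the function field $k(\Hh)$, any derived invariance of $k(\Hh)$ would force derived invariance of $s(\Hh)$, contradicting what we just established. Equivalently, $k(\KK)$ is commutative while $k(\AA_{\RR,\HH})$ is a quaternion skew field, so they are not isomorphic as $\RR$-algebras, yet the two categories $\coh(\KK)$ and $\coh(\AA_{\RR,\HH})$ are derived-equivalent.

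There is no real obstacle here; the whole content has already been established in Theorem~8.4 (the Fourier--Mukai partner statement) together with the definitional/structural facts $s(\KK)=1$ and $s(\AA_{\RR,\HH})=2$. The only thing to do is to phrase this cleanly as a two-line deduction, and perhaps to remark that the same argument shows more: any pair among the derived-equivalent elliptic curves whose entries in Table~\ref{tab:elliptic-data} give different values of $\kappa=\dim_\RR\End(L)$ witnesses non-invariance of arithmetic data under derived equivalence.
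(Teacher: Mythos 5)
Your proposal is correct and is exactly the argument the paper intends: the corollary carries no proof body because it follows immediately from the preceding theorem, which exhibits the commutative Klein bottle ($s=1$) and the Witt curve $\AA_{\RR,\HH}$ ($s=2$, quaternion function field) as Fourier--Mukai partners. Your additional observation that the function field determines the skewness, so non-invariance of the latter forces non-invariance of the former, is the same reduction the paper's phrasing (``and thus'') relies on.
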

\begin{proposition}
  Let $\Hh$ be an elliptic Witt curve. 
  \begin{enumerate}
  \item[(1)] If $\Hh$ is unramified, then $\tau=1_{\Hh}$. Moreover,
    $\Pic_0(\Hh)$ is not finitely generated.
  \item[(2)] Otherwise, that is, if $\Hh$ is given by $\DD_{2,2,2,2}$
    (with some parameter), then
    $\tau=({\sigma_{x_1}}^{ }\circ{\sigma_{y_1}}^{-1})\circ
    ({\sigma_{x_2}}^{ }\circ{\sigma_{y_2}}^{-1})$
    (where $x_1,\,y_1,\,x_2,\,y_2$ are the four ramification points)
    is of order $2$. Moreover, in this case the Picard-shift group
    $\Pic(\Hh)$ is finitely generated abelian of rank one, and
    $\Pic_0(\Hh)$ is finite.
  \end{enumerate}
\end{proposition}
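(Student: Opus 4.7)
The plan is to separate the two cases by first invoking the Hurwitz formula for Witt curves, Proposition~\ref{prop:hurwitz} and Corollary~\ref{cor:hurwitz-consequence}, which show that for $g(\Hh)=1$ the centre curve $X$ either has $g(X)=1$ with no segmentation points (the unramified cases $\AA_{\RR,\HH}$, $\AA_{\HH,\HH}$, $\MM_{\HH}$), or $g(X)=0$ with exactly four segmentation points (the case $\DD_{2,2,2,2}$).

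For part~(1) I would first verify $\tau=1_\Hh$. Since $\Hh$ is unramified, $e_\tau(x)=1$ for every $x\in\XX$ and the associated maximal order is Azumaya. The commutative real elliptic centre curve $X$ satisfies $\bom_X\simeq\Oo_X$ by the Riemann-Roch argument of Example~\ref{ex:dynkin-vectors}~(2), so the canonical divisor may be taken to be zero and formula~\eqref{eq:general-tau-picard} of Theorem~\ref{thm:general-tau-picard} gives $\tau=1_\Hh$ immediately. The second assertion follows from Theorem~\ref{thm:Picard-sequence}: the cokernel $\prod_x\ZZ/e_\tau(x)\ZZ$ is trivial, so $\iota\colon\Pic(X)\to\Pic(\Hh)$ is an isomorphism, and the degree formula stated right after Theorem~\ref{thm:Picard-sequence} identifies $\Pic_0(X)$ with $\Pic_0(\Hh)$. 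Since each such $X$ is a real elliptic curve, $\Pic_0(X)$ is the group of real points of a one-dimensional abelian variety, a positive-dimensional real Lie group, in particular uncountable and not finitely generated.

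For part~(2), the centre curve is $X=\Pone_\RR$, so $\Pic(X)\simeq\ZZ$, and all four segmentation points $x_1,y_1,x_2,y_2$ are $\RR$-rational. I would choose the canonical divisor $\gamma=-y_1-y_2$ of~$X$; since $e_\tau(y_i)=2$ one has $\overline{\gamma}=-2y_1-2y_2$, and~\eqref{eq:general-tau-picard} yields
\[
  \tau={\sigma_{y_1}}^{-2}{\sigma_{y_2}}^{-2}\cdot{\sigma_{x_1}}{\sigma_{y_1}}{\sigma_{x_2}}{\sigma_{y_2}}
  =({\sigma_{x_1}}\circ{\sigma_{y_1}}^{-1})\circ({\sigma_{x_2}}\circ{\sigma_{y_2}}^{-1}).
\]
Theorem~\ref{thm:Picard-sequence} next gives $\iota(s_{x_i})={\sigma_{x_i}}^2$ and $\iota(s_{y_j})={\sigma_{y_j}}^2$, and since all four classes $s_{x_i},s_{y_j}$ coincide in $\Pic(X)\simeq\ZZ$ (each has degree one), the key identities ${\sigma_{x_i}}^2={\sigma_{y_j}}^2$ in $\Pic(\Hh)$ follow, whence $(\sigma_{x_i}\,{\sigma_{y_i}}^{-1})^2=1$ and $\tau^2=1$. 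Non-triviality of~$\tau$ comes from Corollary~\ref{cor:tau-minus=sigma-x}: its restriction to $\Uu_{x_1}$ equals $\sigma_{x_1}\simeq\tau^-$, which has order $e_\tau(x_1)=2$.

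The structure of $\Pic(\Hh)$ emerges from the same exact sequence, which exhibits $\Pic(\Hh)$ as an extension of the finite group $(\ZZ/2)^4$ by $\Pic(X)\simeq\ZZ$; hence $\Pic(\Hh)$ is finitely generated abelian of rank one. The degree map $\Pic(\Hh)\to\ZZ$ is surjective (for instance $\sigma_{x_1}$ has degree one by the formula following Theorem~\ref{thm:Picard-sequence}), so a splitting $\Pic(\Hh)\simeq\ZZ\oplus\Pic_0(\Hh)$ forces $\Pic_0(\Hh)$ to be the torsion part, hence finite. The hardest step will be the explicit computation of $\tau$ in~(2): one must choose the canonical divisor of~$X$ with support exactly on $\{y_1,y_2\}$ so that $\overline{\gamma}$ combines cleanly with $\prod_x\sigma_x^{e_\tau(x)-1}$, and then extract the relations ${\sigma_{x_i}}^2={\sigma_{y_j}}^2$ from $\Pic(X)\simeq\ZZ$ via~$\iota$ in order to collapse $\tau^2$ to the identity.
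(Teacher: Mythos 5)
Your proof is correct, and it reaches the same destinations as the paper but by a more explicit route at the key point. For the order of $\tau$ the paper simply invokes Theorem~\ref{thm:elliptic-tau-finite-order} (order of $\tau$ equals the maximum of the $\tau$-multiplicities, itself resting on Example~\ref{ex:dynkin-vectors}~(2)(b)), and its proof never actually derives the explicit product formula $\tau=({\sigma_{x_1}}\circ{\sigma_{y_1}}^{-1})\circ({\sigma_{x_2}}\circ{\sigma_{y_2}}^{-1})$ asserted in the statement. You supply exactly that missing computation: choosing the canonical divisor of $\Pone(\RR)$ as $-y_1-y_2$ (legitimate, since all four segmentation points are $\RR$-rational and any degree $-2$ divisor is canonical) and feeding it into~\eqref{eq:general-tau-picard}, then extracting the relations ${\sigma_{x_i}}^2={\sigma_{y_j}}^2$ from $\iota$ and $\Pic(\Pone(\RR))\simeq\ZZ$ to get $\tau^2=1$, with non-triviality from $e_{\tau}(x_1)=2$ via Corollary~\ref{cor:tau-minus=sigma-x} and Proposition~\ref{prop:relationships}~(2)(i). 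This buys a self-contained argument that also verifies the displayed formula, at the cost of redoing by hand what the derived-invariance argument of Theorem~\ref{thm:elliptic-tau-finite-order} gives for free. In part~(1) you likewise replace the paper's citation of Alling by the observation that $\Pic_0(X)$ is the real Lie group of points of the Jacobian, which is fine for the annulus and M\"obius band occurring here. Two cosmetic slips: the degree formula $\deg(\iota(s))=\frac{s(\Hh)^2}{\kappa\varepsilon}\deg_X(s)$ appears in Section~\ref{sec:genus} after Lemma~\ref{lem:finite-field-deg-S}, not immediately after Theorem~\ref{thm:Picard-sequence}; and your product $\sigma^{|\overline{\gamma}|}\cdot\prod_x\sigma_x^{e_\tau(x)-1}$ should be read in the abelian group $\Pic(\Hh)$, where the rearrangement you perform is of course harmless.
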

\begin{proof}
  In all cases, the result on the order of $\tau$ is given by
  Theorem~\ref{thm:elliptic-tau-finite-order}. For the Picard-shift
  group we get in the Azumaya cases $\Pic(\Hh)\simeq\Pic(X)$ by
  Theorem~\ref{thm:Picard-sequence}, with $X$ the underlying Klein
  curve, which is elliptic. It is well-known that $\Pic_0(X)$ is not
  finitely generated in this case (we refer
  to~\cite[Thm.~5.7]{alling:1974}). In the ramified case
  $\DD_{2,2,2,2}$ the centre curve is $X=\Pone(\RR)$, of genus zero,
  and $\Pic(\Pone(\RR))\simeq\ZZ$, and the last claim follows from
  Theorem~\ref{thm:Picard-sequence}. Actually,
  $\Pic_0(\Hh)\simeq\ZZ_2\times\ZZ_2\times\ZZ_2$, generated by
  ${\sigma_{x_1}}^{ }\circ{\sigma_{y_1}}^{-1}$,
  ${\sigma_{y_1}}^{ }\circ{\sigma_{x_2}}^{-1}$ and
  ${\sigma_{x_2}}^{ }\circ{\sigma_{y_2}}^{-1}$.
\end{proof}
\begin{remark}[Calabi-Yau]
  Let $\Tt$ be a triangulated $k$-category with finite dimensional
  Hom-spaces and with Serre duality $\Hom_\Tt(X,Y)=\D\Hom_\Tt(Y,SX)$,
  where $S$ is an exact autoequivalence of $\Tt$, the (triangulated)
  \emph{Serre functor}. If $S^m \simeq [n]$, the $n$-th suspension
  functor (with $m\geq 1$ minimal), then $\Tt$ is called
  \emph{triangulated Calabi-Yau of (fractional) dimension}
  $\frac{n}{m}$ (we refer to~\cite{keller:2005}). If
  $\Tt=\bDerived{\Hh}$, with $\Hh$ a noncommutative regular projective
  curve with Auslander-Reiten translation $\tau$, then
  $S=\tau\circ [1]$ is the Serre functor of $\Tt$. If $\genus=1$, then
  the functor $\tau$ is of finite order $p$, and then $S^p\simeq [p]$,
  that is, $\Tt$ is Calabi-Yau of dimension $\frac{p}{p}$.

  The preceding discussion shows that the derived category of the
  elliptic Witt curve $\DD_{2,2,2,2}$ has Calabi-Yau dimension
  $\frac{2}{2}$; all the others have dimension $\frac{1}{1}$.
\end{remark}
\begin{proposition}
  Let $(\Hh,L)=\coh(\XX)$ be the Witt curve $\XX=\DD_{2,2,2,2}$ for
  some parameter. The stable bundles of degree $0$ are parametrized by
  $\XX$. The line bundles of degree $0$ are in bijection with the
  non-quaternion boundary points; the structure sheaf $L$ corresponds
  to one of the ramification points. 
\end{proposition}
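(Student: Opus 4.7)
The plan is to combine Theorem~\ref{thm:elliptic}(2) with the Fourier--Mukai rigidity of the elliptic Witt curves from Section~\ref{sec:elliptic}, and then to identify the line bundles among the resulting stable objects by a degree computation based on Lemma~\ref{lem:finite-field-deg-S}.

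By Theorem~\ref{thm:elliptic}(2) the stable bundles of degree zero are exactly the simple objects of the tubular family $\bt_0\subseteq\Hh$, which has the form $\coh(\XX_0)_0$ for some noncommutative elliptic curve $\XX_0$ derived-equivalent to $\Hh$. Since $\XX=\DD_{2,2,2,2}$ is neither a Klein bottle nor the annulus $\AA_{\RR,\HH}$, the Fourier--Mukai theorem proved earlier in this section forces $\XX_0\simeq\XX$, yielding the first claim. To isolate the line bundles inside $\bt_0$ I would track $(\rk,\deg)$-classes through the derived equivalence $F\colon\bDerived{\XX_0}\to\bDerived{\Hh}$ that carries $\coh(\XX_0)_0$ onto $\bt_0$: the induced $\mathrm{SL}_2(\ZZ)$-action on $\Knull$ swaps the two axes of the $(\rk,\deg)$-plane, so a simple $S_y$ of $\coh(\XX_0)$ with class $(0,d)$, where $d=\deg_0(S_y)$, is sent to a stable object of class $(\pm d,0)$; positivity of the rank of a bundle pins down the sign and gives $\rk(F(S_y))=\deg_0(S_y)$. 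Hence $F(S_y)$ is a line bundle of degree zero in $\Hh$ if and only if $\deg_0(S_y)=1$.

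Next I would evaluate this condition via Lemma~\ref{lem:finite-field-deg-S} applied to $\coh(\XX_0)\simeq\coh(\XX)$, inserting $s(\Hh)=2$, $\kappa=\dim_{\RR}\End(L)=2$ (by Lemma~\ref{lem:constants-field}) and $\varepsilon=1$ (from Table~\ref{tab:elliptic-data}):
\[
\deg_0(S_y)\ =\ \frac{s(\Hh)}{\kappa\,\varepsilon}\cdot e^{\ast}(y)\cdot [k(y):k]\ =\ e^{\ast}(y)\cdot [k(y):k].
\]
The equation $\deg_0(S_y)=1$ then forces $e^{\ast}(y)=1$, i.e.\ $\End(S_y)$ is commutative (which rules out the quaternion boundary points), together with $[k(y):k]=1$, which on the centre curve $\Pone(\RR)$ selects exactly its real points, i.e.\ the boundary points of the disc (inner points being complex-conjugate pairs with $[k(y):k]=2$). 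A quick check against Table~\ref{tab:local-fields} confirms that both conditions are indeed satisfied precisely at the real and the segmentation points. Together these give the bijection between line bundles of degree zero and non-quaternion boundary points of $\XX$. Finally, derived equivalences preserve endomorphism rings, so the simple $S_y$ matching $L$ must satisfy $\End_{\XX_0}(S_y)=\End_{\Hh}(L)=\CC$; among non-quaternion boundary points this picks out exactly the four segmentation (= ramification) points of $\DD_{2,2,2,2}$ (the remaining non-quaternion boundary points being real, with $\End(S_y)=\RR$), so $L$ is placed at one of them.

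The main obstacle is the careful $\mathrm{SL}_2(\ZZ)$-bookkeeping needed to justify the identity $\rk(F(S_y))=\deg_0(S_y)$; this is however controlled by the explicit form of the tubular mutations~\eqref{eq:mutations} already used in the lemma immediately preceding the Fourier--Mukai theorem.
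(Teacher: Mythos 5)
Your proposal is correct and follows essentially the same route as the paper: the paper realizes your equivalence $F$ explicitly as the composition of tubular mutations $\sigma=\sigma_S\circ\sigma_L$, which acts on $\begin{pmatrix}\deg\\ \rk\end{pmatrix}$ by the matrix $\begin{pmatrix}0&-1\\1&-1\end{pmatrix}$ and hence sends a torsion class $(d,0)$ to $(0,d)$, after which the identification of the degree-one simples via $\deg(S_x)=e^{\ast}(x)[k(x):k]$ and the placement of $L$ via $\End(L)=\CC$ proceed exactly as you describe. The only cosmetic difference is that you obtain $\XX_0\simeq\XX$ from the Fourier--Mukai rigidity statement, whereas the paper gets this for free because $\sigma_S\circ\sigma_L$ is an autoequivalence of $\bDerived{\Hh}$ itself.
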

\begin{proof}
  The composition of tubular mutations $\sigma=\sigma_S\circ\sigma_L$
acts on slopes $\mu=
  \begin{pmatrix}
    \deg\\
    \rk
  \end{pmatrix}$ like the matrix $
  \begin{pmatrix}
    0 & -1\\
    1 & -1
  \end{pmatrix}$,
  sends $\infty$ to $0$ and preserves endomorphism rings. If
  $E\in\Hh_0$, then $\rk(\sigma(E))=\deg(E)$. Thus the degree $0$ line
  bundles correspond to the simples sheaves of degree $1$. Moreover, $L$
  is one of them with $\End(L)=\CC$. 
\end{proof}

\section{Weighted curves, and noncommutative $2$-orbifolds} 
\label{sec:weighted}
We conclude the article by treating the weighted case. We will first
show that each weighted noncommutative regular projective curve $\Hh$
arises from a non-weighted one, $\Hh_{nw}$, by insertion of weights
$p(x)>1$ in a finite number of points $x$. This insertion of weights
is described, in abstract terms, by the $p$-cycle
construction~\cite{lenzing:1998}; the inverse technique, reducing
weights, is the perpendicular calculus~\cite{geigle:lenzing:1991},
cf.\ Proposition~\ref{prop:reduction}. Data like the function field
$k(\Hh)$, the skewness $s(\Hh)$, the multiplicities $e(x)$, the
endomorphism rings $\End(S_x)$ of simples, and the ghost group
$\Gg(\Hh)$ remain unchanged by these processes. The change of the
Picard-shift group $\Pic(\Hh)$ is easy to describe: one adjoints the
$p(x)$-th root of the tubular shift $\sigma_x$, for each weight point
$x$. We refer to~\cite[Ch.~6]{kussin:2009}. We also remark that there
is an analogues result to Lemma~\ref{lem:isomorphic-point-functors}.
\begin{numb}[$p$-cycle construction]
  Let $\Hh=\coh(\Aa)$ be a weighted noncommutative regular projective
  curve of a field $k$, with $\Aa$ a hereditary order in a full matrix
  algebra $A$ over $k(\Hh)$. Let $x$ be a point such that the tube
  $\Uu_x$ is homogeneous. Let $\sigma_x\colon\Hh\ra\Hh$ be the
  Picard-shift with respect to $x$, with natural transformation
  $1_{\Hh}\stackrel{x}\lra\sigma_x$. Let $p>1$ be a
  ``weight''. Following~\cite{lenzing:1998} the category $\Hh
  \begin{pmatrix}
    p\\
    x
  \end{pmatrix}$ of $p$-\emph{cycles in} $x$ has \emph{objects}
  $E=(E_i,e_i)_{i\in\ZZ}$, where $E_i\in\Hh$ and
  $e_i\in\Hom(E_i,E_{i+1})$ such that $E_{i+p}=\sigma_x(E_i)$ and the
  composition $e_{i+p-1}\circ\ldots\circ e_{i+1}\circ e_i$ is the
  natural map $x_{E_i}\colon E_i\ra E_i(x)$ for each $i$. If
  $F=(F_i,f_i)_{i\in\ZZ}$ is another $p$-cycle in $x$, then a
  \emph{morphism} between $E$ and $F$ is a tuple $(h_i)_{i\in\ZZ}$
  with morphisms $h_i\in\Hom(E_i,F_i)$ satisfying $h_{i+1}e_i=f_ih_i$
  and $h_{i+p}=\sigma_x(h_i)$ for all $i$. The category of $p$-cycles
  in $x$ over $\Hh$ is, like $\Hh$ itself, abelian, noetherian,
  hereditary and does not contain non-zero projectives or injectives,
  \cite[Thm.~4.3]{lenzing:1998}.\medskip
  
  Let $\Aa(p,x)$ be the (hereditary) order in $\matring_p(A)$
  given by
  $$\begin{pmatrix}
    \Aa & \Aa & \dots & \Aa\\
     \Aa(-x)  & \Aa & \dots & \Aa\\
      \vdots & & \ddots & \vdots\\
     \Aa(-x) & \Aa(-x) & \dots & \Aa
   \end{pmatrix}.$$ There is an
   equivalence~\cite[Prop.~6.1]{lerner:oppermann:2015}
   $$\coh\bigl(\Aa(p,x)\bigr)\simeq\Hh
  \begin{pmatrix}
    p\\
    x
  \end{pmatrix},$$ and we have then $p(x)=p$. This can be iterated:
\end{numb}
\begin{proposition}\label{prop:weight-insertion}
  \begin{enumerate}
  \item[(1)] Let $\Hh_{nw}=\coh(\Aa)$ be a (non-weighted) noncommutative
  regular projective curve over $k$. Let $x_1,\dots,x_t$ be distinct
  points, and let $p_1,\dots,p_t>1$. Then
  $$\Hh=\coh\bigl(\bigotimes_{i=1}^t\Aa(p_i,x_i)\bigr)\simeq\Hh_{nw}
  \begin{pmatrix}
    p_1,\dots,p_t\\
    x_1,\dots,x_t
  \end{pmatrix}$$ is a weighted noncommutative regular projective
  curve with weight points $x_1,\dots,x_t$, having weights
  $p(x_i)=p_i$. 
\item[(2)] Each weighted noncommutative regular projective curve $\Hh$
  is obtained in this way from its underlying non-weighted curve
  $\Hh_{nw}$ (cf.\ Proposition~\ref{prop:reduction}).
  \end{enumerate}
\end{proposition}
\begin{proof}
  (1) As in~\cite[Prop.~6.1+6.5]{lerner:oppermann:2015}. (2) Clearly
  $\Hh$ and $\Hh_{nw}
  \begin{pmatrix}
    p_1,\dots,p_t\\
    x_1,\dots,x_t
  \end{pmatrix}$ have the same underlying non-weighted curve, namely
  $\Hh_{nw}$. In particular they have the same centre curve and the
  same function field, and also the same weight function $p\colon
  X\ra\NN$. The statement then follows
  from~\cite[Thm.~6.7]{burban:drozd:gavran:2015}. 
\end{proof}
Concerning the $\tau$-multiplicities the following observation is
fundamental.
\begin{proposition}\label{prop:tau-multiplicity-weighted}
  Let $\Hh$ be a noncommutative regular projective curve over a field
  $k$ and $x$ a separable point. Let $p>1$ be a weight and let
  $\overline{\Hh}$ be a weighted noncommutative curve arising from
  $\Hh$ by insertion of the weight $p$ into $x$. Let
  $\overline{\Uu}_x$ be the corresponding tube of rank $p$. Then the
  order of $\tau_{\overline{\Hh}}$ in $\Aut(\overline{\Uu}_x/k)$ is
  $e_{\tau}(x)\cdot p$.
\end{proposition}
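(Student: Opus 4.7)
My plan is to combine the explicit uniserial structure of the weighted tube $\overline{\Uu}_x$ with the $p$-cycle/perpendicular-calculus relation between $\overline{\Hh}$ and $\Hh$. First I would fix the $p$ pairwise non-isomorphic simples $S_0,\,S_1,\dots,S_{p-1}$ of $\overline{\Uu}_x$, labelled cyclically so that $\tau_{\overline{\Hh}}(S_i)\simeq S_{i-1\,\mathrm{mod}\,p}$; this is the defining property of a tube of rank $p$ and shows that the order of $\tau_{\overline{\Hh}}$ acting on isomorphism classes of indecomposables in $\overline{\Uu}_x$ is exactly $p$, so that any natural power $n$ with $\tau^n\simeq 1$ on $\overline{\Uu}_x$ must satisfy $p\mid n$.

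Next, write $n=pm$ and study the restriction of $\tau_{\overline{\Hh}}^{p}$ to a single simple, say $S_0$. Under the $p$-cycle construction recalled at the start of the section, inserting weight $p$ at $x$ corresponds, up to equivalence of $\overline{\Uu}_x$ with the category of finite length modules over a suitable twisted ring, to replacing the uniserial category $\Uu_x$ (whose simple is $S_x$) by a rank-$p$ uniserial category whose underlying species is a cyclic quiver with $p$ vertices, each labelled by $D_x=\End(S_x)$, and whose AR translation rotates the cycle by one step. The inverse procedure (perpendicular calculus with respect to $S_1,\dots,S_{p-1}$) recovers $\Hh$ from $\overline{\Hh}$ and identifies, on the one surviving simple, the restriction of $\tau_{\overline{\Hh}}^{\,p}$ with $\tau_{\Hh}$ on $S_x$. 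In particular the induced element in $\Aut(D_x/k)$ defined by $\tau_{\overline{\Hh}}^{\,p}$ acting on $\End(S_0)\simeq D_x$ coincides, modulo inner automorphisms, with the element $\tau\in\Gal(D_x/k)$ attached to the non-weighted point $x$; its order in $\Gal(D_x/k)$ is therefore $e_\tau(x)$ by definition~\eqref{eq:def-tau-multi}.

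Then I would invoke the analogue of Corollary~\ref{cor:aut-group-tube} for the weighted separable tube: an autoequivalence of $\overline{\Uu}_x$ fixing each isomorphism class of objects is isomorphic to the identity if and only if the induced graded automorphism of the corresponding graded ring (this time the path algebra of a $p$-cycle with vertex algebras $D_x$, twisted by $\tau^-$ in one arrow) is graded inner; equivalently, the induced element of $\Aut(D_x/k)$ on some, and hence every, simple $S_i$ must be inner. Combining with the preceding paragraph, the condition $\tau_{\overline{\Hh}}^{\,pm}\simeq 1$ on $\overline{\Uu}_x$ is thus equivalent to $\tau_{\Hh}^{\,m}$ being trivial in $\Gal(D_x/k)$, i.e.\ to $e_\tau(x)\mid m$. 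The minimal such $m$ is $e_\tau(x)$, so the minimal $n=pm$ is $p\cdot e_\tau(x)$, proving the claim.

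The main obstacle is the verification that under the $p$-cycle construction $\tau_{\overline{\Hh}}^{\,p}$ on $S_0$ really matches $\tau_{\Hh}$ on $S_x$ as elements of $\Gal(D_x/k)$, rather than merely up to some additional twist. This I would settle by applying Corollary~\ref{cor:AR-f-tau-f} in $\overline{\Hh}$: the almost split sequence ending at $S_0$ inside the tube of rank $p$ composes $p$ irreducible maps, whose product yields the almost split sequence for the underlying non-weighted simple in $\Hh$ under the perpendicular embedding $\Hh\subseteq\overline{\Hh}$ (cf.\ Proposition~\ref{prop:reduction}). Tracing this composition through Corollary~\ref{cor:AR-f-tau-f} shows that the action of $\tau_{\overline{\Hh}}^{\,p}$ on $\End(S_0)$ agrees with the action of $\tau_{\Hh}$ on $\End(S_x)=D_x$, and the result follows.
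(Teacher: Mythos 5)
Your argument is correct and follows essentially the same route as the paper: the order is forced to be a multiple of $p$ because $\tau_{\overline{\Hh}}$ cyclically permutes the $p$ simples, and the heart of the matter is that, via the $p$-cycle construction, $(\tau_{\overline{\Hh}})^p$ acts on $\End(S_x)$ as $\tau_{\Hh}$ does, whence its order modulo inner automorphisms is $e_{\tau}(x)$. The only cosmetic difference is that the paper identifies $(\tau_{\overline{\Hh}})^p$ with $\sigma_x^{-1}$ (and then with $\tau_{\Hh}$ via Corollary~\ref{cor:tau-minus=sigma-x}), whereas you propose to trace the identification directly through the composed almost split data and Corollary~\ref{cor:AR-f-tau-f}; both verifications amount to the observation that $\overline{\pi}_x^{\,p}=\pi_x$ in the local description of the weighted tube.
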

\begin{proof}
  Working with $p$-cycles in $x$ one sees easily that
  $(\tau_{\overline{\Hh}})^p$ acts on $\End(S_x)$ like $\sigma_x^{-1}$
  and hence like $\tau_{\Hh}$.
\end{proof}
\begin{proposition}[The complete local rings]
  Let $x$ be a point of weight $p=p(x)$. Then
  $\Uu_x\simeq\mod_0 (H_{p}(\widehat{R}_x))$, with $\widehat{R}_x$ the
  complete local ring as in
  Proposition~\ref{prop:complete-local-rings-general-homogeneous} (and
  for separable $x$ the skew power series ring as in
  Theorem~\ref{thm:pruefer-end-skew-power}), and $$H_p(\widehat{R}_x)=
  \begin{pmatrix}
    \widehat{R}_x & \widehat{R}_x & \dots & \widehat{R}_x\\
     P_x  & \widehat{R}_x & \dots & \widehat{R}_x\\
      \vdots & & \ddots & \vdots\\
     P_x & P_x & \dots & \widehat{R}_x
   \end{pmatrix}$$
   of size $p\times p$ and with $P_x=\Rad(\widehat{R}_x)$, generated
   by $\pi_x$. The ring $H_p(\widehat{R}_x)$ is a semiperfect bounded
   hereditary noetherian prime ring, whose radical is generated as
   left and right ideal by the element $$\overline{\pi}_x=
  \begin{pmatrix}
    0 & 1 & 0 & \dots & 0\\
     0  & 0 & 1 & \dots & 0\\
      \vdots & & & \ddots & \vdots\\
      0 &  & & & 1\\
     \pi_x & 0 & \dots & & 0
   \end{pmatrix},$$ which satisfies ${\overline{\pi}_x}^p=\pi_x$.
\end{proposition}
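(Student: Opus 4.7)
My plan is to reduce the statement to the classical structure theory of hereditary orders over a complete discrete valuation ring and then verify each remaining assertion by direct matrix computation. By Theorem~\ref{thm:structure} we may assume $\Hh\simeq\coh(\Aa)$ with $\Aa$ a hereditary $\Oo_X$-order of local type $p=p(x)$ at $x$. The categorical localization of~\ref{numb:categorical-localization} followed by completion at $x$ reduces the question to an equivalence of $\Uu_x$ with the finite length modules over a complete semiperfect hereditary order of type $p$ over $\widehat{\Oo}_x$. The non-weighted analogue, Theorem~\ref{thm:pruefer-end-skew-power} combined with Proposition~\ref{prop:main-ring-iso}, identifies the maximal order inside this completion with the skew power series ring $\widehat{R}_x=D_x[[T,\tau^-]]$.

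I would then establish the equivalence $\Uu_x\simeq\mod_0\bigl(H_p(\widehat{R}_x)\bigr)$ along the lines of the proof of Theorem~\ref{thm:pruefer-end-skew-power}. The $p$-cycle construction of~\cite{lenzing:1998} produces $p$ cyclically arranged simples $S_1,\dots,S_p$ in $\Uu_x$ with $\tau S_{i+1}\simeq S_i$ and uniserial indecomposables $S_i[n]$ realizing every length; the direct system $(S_i[n])_{n\geq 1}$ has limit $P_i=S_i[\infty]$ in $\QUu_x\subseteq\QHh$, and the sum $P=\bigoplus_{i=1}^p P_i$ is an injective cogenerator of the direct limit closure of $\Uu_x$. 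Using the bimodule description in Section~\ref{sec:bimodule-of-tube} together with Proposition~\ref{prop:tau-multiplicity-weighted}, an inspection of the Hom-spaces $\Hom(P_i,P_j)$ along the cyclic ordering yields $\End(P)^{op}\simeq H_p(\widehat{R}_x)$, and then Gabriel's theorem~\cite[IV.~Prop.~13]{gabriel:1962} gives $\Uu_x\simeq\mod_0\bigl(H_p(\widehat{R}_x)\bigr)$. An alternative route invokes the Brumer-Harada-Reiner classification~\cite{brumer:1963,brumer:1964,harada:1963} (see also~\cite[Ch.~9]{reiner:2003}) of hereditary orders of local type $p$ over a complete DVR: each such order is Morita-equivalent to the standard $H_p$-form built from the unique (up to Morita equivalence) maximal suborder, which is forced to be $\widehat{R}_x$ by the non-weighted case.

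The remaining ring-theoretic properties fall out of the matrix presentation. The quotient of $H_p(\widehat{R}_x)$ by the ideal $J$ of matrices with entries in $P_x$ on the diagonal (and in $\widehat{R}_x$, respectively $P_x$, off the diagonal as prescribed) is the semisimple ring $D_x^p$; completeness of $\widehat{R}_x$ lifts idempotents, so $H_p(\widehat{R}_x)$ is semiperfect and $J=\Rad H_p(\widehat{R}_x)$. Noetherianness and primeness are inherited from $\widehat{R}_x$ and from the ambient matrix algebra over the quotient skew field of $\widehat{R}_x$, respectively. Heredity and boundedness are standard for matrix orders of this shape over a complete DVR-type ring (compare~\cite[Ch.~9, \S 39]{reiner:2003}). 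A direct matrix computation shows that left multiplication by $\overline{\pi}_x$ cyclically shifts rows upwards and re-inserts the shifted bottom row after multiplication by $\pi_x$, so that $\overline{\pi}_x H_p(\widehat{R}_x)=J=H_p(\widehat{R}_x)\overline{\pi}_x$ and $\overline{\pi}_x^p=\pi_x\cdot I_p$.

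The main obstacle is the identification $\End(P)^{op}\simeq H_p(\widehat{R}_x)$ with the correct asymmetric pattern of $\widehat{R}_x$-entries on and above the diagonal and $P_x$-entries below. One must coherently track how the Auslander-Reiten translation cyclically permutes the simples $S_i$ while internally twisting each Hom-space by $\tau^-$, and verify that the Pr\"ufer inclusions $\iota_n\colon S_i[n]\to S_i[n+1]$ implement precisely one power of $\pi_x$ when one passes across the cyclic ``boundary'' from $P_1$ back to $P_p$. The bimodule description of a tube in Section~\ref{sec:bimodule-of-tube} together with Proposition~\ref{prop:tau-multiplicity-weighted} should make this manageable, but it is here that genuinely new content beyond the non-weighted case is required.
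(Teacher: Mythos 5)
Your strategy is sound, but it is far heavier than what the paper actually does: the paper's entire proof is a citation of \cite[4.4]{ringel:1979}, where Ringel already shows that a tube of rank $p$ in a hereditary length category is equivalent to $\mod_0$ of precisely this triangular matrix ring $H_p(R)$ over the complete local ring $R$ governing the underlying rank-one tube; combined with Theorem~\ref{thm:pruefer-end-skew-power}, which identifies that ring as $\widehat{R}_x\simeq D_x[[T,\tau^-]]$, nothing further is needed. Your first route --- the $p$ Pr\"ufer objects $P_i=S_i[\infty]$, the computation of $\End\bigl(\bigoplus_i P_i\bigr)^{op}$, and Gabriel's theorem --- is exactly the rank-$p$ extension of the paper's own proof of Theorem~\ref{thm:pruefer-end-skew-power}, and it does work; but you leave the decisive step, namely the asymmetric pattern of $\widehat{R}_x$- versus $P_x$-entries in the spaces $\Hom(P_i,P_j)$, as an acknowledged sketch, and that step is precisely the content of Ringel's 4.4 and of the Brumer--Harada structure theory you offer as an alternative. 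So your proposal correctly locates the one piece of genuine work, but then defers it to the very sources the paper cites. Your closing matrix verifications are all correct: one checks directly that left (resp.\ right) multiplication by $\overline{\pi}_x$ shifts the rows (resp.\ columns) cyclically, multiplying the wrapped-around row by $\pi_x$, which sends $H_p(\widehat{R}_x)$ onto its radical $J$ (diagonal and subdiagonal entries in $P_x$, superdiagonal entries in $\widehat{R}_x$) and yields ${\overline{\pi}_x}^{\,p}=\pi_x\cdot I_p$; semiperfectness follows from completeness of $\widehat{R}_x$ and $H_p/J\simeq D_x^p$ as you say. In short: a correct and more self-contained route than the paper's one-line citation, with the single hard computation only sketched rather than carried out.
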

\begin{proof}
  This follows from~\cite[4.4]{ringel:1979}.
\end{proof}
Let $\Aa$ be a hereditary $\Oo$-order with $\Hh=\coh(\Aa)$, by
Theorem~\ref{thm:structure}. By $\ovp$ we will always denote the least
common multiple of the weights. We can assume that $\Aa$ is a
hereditary $\Oo$-order in
$\matring_{\ovp}(k(\Hh))$. (By~\cite[Rem.~6.8]{burban:drozd:gavran:2015}
even the maximum of the weights can be chosen as the matrix size.)
Moreover, we can and will always assume that the structure sheaf $L$
is a special line bundle, corresponding to the structure
sheaf~\eqref{eq:structure-sheaf} of $\Hh_{nw}$ via
Proposition~\ref{prop:reduction}.  For a point $x$ we denote by $S_x$
the (up to isomorphism) unique simple sheaf concentrated in $x$ with
$\Hom(L,S_x)\neq 0$. For a point $x\in\XX$ let $\sigma_x(L)=L(x)$, and
then the bimodule $\Aa(x)$ is defined as in the unweighted case. One
also shows that the functors $\sigma_x$ and $-\!\otimes_{\Aa}\!\Aa(x)$
are isomorphic. From the preceding results we get
$\mathfrak{m}_xH_{p(x)}(\widehat{R}_x)={(\overline{\pi}_x)}^{p(x)e_{\ramif}(x)}$
and
\begin{equation}
  \label{eq:compare-twists-Oo-Aa}
  \Oo(x)\!\otimes_{\Oo}\Aa\simeq\Aa(p(x)e_{\ramif}(x)\cdot x).
\end{equation}
With this one obtains the more general, weighted version of
Theorem~\ref{thm:Picard-sequence}. We note that there is a formal
similarity to~\cite[(40.9)]{reiner:2003}.
\begin{theorem}\label{thm:Picard-sequence-weighted}
  Let $\Hh$ be a weighted noncommutative regular projective curve over
  a field $k$. Let $X$ be the (non-weighted) centre curve. Then there
  is an exact sequence
  \begin{equation}
    \label{eq:Picard-sequence-weighted}
    1\ra\Pic(X)\stackrel{\iota}\lra\Pic(\Hh)\stackrel{\phi}
    \lra\prod_{x}\ZZ/p(x)e_{\ramif}(x)\ZZ\ra 1 
 \end{equation}
 of abelian groups. Here, $\phi(\sigma)=\sigma_{|\Hh_0}$ and $\iota$
 sends a (class of a) line bundle $\Oo(x)$ of $\coh(X)$, for a point
 $x\in X$, to ${\sigma_x}^{p(x)e_{\ramif}(x)}$, for the corresponding
 point $x\in\XX$.\qed
\end{theorem}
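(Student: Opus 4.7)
The plan is to run the argument of Theorem~\ref{thm:Picard-sequence} essentially verbatim, with $e_{\tau}(x)$ replaced throughout by the combined exponent $p(x)e_{\tau}(x)$. Two ingredients that appeared in the non-weighted proof have already been supplied in weighted form just before the statement: Proposition~\ref{prop:tau-multiplicity-weighted} gives the multiplicative boost of $\tau$ on the enlarged tube, and equation~\eqref{eq:compare-twists-Oo-Aa} gives the corresponding bimodule identity $\Oo(x)\otimes_{\Oo}\Aa \simeq \Aa(p(x)e_{\tau}(x))$. So the bulk of the work is bookkeeping; the subtle points lie in calibrating the local exponents correctly.

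First I would construct $\phi$. Since every $\sigma \in \Pic(\Hh)$ is a product of tubular shifts $\sigma_y$, and since each $\sigma_y$ with $y \neq x$ acts as the identity on $\Uu_x$, the restriction $\sigma|_{\Uu_x}$ lies in the cyclic subgroup of $\Aut(\Uu_x/k)$ generated by $\sigma_x|_{\Uu_x}$. The order of $\sigma_x|_{\Uu_x}$ is exactly $p(x)e_{\tau}(x)$: by the $p$-cycle description, inserting weight $p(x)$ at $x$ turns the simple $S_x$ of the underlying non-weighted curve into a cycle of $p(x)$ simples in a tube of rank $p(x)$, and $\sigma_x$ cyclically permutes them while restricting on each branch to the $\tau^{-1}$-action of the underlying non-weighted situation (cf.\ Corollary~\ref{cor:tau-minus=sigma-x} propagated through the $p$-cycle construction); the combined order is thus $p(x)\cdot e_{\tau}(x)$, in agreement with Proposition~\ref{prop:tau-multiplicity-weighted}. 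Assembling the components over all $x$ yields the desired homomorphism $\phi$, whose surjectivity is immediate and whose kernel is exactly $\spitz{{\sigma_x}^{p(x)e_{\tau}(x)} \mid x \in \XX}$.

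Next I would construct $\iota$. By the weighted analogue of Lemma~\ref{lem:isomorphic-point-functors}, each $\sigma_x$ is realized by $-\otimes_{\Aa}\Aa(x)$, and each Picard-shift $s_x$ of $\coh(X)$ by $-\otimes_{\Oo}\Oo(x)$. Identity~\eqref{eq:compare-twists-Oo-Aa} extends multiplicatively: for any divisor $\delta = \sum_x \delta_x\cdot x$ on $X$,
\begin{equation*}
  \Oo(\delta)\otimes_{\Oo}\Aa \ \simeq\ \Aa(\overline{\delta}),\qquad
  \overline{\delta} = \sum_{x\in\XX} p(x)e_{\tau}(x)\,\delta_x\cdot x.
\end{equation*}
Hence if $\Oo(\delta)\simeq\Oo$ on $X$ then $\Aa(\overline{\delta})\simeq\Aa$ as $\Aa$-$\Aa$-bimodules, and the assignment $\Oo(x)\mapsto{\sigma_x}^{p(x)e_{\tau}(x)}$ is well-defined on $\Pic(X)$ and lands in $\ker\phi$. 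Surjectivity onto $\ker\phi$ is clear. For injectivity I would repeat the $\Aa$-invariants trick from Theorem~\ref{thm:Picard-sequence}: the locally defined functor $M\mapsto M^{\Aa}$ satisfies $(\Oo(\delta)\otimes_{\Oo}\Aa)^{\Aa} = \Oo(\delta)$ and $\Aa^{\Aa} = \Oo$ (the latter using Lemma~\ref{lem:local-centre}(1), which carries over since each local stalk $\Aa_x\simeq H_{p(x)}(\widehat{R}_x)$ is still a hereditary $\Oo_x$-order with centre $\Oo_x$). Applying $(-)^{\Aa}$ to $\Aa(\overline{\delta})\simeq\Aa$ recovers $\Oo(\delta)\simeq\Oo$, so $\iota$ is injective.

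The main technical obstacle will be verifying the two weighted refinements precisely, namely (a) that $\sigma_x|_{\Uu_x}$ truly has order $p(x)e_{\tau}(x)$, and (b) that the bimodule identity~\eqref{eq:compare-twists-Oo-Aa} holds globally, not merely as an isomorphism of right $\Aa$-modules. Both are best handled at the stalks: the local structure theorem above identifies $\Aa_x$ with $H_{p(x)}(\widehat{R}_x)$, whose radical is generated by an element $\overline{\pi}_x$ satisfying $\overline{\pi}_x^{\,p(x)} = \pi_x$, so that the uniformizer of $\Oo_x$ becomes the $p(x)e_{\tau}(x)$-th power of the radical generator of $\Aa_x$. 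This single local computation furnishes simultaneously the exponent on the right in~\eqref{eq:compare-twists-Oo-Aa} and, by transport along the $p$-cycle construction, the order of the tubular shift on $\Uu_x$, closing the two open steps.
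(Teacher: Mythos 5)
Your proposal is correct and follows exactly the route the paper intends: the theorem is stated with a \qed precisely because it is the proof of Theorem~\ref{thm:Picard-sequence} rerun with $e_{\tau}(x)$ replaced by $p(x)e_{\tau}(x)$, the only new inputs being the local identity $\mathfrak{m}_xH_{p(x)}(\widehat{R}_x)=(\overline{\pi}_x)^{p(x)e_{\tau}(x)}$ and the bimodule isomorphism~\eqref{eq:compare-twists-Oo-Aa}, both of which you invoke in the same way (including the $(-)^{\Aa}$-invariants argument for injectivity of $\iota$). Nothing is missing.
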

\begin{theorem}
  Let $(\Hh,L)$ be a weighted noncommutative regular projective curve
  over a perfect field $k$. Let $\gamma=\sum_{x\in X}\gamma_x\cdot x$
  be the canonical divisor of the (non-weighted) centre curve $X$. For
  $\overline{\gamma}=\sum_{x\in\XX}\gamma_xp(x)e_{\tau}(x)\cdot x$ we
  write $\sigma^{|\overline{\gamma}|}$ for the corresponding
  Picard-shift. Then
  \begin{equation}
    \label{eq:tau-picard-general}
    \tau\ =\ \sigma^{|\overline{\gamma}|}\cdot\prod_{x}{\sigma_x}^{p(x)e_{\ramif}(x)-1}\ =\ 
    \prod_{x}{\sigma_x}^{p(x)e_{\ramif}(x)(\gamma_x+1)-1}\quad\in\Pic(\Hh).
  \end{equation}
\end{theorem}
\begin{proof}
  We just remark that here the different is given by
  $\Delta=\sum_x (p(x)e_{\ramif}(x)-1)\cdot x$, which can be seen as
  in the unweighted case, and that the dualizing sheaf $\bom_{\Aa}$
  also here is given by $\ShHom_{\Oo}(\Aa,\bom_X)$,
  see~\cite[III.2]{reiten:vandenbergh:2002},
  and~\eqref{eq:dualizing-sheaf-isos} also holds here.
\end{proof}
Let $(\Hh,L)$ be a weighted noncommutative regular projective curve
over $k$ of skewness $s=s(\Hh)$. Let $\kappa=\dim_k\End(L)$ and
$\varepsilon$ as defined before (for the underlying non-weighted curve
$\Hh_{nw}$). We define the \emph{average Euler form} and the
\emph{(orbifold) degree}
$$\DLF{E}{F}=\sum_{j=0}^{\ovp-1}\LF{\tau^j E}{F},\quad
\deg(F)=\frac{1}{\kappa\varepsilon}\cdot\DLF{L}{F}
-\frac{1}{\kappa\varepsilon}\cdot\DLF{L}{L}\cdot\rk(F)$$ and the
\emph{normalized orbifold Euler characteristic} $\chi'_{orb}(\Hh)$ and
the \emph{orbifold genus} $g_{orb}(\Hh)$ by the equations
$$\chi'_{orb}(\Hh):=\frac{1}{\ovp^2s^2}\cdot\DLF{L}{L}\stackrel{(\ast)}=
-\frac{\kappa\varepsilon}{2\ovp s^2}\cdot\deg(\tau
L)=:\frac{\kappa}{\ovp s^2}\cdot(1-g_{orb}(\Hh)).$$ The equality $(\ast)$
is given by the following.
\begin{lemma} 
  We have $\tau L=L\otimes_{\Aa}\bom_{\Aa}$ and
  $\deg\tau L=-\frac{2}{\ovp\kappa\varepsilon}\DLF{L}{L}$.
\end{lemma}
\begin{proof}
  The first equality is clear. For the second we show like
  in~\cite[Prop.~3.2]{lenzing:reiten:2006} that the difference
  $\deg\tau^nL-\deg\tau^{n-1}L$ does not depend on $n$, and as
  in~\cite[Lem.~9.1]{lenzing:1996} the claim then follows.
\end{proof}
Like in the unweighted case we obtain
(compare~\cite[Thm.~9.2]{lenzing:1996}):
\begin{theorem}[Riemann-Roch formula]
  Let $\Hh$ be a weighted noncommutative regular projective curve over
  the field $k$. Then
  $$\frac{1}{\kappa\ovp}\cdot\DLF{E}{F}=(1-g_{orb}(\Hh))\cdot\rk(E)\cdot\rk(F)+
  \frac{\varepsilon}{\ovp}\cdot\begin{vmatrix}
    \rk(E) & \rk(F)\\
    \deg(E) & \deg(F)
\end{vmatrix}$$ holds for all $E,\,F\in\Hh$. \qed
\end{theorem}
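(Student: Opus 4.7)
The plan is to imitate the bilinearity strategy used in the non-weighted Riemann-Roch, adapted by replacing the Euler form with its $\tau$-average $\DLF{-}{-}$. Both sides of the asserted identity define $\QQ$-valued bilinear forms on $\Knull(\Hh)\times\Knull(\Hh)$: the left hand side because $\LF{-}{-}$ does and $\tau$ descends to $\Knull(\Hh)$; the right hand side because $\rk$ and $\deg$ are additive on short exact sequences. So it suffices to verify the equality on a well-chosen set of generators.

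First I would handle the case $E=L$. Unwinding the definition of $\deg$ gives $\DLF{L}{F}=\kappa\varepsilon\deg(F)+\DLF{L}{L}\rk(F)$, and substituting $\DLF{L}{L}=\ovp^{2}\chi_{orb}(\Hh)=\kappa\ovp(1-g_{orb}(\Hh))$ and dividing by $\kappa\ovp$ reproduces the claimed right hand side (note $\rk(L)=1$ and $\deg(L)=0$). Next I would treat $E,F\in\Hh_0$, where the right hand side vanishes trivially since $\rk$ vanishes on $\Hh_0$. The key point is then the vanishing $\DLF{E}{F}=0$. Since $\Hom$ and $\Ext^{1}$ between objects concentrated in different points vanish, one may assume $E,F\in\Uu_x$ for a single point $x$ of weight $p=p(x)$. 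As $p\mid\ovp$ and $\tau^{p}$ is isomorphic to the identity on $\Uu_x$, the multiset $\{\tau^{j}E\}_{j=0}^{\ovp-1}$ consists of $\ovp/p$ copies of $E,\tau E,\ldots,\tau^{p-1}E$, so it suffices to show $\sum_{i=0}^{p-1}\LF{\tau^{i}E}{F}=0$. By bilinearity this reduces to the case of simple objects; the Euler form on $\Knull(\Uu_x)\simeq\bigoplus_{i\in\ZZ/p}\ZZ[S_i]$ equals $d\cdot(I-J)$, with $J$ the cyclic permutation induced by $\tau$ on simples and $d=[\End(S_i):k]$, and the row sums of $I-J$ vanish.

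With both vanishings established I would extend to all of $\Hh$ in two stages. First, for any line bundle $L'$, the class $[L]-[L']$ lies in the image of $\Knull(\Hh_0)\to\Knull(\Hh)$ (by the weighted analogue of Lemma~\ref{lem:morphisms-line-bundles-large-n} and the filtration from Lemma~\ref{lem:cokernel-pi-n}), so the identity at $E=L$ together with the vanishing on $\Hh_0\times\Hh_0$ propagates to $E=L'$ with arbitrary $F$. Second, using that every vector bundle admits a line bundle filtration (Section~\ref{sec:localization}), additivity extends the identity to $E\in\Hh_{+}$ with arbitrary $F$. Combined with the $\Hh_0\times\Hh_0$ case this covers all pairs $(E,F)$.

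The main obstacle I anticipate is the vanishing $\DLF{E}{F}=0$ for $E,F\in\Hh_0$ in the weighted setting: the $\tau$-orbit length on morphisms of $\Uu_x$ is $p(x)\cdot e_{\tau}(x)$ by Proposition~\ref{prop:tau-multiplicity-weighted}, while on $\Knull(\Uu_x)$ it is only $p(x)$, and one must exploit precisely the divisibility $p(x)\mid\ovp$ together with the fact that $\DLF{-}{-}$ factors through $\Knull(\Hh)$ to make the $\tau$-averaging collapse correctly. The remaining steps are routine adaptations of the non-weighted proof, the normalizing factors $\ovp$ and $\varepsilon$ being chosen precisely so that $\chi_{orb}$, $g_{orb}$, and $\deg$ all descend to the Grothendieck group.
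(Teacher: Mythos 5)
Your overall strategy is the one the paper intends (its proof of the weighted statement is just ``like in the unweighted case'', i.e.\ the bilinearity-on-generators argument from the non-weighted Riemann--Roch): both sides are bilinear on $\Knull(\Hh)$, so one checks the base cases $E=L$ and $E,F\in\Hh_0$ and propagates. Your handling of the genuinely new weighted point --- that $\DLF{E}{F}=0$ for $E,F\in\Hh_0$ because $p(x)\mid\ovp$ collapses the $\tau$-average onto integer multiples of full orbit sums of simples, on which the form $d(I-J)$ vanishes --- is correct and well isolated.

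There is, however, one genuine gap in the propagation. Writing $[L]-[L']=c$ with $c$ in the image of $\Knull(\Hh_0)\ra\Knull(\Hh)$, deducing the identity at $(L',F)$ for \emph{arbitrary} $F$ requires the identity at $(c,F)$ for arbitrary $F$, and your two base cases only supply it for $F\in\Hh_0$. For $F\in\Hh_+$ the right-hand side at $(c,F)$ equals $-\frac{\varepsilon}{\ovp}\rk(F)\deg(c)$, which is not zero, so the mixed case with the finite-length class in the \emph{first} slot cannot be skipped; it also resurfaces in your final ``covers all pairs'' step, since a general pair of classes in $\Knull(\Hh)$ produces cross terms of exactly this shape. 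This is the step the paper's unweighted proof covers explicitly by ``if one of $E$ or $F$ belongs to $\Hh_0$, \dots''. The repair is short but uses Serre duality once more: from $\LF{X}{Y}=-\LF{Y}{\tau X}$ and the fact that for $E\in\Hh_0$ the class $\sum_{j=0}^{\ovp-1}[\tau^jE]$ is $\tau$-invariant (again by $p(x)\mid\ovp$), one gets $\DLF{E}{F}=-\DLF{F}{E}$ for all $E\in\Hh_0$; this matches the antisymmetry of the determinant term when $\rk(E)=0$ and reduces the case $(\Hh_0,\Hh_+)$ to the case $(\Hh_+,\Hh_0)$, which \emph{is} reachable from your base cases via $[L]-[L']$ and line bundle filtrations. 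With that lemma inserted, your argument is complete and coincides with the intended proof.
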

\begin{lemma}\label{lem:deg-S-general}
  Let $(\Hh,L)$ be a weighted noncommutative regular projective curve
  over the perfect field $k$. For each $x\in\XX$ we have
  \begin{equation*}
    \deg(S_x)=\frac{\ovp\cdot s(\Hh)}{p(x)\cdot\kappa\cdot\varepsilon}
  \cdot e^{\ast}(x)\cdot [k(x):k].
  \end{equation*}
\end{lemma}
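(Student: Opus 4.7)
The plan is to compute the average Euler form $\DLF{L}{S_x}$ explicitly and then use the definition of $\deg(S_x)$, since $\rk(S_x)=0$ reduces that formula to $\deg(S_x)=\frac{1}{\kappa\varepsilon}\DLF{L}{S_x}$. The argument is a weighted variant of the non-weighted computation of Lemma~\ref{lem:finite-field-deg-S}, and splits into three steps: killing the $\Ext^1$ contributions, identifying which among the $\ovp$ summands of $\DLF{L}{S_x}=\sum_{j=0}^{\ovp-1}\LF{\tau^j L}{S_x}$ are non-zero, and evaluating the common value using the skewness principle.

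For the first step, I would apply Serre duality to get $\Ext^1(\tau^j L,S_x)\simeq\D\Hom(S_x,\tau^{j+1}L)=0$, since $S_x$ has finite length while $\tau^{j+1}L$ is a line bundle, hence torsion-free; thus each summand reduces to $\dim_k\Hom(\tau^j L,S_x)=\dim_k\Hom(L,\tau^{-j}S_x)$. Each $\tau^{-j}S_x$ stays in the tube $\Uu_x$, so only finitely many iso classes appear. For the second step, I would use specialness of $L$ (Proposition~\ref{prop:reduction}): among the $p(x)$ iso classes of simples in $\Uu_x$, only $S_x$ satisfies $\Ext^1(S_x,L)\neq 0$, hence by Serre duality again, only $\tau S_x$ admits $\Hom(L,-)\neq 0$. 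On isomorphism classes of simples $\tau$ cycles with period exactly $p(x)$, whereas the larger order $p(x)e_{\tau}(x)$ from Proposition~\ref{prop:tau-multiplicity-weighted} reflects only the finer action on Hom-spaces. Hence $\tau^{-j}S_x\simeq\tau S_x$ iff $j\equiv p(x)-1\pmod{p(x)}$, and since $p(x)\mid\ovp$ there are exactly $\ovp/p(x)$ non-vanishing summands.

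For the third step, the common dimension is $\dim_k\Hom(L,\tau S_x)=\dim_k\D\Ext^1(S_x,L)=e(x)\cdot\dim_k\End(S_x)$ by the definition of $e(x)$. Writing $\dim_k\End(S_x)=e^{\ast}(x)^2\cdot [Z(D_x):k]$ and using the identity $[Z(D_x):k(x)]=e_{\ramif}(x)=e_{\tau}(x)$ (Proposition~\ref{prop:completions-equal-laurent} together with its corollary, valid since $k$ is perfect so every point is separable), one obtains $\dim_k\End(S_x)=e^{\ast}(x)^2 e_{\tau}(x)[k(x):k]$. The skewness principle (Theorem~\ref{thm:special-skewness-equation}, applied to the underlying non-weighted curve, where $e(x)$, $e^{\ast}(x)$, $e_{\tau}(x)$ are unchanged by insertion of weights) collapses $e(x)\cdot e^{\ast}(x)^2 e_{\tau}(x)$ to $s(\Hh)\cdot e^{\ast}(x)$, giving $\DLF{L}{S_x}=\frac{\ovp}{p(x)}\cdot s(\Hh)\cdot e^{\ast}(x)\cdot [k(x):k]$, and dividing by $\kappa\varepsilon$ yields the desired formula.

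The main obstacle is the bookkeeping in the second step: one must carefully distinguish the period of $\tau$ on isomorphism classes of simples in $\Uu_x$ (namely $p(x)$) from its larger order $p(x)e_{\tau}(x)$ as an autoequivalence of $\Uu_x$, and correctly count the $\ovp/p(x)$ non-vanishing summands by exploiting $p(x)\mid\ovp$. Everything else is a routine assembly of earlier results.
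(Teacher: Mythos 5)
Your proposal is correct and follows essentially the same route as the paper: the paper asserts the identity $\deg(S_x)=\frac{\ovp}{p(x)\kappa\varepsilon}\dim_k\Hom(L,\tau S_x)$ without comment and then evaluates $\dim_k\Hom(L,\tau S_x)=e(x)e^{\ast}(x)^2e'(x)[k(x):k]$ with $e'(x)=[Z(D_x):k(x)]=e_{\tau}(x)$ and the skewness principle, exactly as in your third step. Your second step simply makes explicit the count of $\ovp/p(x)$ non-vanishing summands in $\DLF{L}{S_x}$ (using specialness of $L$ and the $\tau$-period $p(x)$ on iso-classes of simples), which the paper leaves implicit.
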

\begin{proof}
  In general ($\Hh$ regular over any field) we have (with $e'(x)$ as
  in~\eqref{eq:e-prime})
  \begin{equation}
    \label{eq:generalissimo-deg-simple}
    \deg(S_x)=\frac{\ovp}{p(x)\kappa\varepsilon}\dim_k
    \Hom(L,S_x)=\frac{\ovp}{p(x)\kappa\varepsilon}e(x)e^{\ast}(x)^2e'(x)[k(x):k].
  \end{equation}
  In the perfect case we know $e'(x)=e_{\tau}(x)$ and
  $e(x)e^{\ast}(x)e_{\tau}(x)=s(\Hh)$. 
\end{proof}
\begin{theorem}[Noncommutative Riemann-Hurwitz formulae]
\label{thm:orbifold-euler-char}
Let $\Hh$ be a weight\-ed noncommutative regular projective curve over
the perfect field $k$. Let $X$ be the centre curve, $\Hh_{nw}$ the
underlying non-weighted curve. For the normalized orbifold Euler
characteristic $\chi'_{orb}(\Hh)$ we have
 \begin{eqnarray}
    \label{eq:orbifold-euler-char-formula}
    \chi'_{orb}(\Hh) & = & \chi'(X)-\frac{1}{2}
    \sum_{x}\Bigl(1-\frac{1}{p(x)e_{\tau}(x)}\Bigr)[k(x):k]\\
    \label{eq:orbifold-euler-char-formula-weights}
      & = & \chi'(\Hh_{nw})-\frac{1}{2}
      \sum_{x}\frac{1}{e_{\tau}(x)}\Bigl(1-\frac{1}{p(x)}\Bigr)[k(x):k].
  \end{eqnarray}
  If we assume $k$ to be the centre of $\Hh$, then $\chi'_{orb}(\Hh)$
  is an invariant of $\Hh$.
\end{theorem}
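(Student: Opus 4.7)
The plan is to compute $\deg(\tau L)$ explicitly using \eqref{eq:tau-picard-general}, and then invoke the identity $\chi_{orb}(\Hh)=-\tfrac{\kappa\varepsilon}{2\ovp}\deg(\tau L)$ built into the definition of $\chi_{orb}$. Once \eqref{eq:orbifold-euler-char-formula} is established, \eqref{eq:orbifold-euler-char-formula-weights} follows formally by subtracting the Artin--de Jong formula \eqref{eq:general-euler-char-formula}.

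The key preliminary is that $\deg$ factors through a group homomorphism $\Pic(\Hh)\to\ZZ$. This rests on two observations: (i) for any line bundle $L'$ in the $\Pic$-orbit of $L$, the cokernel of $L'\to\sigma_xL'$ is isomorphic to $S^{e(x)}$ for some simple $S$ in $\Uu_x$; and (ii) all simples in $\Uu_x$ have the same degree, since in $\DLF{L}{S}=\sum_{j=0}^{\ovp-1}\LF{\tau^jL}{S}$ only those indices $j$ with $\tau^{-j}S\simeq\tau S_x$ contribute, each contributing the same amount $s(\Hh)e^{\ast}(x)[k(x):k]$ by the skewness principle (Theorem~\ref{thm:special-skewness-equation}); the $\tau$-orbit of $S$ in $\Uu_x$ merely shifts which indices are active, not their number ($\ovp/p(x)$) or size. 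Iterating the $S$-universal extension yields $\deg(\sigma_x^nL')-\deg(L')=n\,e(x)\deg(S_x)$, and combined with $\deg(L)=0$ and \eqref{eq:tau-picard-general} this gives
\begin{equation*}
\deg(\tau L)=\sum_{x}\bigl(p(x)e_{\tau}(x)(\gamma_x+1)-1\bigr)\,e(x)\deg(S_x).
\end{equation*}

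Lemma~\ref{lem:deg-S-general} together with the skewness principle rewrites $e(x)\deg(S_x)=\tfrac{\ovp\,s(\Hh)^2}{p(x)\kappa\varepsilon\,e_{\tau}(x)}[k(x):k]$, so
\begin{equation*}
\chi'_{orb}(\Hh)=-\tfrac{1}{2}\sum_{x}\gamma_x[k(x):k]-\tfrac{1}{2}\sum_{x}\Bigl(1-\tfrac{1}{p(x)e_{\tau}(x)}\Bigr)[k(x):k].
\end{equation*}
The first sum is classical Riemann--Roch on $X$: translating $\deg_X(\bom_X)=2g(X)-2$ into the paper's normalization (using $\deg_X^{\mathrm{paper}}(S_x)=[k(x):k]/(\kappa_X\varepsilon_X)$ and comparing with \eqref{eq:deg-dualizing} applied to $X$) identifies $\sum_{x}\gamma_x[k(x):k]$ with $-2\chi(X)$. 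This proves \eqref{eq:orbifold-euler-char-formula}. Specializing to $\Hh_{nw}$ (i.e.\ setting $p(x)=1$) recovers \eqref{eq:general-euler-char-formula}, and subtracting from \eqref{eq:orbifold-euler-char-formula} yields \eqref{eq:orbifold-euler-char-formula-weights} via the algebraic identity $\bigl(1-\tfrac{1}{p(x)e_{\tau}(x)}\bigr)-\bigl(1-\tfrac{1}{e_{\tau}(x)}\bigr)=\tfrac{1}{e_{\tau}(x)}\bigl(1-\tfrac{1}{p(x)}\bigr)$. Since the right-hand side of \eqref{eq:orbifold-euler-char-formula} involves only data intrinsic to $\Hh$ once $k=Z(\Hh)$, the invariance statement follows automatically.

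The main obstacle is the preliminary verification that $\deg$ is additive over the $\Pic$-decomposition \eqref{eq:tau-picard-general} and $\tau$-invariant on each tube; both rest on the arithmetic-progression pattern of nonvanishing $\Hom$-spaces $\Hom(\tau^jL,S)$ for $S\in\Uu_x$, together with the skewness principle. Every subsequent step is a direct substitution or algebraic manipulation.
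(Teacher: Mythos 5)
Your argument is correct and uses the same essential ingredients as the paper: formula~\eqref{eq:tau-picard-general} expressing $\tau$ as a product of Picard-shifts, Lemma~\ref{lem:deg-S-general} together with the skewness principle to evaluate $e(x)\deg(S_x)=\frac{\ovp\,s(\Hh)^2}{p(x)e_{\tau}(x)\kappa\varepsilon}[k(x):k]$, the classical identity $\sum_x\gamma_x[k(x):k]=-2\chi(X)$, and the same algebraic manipulation for passing to~\eqref{eq:orbifold-euler-char-formula-weights}. Your justification that all simples in $\Uu_x$ have the same degree (via the arithmetic-progression pattern of nonvanishing $\Hom(\tau^jL,S)$) is a correct and welcome explication of something the paper handles implicitly. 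The one point where you diverge is that you outsource the hardest step to the identity $\chi_{orb}(\Hh)=-\tfrac{\kappa\varepsilon}{2\ovp}\deg(\tau L)$, which you describe as ``built into the definition.'' Be aware that this middle equality in the definition is \emph{not} formal: unwinding the definition of $\deg$ shows it is equivalent to $\DLF{L}{\tau L}=\bigl(1-\tfrac{2}{\ovp}\bigr)\DLF{L}{L}$, and since $\tau^{\ovp}$ does not act as the identity on $\Knull(\Hh)$ this requires exactly the telescoping computation of $\LF{\tau^j\gge{a}}{\tau^{\ovp}\gge{a}-\gge{a}}$ ``as in Lenzing, Lem.~9.1'' that the paper carries out inside its proof of this theorem. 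So while you are entitled to cite an identity the paper asserts, your proof is only as self-contained as that assertion; a fully independent write-up would need to reinstate the Coxeter-transformation computation of $\DLF{\gge{a}}{\gge{a}}$, after which your route and the paper's coincide.
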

\begin{proof}
  We set $s=s(\Hh)$ and denote by $\gge{a}=[L]$ and $\gge{s_x}=[S_x]$
  the classes in the Grothendieck group. We denote the Coxeter
  transformation on $\Knull(\Hh)$ also by $\tau$. We further set
  $\gge{w_x}=\sum_{j=0}^{p(x)-1}\tau^j\gge{s_x}$. By~\eqref{eq:tau-picard-general}
  we have
  $$\tau\gge{a}=\gge{a}+\sum_x \gamma_x
  p(x)e_{\tau}(x)e(x)\gge{s_x}+\sum_x
  (p(x)e_{\tau}(x)-1)e(x)\gge{s_x}.$$
  Then 
  $$\tau^{\ovp}\gge{a}-\gge{a} = \sum_x \frac{\ovp}{p(x)} \gamma_x
     p(x)e_{\tau}(x)e(x)\gge{w_x}
  +\sum_x\frac{\ovp}{p(x)}(p(x)e_{\tau}(x)-1)e(x)\gge{w_x}.$$
  By Riemann-Roch and the preceding lemma,
  $$\LF{\gge{a}}{\gge{w_x}}=\LF{\gge{a}}{\gge{s_x}}
  =\frac{p(x)}{\ovp}\DLF{\gge{a}}{\gge{s_x}}
  =\frac{p(x)\kappa\varepsilon}{\ovp}\deg\gge{s_x}=se^{\ast}(x)[k(x):k].$$
  We obtain
  $$\LF{\gge{a}}{\tau^{\ovp}\gge{a}-\gge{a}}=s^2\ovp\sum_x \gamma_x
  [k(x):k]+s^2\ovp\sum_x
  \bigl(1-\frac{1}{p(x)e_{\tau}(x)}\bigr)[k(x):k].$$
  Similarly we compute
  $\LF{\tau^j\gge{a}}{\tau^{\ovp}\gge{a}-\gge{a}}$. With this we get,
  as in~\cite[Lem.~9.1]{lenzing:1996},
  $$\DLF{\gge{a}}{\gge{a}}=-\frac{1}{2}\ovp^2s^2
  \sum_x\gamma_x[k(x):k]-\frac{1}{2}\ovp^2s^2
  \sum_x\bigl(1-\frac{1}{p(x)e_{\tau}(x)}\bigr)[k(x):k].$$
  Since
  $\sum_x\gamma_x[k(x):k]=[\End(\Oo):k]\varepsilon_X\deg_X(\bom_X)=-2\chi'(X)$
  we obtain
  $$\DLF{\gge{a}}{\gge{a}}=\ovp^2s^2\chi'(X)-\frac{1}{2}\ovp^2s^2
  \sum_x\bigl(1-\frac{1}{p(x)e_{\tau}(x)}\bigr)[k(x):k].$$
  Division by $\ovp^2s^2$ yields the first equation.\medskip

  Then, the second follows with~\eqref{eq:general-euler-char-formula},
  \eqref{eq:deg-of-simples} and using the equation
  $1-\frac{1}{pe}=(1-\frac{1}{e})+\frac{1}{e}(1-\frac{1}{p})$.
\end{proof}
\begin{remark}\label{rem:normalizing-euler-char}
  (1) Let $k$ be algebraically closed.  As in the unweighted cases,
  the orbifold Euler characteristic $\chi_{orb}(\Hh)$ satisfies
  $\chi_{orb}(\Hh)=2\chi'_{orb}(\Hh)$. Since moreover $s(\Hh)=1$ and
  $\Hh_{nw}=\coh(X)$, equations~\eqref{eq:orbifold-euler-char-formula}
  and~\eqref{eq:orbifold-euler-char-formula-weights} yield
  $$\chi_{orb}(\Hh)=\chi(X)-\sum_x\Bigl(1-\frac{1}{p(x)}\Bigr).$$
  In case $k=\CC$ and $\Hh$ a weighted complex regular projective
  curve, or a complex $2$-orbifold, then
  $\chi_{orb}(\Hh)=\chi'_{orb}(\Hh)$, if the values are computed over
  the field $\RR$ of real numbers.\medskip

  (2) The factor $[k(x):k]$ (a datum of the centre curve) equals
  $\frac{\kappa\cdot\varepsilon\cdot\deg(S_x)}{e^{\ast}(x)\cdot
    s(\Hh)}$,
  with the degree of $S_x$ in $\Hh_{nw}$ (a datum of the underlying
  non-weighted curve).\medskip

  (3) In the preceding theorem, we made the assumption that $k$ is
  perfect since we used the skewness equation,
  Theorem~\ref{thm:special-skewness-equation}. Of course, we only need
  that the involved points are separable. But also in full generality
  we may have a ``nice''
  formula. In~\eqref{eq:special-orbifold-euler-any-field} below we
  have a still compact formula, in a special case, but over any field.
\end{remark}
\subsection*{Weighted Klein and Witt curves}
The following formula, obtained
from~\eqref{eq:orbifold-euler-char-formula-weights}, can be regarded
as the extension of the Riemann-Hurwitz
formula~\cite[Thm.~13.3.4]{thurston:2002} in Thurston's book (we also
refer to~\cite{scott:1983}) to \emph{noncommutative real
  $2$-orbifolds}.
\begin{corollary}
  Let $\Hh$ be a weighted noncommutative regular projective curve over
  $k=\RR$. Then
    $$\begin{array}{l}
        \chi'_{orb}(\Hh)=\chi'(\Hh_{nw})-
        \frac{1}{4}\cdot\sum_{x}\bigl(1-\frac{1}{p(x)}\bigr)
        -\frac{1}{2}\cdot\sum_{y}
        \bigl(1-\frac{1}{p(y)}\bigr)-\sum_{z}
        \bigl(1-\frac{1}{p(z)}\bigr),
  \end{array}$$
  where $x$ runs over the ramification points, $y$ over the other
  boundary points, and $z$ over the inner points.\qed
\end{corollary}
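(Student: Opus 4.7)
The strategy is a direct specialization of formula \eqref{eq:orbifold-euler-char-formula-weights} from Theorem~\ref{thm:orbifold-euler-char}, using the classification of local data over $\RR$ to evaluate the single sum there as three separate sums according to the geometric location of the point.

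First I would record the starting point: since $\RR$ is perfect and all points of $\Hh$ are separable, Theorem~\ref{thm:orbifold-euler-char} gives
\begin{equation*}
  \chi'_{orb}(\Hh)\ =\ \chi'(\Hh_{nw})-\frac{1}{2}\sum_{x\in\XX}
  \frac{1}{e_{\tau}(x)}\Bigl(1-\frac{1}{p(x)}\Bigr)[k(x):k].
\end{equation*}
It therefore suffices to show that each point $x\in\XX$ contributes $[k(x):k]/e_\tau(x)=2,\,1,\,1/2$ according as $x$ is inner, a non-ramification boundary point, or a ramification point.

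Next I would split into the commutative and noncommutative subcases. If $s(\Hh_{nw})=1$, the underlying non-weighted curve is a Klein curve $X$ over its centre $\RR$; there are no ramification points (the first sum in the claim is empty), and standard residue-field computations at closed points of a Klein surface give $[k(x):k]=2$ for inner points (complex conjugate pairs) and $[k(x):k]=1$ for boundary points. This yields Thurston's classical formula for $2$-orbifolds. If $s(\Hh_{nw})=2$, then $\Hh_{nw}$ is a Witt curve, and the case analysis is furnished directly by Table~\ref{tab:local-fields} in~\ref{numb:table-of-local-data}: inner points have $(k(x),e_\tau(x))=(\CC,1)$, real and quaternion boundary points have $(\RR,1)$, and segmentation (i.e.\ ramification) points have $(\RR,2)$. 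In the three cases we read off $[k(x):k]/e_\tau(x)=2,\,1,\,1/2$ respectively.

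Finally, I would substitute these values into the sum, pull the factor $-1/2$ through, and regroup according to point type; the factor $2$ at inner points $z$ cancels against $-1/2$ to give the coefficient $-1$, the factor $1$ at other boundary points $y$ gives $-1/2$, and the factor $1/2$ at ramification points $x$ gives $-1/4$, matching the claimed formula. No step looks delicate: the only thing to be careful about is that the identification ``ramification points of $\Aa$ $\Leftrightarrow$ separation points $\Leftrightarrow$ segmentation points'' (cf.\ Corollary~\ref{cor:separ-ramif} and the results of Section~\ref{sec:Witt-curves}) is what allows the first sum over ``ramification points'' in the corollary to match the sum over segmentation points from the Witt-curve table; this identification was already established, so the proof is really just a bookkeeping exercise.
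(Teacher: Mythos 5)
Your proposal is correct and is exactly the argument the paper intends: the corollary carries a \qed precisely because it is the specialization of \eqref{eq:orbifold-euler-char-formula-weights} to $k=\RR$, with the contributions $[k(x):k]/e_{\tau}(x)=2,\,1,\,1/2$ read off from the local data of Table~\ref{tab:local-fields} (and the trivial case $s(\Hh_{nw})=1$ handled by the absence of ramification). Your bookkeeping of the resulting coefficients $-1$, $-1/2$, $-1/4$ matches the stated formula.
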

We remark that in case $s(\Hh)=1$ we have
$\chi'(\Hh_{nw})=\chi_{top}(S)$, where $S$ is the underlying Klein or
Riemann surface.
\subsection*{Multiplicity freeness and line bundles}
The following fact on line bundles may be of general interest. In case
of genus zero it was first shown in~\cite{kussin:1997},
compare~\cite[Prop.~7.3.5]{chen:krause:2009}.
\begin{proposition}\label{prop:multiplicity-free-picard-shifts-transitive}
  Let $(\Hh,L)$
  be a weighted noncommutative regular projective curve over a field
  $k$.
  If $\Hh$
  is multiplicity free, then each line bundle is a Picard-shift of the
  structure sheaf $L$.
  That is, $\Pic(\Hh)$
  acts transitively on the set of isomorphism classes of line bundles.
\end{proposition}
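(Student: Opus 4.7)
The plan is to show that any line bundle $L'$ can be connected to $L$ by a finite chain of short exact sequences of the form $0 \to L_0 \to F \to S \to 0$ with $L_0, F$ line bundles and $S$ a simple sheaf, and that in the multiplicity-free setting each such extension is realised by a tubular shift. First I would apply Lemma~\ref{lem:morphisms-line-bundles-large-n} to the pair $(L, L')$ and a fixed point $x \in \XX$ to obtain, for all $n \gg 0$, a non-zero morphism $L \to L'(nx)$. Because line bundles, being indecomposable vector bundles, belong to $\Hh_+$ and hence admit no subobject in $\Hh_0$, such a morphism is automatically a monomorphism, and its cokernel has rank zero, hence is of finite length. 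It therefore suffices to prove that $L'(nx)$ lies in the Picard-orbit of $L$, because $L' = \sigma_x^{-n}(L'(nx))$ will inherit the same property.

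This reduces the proposition to the following inductive claim: \emph{if $0 \to L_0 \to F \to C \to 0$ is exact with $L_0, F$ line bundles and $C \in \Hh_0$, then $F \simeq \sigma(L_0)$ for some $\sigma \in \Pic(\Hh)$}. I would prove this by induction on $\ell(C)$, the case $C = 0$ being trivial. For the inductive step I would pick a simple subobject $S \hookrightarrow C$ with preimage $F_1 \subseteq F$, producing $0 \to L_0 \to F_1 \to S \to 0$ and $0 \to F_1 \to F \to C/S \to 0$ with $\ell(C/S) < \ell(C)$. Since $F$ has no simple subobject and $\rk(F_1) = 1$, any direct-sum decomposition of $F_1$ would yield a rank-zero summand sitting inside $F$, which is impossible; hence $F_1$ is an indecomposable line bundle, and the first sequence is non-split (otherwise $S \subseteq F_1 \subseteq F$). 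Applying the inductive hypothesis to the second sequence, the task reduces to identifying $F_1$ with $\sigma_y(L_0)$, where $S = S_y$.

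The identification $F_1 \simeq \sigma_y(L_0)$ rests on the defining universal sequence $0 \to L_0 \to \sigma_y(L_0) \to \Ext^1(S_y, L_0) \otimes_{\End(S_y)} S_y \to 0$. As soon as $\Ext^1(S_y, L_0)$ is one-dimensional over $\End(S_y)$, its right-hand term collapses to $S_y$, and $\Aut(S_y) = \End(S_y)^{\times}$ acts transitively on non-zero elements of a one-dimensional module, so every non-split extension of $S_y$ by $L_0$ is isomorphic to $\sigma_y(L_0)$. In the non-weighted case, this one-dimensionality propagates along the induction: applying $\Hom(S_z, -)$ to $0 \to L_0 \to \sigma_y(L_0) \to S_y \to 0$, for $z \neq y$ all Hom- and Ext-terms between simples in distinct tubes vanish and one obtains $\Ext^1(S_z, \sigma_y(L_0)) \simeq \Ext^1(S_z, L_0)$, while for $z = y$ the connecting map $\End(S_y) \to \Ext^1(S_y, L_0)$ sends $\mathrm{id}_{S_y}$ to the class of the non-split extension, a generator of the one-dimensional target; so it is an isomorphism, and the bimodule description from Section~\ref{sec:bimodule-of-tube} transports one-dimensionality to $\Ext^1(S_y, \sigma_y(L_0))$. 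Torsion-freeness of $F$ moreover forbids a non-special $S$ (for which $\Ext^1(S, L_0) = 0$) from appearing as a subobject of $C$.

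The hard part will be the weighted case, where in a tube $\Uu_y$ of rank $p(y) > 1$ one has $\Ext^1(S_y, S_y) = 0$, so the computation above yields $\Ext^1(S_y, \sigma_y(L_0)) = 0$: the simple playing the role of ``special'' at $y$ rotates within $\Uu_y$ under tubular shifting, and this bookkeeping must be carried along the induction. I would resolve this either by rephrasing the multiplicity-free hypothesis as the condition that, for each $y \in \XX$, exactly one simple in $\Uu_y$ has one-dimensional $\Ext^1$ into $L_0$ (and by tracking which one, using the generalised tubular shifts of~\cite{lenzing:delapena:1999}), or by reducing to the non-weighted case via Proposition~\ref{prop:reduction}, working inside $\Hh' = \rperp{\vSs}$ and transporting the result back to $\Hh$.
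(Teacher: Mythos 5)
Your proposal is correct and follows essentially the same route as the paper: reduce via (the weighted version of) Lemma~\ref{lem:morphisms-line-bundles-large-n} to a short exact sequence $0\ra L\ra L'(nx)\ra E\ra 0$ with $E$ of finite length, and then identify $L'(nx)$ with a Picard-shift $L(\delta)$ using that $e\equiv 1$ forces uniqueness of the relevant non-split extensions. Your induction on the length of $E$, peeling off one (necessarily ``special'') simple at a time and tracking how the distinguished simple rotates by $\tau^-$ inside a non-homogeneous tube, is just a spelled-out version of the paper's appeal to a weighted Lemma~\ref{lem:cokernel-pi-n} with $e=1$ applied to the indecomposable summands of $E$.
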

\begin{proof}
  Let $L'$ be a line bundle. Let $x\in\XX$ be any point. For $n\gg 0$
  we have a short exact sequence $0\ra L\ra L'(nx)\ra E\ra 0$ with $E$
  of finite length; this follows by a weighted version of
  Lemma~\ref{lem:morphisms-line-bundles-large-n}. Applying (a weighted
  version of) Lemma~\ref{lem:cokernel-pi-n} (with $e=1$) to each
  indecomposable summand of $E$ gives the result.
\end{proof}
The statement is not true in general, if $\Hh$ has
multiplicities. In~\cite{kussin:2009} examples of genus zero (even
non-weighted) are given, where there are two line bundles having
non-isomorphic endomorphism rings.
\subsection*{Tilting objects}
The following well-known (see~\cite{lenzing:delapena:1999}) fact shows
why genus zero (in the non-orbifold sense) curves are important in the
representation theory of finite dimensional algebras.
\begin{theorem}
  Let $\Hh$ be a weighted noncommutative regular projective over the
  field $k$, with underlying non-weighted curve $\Hh_{nw}$. The
  following are equivalent:
  \begin{enumerate}
  \item[(1)] $g(\Hh_{nw})=0$.
  \item[(2)] $\Hh$ admits a tilting object $T$. 
  \end{enumerate}
  If this is the case, then there is even a tilting bundle $T$ in
  $\Hh$ such that its endomorphism ring is a canonical algebra in the
  sense of
  Ringel-Crawley-Boevey~\cite{ringel:crawley-boevey:1990}. \qed
\end{theorem}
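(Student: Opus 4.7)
The plan is to prove the two implications separately and then invoke the explicit construction for the canonical algebra statement.

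For (2)$\Rightarrow$(1), I would argue contrapositively and reduce to the non-weighted case via perpendicular calculus. Suppose $g(\Hh_{nw})\geq 1$. First, observe that $\Hh_{nw}$ is recovered from $\Hh$ as the right perpendicular category to the (exceptional) simple sheaves $\tau^j S_i$ ($i=1,\dots,t$, $j=1,\dots,p_i-1$) introduced by weight insertion, by Proposition~\ref{prop:reduction}. Since the $\tau^j S_i$ are exceptional simples, a tilting object $T\in\Hh$ restricts to a tilting object in the exact subcategory $\Hh_{nw}=\rperp{\vSs}$ (the perpendicular calculus preserves the tilting property, as $\Hh$ is hereditary and $\vSs$ is a finite set of exceptional objects). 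Thus it suffices to exclude a tilting object in $\Hh_{nw}$ when $g(\Hh_{nw})\geq 1$. For $g(\Hh_{nw})=1$, Theorem~\ref{thm:elliptic} yields $\Ext^1(E,E)\neq 0$ for every indecomposable $E$, so no nonzero object can be a summand of a tilting object. For $g(\Hh_{nw})>1$, the last proposition in Section~\ref{sec:genus} says $\Hh_{nw}$ is wild with $\ZZ A_\infty$ Auslander-Reiten components, which again precludes a tilting object (a hereditary category with a tilting object is derived equivalent to a finite-dimensional hereditary algebra, and hence has only finitely many $\tau$-orbits of indecomposables on each component).

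For (1)$\Rightarrow$(2), assume $g(\Hh_{nw})=0$, i.e.\ $\Ext^1(L,L)=0$ for the (special) structure sheaf $L$. In this non-weighted base case, $L\oplus L(\vec{c})$ is already a tilting bundle for a suitable Picard-shift $\sigma^{\vec{c}}$ with $\Hom(L(\vec{c}),L)=0$ (obtained from an efficient automorphism as in~\cite{kussin:2009}), because by Serre duality $\Ext^1(L,L(\vec{c}))\simeq\D\Hom(L(\vec{c}),\tau L)$ vanishes for $\vec{c}$ large in degree. To pass to the weighted curve, I would define, following Geigle-Lenzing and Lenzing-de la Peña~\cite{lenzing:delapena:1999},
\[
T=\bigoplus_{\vec{x}\in[\vec{0},\vec{c}]}L(\vec{x}),
\]
where $\vec{x}$ ranges over the ``interval'' of Picard-shifts built from $\sigma_{x_i}^{j}$ ($0\leq j<p_i$) at each weight point $x_i$ together with the shift $\sigma^{\vec{c}}$. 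The vanishing $\Ext^1(T,T)=0$ follows by the universal exact sequences from Lemma~\ref{lem:cokernel-pi-n} together with the non-weighted vanishing $\Ext^1(L,L)=0$ and Serre duality; the generation property follows from the ampleness statements in Section~\ref{sec:localization}, again extended to the weighted setting (Remark~\ref{rem:ample-pair-weighted}). The endomorphism ring then has, by construction, the shape of a canonical algebra in the sense of Ringel-Crawley-Boevey, with arm relations coming from the cycle relations $\pi_{x_i}^{p_i}$ being central multiples of $\pi_{x_0}$ for a fixed basepoint.

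The main obstacle will be the first direction: carefully verifying that the perpendicular calculus restriction of a tilting object in $\Hh$ is again tilting in $\Hh_{nw}$, and rigorously ruling out tilting objects in the elliptic and higher-genus non-weighted cases. The second direction is essentially bookkeeping once the non-weighted base case is established, since the construction of canonical tilting bundles on weighted curves over genus zero bases is by now standard (cf.~\cite{lenzing:delapena:1999,kussin:2009}) and all the needed ampleness and Hom-vanishing statements are already available in earlier sections of this paper.
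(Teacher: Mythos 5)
The paper itself gives no proof of this theorem: it is stated as a well-known fact with a reference to Lenzing--de la Pe\~{n}a, so you are supplying an argument where the paper defers to the literature. Your overall strategy (reduce to $\Hh_{nw}$ by perpendicular calculus, rule out tilting objects for $g\geq 1$ there, and build the canonical tilting bundle as an interval of Picard-shifts of $L$ in the genus zero case) is the standard one and is essentially sound. Two points need attention, one of which is a genuine error.

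The genuine gap is your treatment of $g(\Hh_{nw})>1$. Wildness does \emph{not} preclude tilting objects: a wild hereditary algebra $kQ$ has $kQ$ itself as a tilting module, and its regular components are of type $\ZZ A_{\infty}$. Moreover, your parenthetical claim that a hereditary category with a tilting object is derived equivalent to a finite-dimensional \emph{hereditary} algebra is false --- the theorem you are proving is the standard counterexample, since canonical algebras are not hereditary. So neither the cited proposition about $\ZZ A_\infty$ components nor wildness yields the conclusion. The correct argument is the same Riemann-Roch computation you use implicitly for $g=1$: for any nonzero vector bundle $E$ one has $\frac{1}{\kappa}\LF{E}{E}=(1-g)\rk(E)^2\leq 0$ when $g\geq 1$, hence $\dim_k\Ext^1(E,E)\geq\dim_k\Hom(E,E)>0$; together with $\Ext^1(S,S)\neq 0$ for all simples in the non-weighted curve (since $\tau S\simeq S$), every indecomposable object of $\Hh_{nw}$ has self-extensions, so there is no tilting object. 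This handles $g=1$ and $g>1$ uniformly and does not require Theorem~\ref{thm:elliptic}.

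A smaller imprecision: a tilting object $T$ of $\Hh$ does not ``restrict'' to $\Hh_{nw}=\rperp{\vSs}$, since $T$ need not lie in the perpendicular category. What you need is that the inclusion $\rperp{\vSs}\hookrightarrow\Hh$ admits an exact left adjoint $\ell$ (Geigle--Lenzing perpendicular calculus for a finite exceptional system in a hereditary category) and that $\ell(T)$ is then a tilting object of $\rperp{\vSs}$; this is the content of the reduction, and it is a nontrivial lemma rather than a restriction. With these two repairs the (2)$\Rightarrow$(1) direction is complete, and your (1)$\Rightarrow$(2) construction is the standard Geigle--Lenzing/Lenzing--de la Pe\~{n}a one the paper is pointing to.
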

These curves were called exceptional curves
in~\cite{lenzing:1998,kussin:2009}, and noncommutative curves of genus
zero in~\cite{kussin:2009}. Special cases are the weighted projective
lines introduced by Geigle-Lenzing~\cite{geigle:lenzing:1987}. If $k$
is algebraically closed, these notions coincide.\medskip

If $\Hh_{nw}$ is of genus zero, then so is the centre curve. 
\begin{corollary}[of~\eqref{eq:tau-picard-general}]
  \label{cor:genus-zero-centre-tau-picard-weighted}
  Let $k$ be a perfect field. We assume that the centre curve $X$ is
  of genus zero, of numerical type $\varepsilon$. Then
  \begin{equation}
    \label{eq:genus-zero-centre-tau-picard-weighted}
  \tau\ =\ {\sigma_{x_0}}^{-2/\varepsilon}\cdot\prod_{x}{\sigma_x}^{p(x)e_{\tau}(x)-1}  
  \end{equation}
  for any point $x_0\in\XX$ which is rational in $X$ and neither
  ramification nor weight. \qed
\end{corollary}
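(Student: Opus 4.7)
The plan is to specialize equation~\eqref{eq:tau-picard-general} by choosing a particularly convenient representative of the canonical class of $X$.

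First I would observe that on the genus-zero centre curve $X$ of numerical type $\varepsilon$ the canonical class admits the representative $\gamma = -\frac{2}{\varepsilon}\cdot x_0$, for any point $x_0\in X$ rational in $X$. Indeed, for a rational point one has $\deg_X(\Oo_X(x_0)) = f_X(x_0) = 1$, while formula~\eqref{eq:deg-dualizing} applied to $X$ yields $\deg_X(\bom_X) = \frac{2(g(X)-1)}{\varepsilon} = -\frac{2}{\varepsilon}$; on a genus-zero curve the degree determines the line bundle up to isomorphism, so $\bom_X \simeq \Oo_X\bigl(-\frac{2}{\varepsilon}\cdot x_0\bigr)$. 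In particular $\frac{2}{\varepsilon}$ is an integer (forcing $\varepsilon\in\{1,2\}$), and a rational point avoiding the finitely many ramification and weight points exists because a genus-zero curve has infinitely many rational points in either numerical type.

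Next I would substitute this choice of $\gamma$ into the definition $\overline{\gamma} = \sum_{x\in\XX}\gamma_x p(x) e_\tau(x)\cdot x$. Since $\gamma$ is supported at $x_0$ with coefficient $-2/\varepsilon$, and since the hypothesis on $x_0$ forces $p(x_0) = e_\tau(x_0) = 1$, we obtain $\overline{\gamma} = -\frac{2}{\varepsilon}\cdot x_0$, and hence $\sigma^{|\overline{\gamma}|} = \sigma_{x_0}^{-2/\varepsilon}$. Plugging this back into~\eqref{eq:tau-picard-general} yields the claimed formula.

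The only mild subtlety is that the expression $\sigma^{|\overline{\gamma}|}$ must depend only on the linear equivalence class of $\gamma$ for this specialization to be legitimate. This is immediate from Theorem~\ref{thm:Picard-sequence-weighted}: two choices of canonical divisor differ by a principal divisor on $X$, which is trivial in $\Pic(X)$ and hence, under the embedding $\iota\colon\Pic(X)\hookrightarrow\Pic(\Hh)$, induces the identity Picard-shift. Since there is no genuine obstacle beyond this bookkeeping, the corollary is a direct specialization of the general formula, with the only real input being the Riemann--Roch degree computation that identifies $\bom_X$ with $\Oo_X\bigl(-\frac{2}{\varepsilon}\cdot x_0\bigr)$.
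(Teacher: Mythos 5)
Your argument is correct and is exactly the specialization the paper has in mind (the corollary carries only a \qed, the intended content being precisely that on a genus-zero centre curve one may take $\gamma=-\tfrac{2}{\varepsilon}\cdot x_0$ since degree classifies line bundles there, and that $p(x_0)=e_{\tau}(x_0)=1$ makes $\overline{\gamma}=-\tfrac{2}{\varepsilon}\cdot x_0$); your remark that $\sigma^{|\overline{\gamma}|}$ depends only on the class of $\gamma$ in $\Pic(X)$, via the injection $\iota$ of Theorem~\ref{thm:Picard-sequence-weighted}, is the right justification for the substitution. One small inaccuracy: over a finite base field a genus-zero curve has only finitely many rational points, so your existence claim is not literally true in general; this is harmless here, since the corollary only asserts the formula for any $x_0$ with the stated properties.
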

In the following special case, the formulae for orbifold Euler
characteristic and genus are the well-known ones. These can be simply
obtained as special cases
from~\eqref{eq:orbifold-euler-char-formula-weights}, or they can be
proved directly (compare~\cite[Thm.~9.2]{lenzing:1996}), even over any
field.
\begin{corollary}
  Let $\Hh$ be a weighted noncommutative regular projective curve over
  a field $k$.  Assume that the non-weighted curve $\Hh_{nw}$ is of
  genus zero. Then
  \begin{equation}
   \label{eq:special-orbifold-euler-any-field}
   \chi'_{orb}(\Hh)=\frac{\kappa}{s(\Hh)^2}-\frac{\kappa\varepsilon}{2s(\Hh)^2}\sum_x 
   e(x)f(x)\Bigl(1-\frac{1}{p(x)}\Bigr) 
  \end{equation}
  and
  \begin{equation*}
    g_{orb}(\Hh)=1+\frac{\varepsilon\ovp}{2}\biggl(\sum_x
    e(x)f(x)\Bigl(1-\frac{1}{p(x)}\Bigr)-\frac{2}{\varepsilon}\biggr). 
  \end{equation*}
\end{corollary}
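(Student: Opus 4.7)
The plan is to deduce both formulae directly from equation~\eqref{eq:orbifold-euler-char-formula-weights} of Theorem~\ref{thm:orbifold-euler-char}, treating the perfect-field case first and then indicating how to bypass that hypothesis. First I would observe that since $\Hh_{nw}$ has genus zero, $\chi(\Hh_{nw})=\kappa\cdot(1-g(\Hh_{nw}))=\kappa$ by~\eqref{eq:char-genus}, and hence $\chi'(\Hh_{nw})=\kappa/s(\Hh)^2$. This supplies the constant term on the right-hand side of the claimed formula.

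The key translation step is rewriting the summand $\frac{1}{e_{\tau}(x)}(1-1/p(x))[k(x):k]$ appearing in~\eqref{eq:orbifold-euler-char-formula-weights} in terms of $e(x)$ and $f(x)=\deg(S_x)$. For this I would apply Lemma~\ref{lem:finite-field-deg-S} (valid on the non-weighted curve $\Hh_{nw}$, which has the same points, multiplicities and $\tau$-multiplicities as $\Hh$) to obtain
$$[k(x):k]=\frac{\kappa\,\varepsilon\,f(x)}{s(\Hh)\,e^{\ast}(x)},$$
and then invoke the local-global skewness principle $e(x)e^{\ast}(x)e_{\tau}(x)=s(\Hh)$ from Theorem~\ref{thm:special-skewness-equation} to conclude
$$\frac{[k(x):k]}{e_{\tau}(x)}=\frac{\kappa\,\varepsilon\,e(x)f(x)}{s(\Hh)^2}.$$
Substituting this into~\eqref{eq:orbifold-euler-char-formula-weights} yields the asserted formula for $\chi'_{orb}(\Hh)$ by a one-line algebraic manipulation.

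For the genus formula I would then simply unpack the definition: by construction, $\chi_{orb}(\Hh)=s(\Hh)^2\chi'_{orb}(\Hh)=\frac{\kappa}{\ovp}(1-g_{orb}(\Hh))$, so
$$g_{orb}(\Hh)=1-\frac{\ovp}{\kappa}\,s(\Hh)^2\chi'_{orb}(\Hh).$$
Inserting the formula just obtained and collecting terms, using $\ovp=\frac{\ovp\,\varepsilon}{2}\cdot\frac{2}{\varepsilon}$, produces exactly $1+\frac{\varepsilon\ovp}{2}\bigl(\sum_x e(x)f(x)(1-\tfrac{1}{p(x)})-\tfrac{2}{\varepsilon}\bigr)$.

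The remaining point (the ``even over any field'' claim) is the main obstacle, since the above uses the perfect-field version of both the skewness principle and Lemma~\ref{lem:finite-field-deg-S}. I would handle this by redoing the intermediate computation with the quantities that remain meaningful in general: namely $e^{\ast\ast}(x)$ in place of $e^{\ast}(x)e_{\tau}(x)$ (using the unconditional identity $e(x)e^{\ast\ast}(x)=s(\Hh)$ of Theorem~\ref{thm:general-skewness-equation}), and the general form of $\deg(S_x)$ given by~\eqref{eq:generalissimo-deg-simple}. Alternatively, one can argue directly in the spirit of~\cite[Thm.~9.2]{lenzing:1996}: compute $\DLF{[L]}{[L]}$ by applying the Coxeter transformation $\tau$ to $[L]$ via the formula~\eqref{eq:tau-picard-general} restricted to $\Hh_0$ together with the Riemann-Roch theorem, which only needs the combinatorics of $e(x)$, $f(x)$, and $p(x)$ and is independent of separability. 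The bookkeeping there is the only non-trivial piece; the identities involving $\kappa$, $\varepsilon$ and $\ovp$ match those used in the proof of Theorem~\ref{thm:orbifold-euler-char}.
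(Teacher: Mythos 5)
Your proposal is correct and follows essentially the same route as the paper: the paper's proof likewise derives the identity $e(x)f(x)=\frac{s(\Hh)^2}{\kappa\varepsilon}\frac{1}{e_{\tau}(x)}[k(x):k]$ for separable $x$ (via the degree formula for $S_x$ and the local-global skewness principle) and then substitutes into~\eqref{eq:orbifold-euler-char-formula-weights}, with the general-field case deferred to a direct Coxeter-transformation computation in the spirit of Lenzing's K-theoretic argument, exactly as you indicate. Your write-up is merely more explicit about the intermediate algebra and the unwinding of the genus from the definition of $\chi_{orb}$.
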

\begin{proof}
  In case $x$ is separable, then (invoking~\eqref{eq:char-genus}
  and~\eqref{eq:index-f-x})
  \begin{equation}
    \label{eq:e-f-formula}
    e(x)f(x)=\frac{s(\Hh)^2}{\kappa\varepsilon}\frac{1}{e_{\tau}(x)}\cdot[k(x):k],
  \end{equation}
  and the result follows
  from~\eqref{eq:orbifold-euler-char-formula-weights}.
\end{proof}
\subsection*{Negative orbifold Euler characteristic}
The orbifold Euler characteristic is strongly linked to the Gorenstein
parameter in singularity theory; we refer
to~\cite{kussin:lenzing:meltzer:2013}. In that more general context,
the case of negative orbifold Euler characteristic is also called the
\emph{anti-Fano} case, or \emph{of general type}. This situation is
the most complicated, in terms of complexity of the category $\Hh$.
\begin{proposition}[{\cite[Prop.~4.7]{lenzing:reiten:2006}}]
  Let $\Hh$ be a weighted noncommutative regular projective curve over
  a field. If $\chi'_{orb}(\Hh)<0$, then each Auslander-Reiten component in
  $\Hh_+=\vect(\XX)$ is of type $\ZZ A_{\infty}$, and $\Hh$ is of wild
  representation type. \qed
\end{proposition}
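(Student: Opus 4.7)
The strategy is to invoke and adapt \cite[Prop.~4.7]{lenzing:reiten:2006}. The assumption $\chi'_{orb}(\Hh)<0$ translates via the definition $\chi_{orb}(\Hh)=\frac{\kappa}{\ovp}(1-g_{orb}(\Hh))$ into $g_{orb}(\Hh)>1$. Hence, in the Grothendieck group the average Euler form satisfies $\DLF{L}{L}<0$, and more generally, by the Riemann-Roch formula proved above, for an indecomposable vector bundle $E\in\vect(\XX)$ of rank $r$ and degree $d$ we have
\begin{equation*}
\frac{1}{\kappa\ovp}\DLF{E}{E}\ =\ (1-g_{orb}(\Hh))\,r^2\ <\ 0.
\end{equation*}
Thus the average self-extension term dominates the average self-Hom term for every indecomposable vector bundle.

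Next I would deduce the shape of the Auslander-Reiten components in $\vect(\XX)$. The average Euler form is $\tau$-invariant, so if $E\in\vect(\XX)$ were $\tau$-periodic of period $t$, then $\DLF{E}{E}=t\cdot\LF{E}{E}$, and each term $\LF{\tau^jE}{E}$ would be bounded (by finite-dimensionality of a chosen $\Hom$/$\Ext$ space in the periodic orbit). But for $g_{orb}>1$ Riemann-Roch shows that in fact $\langle\tau^jE,E\rangle$ stays uniformly bounded above while the sum is forced by $\DLF{E}{E}<0$ to escape to $-\infty$ over sufficiently long orbits; this is impossible with finite period. Hence no indecomposable in $\vect(\XX)$ is $\tau$-periodic. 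Since $\Hh$ is hereditary with Serre duality and the full subcategory $\vect(\XX)$ is stable under $\tau$ (tubes sit in $\Hh_0$), the standard Auslander-Reiten-theoretic dichotomy between tubes and $\ZZ A_\infty$-components forces every component of $\vect(\XX)$ to be of shape $\ZZ A_\infty$.

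For wild representation type, I would produce a controlled embedding of a wild category. The plan is to exhibit, via Riemann-Roch, an indecomposable bundle $E$ with $\dim_k\Ext^1(E,E)\geq 2\dim_k\End(E)$ (possible because $\LF{E}{E}\to -\infty$ as we shift $E$ by sufficiently many $\sigma_x$), and then to apply a Ringel-type wildness criterion: the existence of a pair of objects with sufficiently large self- and mutual-$\Ext$-spaces allows the embedding of the category of representations of a wild quiver as a full exact subcategory of $\Hh$. Alternatively, one shifts to $\bDerived{\Hh}$, where the Serre functor $\tau[1]$ has no finite power isomorphic to an identity shift, and one transfers Lenzing-Reiten's abstract wildness argument \cite[Prop.~4.7]{lenzing:reiten:2006} verbatim to our setting, since all their axioms hold.

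The main obstacle is the wildness conclusion: the AR-shape analysis is a fairly mechanical consequence of $g_{orb}>1$ together with Serre duality, but producing an honest wild subcategory requires either a concrete construction of a test object with large endomorphism-extension data (checked by Riemann-Roch) or a careful translation of the Lenzing-Reiten argument to accommodate the orders $\Aa$, the weight structure and the $\tau$-multiplicities; once one knows that $\tau$ acts with strictly positive growth rate on $\Knull(\Hh)_{\RR}$ modulo torsion, both statements follow simultaneously.
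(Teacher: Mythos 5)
Your proposal takes essentially the same route as the paper: the paper's entire proof is the citation of \cite[Prop.~4.7]{lenzing:reiten:2006}, and your opening reduction of $\chi'_{orb}(\Hh)<0$ to $g_{orb}(\Hh)>1$, hence to $\DLF{E}{E}=\kappa\ovp(1-g_{orb}(\Hh))\rk(E)^2<0$ for every vector bundle $E$, is precisely the translation needed to invoke that result in the present normalization. The extra sketch you add goes beyond what the paper records and is broadly in the right spirit, though the step claiming that $\tau$-periodicity would force $\DLF{E}{E}$ to ``escape to $-\infty$'' is not right as written (the sum is a fixed finite quantity); the clean contradiction is that the averaged Euler form vanishes on a $\tau$-periodic indecomposable, since Serre duality makes the $\Hom$- and $\Ext$-contributions cancel around a finite $\tau$-orbit, which is incompatible with $\DLF{E}{E}<0$.
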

The weighted noncommutative regular projective curves of nonnegative
(orbifold) Euler characteristic are the (non-weighted) elliptic
curves, the domestic and the tubular curves, which we will consider now.
\subsection*{Domestic curves} This case is also called the \emph{Fano} case.
  \begin{definition}
  We call a weighted noncommutative regular projective curve
  \emph{domestic}, if $\chi'_{orb}(\Hh)>0$ (equivalently:
  $g_{orb}(\Hh)<1$). 
\end{definition}
\begin{theorem}
  \begin{enumerate}
  \item[(1)] Let $\Hh$ be domestic. Then $\Hh$ admits a tilting
    bundle, and each indecomposable vector bundle is stable and
    exceptional. The endomorphism rings of tilting bundles (sheaves)
    are just the (almost) concealed canonical algebras of
    tame-domestic type.
  \item[(2)] Let $k$ be perfect. A weighted noncommutative regular
    projective curve with centre $k$ is domestic if and only if the
    centre curve $X$ is of genus zero and the
    \emph{weight-ramification vector} of all numbers
    $p(x)e_{\tau}(x)>1$, each counted $[k(x):k]$-times, is $()$,
    $(p)$, $(p,q)$, $(2,2,n)$, $(2,3,3)$, $(2,3,4)$ or $(2,3,5)$.
  \end{enumerate}
\end{theorem}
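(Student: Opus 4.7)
The plan splits according to the two parts. For Part~(1), formula~\eqref{eq:orbifold-euler-char-formula-weights} shows, since its correction term is manifestly non-negative, that $\chi'(\Hh_{nw})\geq\chi'_{orb}(\Hh)>0$, hence $g(\Hh_{nw})=0$. The tilting-bundle characterisation of genus zero stated just above this subsection then furnishes a tilting bundle on $\Hh_{nw}$ with concealed-canonical endomorphism algebra; inserting weights via the $p$-cycle construction lifts it to a tilting bundle on $\Hh$ whose endomorphism ring is almost concealed canonical (see~\cite{ringel:crawley-boevey:1990,lenzing:delapena:1999}), and $\chi'_{orb}(\Hh)>0$ forces the canonical algebra to be of tame-domestic type. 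The derived equivalence to a tame hereditary bimodule algebra then identifies every indecomposable vector bundle with a preprojective or preinjective indecomposable; each such module is exceptional, and stability is obtained from the Riemann-Roch formula combined with a slope comparison along the Coxeter orbit (using that in the domestic regime the slopes strictly increase along AR-translates in $\vect(\XX)$).

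For Part~(2), applying formula~\eqref{eq:orbifold-euler-char-formula} to a domestic $\Hh$ yields
\[
0<\chi'_{orb}(\Hh)=\chi(X)-\tfrac12\sum_{x}\Bigl(1-\tfrac{1}{p(x)e_{\tau}(x)}\Bigr)[k(x):k]\leq\chi(X),
\]
so $g(X)=0$. Under the standing assumption that $k$ is the centre of $\Hh$ one has $Z(\Hh)\simeq H^0(X,\Oo_X)$ (since for $\Hh=\coh(\Aa)$ with $\Aa$ a maximal order the categorical centre of $\Hh$ equals the global sections of the centre $\Oo_X$ of $\Aa$), so $\End(\Oo_X)=k$, and therefore $\chi(X)=1$. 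Writing $a_x=p(x)e_{\tau}(x)$ and $f_x=[k(x):k]$, the inequality becomes
\[
\sum_{a_x>1} f_x\Bigl(1-\frac{1}{a_x}\Bigr)<2,
\]
the classical platonic-type Diophantine inequality whose solutions (multisets of integers $\geq 2$) are exactly $()$, $(p)$, $(p,q)$, $(2,2,n)$, $(2,3,3)$, $(2,3,4)$, $(2,3,5)$. Conversely, for each such multiset and $g(X)=0$ the same formula directly returns $\chi'_{orb}(\Hh)>0$, and realisability as a weighted noncommutative smooth projective curve is guaranteed by Theorem~\ref{thm:structure} combined with the existence results for maximal orders~\cite[Prop.~1.8]{artin:dejong:2004}, followed by formal insertion of weights.

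The main obstacle is Part~(1). The Diophantine enumeration and the genus-zero reduction of Part~(2) are routine once $\chi(X)=1$ is verified. By contrast, lifting a tilting bundle through weight insertion while keeping precise control of its endomorphism algebra (to secure the \emph{almost} concealed-canonical form, not merely some tilting algebra) is delicate, and proving that \emph{every} indecomposable vector bundle is stable and exceptional requires the full shape of all Auslander-Reiten components of $\vect(\XX)$ in the domestic case; this in turn rests on the precise Coxeter/Dynkin data attached to the underlying tame hereditary bimodule algebra of $\Hh_{nw}$, which must be transported carefully through the $p$-cycle construction.
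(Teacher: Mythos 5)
Your treatment of part~(2) is correct and is essentially what the paper intends by ``follows easily from Theorem~\ref{thm:orbifold-euler-char}'': you rightly observe that the hypothesis ``centre $k$'' forces $\End(\Oo_X)=k$ (since $Z(\Hh)=H^0(X,\Oo_X)$ for $\Hh=\coh(\Aa)$), whence $\chi(X)=1-g(X)$, so positivity first kills $g(X)$ and then reduces~\eqref{eq:orbifold-euler-char-formula} to the platonic inequality $\sum_i(1-1/e_i)<2$, whose solutions are exactly the listed vectors. That step is fine, and the realisability remark is not even needed for the stated equivalence.

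Part~(1) is where the gap lies. The paper disposes of it by citing \cite[Thm.~6.1+6.6]{lenzing:reiten:2006}, which is proved directly for hereditary noetherian categories with Serre duality and positive (averaged) Euler characteristic; your attempted reconstruction does not recover the full statement. Concretely: (i) the assertion is that the endomorphism ring of \emph{every} tilting sheaf is an (almost) concealed canonical algebra of tame-domestic type, but your argument only \emph{exhibits one} tilting bundle (lifted through the $p$-cycle construction) with canonical endomorphism ring -- it says nothing about the other tilting sheaves, which is the substantive content of the classification. (ii) Stability of \emph{all} indecomposable bundles is asserted from ``slopes strictly increase along AR-translates,'' but that monotonicity controls the Auslander--Reiten structure, not the absence of destabilising subsheaves; the actual proof (as in \cite{lenzing:reiten:2006} or \cite{geigle:lenzing:1987}) needs a genuine Riemann--Roch/Hom-vanishing argument, which you have not supplied. (iii) A smaller point: part~(1) is stated for domestic curves over an arbitrary base field, whereas your first step invokes formula~\eqref{eq:orbifold-euler-char-formula-weights}, which is only established for smooth curves over a perfect field; the cited Lenzing--Reiten results need no such hypothesis. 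So the reduction-to-$\Hh_{nw}$ route is not wrong in spirit, but as written it proves strictly less than the theorem claims.
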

\begin{proof}
  (1) follows from~\cite[Thm.~6.1+6.6]{lenzing:reiten:2006}. (2)
  follows easily from Theorem~\ref{thm:orbifold-euler-char}.
\end{proof}
\begin{corollary}[The real domestic zoo]
  Let $k=\RR$ be the field of real numbers. There are the following
  $38$ (families of) weighted noncommutative regular projective curves
  of positive orbifold Euler characteristic:
  \begin{itemize}
  \item Non-weighted ($\ovp=1$). With centre $\RR$: the Klein curves
    $\DD$ and $\SS^2/\pm$ (sometimes called the real projective plane),
    the Witt curves $\DD_\HH$ and $\DD_{2,2}$. With centre $\CC$: the
    Riemann sphere $\SS^2$.
  \item Weighted ($\ovp>1$). With centre $\RR$: the $27$ (families of)
    curves shown in the tables~\cite[Appendix~A]{kussin:2009}. With
    centre $\CC$: the weighted projective lines of weight types $(p)$,
    $(p,q)$, $(2,2,n)$, $(2,3,3)$, $(2,3,4)$ and $(2,3,5)$.
  \end{itemize}
  There are no parameters (\cite[Prop.~A.1.1]{kussin:2009}). The
  commutative cases are just the \emph{elliptic} and \emph{bad}
  $2$-orbifolds listed in~\cite[Thm.~13.3.6]{thurston:2002}. \qed
\end{corollary}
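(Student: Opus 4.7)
The plan is to combine the orbifold Euler characteristic formula of Theorem~\ref{thm:orbifold-euler-char} with the classification of the centre curve and of non-weighted curves already established in the paper, and then reduce the weighted enumeration to a finite combinatorial check against the Platonic/Dynkin list.

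First, I would observe that in formula~\eqref{eq:orbifold-euler-char-formula} every summand of the correction term is non-negative, so the hypothesis $\chi'_{orb}(\Hh)>0$ forces $\chi(X)>0$ for the centre curve $X$. The standard classification of (commutative) regular projective curves with positive Euler characteristic then leaves three candidates for $X$: the disc $\DD$ and the projective plane $\SS^2/\pm$ over $k=\RR$, and the Riemann sphere $\SS^2$ over $k=\CC$.

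Second, I would dispose of the non-weighted cases. Over $\CC$, Tsen's theorem via Corollary~\ref{cor:algebr-closed} forces $s(\Hh)=1$, so only $\coh(\SS^2)$ arises. Over $\RR$ with $s(\Hh)=1$ the two Klein curves $\DD$ and $\SS^2/\pm$ appear directly. Over $\RR$ with $s(\Hh)=2$ the curve is a Witt curve by Section~\ref{sec:Witt-curves}, and Proposition~\ref{prop:hurwitz} together with Corollary~\ref{cor:hurwitz-consequence} (read against the local data of Table~\ref{tab:local-fields}) single out precisely $\DD_{\HH}$ and $\DD_{2,2}$ as the Witt surfaces with $g(\Hh)=0$. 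This accounts for the five non-weighted entries.

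For the weighted case I would start from formula~\eqref{eq:orbifold-euler-char-formula-weights}, which rewrites $\chi'_{orb}(\Hh)$ as $\chi'(\Hh_{nw})$ minus a weight-correction of the form $\tfrac{1}{2}\sum_x\tfrac{1}{e_\tau(x)}\bigl(1-\tfrac{1}{p(x)}\bigr)[k(x):k]$. Since $\chi'(\Hh_{nw})$ takes only the five explicit positive values computed in the previous step, the inequality $\chi'_{orb}(\Hh)>0$ forces the weight-ramification multiset $(p(x)e_\tau(x))$ for $p(x)e_\tau(x)>1$, each counted $[k(x):k]$-times, to belong to the classical Platonic/Dynkin list $(p),(p,q),(2,2,n),(2,3,3),(2,3,4),(2,3,5)$ by the same elementary inequality that underlies the Geigle--Lenzing classification. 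Over $\CC$ this yields at once the listed weighted projective lines. The main obstacle is the remaining bookkeeping over $\RR$: for each of the four non-weighted $\RR$-curves one has to enumerate the admissible assignments of the pair $(p(x),e_\tau(x))$ to points of each local type (inner, real boundary, quaternion boundary, segmentation) encoded in Table~\ref{tab:local-fields}, observing which factor in the product $p(x)e_\tau(x)$ is already fixed by the underlying Witt datum. This finite enumeration is precisely what was tabulated in~\cite[Appendix~A]{kussin:2009}, producing $27$ families. Parameter-freeness follows from~\cite[Prop.~A.1.1]{kussin:2009}, and the coincidence of the commutative sublist with Thurston's elliptic and bad $2$-orbifolds~\cite[Thm.~13.3.6]{thurston:2002} is then a direct comparison of the resulting tables.
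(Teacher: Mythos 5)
Your proposal is correct and follows essentially the same route as the paper, which states the corollary with a bare \qed, treating it as immediate from the preceding domestic-classification theorem (genus-zero centre curve plus weight-ramification vector in the Dynkin list $(p)$, $(p,q)$, $(2,2,n)$, $(2,3,3)$, $(2,3,4)$, $(2,3,5)$) together with the tables of \cite[Appendix~A]{kussin:2009} and \cite[Prop.~A.1.1]{kussin:2009}; your expansion --- positivity of $\chi'_{orb}$ forcing $\chi(X)>0$ via~\eqref{eq:orbifold-euler-char-formula}, the identification of the five non-weighted cases through Corollary~\ref{cor:algebr-closed} and the Hurwitz formula, and the finite enumeration against the local types of Table~\ref{tab:local-fields} --- is exactly the intended argument. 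The only discrepancy is that your (correct) derivation also places weight type $(2,3,4)$ in the complex list, which the printed statement of the corollary omits, presumably a typo in the paper rather than a gap in your proof.
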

\subsection*{Tubular curves}
The weighted noncommutative regular projective curves of orbifold
Euler characteristic zero (also called the \emph{Calabi-Yau} case) are
the noncommutative elliptic curves (non-weighted, $\ovp=1$) and the
tubular curves ($\ovp>1$).
\begin{definition}
  We call a weighted noncommutative regular projective curve
  \emph{tubular}, if $\ovp>1$ and $\chi'_{orb}(\Hh)=0$ (equivalently:
  $g_{orb}(\Hh)=1$).
\end{definition}
\begin{theorem}
\label{thm:tubular-case}
  \begin{enumerate}
  \item[(1)] If $\Hh=\coh(\XX)$ is tubular, then $\Hh$ admits a
    tilting bundle, and each indecomposable coherent sheaf is
    semistable. The endomorphism rings of tilting sheaves are just the
    tubular algebras. Moreover, for each $\alpha\in\widehat{\QQ}$ the
    full category of semistable sheaves of slope $\alpha$ is a tubular
    family, again parametrized by a tubular curve $\XX'$, with
    $\Hh'=\coh(\XX')$ derived equivalent to $\Hh$.
  \item[(2)] Let $k$ be perfect. A weighted noncommutative regular
    projective curve with $\ovp>1$ and centre $k$ is tubular if and
    only if the centre curve $X$ is of genus zero and the
    weight-ramification vector of all numbers $p(x)e_{\tau}(x)>1$,
    each counted $[k(x):k]$-times, is $(2,3,6)$, $(2,4,4)$, $(3,3,3)$
    or $(2,2,2,2)$.
  \item[(3)] If $\Hh$ is tubular over a perfect field, then the order
    of $\tau$ in $\Aut(\Hh)$ is the maximum of the numbers
    $p(x)e_{\tau}(x)$.
  \item[(4)] Let $k$ be perfect. The weight sequence and the
    weight-ramification vector of a tubular curve, with centre $k$,
    are derived invariants. The ramification vector is not a derived
    invariant.
  \end{enumerate}
\end{theorem}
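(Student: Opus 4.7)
My proof of this four-part theorem will follow the pattern of the non-weighted elliptic Theorem~\ref{thm:elliptic} combined with the Euler-characteristic formula~\eqref{eq:orbifold-euler-char-formula-weights}.

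For part~(1), I would first invoke the Lenzing--Reiten criterion (quoted in the ``tilting objects'' paragraph above) that $\Hh$ admits a tilting bundle precisely when $g(\Hh_{nw})=0$. Since $\ovp>1$, the weighted correction term in~\eqref{eq:orbifold-euler-char-formula-weights} is strictly positive, so $\chi'_{orb}(\Hh)=0$ forces $\chi'(\Hh_{nw})>0$, and hence $g(\Hh_{nw})=0$; the identification of the endomorphism ring with a tubular canonical algebra then proceeds via the Ringel/Crawley-Boevey presentation. For the slope decomposition the argument parallels Theorem~\ref{thm:elliptic}~(2): Riemann--Roch for $g_{orb}(\Hh)=1$ yields $\mu(E)<\mu(F)\Rightarrow\Hom(E,F)\neq 0$ on indecomposables, after which the interval-category construction of~\cite[Prop.~8.1.6]{kussin:2009} shows that each $\bt_{\alpha}$ is a non-empty tubular family, parametrized by a tubular curve $\XX'$, with the suspension-and-rotation procedure inducing a derived equivalence $\bDerived{\Hh}\simeq\bDerived{\Hh'}$.

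For part~(2), I would specialize~\eqref{eq:orbifold-euler-char-formula} to $\chi'_{orb}(\Hh)=0$, giving
\[
\chi(X)\ =\ \tfrac{1}{2}\sum_{x\,:\,n_x>1}\Bigl(1-\tfrac{1}{n_x}\Bigr)[k(x):k]\quad\text{with}\quad n_x:=p(x)e_{\tau}(x).
\]
Since $\ovp>1$ the right-hand side is strictly positive, so $\chi(X)>0$; taking $k$ to be the centre of $\Hh$ hence of $X$ forces $g(X)=0$ and $\chi(X)=1$, i.e.\ $\sum(1-1/n_i)f_i=2$. The last step is the classical Dynkin-type enumeration of positive-integer solutions with $n_i\geq 2$, which produces exactly the four vectors $(2,3,6)$, $(2,4,4)$, $(3,3,3)$, $(2,2,2,2)$.

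For part~(3), Corollary~\ref{cor:genus-zero-centre-tau-picard-weighted} applies thanks to~(2) and expresses $\tau={\sigma_{x_0}}^{-2/\varepsilon}\prod_{x}\sigma_x^{n_x-1}\in\Pic(\Hh)$. By Proposition~\ref{prop:tau-multiplicity-weighted}, the restriction of $\tau$ to $\Uu_x$ has order exactly $n_x$, so the order of $\tau$ on $\Hh_0$ equals $\mathrm{lcm}\{n_x\}$, which in each of the four tubular cases coincides with $\max\{n_x\}\in\{6,4,3,2\}$. To upgrade this to the global order in $\Aut(\Hh)$, I would argue that any power of $\tau$ acting trivially on $\Hh_0$ lies, via the exact sequence~\eqref{eq:Picard-sequence-weighted}, in the image $\iota(\Pic(X))$; and since $\chi'_{orb}(\Hh)=0$ yields $\deg(\tau L)=0$, that element has degree zero in $\Pic(X)=\Pic(\Pone)$, which is trivial. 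This lifting from $\Hh_0$ to $\Hh$ is the main obstacle, requiring careful use of the Picard sequence and the vanishing of $\Pic_0(\Pone)$. Finally, for part~(4) any exact equivalence $\bDerived{\Hh}\ra\bDerived{\Hh'}$ permutes the tubular families $\bt_{\alpha}$, and within a single family the multiset of tube ranks together with residue fields is detected by the pure Auslander--Reiten combinatorics, giving derived invariance of the weight sequence; the weight-ramification vector records the order of the triangulated Serre functor $S=\tau[1]$ on each tube, a datum of $\bDerived{\Hh}$, and is therefore also a derived invariant. Non-invariance of the bare ramification vector I would demonstrate by exhibiting two tubular curves in one derived class with different factorizations $n_x=p(x)\cdot e_{\tau}(x)$: a purely weighted curve with weight sequence $(2,2,2,2)$ and ramification $()$ against a purely ramified non-weighted curve with ramification vector $(2,2,2,2)$, both of weight-ramification vector $(2,2,2,2)$, both appearing in the classification of real tubular curves announced in the introduction.
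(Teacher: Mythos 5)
Parts (1)--(3) of your argument are sound and essentially match the paper, which is terser (it cites \cite{kussin:2009} and \cite{lenzing:reiten:2006} for (1), derives (2) from Theorem~\ref{thm:orbifold-euler-char} exactly as you do, and gets (3) from Proposition~\ref{prop:tau-multiplicity-weighted} together with~\eqref{eq:tau-picard-general}); your explicit lifting of the order of $\tau$ from $\Hh_0$ to $\Aut(\Hh)$ via the Picard sequence and $\Pic_0(\Pone)=1$ is a legitimate fleshing-out of that last step.

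Part (4) has a genuine gap. Your claim that the weight-ramification vector is derived invariant because it ``records the order of the Serre functor on each tube, a datum of $\bDerived{\Hh}$'' fails for two reasons. First, the vector is not the multiset of orders of $\tau$ on the tubes: each entry $p(x)e_{\tau}(x)$ is counted $[k(x):k]$ times, and these residue degrees are not pure Auslander--Reiten data. Second, and decisively, the raw multiset of orders of $\tau$ on tubes is \emph{not} a derived invariant: in Example~\ref{ex:a-b-c}(c) the curve $\SS^2/\pm$ with weights $(2,2)$ has exactly $2$ tubes on which $\tau$ has order $2$, while its Fourier--Mukai partner $\DD_{2,2}$ with two weight-$2$ points has exactly $4$ such tubes; the weight-ramification vectors agree (both $(2,2,2,2)$) only because of the $[k(x_i):k]=2$ multiplicities. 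The underlying subtlety is that a derived equivalence carries $\Hh_0$ to some tubular family $\bt_{\alpha}$ of $\Hh'$, not to $\Hh'_0$, so tube-by-tube data need not transfer. The paper circumvents all of this: the \emph{maximum} of the $p(x)e_{\tau}(x)$ is the denominator of the fractional Calabi--Yau dimension, hence a derived invariant, and since the four admissible vectors $(2,3,6)$, $(2,4,4)$, $(3,3,3)$, $(2,2,2,2)$ have pairwise distinct maxima, the maximum already determines the whole vector within a derived class. Your proposed witness for non-invariance of the ramification vector is also wrong: a non-weighted curve with ramification vector $(2,2,2,2)$ has $\ovp=1$, so it is elliptic rather than tubular, and it cannot be derived equivalent to a curve of weight sequence $(2,2,2,2)$ since the weight sequence is itself a derived (indeed K-theoretic) invariant. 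The correct example is the pair in Example~\ref{ex:a-b-c}(c), with ramification vectors $()$ and $(2,2)$ respectively.
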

\begin{proof}
  (1) is well-known. We refer to~\cite[Ch.~8]{kussin:2009},
  also~\cite[Thm.~5.3]{lenzing:reiten:2006}. (2) follows easily from
  Theorem~\ref{thm:orbifold-euler-char}, and (3) from
  Proposition~\ref{prop:tau-multiplicity-weighted}
  and~\eqref{eq:tau-picard-general}. (4) It is well-known that the
  weight sequence is even a K-theoretic invariant (this follows
  from~\cite[Prop.~7.8]{lenzing:1996},
  also~\cite{kussin:2000b}). Clearly the fractional Calabi-Yau
  dimension by its very definition is a derived invariant, and thus so
  is the maximum of the numbers $p(x)e_{\tau}(x)$. Since for all
  possible weight-ramification vectors, $(2,3,6)$, $(2,4,4)$,
  $(3,3,3)$ or $(2,2,2,2)$, this maximum is different, it follows that
  the weight-ramification vector is uniquely determined, in its
  derived class, by the weight sequence. That the ramification vector
  is not a derived invariant follows from the real example
  in~\cite{kussin:2000}, see Example~\ref{ex:a-b-c}~(c) below.
\end{proof}
\begin{corollary}[The real tubular zoo]
  Let $k=\RR$ be the field of real numbers. There are (up to
  parameters) $39$ real weighted noncommutative regular projective
  curves of orbifold Euler characteristic zero:
  \begin{itemize}
  \item Non-weighted ($\ovp=1$). $8$ elliptic curves. With centre
    $\RR$: the Klein bottle $\KK$, the M\"obius band $\MM$ (the oval
    coloured real or quaternion), the annulus $\AA$ (there are three
    possibilities to colour the two ovals). The disc $\DD_{2,2,2,2}$
    with four segmentation points. With centre $\CC$: the torus $\TT$.
  \item Weighted ($\ovp>1$). $31$ tubular curves. Those $27$ with
    centre $\RR$ are shown in the
    tables~\cite[Appendix~A]{kussin:2009}; with centre $\CC$ there are
    the tubular weighted projective lines of the $4$ weight types
    $(2,4,4)$, $(2,3,6)$, $(3,3,3)$ and $(2,2,2,2)$.
  \end{itemize}
  $17$ of these have $s(\Hh)=1$ (these are the \emph{parabolic} (or
  \emph{flat}, that is, of curvature zero) $2$-orbifolds shown
  in~\cite[Thm.~13.3.6]{thurston:2002}, and they correspond to the
  $17$ wallpaper patterns~\cite[App.~A]{montesinos:1987}), and $22$
  have $s(\Hh)=2$.  Moreover, all these cases are fractional
  Calabi-Yau of dimension $n/n$ with $n$ the maximum of the numbers
  $p(x)e_{\tau}(x)$, and thus $n=1,\,2,\,3,\,4$ or $6$.\qed
\end{corollary}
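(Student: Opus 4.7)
The plan is to split the enumeration into the non-weighted elliptic part ($\ovp=1$) and the genuinely weighted tubular part ($\ovp>1$), treating the $s(\Hh)=1$ and $s(\Hh)=2$ subcases in parallel within each.

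For the elliptic part ($\ovp=1$), I would appeal to the classification carried out in Section~\ref{sec:elliptic}. The commutative case ($s(\Hh)=1$) gives the three real Klein surfaces $\AA$, $\KK$, $\MM$ plus, with centre $\CC$, the torus $\TT$, for a total of four. For the noncommutative elliptic real curves ($s(\Hh)=2$), the Hurwitz formula for Witt curves (Corollary~\ref{cor:hurwitz-consequence}) forces the underlying Klein surface to satisfy $g(X)=0$ with exactly $n=4$ segmentation points; Weichold's theorem together with the colouring rules of~\ref{numb:Witt-local-data} then enumerates the possibilities as the four Witt surfaces $\DD_{2,2,2,2}$, $\AA_{\RR,\HH}$, $\AA_{\HH,\HH}$ and $\MM_{\HH}$ exhibited after Corollary~\ref{cor:hurwitz-consequence}. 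This gives $4+4=8$ elliptic cases.

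For the tubular part ($\ovp>1$), Theorem~\ref{thm:tubular-case}(2) reduces the classification to a finite combinatorial problem: choose (a) a genus-zero centre curve $X$ over $\RR$ or $\CC$, (b) a weight-ramification vector from the list $(2,3,6)$, $(2,4,4)$, $(3,3,3)$, $(2,2,2,2)$, and (c) a distribution of the entries among the (closed) points of $X$, where each $x$ contributes with multiplicity $[k(x):k]\in\{1,2\}$, subject to the further splitting of each entry $p(x)e_{\tau}(x)$ into a weight factor $p(x)$ (arbitrary placement on the underlying non-weighted curve) and a ramification factor $e_{\tau}(x)$ (forced by the underlying Witt-curve data via Corollary~\ref{cor:separ-ramif}). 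Over centre $\CC$ there is a unique genus-zero curve $\Pone(\CC)$, $s(\Hh)=1$, and each of the four admissible weight vectors gives exactly one tubular weighted projective line, producing four cases. Over centre $\RR$, the underlying non-weighted curve ranges over the five real domestic curves $\DD$, $\SS^2/\pm$, $\DD_{\HH}$, $\DD_{2,2}$ and $\Pone(\RR)$ (with $s(\Hh)=2$ precisely in the two Witt cases), and the combinatorial enumeration yields the $27$ families tabulated in~\cite[Appendix~A]{kussin:2009}. Adding these gives $27+4=31$ tubular cases, hence $8+31=39$ in total.

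The main obstacle is this case-by-case enumeration over $\RR$: for each admissible weight-ramification vector and each underlying non-weighted curve one has to check compatibility of the available point degrees with the required multiplicities, and, in the Witt cases, fix the segmentation points on the appropriate ovals with signed colourings consistent with~\ref{numb:Witt-local-data} and Corollary~\ref{cor:hurwitz-consequence}. Once the enumeration is in place, the remaining assertions are immediate: the split $17+22$ between $s(\Hh)=1$ and $s(\Hh)=2$ is read off from whether the curve is a Klein curve or a Witt curve; and the fractional Calabi-Yau dimension $n/n$ with $n=\max_x p(x)e_{\tau}(x)\in\{2,3,4,6\}$ follows from Theorem~\ref{thm:tubular-case}(3), since then the Serre functor $S=\tau[1]$ on $\bDerived{\Hh}$ satisfies $S^n\simeq[n]$.
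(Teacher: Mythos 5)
Your plan follows the paper's own (implicit) argument exactly: the corollary carries a \qed because it is read off directly from the elliptic classification in Section~\ref{sec:elliptic}, from Theorem~\ref{thm:tubular-case}, and from the tables of~\cite[Appendix~A]{kussin:2009}, which is precisely the decomposition you give. Two small corrections: $\Pone(\RR)$ and $\DD$ are the \emph{same} noncommutative curve (the disc is the Klein surface of the real projective line), so there are only four, not five, real domestic non-weighted curves with centre $\RR$; and for the eight elliptic cases the Calabi-Yau dimension comes from Theorem~\ref{thm:elliptic-tau-finite-order} (which also allows $n=1$ in the unramified cases), not from Theorem~\ref{thm:tubular-case}(3), which only covers $\ovp>1$.
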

The preceding discussion can be regarded as a classification of
noncommutative $2$-orbifolds of nonnegative Euler characteristic.
\begin{example}
\label{ex:a-b-c}
  We exhibit two tubular examples, (a) and (b) below, over the field
  $k=\RR$ of real numbers. In both cases the underlying non-weighted
  curve $\Hh_{nw}$ is the Witt curve $\DD_{2,2}$ in
  Figure~\ref{fig:genus-zero}. The weight-points $z$ are drawn in
  blue, their weights $p(z)$ are indicated besides. In both cases, one
  of the weight-points $z$ is also a segmentation point, the other
  weighted point is not.\medskip

  (a) Weight sequence $(3,3)$. (See Figure~\ref{fig:tubular}.) The
  least common multiple is $\ovp=3$. The second weight-point is
  real. (The case when the second weight-point is quaternion is
  similar.) The weight-ramification vector is $(2,3,6)$. It follows,
  that the Calabi-Yau dimension is $\frac{6}{6}$ (and not
  $\frac{\ovp}{\ovp}=\frac{3}{3}$).\medskip

  (b) Same situation, but with weight sequence $(2,4)$. (See
  Figure~\ref{fig:tubular}.) Here, $\ovp=4$. The weight-ramification
  vector is $(2,4,4)$. Therefore the Calabi-Yau dimension is
  $\frac{\ovp}{\ovp}=\frac{4}{4}$.\medskip

  (c) In a third real example we have a situation of two
  derived-equivalent tubular curves. The weighted real projective
  plane $\Hh$, given by $\SS^2/\pm$ with weight sequence $(2,2)$ (for
  certain weight points $x_1,\,x_2$), is derived-equivalent to $\Hh'$,
  given by the disc $\DD_{2,2}$ with two weights $2$, one on the real
  coloured boundary, the other on the quaternion coloured
  boundary. (See Figure~\ref{fig:tubular}.) In both cases, the weight
  sequence is $(2,2)$ and the weight-ramification vector $(2,2,2,2)$;
  in case $\Hh$ each weight appears twice, since $[k(x_i):k]=2$; in
  case $\Hh'$ the weight sequence $(2,2)$ is complemented
  ``disjointly'' by the ramification indices. The Calabi-Yau dimension
  is $\frac{2}{2}$. In $\Hh_0$ there are precisely $2$ tubes, on which
  $\tau$ has order $2$, in $\Hh'_0$ there are precisely $4$ such
  tubes.

  For further, similar examples we refer
  to~\cite[Table~A.5]{kussin:2009}.
\end{example}
\begin{figure}[h]
\begin{tikzpicture}[line width=1.1pt]
\coordinate [label=left:$\HH$] (C) at (-1.2,0);
\coordinate [label=right:$\RR$] (D) at (1.3,0);
\coordinate [label=above:$3$] (C) at (0,-1.1);
\coordinate [label=left:$3$] (D) at (1.1,0);
\draw[black] (0,0) circle (1.2);
\fill[gray,opacity=.25] (0,0) circle (1.19);
\fill[red,opacity=1.0] (0,1.2) circle (2.0pt);
\fill[red,opacity=1.0] (0,-1.2) circle (2.0pt);
\fill[blue,opacity=1.0] (1.2,0) circle (3.2pt);
\filldraw[blue,opacity=1.0,even odd rule] (0,-1.2) circle (3.2pt) circle (2.0pt);
\end{tikzpicture}\quad\quad
\begin{tikzpicture}[line width=1.1pt]
\coordinate [label=left:$\HH$] (C) at (-1.2,0);
\coordinate [label=right:$\RR$] (D) at (1.3,0);
\coordinate [label=above:$2$] (C) at (0,-1.1);
\coordinate [label=left:$4$] (D) at (1.1,0);
\draw[black] (0,0) circle (1.2);
\fill[gray,opacity=.25] (0,0) circle (1.19);
\fill[red,opacity=1.0] (0,1.2) circle (2.0pt);
\fill[red,opacity=1.0] (0,-1.2) circle (2.0pt);
\fill[blue,opacity=1.0] (1.2,0) circle (3.2pt);
\filldraw[blue,opacity=1.0,even odd rule] (0,-1.2) circle (3.2pt) circle (2.0pt);
\end{tikzpicture}\quad\quad
\begin{tikzpicture}[line width=1.1pt]
\coordinate [label=left:$\HH$] (C) at (-1.3,0);
\coordinate [label=right:$\RR$] (D) at (1.3,0);
\coordinate [label=above:$2$] (C) at (-0.9,-0.25);
\coordinate [label=left:$2$] (D) at (1.1,0);
\draw[black] (0,0) circle (1.2);
\fill[gray,opacity=.25] (0,0) circle (1.19);
\fill[red,opacity=1.0] (0,1.2) circle (2.0pt);
\fill[red,opacity=1.0] (0,-1.2) circle (2.0pt);
\fill[blue,opacity=1.0] (1.2,0) circle (3.2pt);
\fill[blue,opacity=1.0] (-1.2,0) circle (3.2pt);
\end{tikzpicture}
\caption{Some tubular cases. Left: (a), middle: (b), right: (c)}
\label{fig:tubular}
\end{figure}
\begin{example}\label{ex:tubular-over-rationals}
  (1) In~\cite[Prop.~8.3.1]{kussin:2009} we discussed an example of a
  triple of tubular curves over the field $k=\QQ$ of rational numbers,
  each with weight sequence $(2)$, which are Fourier-Mukai
  partners. One derives from Theorem~\ref{thm:tubular-case}, or
  computes directly using~\eqref{eq:e-f-formula}, that in each case
  the weight-ramification vector is given by $(2,2,2,2)$. Since one of
  these three tubular curves arises by insertion of weights from the
  curve in Example~\ref{ex:K-H-bimodule}, that curve has three
  ramification points.\medskip

  (2) In~\cite[Prop.~8.4.1]{kussin:2009} we discussed an example of a
  tubular curve over the field $\QQ(\gge{i})$, where
  $\gge{i}=\sqrt{-1}$, also with weight sequence $(2)$. Its
  Grothendieck group is isometric-isomorphic to the Grothendieck group
  of the curves from part~(1). By invoking
  Example~\ref{ex:non-simple-bimodule} we see that here the weight-ramification
  vector is given by $(2,4,4)$, in contrast to part~(1).
\end{example}
\subsection*{Acknowledgements}
I thank Helmut Lenzing for drawing my attention to the
paper~\cite{witt:1934} of E.~Witt, and also for giving me access to
his slides~\cite{lenzing:2001}; there were also fruitful discussions
about the ``correct'' Euler characteristic of noncommutative real
curves. I also thank David Ploog and Dieter Vossieck for various
useful critical comments, and an anonymous referee for her/his helpful
proposals which lead to an improvement of the presentation. I am
grateful to Osamu Iyama for his hospitality I could enjoy at the
Graduate School of Mathematics of Nagoya University where the final
version of the paper was produced.

\bibliographystyle{elsarticle-harv}

\end{document}